\documentclass[11pt]{article}

\usepackage[letterpaper,margin=1in]{geometry}

\usepackage{amsmath, amssymb}
\usepackage{amsthm}

\usepackage[pdfpagelabels,bookmarks=false]{hyperref}
\hypersetup{colorlinks, linkcolor=darkblue, citecolor=darkgreen, urlcolor=darkblue}
\urlstyle{same}

\usepackage[capitalize, nameinlink]{cleveref}
\crefname{theorem}{Theorem}{Theorems}
\Crefname{lemma}{Lemma}{Lemmas}
\Crefname{claim}{Claim}{Claims}
\Crefname{fact}{Fact}{Facts}
\Crefname{remark}{Remark}{Remarks}
\Crefname{observation}{Observation}{Observations}
\Crefname{line}{Line}{Lines}
\crefalias{AlgoLine}{line}

\newtheorem{theorem}{Theorem}
\newtheorem{lemma}[theorem]{Lemma}

\newtheorem{definition}[theorem]{Definition}

\newtheorem{corollary}[theorem]{Corollary}

\newtheorem{observation}[theorem]{Observation}

\newtheorem{claim}[theorem]{Claim}

\usepackage{times}

\usepackage{anyfontsize}

\usepackage[inline]{enumitem}
\setlist[enumerate,1]{label=(\roman*), leftmargin=2.2em}
\setlist[enumerate,2]{label=(\alph*)}
\setlist{nosep,topsep=0.1em}
\setlist[itemize,1]{label={\bfseries--}}

\usepackage{mathtools}
\DeclarePairedDelimiter\ceil{\lceil}{\rceil}
\DeclarePairedDelimiter\floor{\lfloor}{\rfloor}
\makeatletter
\newtheorem*{rep@theorem}{\rep@title}
\newcommand{\newreptheorem}[2]{\newenvironment{rep#1}[1]{\def\rep@title{#2 \ref{##1}}\begin{rep@theorem}}{\end{rep@theorem}}}
\makeatother

\newreptheorem{theorem}{Theorem}
\newreptheorem{lemma}{Lemma}
\newreptheorem{corollary}{Corollary}

\usepackage{mdframed}

\makeatletter
\let\@@pmod\pmod
\DeclareRobustCommand{\pmod}{\@ifstar\@pmods\@@pmod}
\def\@pmods#1{\mkern8mu({\operator@font mod}\mkern 6mu#1)}
\makeatother

\makeatletter
\let\@@mod\mod
\DeclareRobustCommand{\mod}{\@ifstar\@mods\@@mod}
\def\@mods#1{\mkern8mu{\operator@font mod}\mkern 6mu#1}
\makeatother
\setlength\marginparwidth{2cm}
\usepackage[color=darkgreen!40!white]{todonotes}

\usepackage{xcolor}
\definecolor{darkblue}{rgb}{0,0,0.38}
\definecolor{darkred}{rgb}{0.6,0,0}
\definecolor{darkgreen}{rgb}{0.1,0.35,0}

\usepackage[
backend=biber,
style=alphabetic,
citestyle=alphabetic,
maxalphanames=6,
maxcitenames=99,
mincitenames=98,
maxbibnames=99,
giveninits=true,
url=false,
doi=true,
isbn=true,
backref=true
]{biblatex}



\usepackage{xpatch}

\makeatletter
\patchcmd\blx@bblinput{\blx@blxinit}
                      {\blx@blxinit
                      }{}{\fail}
\makeatother

\makeatletter
\DeclareFieldFormat{eprint:arxiv}{arXiv\addcolon\space
  \ifhyperref
    {\href{https://arxiv.org/abs/#1}{\nolinkurl{#1}\iffieldundef{eprintclass}
     {}
{\addspace\mkbibbrackets{\thefield{eprintclass}}}}}
    {\nolinkurl{#1}
     \iffieldundef{eprintclass}
       {}
{\addspace\mkbibbrackets{\thefield{eprintclass}}}}}
\makeatother

\makeatletter
\newcommand{\labeltarget}[1]{\Hy@raisedlink{\hypertarget{#1}{}}}
\makeatother

\usepackage[vlined,ruled,algo2e,linesnumbered]{algorithm2e}
\setlength{\algomargin}{0.2em}
\SetAlCapHSkip{0.2em}
\SetAlgoInsideSkip{smallskip}
\SetCustomAlgoRuledWidth{0.95\textwidth}
\makeatletter
\patchcmd{\@algocf@start}{\begin{lrbox}{\algocf@algobox}}{\rule{0.025\textwidth}{\z@}\begin{lrbox}{\algocf@algobox}\begin{minipage}{0.95\textwidth}}{}{}
\patchcmd{\@algocf@finish}{\end{lrbox}}{\end{minipage}\end{lrbox}}{}{}
\makeatother

\usepackage{xfrac}
\ExplSyntaxOn
\DeclareRestrictedTemplate { xfrac } { text } { math }
  {
    numerator-font      = \number \fam ,
    slash-symbol        = /            ,
    slash-symbol-font   = \number \fam ,
    denominator-font    = \number \fam ,
    scale-factor        = 0.7          ,
    scale-relative      = false        ,
    scaling             = true         ,
    denominator-bot-sep = 0 pt         ,
    math-mode           = true         ,
    phantom             = ( }
\DeclareInstance { xfrac } { mathdefault } { math }
  { numerator-top-sep = 0pt }
\ExplSyntaxOff

\usepackage{wrapfig}

\usepackage[margin=10pt,font=small,labelfont=bf]{caption}

\usepackage{graphicx}
\graphicspath{{.}{graphics/}}
\makeatletter
\newcommand\appendtographicspath[1]{\g@addto@macro\Ginput@path{#1}}
\makeatother

\usepackage{tikz}
\usetikzlibrary{calc}

\newcommand{\ksum}[1][]{\mathbin{\oplus_{#1}}}

\makeatletter
\DeclareRobustCommand{\cev}[1]{{\mathpalette\do@cev{#1}}}
\newcommand{\do@cev}[2]{\vbox{\offinterlineskip
    \sbox\z@{$\m@th#1 x$}\ialign{##\cr
      \hidewidth\reflectbox{$\m@th#1\vec{}\mkern4mu$}\hidewidth\cr
      \noalign{\kern-\ht\z@}
      $\m@th#1#2$\cr
    }}}
\makeatother

\newcommand{\CCTU}{\hyperlink{prb:CCTU}{CCTU}}
\newcommand{\CCTUF}{\hyperlink{prb:CCTUF}{CCTUF}}
\newcommand{\RCCTUF}{\hyperlink{prb:R-CCTUF}{$R$-CCTUF}}
\newcommand{\CTC}{\hyperlink{prb:CTC}{CTC}}
\newcommand{\CCC}{\hyperlink{prb:CCC}{CCC}}
\newcommand{\XLC}{\hyperlink{prb:XLC}{XLC}}
\newcommand{\CCSM}{\hyperlink{prb:CCSM}{CCSM}}

\DeclareMathOperator{\vertices}{vertices}

\newcommand{\MOORdot}{}

\title{Congruency-Constrained TU Problems Beyond the Bimodular Case\thanks{This project received funding from Swiss National Science Foundation grants 200021\_184622 and P500PT\_206742, the European Research Council (ERC) under the European Union's Horizon 2020 research and innovation programme (grant agreement No 817750), and the Deutsche Forschungsgemeinschaft (DFG, German Research Foundation) under Germany's \hbox{Excellence Strategy~-~GZ~2047/1}, Projekt-ID 390685813.
}}
\author{
Martin N{\"a}gele\thanks{Research Institute for Discrete Mathematics and Hausdorff Center for Mathematics, University of Bonn, Bonn, Germany.
Email: \href{mailto:naegele@or.uni-bonn.de}{naegele@or.uni-bonn.de}.
Most of this work was done while the author was employed at ETH Zurich.
}
\and
Richard Santiago\thanks{
Department of Mathematics, ETH Zurich, Zurich, Switzerland.
Email: \href{mailto:rtorres@ethz.ch}{rtorres@ethz.ch}.}
\and
Rico Zenklusen\thanks{
Department of Mathematics, ETH Zurich, Zurich, Switzerland.
Email: \href{mailto:ricoz@ethz.ch}{ricoz@ethz.ch}.}
}

\date{}

\begin{document}

\maketitle

\begin{abstract}
A long-standing open question in Integer Programming is whether integer programs with constraint matrices with bounded subdeterminants are efficiently solvable.
An important special case thereof are congruency-constrained integer programs
$\min\{c^\top x\colon Tx\leq b, \gamma^\top x\equiv r\pmod*{m}, x\in\mathbb{Z}^n\}$ with a totally unimodular constraint matrix $T$.
Such problems have been shown to be polynomial-time solvable for $m=2$, which led to an efficient algorithm for integer programs with bimodular constraint matrices, i.e., full-rank matrices whose $n\times n$ subdeterminants are bounded by two in absolute value.
Whereas these advances heavily relied on existing results on well-known combinatorial problems with parity constraints, new approaches are needed beyond the bimodular case, i.e., for $m>2$.

We make first progress in this direction through several new techniques. In particular, we show how to efficiently decide feasibility of congruency-constrained integer programs with a totally unimodular constraint matrix for $m=3$ using a randomized algorithm.
Furthermore, for general $m$, our techniques also allow for identifying flat directions of infeasible problems, and deducing bounds on the proximity between solutions of the problem and its relaxation.
\end{abstract}

\thispagestyle{empty}
\addtocounter{page}{-1}

\begin{tikzpicture}[overlay, remember picture, shift = {(current page.south east)}]
\node[anchor=south east, outer sep=5mm] {
\begin{tikzpicture}[outer sep=0] \node (ERC) {\includegraphics[height=13mm]{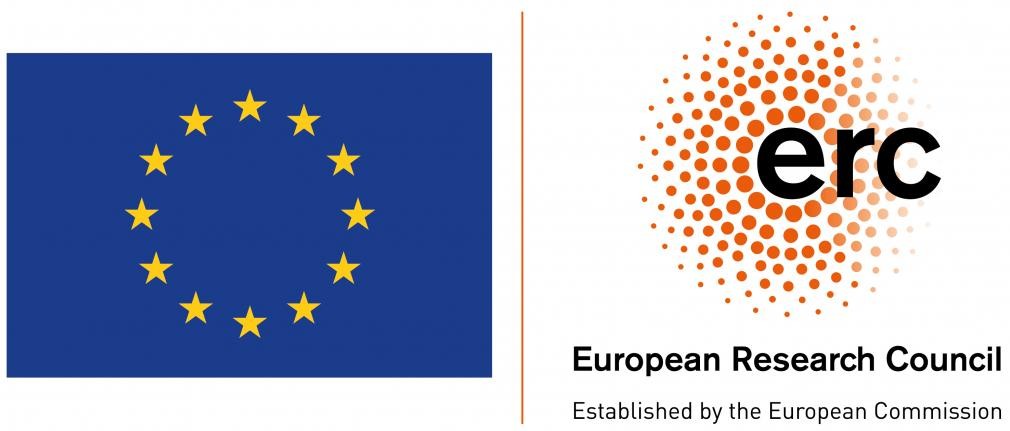}};
\node[left=5mm of ERC] (SNSF) {\includegraphics[height=7mm]{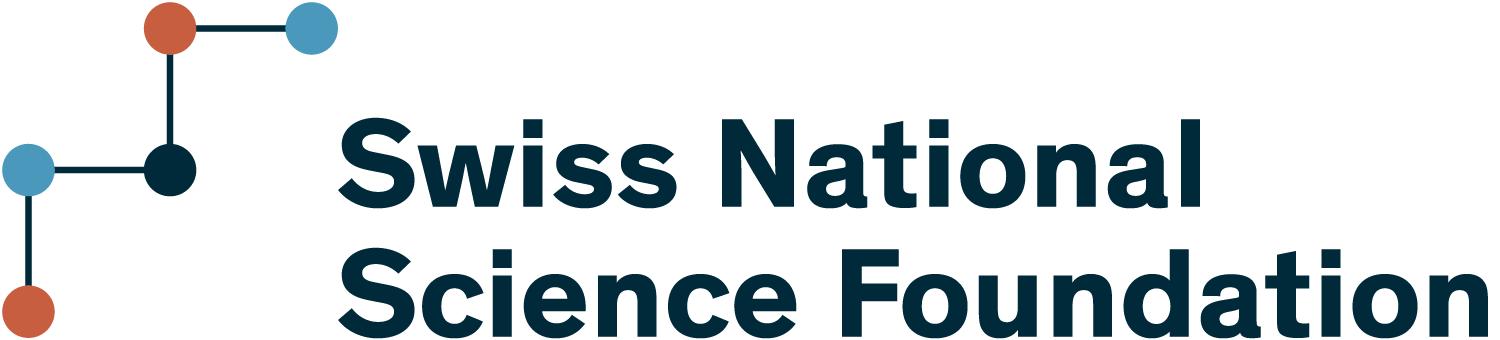}};
\node[right=5mm of ERC] (DFG) {\includegraphics[height=5mm]{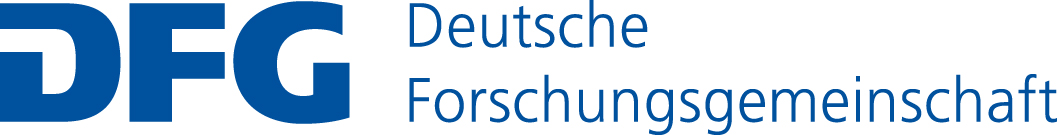}};\end{tikzpicture}
};
\end{tikzpicture}

\newpage

\section{Introduction\MOORdot}

Integer linear programs (ILPs) $\min\{c^\top x\colon Ax\leq b, x\in\mathbb{Z}^n\}$ for $A\in\mathbb{Z}^{k\times n}$, $b\in\mathbb{Z}^k$, and $c\in\mathbb{Z}^n$ are one of the most basic yet powerful discrete optimization problems.
They are well-known to be NP-hard, and extensive research is dedicated to identify efficiently solvable subclasses.
One of the best known such classes is when the constraint matrix $A$ is required to be \emph{totally unimodular} (TU), i.e., all square submatrices of $A$ have a determinant in $\{-1, 0, 1\}$.
The class of totally unimodular ILPs still comprises a large number of interesting and heavily studied problems, as for example network flow and cut problems, bipartite matching problems, and many others.

Intriguingly, it is still badly understood what kind of generalizations of this classical result on ILPs with totally unimodular constraint matrices are possible to obtain larger classes of efficiently solvable ILPs.
In particular, there is a long-standing open question on whether ILPs are efficiently solvable if their constraint matrix is $\Delta$-modular for constant $\Delta$.
Here, a matrix $A\in\mathbb{Z}^{k\times n}$ is $\Delta$-modular for $\Delta\in\mathbb{Z}_{>0}$ if it has full column rank $n$, and all $n\times n$ submatrices have determinants bounded by $\Delta$ in absolute value.\footnote{One may also consider \emph{totally $\Delta$-modular} matrices $A$, where \emph{all} square subdeterminants of $A$ are bounded by $\Delta$ in absolute value. The notion of $\Delta$-modularity is more general in the sense that totally $\Delta$-modular ILPs can be reduced to $\Delta$-modular ILPs. In particular, reducing to a problem with full-rank constraint matrix can be achieved by a standard transformation to non-negative variables.}
Besides TU constraint matrices, progress has only been achieved for the bimodular case $\Delta=2$, for which an efficient algorithm was presented by Artmann, Weismantel, and Zenklusen~\cite{artmann_2017_strongly}.
A relevant special case of such problems are Congruency-Constrained TU Problems.\footnote{A \CCTU{} problem with modulus $m$ can be written as an $m$-modular ILP by transforming the congruency constraint into a linear equality constraint as follows.
First append the row $\gamma^\top$ to the matrix $T$ and then append a column with zeros everywhere except for the last entry (the one corresponding to the newly added row), which is set to $m$.
Finally, the right-hand side of the newly added constraint is set to $r$, the target residue.
\label{footn:red_CCTU_ILP}}\begin{mdframed}[innerleftmargin=0.5em, innertopmargin=0.5em, innerrightmargin=0.5em, innerbottommargin=0.5em, userdefinedwidth=0.95\linewidth, align=center]
{\textbf{Congruency-Constrained TU Optimization (\hypertarget{prb:CCTU}{CCTU}):}}
Let $T\in\{-1, 0, 1\}^{k\times n}$ be TU, $b\in\mathbb{Z}^k$, $c\in\mathbb{Z}^n$, $m\in\mathbb{Z}_{> 0}$, $\gamma\in\mathbb{Z}^n$, and $r\in\mathbb{Z}$. The task is to show infeasibility, unboundedness, or find a  minimizer of
\begin{equation*}
\min\left\{c^\top x \colon Tx \leq b,\ \gamma^\top x \equiv r \pmod*{m},\ x\in\mathbb{Z}^n \right\}\enspace.
\end{equation*}
\end{mdframed}
Even for $m=2$, \CCTU{} problems capture classical combinatorial optimization problems like the minimum odd cut problem.
Moreover, there are reasons to believe that insights on \CCTU{} problems may be key to make further progress on the open question of bounded subdeterminant ILPs.
For $\Delta=2$, a result of Veselov and Chirkov~\cite{veselov_2009_integer} implies that bimodular ILPs reduce to \CCTU{} problems with $m=2$, i.e., with parity constraints (see~\cite{artmann_2017_strongly}).
The result in~\cite{veselov_2009_integer} does not extend to $\Delta > 2$, and it remains open whether another reduction to \CCTU{} problems may exist.
Questions closely related to \CCTU{} have also appeared in recent progress of Fiorini, Joret, Weltge, and Yuditsky~\cite{fiorini_2021_integerPrograms}, who obtained an efficient algorithm for totally $\Delta$-modular ILPs with a constraint matrix having at most two non-zeros in each row.
This algorithm computes certain circulations with parity constraints, which can be interpreted as \CCTU{} problems with a bounded number of additional constraints.

Moreover, we highlight that for prime numbers $m$, \CCTU{} problems with modulus $m$ are equivalent to ILPs with a constraint matrix $A$ that has full column rank and all of whose $n\times n$ subdeterminants are within $\{0,\pm m\}$, in the sense that any of the two problems can be efficiently transformed to the other one.\footnote{The reduction mentioned in \cref{footn:red_CCTU_ILP} from a \CCTU{} problem to a $\Delta$-modular ILP shows one direction. The other one follows by an analogous reduction to the one used in the bimodular case~\cite{artmann_2017_strongly}.
We highlight that in the conference version of this paper~\cite{nagele_2022_congruency}, we missed adding that $m$ needs to be prime for such an analogous reduction to work out.}

Typically, we consider \CCTU{} problems with a constant modulus $m$, since \CCTU{} with arbitrary non-constant modulus $m$ is NP-hard (one can, for example, model the minimum bisection problem).

\subsection{Our results\MOORdot}

We present the first progress towards solving \CCTU{} problems beyond the parity-constrained case by approaching the corresponding feasibility problem.

\begin{mdframed}[innerleftmargin=0.5em, innertopmargin=0.5em, innerrightmargin=0.5em, innerbottommargin=0.5em, userdefinedwidth=0.95\linewidth, align=center]
{\textbf{Congruency-Constrained TU Feasibility (\hypertarget{prb:CCTUF}{CCTUF}):}}
Let $T\in\{-1, 0, 1\}^{k\times n}$ be a totally unimodular matrix, let $b\in\mathbb{Z}^k$, $m\in\mathbb{Z}_{>0}$, $\gamma\in\mathbb{Z}^n$, and $r\in\mathbb{Z}$. The task is to show infeasibility or find a solution of the system
$$
Tx \leq b,\ \gamma^\top x \equiv r \pmod*{m},\ x\in\mathbb{Z}^n \enspace.
$$
\end{mdframed}

\noindent Our main result is the following.

\begin{theorem}\label{thm:feasibilityMod3}
There is a strongly polynomial-time randomized algorithm for \CCTUF{} problems with $m=3$.\footnote{In this context, we consider a randomized algorithm to be one that always correctly detects infeasibility of a problem, and finds a solution of a feasible problem with high probability $1-\sfrac1n$, where $n$ is the number of variables.}
\end{theorem}

As we show in \cref{app:unboundedness}, being able to solve feasibility problems is also enough to detect unboundedness of \CCTU{} problems.\footnote{Analogous to linear and integer programming, we call a \CCTU{} \emph{unbounded} if it is possible to achieve arbitrarily good objective values. Hence, having an unbounded feasible region does not imply unboundedness of the problem.}
One of the key ideas in the proof of \cref{thm:feasibilityMod3} is to reduce a \CCTUF{} problem to a hierarchy of slightly relaxed congruency-constrained problems with totally unimodular constraint matrices that we call \RCCTUF{} problems, and which we define as follows.

\begin{mdframed}[innerleftmargin=0.5em, innertopmargin=0.5em, innerrightmargin=0.5em, innerbottommargin=0.5em, userdefinedwidth=0.95\linewidth, align=center]
{\textbf{\boldmath$R$-Congruency-Constrained TU Feasibility (\hypertarget{prb:R-CCTUF}{$R$-CCTUF})\unboldmath:}}
Let $T\in\{-1, 0, 1\}^{k\times n}$ be a totally unimodular matrix and let $b\in\mathbb{Z}^k$. Additionally, let $m\in\mathbb{Z}_{> 0}$, $\gamma\in\mathbb{Z}^n$, and $R\subseteq \{0, \ldots, m-1\}$. The task is to show infeasibility or find a feasible solution of the system
\begin{equation*}
Tx \leq b,\ \gamma^\top x \in R \pmod*{m},\ x\in\mathbb{Z}^n \enspace.
\end{equation*}
\end{mdframed}

\noindent Here, the constraint $\gamma^\top x\in R\pmod*{m}$ is satisfied if and only if there exists an $r\in R$ such that $\gamma^\top x\equiv r\pmod*{m}$. We call $R$ the set of \emph{target residues}.
Clearly, every \CCTUF{} problem is an \RCCTUF{} problem with $R=\{r\}$.
Intuitively, the larger the set $R$ of target residues is, the easier the corresponding problem gets---in the extreme case of $|R|=m$, the congruency constraint is trivially fulfilled by any solution, and simply finding a solution of the TU problem without congruency constraint is enough.
Additionally, \RCCTUF{} problems can always be reduced to several problems of the same type with a smaller set of target residues. In particular, any \RCCTUF{} problem can be reduced to $|R|$ many \CCTUF{} problems, namely one for each $r\in R$.
Our new progress for \RCCTUF{} problems is going two steps into the hierarchy if the modulus $m$ is a prime number, i.e., we can solve feasibility problems with $|R|\geq m-2$.

\begin{theorem}\label{thm:R=m-2}
There is a strongly polynomial-time randomized algorithm for \RCCTUF{} problems with constant prime modulus $m$ and $|R|\geq m-2$.
\end{theorem}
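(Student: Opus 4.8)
The plan is to solve the whole hierarchy of \RCCTUF{} problems from the top down: the level $|R|=m$ is trivial, and we descend exactly two levels, using that $m$ is prime to control how the congruency constraint interacts with the integral structure of totally unimodular polyhedra. Throughout put $P\coloneqq\{x\in\mathbb R^n\colon Tx\le b\}$.

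First come preliminary reductions. If $P=\emptyset$ the instance is infeasible. If $\gamma\equiv 0\pmod m$ then $\gamma^\top x$ has a single fixed residue on $\mathbb Z^n$, so we only test whether that residue lies in $R$ and, if so, return a vertex of $P$ (integral since $T$ is TU). For unboundedness, the recession cone of $P$ is generated by $\{-1,0,1\}$-vectors; if some generator $d$ has $\gamma^\top d\not\equiv 0\pmod m$, then moving from any integer point of $P$ along $d$ hits every residue in $\mathbb Z/m\mathbb Z$ (primality), so the instance is feasible, and otherwise the recession directions are irrelevant to the congruency and can be factored out. Lastly, implicit equalities of $Tx\le b$ are eliminated by pivoting, which preserves total unimodularity and updates $\gamma$ and $R$ accordingly. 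Hence we may assume $P$ is a full-dimensional polytope and $\gamma\not\equiv 0\pmod m$; write $|R|=m-j$ with $j\in\{0,1,2\}$.

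Next, the base case and the ``fat'' case. Call $P$ \emph{fat} if for some coordinate $i$ with $\gamma_i\not\equiv 0\pmod m$ it contains two integer points differing by $j\,e_i$ --- equivalently, the totally unimodular system $\{Tx\le b,\ Tx\le b-j\,T_{\cdot i}\}$ is feasible, a single linear program. Then an integral vertex $z$ of that system gives $z,z+e_i,\dots,z+j\,e_i\in P$, and the $j+1$ values $\gamma^\top z+t\gamma_i$ for $t=0,\dots,j$ are pairwise distinct modulo $m$ (prime, $\gamma_i\not\equiv0$); since only $j$ residues are excluded from $R$, one of these $j+1$ integer points is feasible and we output it. In particular $|R|=m$ (i.e.\ $j=0$) is handled immediately. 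One should also test the analogous short-beam configurations in a few other low-complexity directions, but conceptually the coordinate directions carry the argument.

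Otherwise $P$ is ``thin'': near every coordinate with $\gamma_i\not\equiv 0$ it admits no short beam, and the plan is to convert this rigidity into a \emph{flat direction} --- an integer vector $\pi$ along which $\pi^\top x$ takes only $W=\mathrm{poly}(n,m)$ distinct values on $P$, chosen so that appending $\pi^\top x=c$ keeps the constraint matrix totally unimodular (e.g.\ a row of $T$, or a coordinate after further pivoting). Branching over the $\le W+1$ values of $c$ yields \RCCTUF{} instances of strictly smaller dimension with the same target set $R$, and we recurse; with depth at most $n$ and polynomially bounded branching this is strongly polynomial, and together with the fat case it settles feasibility. The scheme halts after two levels because the ``fat'' certificate weakens as $|R|$ grows --- a beam of length only $j$ suffices when $|R|=m-j$ --- so the complementary thin regime retains enough structure to produce a usable flat direction precisely when $j\le 2$. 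I expect the genuinely hard part to be exactly this: showing that ``not fat'' forces a flat direction that is simultaneously polynomially narrow and compatible with total unimodularity, so that the dimension recursion stays inside the class of congruency-constrained TU problems, and dealing with general $\{-1,0,1\}$-beam directions rather than only coordinate ones. This is where primality of $m$ and the bound $|R|\ge m-2$ are genuinely exploited, and it is plausible that a Seymour-type decomposition of the underlying regular matroid enters here.
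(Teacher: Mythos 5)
There is a genuine gap, and it sits exactly where your outline claims the recursion becomes easy. After the ``fat'' test fails you propose to branch on the $\le W+1$ values of a flat direction and recurse on lower-dimensional \RCCTUF{} instances \emph{with the same target set $R$}, asserting that ``depth at most $n$ and polynomially bounded branching'' gives a strongly polynomial algorithm. That complexity claim is false: with branching factor at least $2$ (and for $|R|=m-2$ a flat direction has width $m-|R|-1=1$, hence two values to branch on) and recursion depth up to $n$, the tree has up to $2^n$ leaves. Branching per node being polynomial does not help unless the depth is logarithmic or the branching happens only a bounded number of times. This is precisely why the width/flatness argument in the paper closes the case $|R|=m-1$ (width $0$, so no branching, just repeated dimension reduction as in \cref{lem:dimensionReduction}) but does \emph{not} extend to $|R|=m-2$. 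The paper's actual proof of \cref{thm:R=m-2} avoids the blow-up by a completely different progress measure: it decomposes $T$ via Seymour's theorem, recurses on the \emph{smaller} block, and only spawns multiple subproblems when the Cauchy--Davenport inequality (\cref{lem:cauchyDavenport}) lets it strictly enlarge $R$ --- which can happen at most $m-|R|\le 2$ times along any branch --- while linear ``sub-patterns'' are absorbed into a single smaller problem via \cref{thm:integration,thm:projection}. Your proposal has no analogue of this bounded-branching mechanism.

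A second, related hole: your recursion has no base case beyond dimension reduction and the fat configuration, whereas the theorem you are proving is a \emph{randomized} statement. In the paper the randomness is forced by the base blocks of Seymour's decomposition: network-matrix instances are reduced to congruency-constrained circulations and then to exact-length circulation (randomized, via exact matching, \cref{thm:solveNetwRPP}), and transposed network matrices go through congruency-constrained submodular minimization (\cref{thm:solveTranspNetwPP}). Your outline is entirely deterministic and never needs these subroutines; if it worked as stated it would yield a deterministic algorithm already for CCTUF with $m=3$, which is stronger than what is known. Combined with the fact that the key step you yourself flag as the hard part (``not fat'' forces a polynomially narrow, TU-compatible flat direction, for general $\{-1,0,1\}$ directions) is left unproved --- and that even granting it, the branching analysis above collapses --- the proposal does not constitute a proof of \cref{thm:R=m-2}.
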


Observing that for $m=3$, an \RCCTUF{} problem with $|R|=m-2$ is in fact a \CCTUF{} problem, \cref{thm:feasibilityMod3} immediately follows from \cref{thm:R=m-2}. Our proof of \cref{thm:R=m-2} is inspired by methods developed in~\cite{artmann_2017_strongly} for bimodular integer programs, but goes significantly beyond the strategy and techniques employed there.
In particular, we also decompose \RCCTUF{} problems into smaller ones following Seymour's decomposition of TU matrices, but we need methods that allow for progressing in the hierarchy of \RCCTUF{} problems introduced above. This step requires us to have prime modulus due to an application of the Cauchy-Davenport Inequality.
The decomposition approach deterministically reduces general \RCCTUF{} problems to problems with so-called \emph{base-block} constraint matrices.
While parity-constraints are fairly common in Combinatorial Optimization and known techniques could be leveraged in~\cite{artmann_2017_strongly} to solve parity-constrained base block problems, we present new approaches for $m>2$.
In particular, we create new links to recent advances on congruency-constrained submodular optimization and exact weight flow problems. The only known algorithm for exact weight flow problems is randomized, which is why we obtain a randomized algorithm as stated in \cref{thm:R=m-2} (and thus also in \cref{thm:feasibilityMod3}).

Interestingly, focusing on the case of $|R|=m-1$ only, our techniques lead to a substantially simpler approach for \RCCTUF{} problems that does not need to rely on decomposition methods and can therefore avoid both randomization and the prime modulus requirement, resulting in the following theorem.
\begin{theorem}\label{thm:R=m-1}
There is a strongly polynomial-time algorithm for \RCCTUF{} problems with $|R|=m-1$.
\end{theorem}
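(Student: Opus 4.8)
The plan is to exploit the fact that when $|R| = m-1$, exactly one residue class is forbidden; call it $s$, so $R = \{0,\ldots,m-1\}\setminus\{s\}$. The problem thus asks: find $x \in \mathbb{Z}^n$ with $Tx \le b$ and $\gamma^\top x \not\equiv s \pmod m$, or certify that no such $x$ exists. The key observation is that infeasibility of this problem is a very rigid condition: it means that \emph{every} integer point $x$ of the polyhedron $P = \{x : Tx \le b\}$ satisfies $\gamma^\top x \equiv s \pmod m$. First I would handle the easy reductions: check whether $P$ contains an integer point at all (this is a pure TU feasibility problem, solvable in strongly polynomial time via linear programming since $T$ is TU and $b$ is integral); if not, report infeasibility. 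If $P \cap \mathbb{Z}^n \ne \emptyset$, pick any integer point $x_0$; if $\gamma^\top x_0 \not\equiv s \pmod m$ we are done, so assume $\gamma^\top x_0 \equiv s$.

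The heart of the argument is then to decide whether we can "perturb" $x_0$ within $P \cap \mathbb{Z}^n$ to change the residue of $\gamma^\top x$. Since $T$ is totally unimodular, the integer points of $P$ (when $P$ is a polytope, and after standard reductions handling unboundedness and lower-dimensional faces) form a rich lattice-like structure: in particular, for any two integer points $x_0, x_1 \in P$, the difference $x_1 - x_0$ can be written using \emph{edge directions} of $P$, and integer points along the way are again in $P$. More concretely, I would use the following: if there exist two integer points $x_0, x_1 \in P$ with $\gamma^\top x_0 \not\equiv \gamma^\top x_1 \pmod m$, then by moving along a path of integer points of $P$ from $x_0$ to $x_1$ (each step being an integral edge-direction of a suitable TU system, so consecutive points differ and stay integral), some consecutive pair $x', x''$ already has $\gamma^\top x' \not\equiv \gamma^\top x'' \pmod m$ — and one of $x', x''$ must then avoid the single forbidden residue $s$, unless both are $\equiv s$, which is impossible since they differ mod $m$. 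So the problem reduces to: \textbf{is $\gamma^\top x \pmod m$ constant over $P \cap \mathbb{Z}^n$?} If it is constant and equal to $s$, the instance is infeasible; otherwise it is feasible and a witness can be extracted from the path.

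To decide whether $\gamma^\top x \bmod m$ is constant over $P\cap\mathbb{Z}^n$, I would reformulate it as a small number of TU optimization problems. Because consecutive integer points along an edge-walk differ by an integral vector $d$ that is (up to sign) a column-direction of a TU matrix, $\gamma^\top d$ can only take polynomially-bounded values — indeed, the relevant quantity is whether there is a feasible direction $d$ with $\gamma^\top d \not\equiv 0 \pmod m$. I would set this up by considering, for each integer point $x_0$ found, the cone of feasible directions and asking whether $\gamma^\top$ restricted to integral lattice moves inside $P$ hits more than one residue; equivalently, maximize and minimize $\gamma^\top x$ over $P$ (both strongly-polynomial TU LPs with integral optima), obtaining values $L \le \gamma^\top x \le U$ over $P \cap \mathbb{Z}^n$, and then argue that \emph{every} residue in $\{L, L+1, \ldots, U\}\pmod m$ is actually attained by some integer point — this is where total unimodularity is essential, since it guarantees that $\gamma^\top x$ takes all integer values in $[L,U]$ on $P \cap \mathbb{Z}^n$ (connectedness of the integer points of a TU polyhedron under unit-edge moves, a standard consequence of the integral decomposition / Hilbert-basis-type property of TU systems). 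If $U - L \ge m - 1$ all residues appear and the instance is feasible; if $U - L < m-1$ we explicitly enumerate the $O(m)$ many attained residues and check whether any lies in $R$.

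The main obstacle I anticipate is making precise and rigorous the claim that $\gamma^\top x$ attains every integer value between $L$ and $U$ on $P \cap \mathbb{Z}^n$, and more generally that one can walk between any two integer points of a TU polyhedron through integer points with controlled (unit, or at least residue-controlled) increments of $\gamma^\top x$. This requires invoking the right structural theorem about TU systems — e.g., that $\{x : Tx \le b\}$ has an integral "test set" of $\pm$ circuit directions, or arguing via the fact that $T$ TU implies the system $T x \le b,\ \gamma^\top x = t$ is again describable by a TU-like system after the standard trick of appending $\gamma^\top$ with a fresh $\pm 1$ slack variable — and then carefully checking that the walk stays inside $P$. Handling unboundedness of $P$ (where $\gamma^\top x$ may be unbounded in one direction, so almost all residues are trivially attainable unless $\gamma^\top$ is constant on the recession cone) and lower-dimensional or empty cases needs some care but is routine. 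Once the "constancy over integer points" subroutine is in place, the overall algorithm — find an integer point, compute $L$ and $U$ via TU LP, compare $U-L$ to $m-1$, and either output a witness or certify infeasibility — runs in strongly polynomial time and uses no randomization, giving Theorem \ref{thm:R=m-1}.
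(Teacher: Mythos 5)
There is a genuine gap, and it is exactly at the place you flag as your ``main obstacle'': the claim that $\gamma^\top x$ attains every integer value between $L=\min\{\gamma^\top x\colon x\in P\}$ and $U=\max\{\gamma^\top x\colon x\in P\}$ on $P\cap\mathbb{Z}^n$ is false, and consequently so is the test ``if $U-L\geq m-1$ then all residues appear.'' Total unimodularity of $T$ gives you nothing of this sort for an \emph{arbitrary} integer vector $\gamma$: the integer points of $P$ are indeed connected by moves along elementary (edge) directions $d$, but such a move changes $\gamma^\top x$ by $\gamma^\top d$, which can be any integer, since $\gamma$ is in general not TU-appendable to $T$. Concretely, for $P=[0,1]^2$ and $\gamma=(2,3)$ the attained values are $\{0,2,3,5\}$, skipping $1$ and $4$; worse, for $P=\{x\in\mathbb{R}\colon 0\leq x\leq 10\}$ and $\gamma=(m)$ one has $U-L=10m\geq m-1$ while every integer point satisfies $\gamma^\top x\equiv 0\pmod*{m}$, so with forbidden residue $s=0$ your algorithm would declare a genuinely infeasible instance feasible. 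Moreover, the preceding reduction (``decide whether $\gamma^\top x\bmod m$ is constant over $P\cap\mathbb{Z}^n$'') is not a reduction at all: deciding non-constancy and exhibiting a witness point with residue $\neq s$ \emph{is} the \RCCTUF{} problem with $|R|=m-1$, so all of the difficulty has been pushed into the false structural claim.

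The paper's proof takes a different route precisely to avoid this issue: the flatness statement (\cref{thm:flatDirections}, via \cref{lem:flatOrIrrelevant}) is formulated for directions $d$ that are \emph{rows of $T$}, hence TU-appendable, and for such $d$ the decomposition lemma (\cref{lem:decompositionLemma}) together with \cref{lem:transformSolutionEfficiently} shows that any constraint of width at least $m-|R|=1$ can be dropped without changing feasibility, while a constraint of width $0$ (tight on all of $P$) allows a dimension reduction by projection (\cref{lem:dimensionReduction}). Iterating these two steps yields \cref{thm:R=m-1}. If you want to salvage your outline, the quantity you must control is $d^\top x$ for TU-appendable $d$ (where unit increments are available), not $\gamma^\top x$; the congruency vector $\gamma$ has to be handled through the residue-shaving argument of \cref{lem:feasibleResidueSum}, not through an intermediate-value property it does not possess.
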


For $m=2$, \cref{thm:R=m-1} states that feasibility of parity-constrained TU problems can be decided efficiently.
This is a special case of bimodular IP feasibility, which has been known to admit polynomial time algorithms since the work of Veselov and Chirkov~\cite{veselov_2009_integer}.
Let us also remark that for general $m$, the congruency constraint in \RCCTUF{} problems with $|R|=m-1$ can be rewritten in the form $\gamma^\top x\not\equiv r\pmod*{m}$ for some residue $r$. Such constraint types and generalizations thereof have been studied in different settings already, in particular in the context of minimizing submodular functions (see Goemans and Ramakrishnan~\cite{goemans_1995_minimizing}, and Gr\"otschel, Lov\'asz, and Schrijver~\cite{groetschel_1993_geometric}).

Our approach for \cref{thm:R=m-1} is derived from interesting structural properties of \RCCTUF{} problems that are likely to be of independent interest, and two of which we want to highlight here.
One is concerned with \emph{flat directions} of the underlying polyhedron, i.e., vectors $d\in\mathbb{Z}^n\setminus \{0\}$ for which the \emph{width} $\max \{d^\top x \colon x \in \mathbb{Z}^n, Tx\leq b\} - \min  \{d^\top x\colon x\in \mathbb{Z}^n, Tx\leq b\}$ is small.
Prior to our work, results of this type have only been known for very restricted cases.
In particular, it is proved in Artmann's PhD thesis~\cite[Theorem 3.4]{artmann2020optimization} that for \CCTUF{} problems restricted to modulus $m=3$ and to base block constraint matrices, it holds that if the problem is infeasible, then a row of the constraint matrix is a flat direction of width $1$.
Our techniques show, through an arguably much simpler approach, that analogous results hold for arbitrary moduli $m$ and \CCTUF{} problems without any further restriction on the constraint matrix. Moreover, our result also generalizes to \RCCTUF{} problems, providing the following bound on the width, which can easily be seen to be tight.
\begin{theorem}\label{thm:flatDirections}
For every \RCCTUF{} problem, either there is a constraint matrix row that is a flat direction of the underlying polyhedron of width at most $m-|R|-1$, or a feasible solution of the \RCCTUF{} problem can be found in strongly polynomial time.
\end{theorem}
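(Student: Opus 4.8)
The plan is to analyze an $R$-CCTUF problem by looking at the polyhedron $P = \{x \in \mathbb{R}^n : Tx \leq b\}$ together with the integer points in it, and to examine what residues (mod $m$) the linear functional $\gamma^\top$ takes on $P \cap \mathbb{Z}^n$. Let $S = \{\gamma^\top x \bmod m : x \in P \cap \mathbb{Z}^n\}$ be the set of attained residues. If $S \cap R \neq \emptyset$, the problem is feasible and I need to exhibit such a solution in strongly polynomial time; this is the ``or'' branch. So the crux is the infeasible case, where $S \cap R = \emptyset$, equivalently $S \subseteq \{0,\dots,m-1\}\setminus R$, a set of size at most $m - |R|$. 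The goal is then to show that some row of $T$, viewed as a vector $d \in \mathbb{Z}^n$, has width $\max\{d^\top x : x \in P\} - \min\{d^\top x : x \in P\} \leq m - |R| - 1$ over the integer hull (equivalently over $P$, since $P$ is a TU polyhedron and hence integral whenever it is nonempty and pointed in the relevant directions).

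First I would reduce to the case where $P$ is integral and the system is in a convenient normal form, handling unboundedness and emptiness separately (emptiness makes the problem trivially infeasible but then \emph{every} direction is ``flat'' in a degenerate sense — I would argue the interesting case is $P \neq \emptyset$). Next, the key structural step: I claim that if the problem is infeasible then $\gamma^\top$ cannot take too many distinct values on $P \cap \mathbb{Z}^n$ in a way that is ``spread out'' — more precisely, I want to relate the \emph{number of residues} $|S|$ to the \emph{integer width} of $\gamma$ itself or of one of the rows of $T$. The bridge should be: by total unimodularity, for any two integer points $x, x' \in P$, the segment between them can be traversed by integer points of $P$ that change one row-value of $Tx$ at a time (an integral analogue of walking along edges of the TU polyhedron), and along such a walk $\gamma^\top$ changes in a controlled way. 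If the $\gamma^\top$-values on $P \cap \mathbb{Z}^n$ form a contiguous integer interval — which I expect to follow from TU-ness via an exchange/augmentation argument showing $\{\gamma^\top x : x \in P \cap \mathbb{Z}^n\} = \{\lfloor \gamma_{\min}\rfloor, \dots, \lceil \gamma_{\max}\rceil\}$ — then the number of \emph{distinct residues} $|S|$ is $\min(m, \text{(integer width of }\gamma\text{)} + 1)$. Infeasibility forces $|S| \leq m - |R|$, hence the integer width of $\gamma$ is at most $m - |R| - 1$; but $\gamma$ need not be a row of $T$, so the final move is to pass from a flat direction $\gamma$ (or some auxiliary vector) to a flat \emph{row} of $T$, presumably by using the reduction in footnote~\ref{footn:red_CCTU_ILP} that appends $\gamma^\top$ as a row of an enlarged TU-like matrix, or by a direct argument that if every row of $T$ had width $\geq m - |R|$ one could chain local moves to realize $\geq m - |R| + 1$ residues.

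The main obstacle I anticipate is exactly this last point — showing that \emph{a row of $T$}, rather than an arbitrary integer vector, serves as the flat direction. The natural first attempt gives flatness of $\gamma$, and one needs the specific geometry of TU polyhedra to localize this to a constraint row: intuitively, if $P$ is ``wide'' in every facet-normal direction, then $P$ contains a large enough integer box-like structure that $\gamma^\top$ realizes more than $m - |R|$ residues on it, contradicting infeasibility. Making this quantitative — i.e., that width $\geq m-|R|$ in all rows implies at least $m - |R| + 1$ consecutive achievable $\gamma$-residues — is where I would spend the most effort, likely via an inductive argument on the number of constraints or on $n$, peeling off one tight constraint at a time, and using the integrality of TU faces. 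For the algorithmic ``or'' branch, once $|S| \geq m - |R| + 1$ residues are guaranteed I would extract an actual feasible point: solve the relaxation, then use the local augmentation moves (each solvable as a TU LP, hence strongly polynomial) to walk $\gamma^\top$ to a target residue in $R$. Tightness of the bound $m - |R| - 1$ is easy: take $\gamma^\top x = x_1$ with $0 \le x_1 \le m - |R| - 1$ as the only nontrivial constraint and $R$ chosen to miss those residues.
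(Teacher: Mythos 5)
Your plan hinges on two claims that do not hold. First, the structural claim that $\{\gamma^\top x\colon x\in P\cap\mathbb{Z}^n\}$ is a contiguous interval of integers is false for a general integer vector $\gamma$, even over a TU polyhedron: for $P=[0,1]^2$ and $\gamma=(2,3)$ the attained values are $\{0,2,3,5\}$. Consequently the inference ``infeasible $\Rightarrow$ $|S|\le m-|R|$ $\Rightarrow$ the width of $\gamma$ is at most $m-|R|-1$'' collapses. Worse, the quantitative statement you hope to prove instead --- that width at least $m-|R|$ in \emph{every} row of $T$ forces at least $m-|R|+1$ attainable residues of $\gamma^\top x$ --- is also false: the set of attainable residues is constrained by the arithmetic of $\gamma$ modulo $m$ (e.g.\ if all entries of $\gamma$ share a common factor with $m$, only a proper subgroup/coset of residues is ever attained, no matter how wide $P$ is in every facet direction). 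So no residue-counting argument on $P$ can deliver the dichotomy; the correct invariant, and what the theorem really encodes, is that a \emph{wide} constraint row is \emph{irrelevant to feasibility}, not that wideness creates many residues. Second, the step you yourself flag as the main obstacle --- localizing flatness from $\gamma$ (or from ``somewhere'') to an actual row of $T$ --- is exactly the content of the theorem, and your proposal offers only speculation for it; similarly, the algorithmic branch ``walk $\gamma^\top$ to a target residue by local augmentation moves'' is essentially the original problem restated, since the increments of $\gamma^\top x$ along such moves are arbitrary integers modulo $m$.

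The paper's proof runs along a different axis, and it is worth seeing why it avoids these issues. It never bounds the width of $\gamma$ or counts residues on $P$. Instead, the decomposition lemma (\cref{lem:decompositionLemma}) writes $y-x_0$, for any relaxation solution $x_0$ and any feasible $y$, as an integral nonnegative combination $\sum_i\lambda_i y^i$ of vectors elementary with respect to $T$ (so every TU-appendable scalar product, in particular every row of $T$, changes by at most $1$ per term), with every truncation $x_0+\sum_i\mu_i y^i$, $\mu_i\le\lambda_i$, still satisfying $Tx\le b$. A pigeonhole lemma on partial sums of the residues $\gamma^\top y^i$ (\cref{lem:feasibleResidueSum}) then shows one can delete consecutive terms until at most $m-|R|$ remain while keeping $\gamma^\top$ in $R$ modulo $m$; this is the proximity statement \cref{lem:transformSolutionEfficiently}. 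From it, \cref{lem:flatOrIrrelevant} follows: if a constraint row $d$ has width at least $m-|R|$, any solution of the problem with that constraint dropped can be converted (in strongly polynomial time) into one satisfying it, so the constraint may be deleted without changing feasibility; iterating over all rows, checking widths via Tardos' LP algorithm, either exhibits a flat row or strips all inequalities. If you want to salvage your approach, the piece you must supply is precisely such a ``wide rows are removable'' or proximity argument tied to rows of $T$ (via elementary vectors/TU-appendability), rather than any statement about the range or residue spectrum of $\gamma$ on $P$. Your tightness example is fine and matches the paper's remark.
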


Finally, our techniques also lead to proximity results.
We call the problem obtained from \CCTU{}, \CCTUF{}, or \RCCTUF{} problems after dropping the congruency constraint the \emph{relaxation} of the respective problem.
Note that this relaxation is not a linear relaxation in the usual sense as we still require integral solutions, but is nonetheless closely related to it due to the totally unimodular constraint matrices.
Prior knowledge of proximity results in this context have been very limited.
In particular, it was known~\cite[Lemma 3.3]{artmann2020optimization} that given a feasible \CCTU{} problem with $m=3$, then for any vertex $y\in\mathbb{Z}^n$ of the underlying polyhedron $\{x\in\mathbb{R}^n\colon Tx\leq b\}$, there exists a feasible solution $x$ of the \CCTU{} problem such that $\|y-x\|_{\infty}\leq 2$.
While the method used in~\cite{artmann2020optimization} is specific for the $m=3$ case, our techniques lead to the following more general result for arbitrary modulus $m$ and, again, the more general congruency-constraint type. Here, $R$-CCTU denotes the optimization versions of \RCCTUF{} problems, analogous to the relation between \CCTU{} and \CCTUF{} problems. In other words, an $R$-CCTU problem is a \CCTU{} problem where the congruency-constraint $\gamma^\top x\equiv r\pmod*{m}$ for a single residue $r$ is replaced by $\gamma^\top x\in R\pmod*{m}$ for a set $R$ of residues.

\begin{theorem}\label{thm:proximity} Consider a feasible $R$-CCTU problem with modulus $m$.
\begin{enumerate}
\item\label{thmitem:Fproximity} For any $x_0$ feasible for the relaxation, there is an $x$ feasible for the problem with $\|x-x_0\|_{\infty}\leq m-|R|$.
\item\label{thmitem:proximity} For any $x_0$ optimal for the relaxation, there is an $x$ optimal for the problem with $\|x-x_0\|_{\infty}\leq m-|R|$, and vice versa.
\end{enumerate}
Moreover, in~\ref{thmitem:Fproximity} and~\ref{thmitem:proximity}, given $x_0$ and any feasible or optimal solution of the $R$-CCTU problem, respectively, a solution $x$ with the stated properties can be found in strongly polynomial time.
Also, in~\ref{thmitem:proximity}, given $x$, a solution $x_0$ with the stated properties can be found in strongly polynomial time.
\end{theorem}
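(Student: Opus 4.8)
The plan is to first prove the feasibility statement \ref{thmitem:Fproximity} and then deduce the optimization statement \ref{thmitem:proximity} from it. For \ref{thmitem:Fproximity}, fix a relaxation-feasible point $x_0\in\mathbb Z^n$ with $Tx_0\le b$, and for $\ell\in\mathbb Z_{\ge 0}$ consider the set of residues attainable by relaxation-feasible points within $\infty$-distance $\ell$ of $x_0$,
\[
R_\ell \;:=\; \bigl\{\gamma^\top x \bmod m \;:\; x\in\mathbb Z^n,\ Tx\le b,\ \|x-x_0\|_\infty\le\ell\bigr\}.
\]
This is a nondecreasing family with $R_0=\{\gamma^\top x_0\bmod m\}$, and $R_\infty:=\bigcup_\ell R_\ell$ is the set of all attainable residues; feasibility of the $R$-CCTU problem means exactly $R_\infty\cap R\ne\emptyset$. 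If $\gamma^\top x_0\bmod m\in R$ we are done with $x=x_0$; otherwise $R_0\subseteq\mathcal B:=\{0,\dots,m-1\}\setminus R$, a set of size $m-|R|$. The theorem then reduces to the following growth statement, which is the heart of the argument: \emph{if $R_\ell\ne R_\infty$, then $R_\ell\subsetneq R_{\ell+1}$.} Granting this, as long as $R_\ell\cap R=\emptyset$ and $R_\ell\ne R_\infty$ we get $|R_{\ell+1}|>|R_\ell|$, so starting from $|R_0|=1$ we obtain $|R_\ell|\ge\ell+1$ for all such $\ell$; hence $R_{m-|R|-1}$ already equals all of $\mathcal B$, it cannot be saturated (since $R_\infty\cap R\ne\emptyset$), and therefore $R_{m-|R|}$ must contain a residue of $R$. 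Any witness is feasible and within $\infty$-distance $m-|R|$ of $x_0$; this is tight, matching the extremal family behind \cref{thm:flatDirections}.

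The engine behind the growth statement is a total-unimodularity exchange lemma: for any $y,z\in\mathbb Z^n$ with $Ty\le b$ and $Tz\le b$ there is a vector $u\in\{-1,0,1\}^n$ that is sign-compatible with $z-y$, satisfies $u_i=\operatorname{sign}(z_i-y_i)$ for every coordinate $i$ with $|z_i-y_i|=\|z-y\|_\infty$, and has both $y+u$ and $z-u$ in $\{x:Tx\le b\}$. Indeed, the admissible $u$ are precisely the integer points of the polyhedron $\{u: Tz-b\le Tu\le b-Ty,\ u\text{ in the sign-compatible }[-1,1]\text{-box}\}$, whose constraint matrix $[T;-T;I;-I]$ is totally unimodular and whose right-hand side is integral, while $u=(z-y)/\|z-y\|_\infty$ is a feasible fractional point that also maximizes $\sum_{i\in S}\operatorname{sign}(z_i-y_i)u_i$ over it, where $S$ is the set of coordinates achieving $\|z-y\|_\infty$; an integral optimum therefore has the stated form. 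Applying this with $y=x_0$ lets us walk any $z$ toward $x_0$ in unit $\infty$-steps inside $\{Tx\le b\}$, the distance to $x_0$ dropping by exactly one at each step. When $\ell=0$ the hypothesis $R_1=R_0$ says $\gamma^\top u\equiv 0\pmod m$ for every such step at $x_0$, so the walk preserves $\gamma^\top z\bmod m$ and forces $R_\infty=R_0$ — this settles the growth statement for $\ell=0$. For general $\ell$ the step that reduces the distance from $\ell+2$ to $\ell+1$ need not preserve the residue, and this modular bookkeeping is the main obstacle I expect; the plan to overcome it is to combine the exchange lemma with the flat-direction structure of \cref{thm:flatDirections} applied to the $\bigl(\{0,\dots,m-1\}\setminus R_\ell\bigr)$-residue problem restricted to the $\infty$-ball of radius $\ell+1$, which is infeasible precisely when $R_{\ell+1}=R_\ell$, and to use the resulting bounded-width row to propagate the local obstruction outward and conclude $R_\ell=R_\infty$.

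For the algorithmic part of \ref{thmitem:Fproximity} we do not recompute the sets $R_\ell$ (that would itself be a hard feasibility question); instead, given a feasible solution $x^\star$ of the $R$-CCTU problem, we iterate: apply the exchange lemma to $(x_0,x^\star)$, obtaining a strictly-closer relaxation-feasible point $x^\star-u$ and the auxiliary point $x_0+u$ at distance $1$ from $x_0$; if the residue of $x^\star-u$ stays in $R$ we replace $x^\star$ by it, and otherwise we route through $x_0+u$ as dictated by the analysis above. Each iteration solves one totally unimodular linear program, and the growth statement guarantees that after strongly-polynomially many iterations we reach a feasible point within $\infty$-distance $m-|R|$ of $x_0$.

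Finally, \ref{thmitem:proximity} follows from \ref{thmitem:Fproximity} by running the same pull-towards-$x_0$ procedure on a given problem-optimal solution $x^{\star\star}$ while only taking exchange steps that do not increase $c^\top(\cdot)$; such steps are available because optimality of $x_0$ for the relaxation places $-c$ in the cone generated by the $T$-rows tight at $x_0$, which forces $c^\top u\ge 0$ for the distance-reducing step $u$ toward $x_0$, hence $c^\top(x^{\star\star}-u)\le c^\top x^{\star\star}$. Since the returned point is feasible for the problem with objective at most the problem's optimum, it is optimal. (Alternatively, when the problem attains the relaxation optimum one may simply apply \ref{thmitem:Fproximity} on the relaxation's optimal face, which is cut out by a subsystem of $Tx\le b$ and hence remains totally unimodular.)
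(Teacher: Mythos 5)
Your overall strategy (pull a feasible solution towards $x_0$ through unit $\ell_\infty$-steps inside the TU polyhedron) is reasonable, and two of your ingredients are sound: the TU exchange lemma is correct as argued (the polytope $\{u:\ Tz-b\le Tu\le b-Ty\}$ intersected with the sign-compatible box is integral and contains $(z-y)/\|z-y\|_\infty$), and the observation that relaxation-optimality of $x_0$ forces $c^\top u\ge 0$ for any step $u$ with $x_0+u$ feasible is a valid way to make the procedure cost-monotone for part~\ref{thmitem:proximity}. However, there is a genuine gap at the heart of the argument: the whole bound rests on the growth statement ``if $R_\ell\ne R_\infty$ then $R_\ell\subsetneq R_{\ell+1}$'', and you only prove it for $\ell=0$. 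For general $\ell$ you explicitly acknowledge the modular bookkeeping as an obstacle and offer a plan rather than a proof. That plan does not work as stated: applying \cref{thm:flatDirections} to the $(\{0,\ldots,m-1\}\setminus R_\ell)$-residue problem restricted to the $\infty$-ball of radius $\ell+1$ yields (upon infeasibility) a flat row of the \emph{augmented} system, which may well be one of the added box rows, and in any case its small width is a statement about the intersection of the polyhedron with the ball only; it gives no control over residues attainable outside the ball, so the desired conclusion $R_\ell=R_\infty$ does not follow. Since the algorithmic claim (``route through $x_0+u$ as dictated by the analysis above'') also leans on this unproven step, both the existence bound and the strongly polynomial algorithm remain unestablished. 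Note also that what does follow from proximity-type reasoning is only the weaker fact that a new residue appears by radius $|R_\ell|$, not at radius $\ell+1$, so your stronger per-step growth claim genuinely needs an argument.

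The idea your proof is missing, and which the paper uses, is a \emph{simultaneous} decomposition rather than a one-step exchange: $y-x_0=\sum_i\lambda_i y^i$ with $\lambda_i\in\mathbb{Z}_{\ge0}$ and $y^i$ elementary with respect to $T$, such that \emph{every} sub-sum $x_0+\sum_i\mu_i y^i$ ($0\le\mu_i\le\lambda_i$) stays feasible (\cref{lem:decompositionLemma}). This converts the residue bookkeeping into a purely combinatorial pigeonhole statement about dropping a consecutive block of terms while keeping the total residue in $R$ (\cref{lem:feasibleResidueSum}), which bounds the number of surviving terms by $m-|R|$; elementarity then bounds $|d^\top(\tilde y-x_0)|$ for every TU-appendable $d$ (in particular every $\pm e_i$), and the same machinery gives the strongly polynomial algorithm and the cost inequality needed for part~\ref{thmitem:proximity} (\cref{lem:transformSolutionEfficiently,thm:proximityGeneral}). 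Your single-step exchange does not allow terms to be recombined or dropped freely, which is exactly why the residue obstruction you ran into appears.
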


\subsection{Related work\MOORdot}

The maximum absolute value $\Delta$ of a subdeterminant of the constraint matrix is a parameter that has received significant attention in integer programming recently.
The closely related problem class of congruency-constrained combinatorial optimization problems has been investigated already in the early 80's for the parity-constrained case, and several further advances have been made since.
We briefly recap prior work linked to these areas.

A problem that can be cast as a bounded subdeterminant integer program, has gained substantial interest recently~\cite{bock_2014_solving,conforti_2020_stableset,conforti_2021_extended}, and was resolved in~\cite{fiorini_2021_integerPrograms}, is the stable set problem in graphs $G$ with bounded odd cycle packing number $\mathrm{ocp}(G)$, i.e., graphs for which the maximum number of disjoint odd cycles is bounded.
The incidence matrix of such a graph has maximum subdeterminant $2^{\mathrm{ocp}(G)}$ (see, e.g., \cite{grossman_1995_minors}).
Several further interesting results link the parameter $\Delta$ to properties of integer programs, their relaxations, and underlying polyhedra (see, e.g.,~\cite{bonifas_2012_subdetDiameter,eisenbrand_2017_geometric,lee_2020_improvingProximity,lee_2021_polynomial,paat_2021_integralitynumber,tardos_1986_strongly} and references therein).
Furthermore, there has been interesting recent progress on the problem of approximating the largest subdeterminant of a matrix (see \textcite{summa_2015_largest}, and \textcite{nikolov_2015_randomized}).
Also, IPs with more constrained subdeterminant structures that admit efficient algorithms for integer programming were considered \cite{veselov_2009_integer,artmann_2016_nondegenerate,glanzer_2021_abcRecognition}.

One of the most classical congruency-constrained combinatorial optimization problems is the minimum odd cut problem, which asks to find a minimum cut among all cuts with an odd number of vertices. \textcite{padberg_1982_odd} presented a first efficient method for the minimum odd cut problem. Subsequently, \textcite{barahona_1987_construction} showed that efficient minimization is also possible over all cuts with an even number of vertices.
Later works by \textcite{groetschel_1984_corrigendum}, and by \textcite{goemans_1995_minimizing} generalized these results to the minimization of submodular functions. More precisely, the approach of~\cite{goemans_1995_minimizing} allows for minimizing over so-called triple families, which includes the case of cuts $C\subseteq V$ of cardinality \emph{not} congruent to $r$ modulo $m$, for any integers $r$ and $m$. \textcite{nagele_2018_submodular} showed that a submodular function can also be efficiently minimized over sets of cardinality $r \pmod*{m}$, for any integer $m$ that is a constant prime power. For the special case of minimum cuts, \textcite{nagele_2020_newContraction} presented a randomized PTAS for finding a minimum cut among all cuts containing~$r\pmod*{m}$ many vertices, for any constant $m$.

\subsection{Organization of the paper\MOORdot}

In \cref{sec:overview}, we present the key ideas and techniques that lead to our new results.
In particular, \cref{sec:overviewDecompAndFlat} presents a decomposition lemma, a crucial ingredient that is central to all our results, and we showcase its strength by readily deducing from it our flatness and proximity results (\cref{thm:flatDirections,thm:proximity}).
Subsequently, \cref{sec:overviewRCCTUF} gives an overview of our approach to \CCTUF{} problems and the proof of \cref{thm:R=m-2}.

A proof of the decomposition lemma as well as more applications thereof (in particular, \cref{thm:R=m-1}), are given in \cref{sec:decompLemma}, while \cref{sec:baseBlocks,sec:patterns} fill in details and present the missing proofs from \cref{sec:overviewRCCTUF}.

\section{\boldmath Overview of our approach\MOORdot \unboldmath}\label{sec:overview}

\subsection{Decomposition, flat directions, and proximity\MOORdot}\label{sec:overviewDecompAndFlat}

One technique that we employ repeatedly is a careful decomposition of vectors into well-structured ones. In particular, we often apply such decomposition to solutions of \CCTUF{} or \RCCTUF{} problems, to obtain a structured sum of other vectors. A key role in this decomposition is taken by \emph{elementary vectors}, which we define as follows.

\begin{definition}
Let $T\in\mathbb{Z}^{k\times n}$ be a totally unimodular matrix.
\begin{enumerate}
\item A vector $d\in\mathbb{Z}^n$ is \emph{TU-appendable} to $T$ if the matrix $\begin{psmallmatrix}
T\\ d^\top
\end{psmallmatrix}$ is totally unimodular.
\item A vector $x\in\mathbb{Z}^n$ is \emph{elementary} w.r.t.~$T$ if $d^\top x\in\{-1,0,1\}$ for all $d$ that are TU-appendable to $T$.
\end{enumerate}
\end{definition}

Concretely, we obtain the following decomposition lemma.
We remark that here and throughout this paper, we use the shorthand notation $[n]\coloneqq \{1,\ldots,n\}$ for $n\in\mathbb{Z}_{\geq 1}$.
\begin{lemma}[Decomposition lemma]\label{lem:decompositionLemma}
Let $T\in\{-1, 0, 1\}^{k\times n}$ be a totally unimodular matrix, let $b\in\mathbb{Z}^k$, and let $x_0, y\in \mathbb{Z}^n$ be two solutions of the system $Tx\leq b$. Then, we can determine in strongly polynomial time $y^1,\ldots,y^n\in\mathbb{Z}^n$ and $\lambda_1,\ldots,\lambda_n\in\mathbb{Z}_{\geq 0}$ such that $y-x_0=\sum_{i=1}^n \lambda_i y^i$ with the following properties:
\begin{enumerate}
\item\label{lemitem:unitScalarProduct}
$y^1,\ldots,y^n$ are elementary with respect to $T$.
\item\label{lemitem:feasibleSubsum}
For $\mu_1,\ldots,\mu_n\in\mathbb{Z}_{\geq 0}$ with $\mu_i\leq\lambda_i$ for all $i\in[n]$, the vector $\tilde{y}\coloneqq x_0 + \sum_{i=1}^n \mu_i y^i$ satisfies $T\tilde y\leq b$.
\end{enumerate}
\end{lemma}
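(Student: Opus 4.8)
The plan is to move from $x_0$ to $y$ along a \emph{conformal staircase}: a sequence $x_0 = w_0, w_1, \dots, w_N = y$ of feasible integer points with $w_{s+1} - w_s = \lambda_s\rho^{(s)}$, where each $\rho^{(s)}$ is elementary with respect to $T$, each $\lambda_s$ is a positive integer, and --- crucially --- all the $\rho^{(s)}$ are \emph{doubly conformal} to $z\coloneqq y-x_0$, meaning sign-compatible with $z$ coordinate-wise and with $Tz$ row-wise. Taking $y^1,\dots,y^N\coloneqq\rho^{(0)},\dots,\rho^{(N-1)}$ and padding with zero coefficients up to index $n$ then proves the lemma: \cref{lemitem:unitScalarProduct} is exactly the elementariness, and \cref{lemitem:feasibleSubsum} holds because for each row $t$ of $T$ all values $(T\rho^{(s)})_t$ share one sign, so for $0\le\mu_i\le\lambda_i$ one gets $(T\tilde y)_t\le (Tx_0)_t\le b_t$ if that sign is nonpositive, and $(T\tilde y)_t\le (Tx_0)_t+\sum_i\lambda_i(T\rho^{(i-1)})_t=(Ty)_t\le b_t$ if it is nonnegative.

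The building block I would set up is the following step. Given a feasible integer point $w$ that is coordinate- and $T$-conformally between $x_0$ and $y$, put $z_w\coloneqq y-w$ and consider the cone
\[
C_w\coloneqq\Bigl\{v\in\mathbb{R}^n : v_j(z_w)_j\ge 0\ \text{and}\ \bigl((z_w)_j=0\Rightarrow v_j=0\bigr)\ \forall j;\ \ (Tv)_t(Tz_w)_t\ge 0\ \text{and}\ \bigl((Tz_w)_t=0\Rightarrow (Tv)_t=0\bigr)\ \forall t\Bigr\}.
\]
It is pointed (it lies in an orthant), it is nonzero since $z_w\in C_w$, and it is cut out by a linear system whose matrix is obtained from $\begin{psmallmatrix}I_n\\ T\end{psmallmatrix}$ --- totally unimodular since $T$ is --- by deleting, negating and duplicating rows, hence is itself totally unimodular. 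I would then argue that a primitive integer generator $\rho$ of any extreme ray of $C_w$ is elementary with respect to $T$: such a $\rho$ is the vector of signed maximal minors of a full-rank $(n-1)\times n$ submatrix $A'$ of that totally unimodular matrix, so $\rho\in\{-1,0,1\}^n$; and for every $d$ that is TU-appendable to $T$, the matrix $\begin{psmallmatrix}A'\\ d^\top\end{psmallmatrix}$ is totally unimodular (appending unit-vector rows to $\begin{psmallmatrix}T\\ d^\top\end{psmallmatrix}$ preserves total unimodularity), so its determinant $\pm d^\top\rho$ lies in $\{-1,0,1\}$. Now move from $w$ along $\rho$ as far as possible while keeping $x_j$ between $w_j$ and $y_j$ for all $j$ and $(Tx)_t$ between $(Tw)_t$ and $(Ty)_t$ for all $t$. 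Since every binding constraint has the form $\pm e_j$ or $\pm T_t$ --- both TU-appendable, hence evaluating to a value in $\{-1,0,1\}$ on the elementary vector $\rho$ --- the maximal step size $\lambda$ is a positive integer, and $w'=w+\lambda\rho$ stays feasible because the binding box constraints force $(Tx)_t\le\max\{(Tw)_t,(Ty)_t\}\le b_t$.

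Iterating this step from $w_0=x_0$ produces the staircase. Termination and the bound $N\le n$ I would get from a dimension count: at the end of each step some coordinate $j$ has $x_j$ equal to $y_j$, or some row has $(Tx)_t$ equal to $(Ty)_t$; in either case the quantity in question was genuinely nonzero on the current residual $z_w$ (it is frozen exactly at its target), so the linear hull $\{v: v_j=0\ (j\notin\operatorname{supp} z_w),\ (Tv)_t=0\ (t\notin\operatorname{supp} Tz_w)\}$ of the admissible-direction cone loses a dimension. Starting from dimension at most $n$, this happens at most $n$ times, after which the residual is $0$. Each step costs $O(1)$ extreme-ray and linear-algebra computations on matrices derived from $T$, so the whole procedure is strongly polynomial.

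The point that needs the most care, and where the argument would go wrong if done naively, is the double conformality: one really must track the images under $T$, not just the coordinates. A plain interpolation box between $x_0$ and $y$ need not lie inside $\{x:Tx\le b\}$, and without the $T$-part of the box the vectors $T\rho^{(s)}$ need not be sign-compatible along a common row, which breaks \cref{lemitem:feasibleSubsum} for sub-sums that are not prefixes. Carrying the $T$-box fixes this, but one then has to confirm --- precisely via the dimension count above --- that these extra constraints do not inflate the number of steps beyond $n$. The other delicate verification is the interplay between ``$d$ TU-appendable to $T$'' and total unimodularity of the relevant augmented matrices, which is what certifies that the extreme rays produced along the way are elementary.
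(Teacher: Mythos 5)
Your proposal is correct and follows essentially the same route as the paper: you decompose $y-x_0$ by iteratively taking an elementary extreme ray of a pointed TU cone of directions sign-compatible with the residual both coordinate-wise and in the $T$-image (the paper realizes this by appending coordinate-sign rows and flipping rows to form $\overline{T}x\leq 0$, and its polytope $P_1$ is exactly your double box), stepping maximally, terminating via the same dimension-drop count, and verifying property~\ref{lemitem:feasibleSubsum} by the same per-row shared-sign argument. The only differences are in the local verifications—you get integrality of the step lengths from the $\{-1,0,1\}$ ray entries and unit rates of binding constraints and prove elementariness by a cofactor/determinant expansion, whereas the paper uses vertex-integrality of the TU polytope and a convex-combination argument (its Lemmas~\ref{lem:decompPointedTUCone} and~\ref{lem:extremalRayIsExtremalT})—and both versions are sound.
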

In words, the above decomposition lemma allows for efficiently writing a solution $y$ to the relaxation of a \CCTUF{} (or, more generally, also \RCCTUF{}) as a sum of another solution $x_0$ and a combination of elementary vectors $y^i$ that can moreover be freely combined to obtain other solutions to the relaxation. A formal proof of this decomposition lemma is given in \cref{sec:proofDecompLemma}.

One of our applications of the decomposition lemma is to bound the search space in which we need to look for solutions of \RCCTUF{} problems.
Note that given a solution $x_0$ of the relaxation of an \RCCTUF{} problem and any feasible \RCCTUF{} solution $y$, i.e., one that also satisfies the congruency constraint, as well as a TU-appendable row $d^\top$,
\cref{lem:decompositionLemma} allows for efficiently decomposing $y-x_0$ into a sum of the form $\sum_{i=1}^n \lambda_i y^i$ with $\sum_{i=1}^n\lambda_i \geq |d^\top (y-x_0)|$.
Hence, if $|d^\top (y-x_0)|$ is large, the sum $\sum_{i=1}^n \lambda_i y^i$ has many terms, and due to point~\ref{lemitem:feasibleSubsum}, there are many options to build new solutions $x_0+\sum_{i=1}^n\mu_i y^i$ of the relaxation of the \RCCTUF{} problem by removing an arbitrary subset of the terms (i.e., choosing $\mu_i\in\{0,\ldots,\lambda_i\}$).
Thus, in order to obtain a new feasible solution for the \RCCTUF{} problem, we have to make sure that $\gamma^\top x_0 + \sum_{i=1}^n \mu_i \gamma^\top y^i \in R \pmod*{m}$, i.e., that we hit a feasible residue again.
The following lemma shows that there always exists such a choice with $\sum_{i=1}^n \mu_i \leq m-|R|$.

\begin{lemma}\label{lem:feasibleResidueSum}
Let $m\in\mathbb{Z}_{>0}$, $R\subseteq\{0,\ldots, m-1\}$, and $r_1,\ldots,r_\ell\in\mathbb{Z}$ with $\sum_{i\in[\ell]} r_i \in R\pmod*{m}$. If there is no interval $I = \{i_1,\ldots,i_2\}$ with $i_1,i_2\in [\ell]$ and $i_1<i_2$ such that $\sum_{i\in [\ell]\setminus I} r_i\in R$, then $\ell\leq m-|R|$.
\end{lemma}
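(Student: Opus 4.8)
The plan is to replace the terms $r_1,\dots,r_\ell$ by their prefix sums and turn the statement into a pigeonhole count in $\mathbb{Z}/m\mathbb{Z}$. I would set $s_0:=0$ and $s_j:=r_1+\dots+r_j$ for $j\in[\ell]$, and work with the residues $\bar s_j:=s_j\bmod m\in\{0,\dots,m-1\}$; by hypothesis $\bar s_\ell\in R$, say $\bar s_\ell=r^\ast$. The crucial identity is that for an interval $I=\{i_1,\dots,i_2\}$ one has $\sum_{i\in[\ell]\setminus I}r_i = s_\ell-(s_{i_2}-s_{i_1-1})$ (this also covers a single-term interval and the full interval $[\ell]$), so $\sum_{i\in[\ell]\setminus I}r_i\in R\pmod{m}$ holds if and only if $\bar s_{i_2}-\bar s_{i_1-1}\equiv r^\ast-r\pmod{m}$ for some $r\in R$. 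Introducing $R^\ast:=\{(r^\ast-r)\bmod m:r\in R\}$, which satisfies $|R^\ast|=|R|$ and $0\in R^\ast$ (take $r=r^\ast$), the assumption that no interval works translates exactly into: $(\bar s_b-\bar s_a)\bmod m\notin R^\ast$ for every pair $0\le a<b\le\ell$, since the pair $(a,b)=(i_1-1,i_2)$ ranges over all such pairs as $I$ ranges over all intervals (the consecutive pairs $b=a+1$ coming precisely from single-term intervals).

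Given this reformulation the rest is short. Specializing to $a=0$ gives $\bar s_b=(\bar s_b-\bar s_0)\bmod m\notin R^\ast$ for all $b\in[\ell]$, so $\bar s_1,\dots,\bar s_\ell$ all lie in $(\mathbb{Z}/m\mathbb{Z})\setminus R^\ast$, a set of size $m-|R^\ast|=m-|R|$. Moreover, since $0\in R^\ast$, the condition $(\bar s_b-\bar s_a)\bmod m\notin R^\ast$ forces $\bar s_a\ne\bar s_b$ for all $0\le a<b\le\ell$; in particular $\bar s_1,\dots,\bar s_\ell$ are pairwise distinct. Hence $\ell$ pairwise distinct residues fit into a set of size $m-|R|$, which yields $\ell\le m-|R|$.

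The step I expect to require the most care is the translation in the first paragraph: writing the deletion of $I$ as the prefix-sum difference $s_\ell-(s_{i_2}-s_{i_1-1})$, bookkeeping the $\bmod m$ reductions correctly, and checking that the index pairs obtained this way are exactly all pairs $0\le a<b\le\ell$ — the pairs $(0,b)$ and the consecutive pairs $(a,a+1)$ are the ones actually used, and they arise from the longest and the single-term intervals respectively. Everything after that is routine counting. I would also remark that the bound is tight: when $0\notin R$ one has $r^\ast\notin R^\ast$, so one can choose the $r_i$ so that $\bar s_1,\dots,\bar s_\ell$ enumerate all of $(\mathbb{Z}/m\mathbb{Z})\setminus R^\ast$ with $\bar s_\ell=r^\ast$, producing an instance with $\ell=m-|R|$ and no removable interval.
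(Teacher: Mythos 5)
Your proof is correct and takes essentially the same route as the paper: both encode the removal of an interval as a difference of prefix sums and then count residues modulo $m$ — the paper by a pigeonhole contradiction on $s_0,\ldots,s_{\ell-1}$ avoiding $R$, you directly by showing that $\bar s_1,\ldots,\bar s_\ell$ are pairwise distinct and avoid the shifted set $R^\ast$ of size $|R|$. Two minor remarks, neither affecting correctness relative to the paper: like the paper's own proof, your translation tacitly allows single-element intervals (the pairs $b=a+1$), which the statement's condition $i_1<i_2$ formally excludes, and your closing tightness sketch is incomplete as stated, since choosing $\bar s_1,\ldots,\bar s_\ell$ to enumerate the complement of $R^\ast$ does not by itself guarantee that the differences $\bar s_b-\bar s_a$ with $a\ge 1$ also avoid $R^\ast$.
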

\begin{proof}
Assume for the sake of deriving a contradiction that there is no interval $I\subseteq [\ell]$ such that $\sum_{i\in [\ell]\setminus I} r_i\in R$, but $\ell\geq m-|R|+1$.
Consider the $\ell$ integers $s_0 = 0$, $s_1=r_1$, \ldots, $s_{\ell-1} = r_1 + \ldots + r_{\ell-1}$. Observe that $s_j\notin R \pmod*{m}$ for all $j\in [\ell-1]$; for otherwise, there is an interval $I=\{j+1,\ldots, \ell\}$ for some $j\in [\ell-1]$ such that $\sum_{i\in[\ell]\setminus I} r_i = s_j\in R\pmod*{m}$, contradicting the assumption. Thus, $s_j\in \{0,\ldots,m-1\}\setminus R\pmod*{m}$ for $j\in [\ell-1]$. Hence, because $\ell\geq m-|R|+1$, we have by the pigeonhole principle that there exist distinct $j_1,j_2\in [\ell-1]$ such that $s_{j_1}\equiv s_{j_2}\pmod*{m}$. Thus, $I=\{j_1+1, \ldots, j_2\}$ is an interval with $\sum_{i\in[\ell]\setminus I} r_i = \sum_{i\in[\ell]} r_i - (s_{j_2} - s_{j_1}) \equiv \sum_{i\in[\ell]} r_i \in R\pmod*{m}$, again contradicting the assumption and hence completing the proof.
\end{proof}

Indeed, \cref{lem:feasibleResidueSum} shows that as long as the sum
$$ \underbrace{\gamma^\top y^1 + \ldots + \gamma^\top y^1}_{\text{$\lambda_1$ many terms}} + \ldots + \underbrace{\gamma^\top y^n + \ldots + \gamma^\top y^n}_{\text{$\lambda_n$ many terms}} \in R - \gamma^\top x_0 \pmod{m}
$$
has at least $m-|R|+1$ many terms, there is a subset of consecutive terms that can be removed while keeping the total residue inside the set $R - \gamma^\top x_0$.
Iterating the procedure eventually leaves us with terms corresponding to a solution of the form $\tilde y \coloneqq x_0+\sum_{i=1}^n\mu_i y^i$ with $\sum_{i=1}^n \mu_i \leq m-|R|$.
Observe that this solution $\tilde y$ is close to the solution $x_0$ of the relaxation of the initial problem in the sense that $|d^\top (\tilde y - x_0)| \leq m-|R|$, which can be used as a bound for the search space when looking for feasible solutions.
Beyond that, the idea described above is also at the heart of our flatness and proximity results (\cref{thm:flatDirections,thm:proximity}).

One caveat in the above construction is that a direct realization of the approach suggested by \cref{lem:feasibleResidueSum} may have a worst-case running time polynomial in $m$, which is not polynomial in the input size of the \RCCTUF{} problem when $m$ is part of the input.
Interestingly, given a sum $\sum r_i$ that lies in $R\pmod*{m}$ for residues $r_i\in\mathbb{Z}$ and a set $R\subseteq\{0,\ldots, m-1\}$, it is generally NP-hard to find a smallest possible number of terms $r_i$ that also sum to a residue in $R$ modulo $m$, as can be seen by a reduction from the Subset Sum problem, for example.
Nonetheless, we are able to get the following constructive result by exploiting that the sum $\sum_{i=1}^n\lambda_i y^i$ contains no more than $n$ distinct vectors $y^i$, and the fact that we do not need to find a shortest partial sum with residue in $R-\gamma^{\top} x_0$ but only one with at most $m-|R|$ terms. Its formal proof is postponed to \cref{sec:proofDecompLemma}.

\begin{lemma}\label{lem:transformSolutionEfficiently}
Consider an \RCCTUF{} problem with modulus $m$, constraint matrix $T$, a feasible solution $y$, and a solution $x_0$ of its relaxation.
We can obtain in strongly polynomial time a feasible solution $\tilde y$ such that $x_0+y-\tilde y$ is feasible for the relaxation, as well, and
\begin{enumerate}
\item\label{lemitem:smallProducts} for any $d\in\mathbb{Z}^n$ that is TU-appendable to $T$, we have $d^\top (\tilde y-x_0) \leq m-|R|$, and
\item\label{lemitem:objective} for any $c\in\mathbb{Z}^n$ such that $x_0$ minimizes $c^\top x$ over the relaxation of the
\RCCTUF{} problem, $c^\top \tilde y \leq c^\top y$.
\end{enumerate}
\end{lemma}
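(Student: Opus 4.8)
The plan is to first reduce the statement to a purely number-theoretic question about selecting a short combination of residues, observe that the two claimed properties then come for free, and finally argue that the required selection can be carried out without a running time polynomial in $m$.

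\emph{Reduction to a residue-selection problem.} Apply \cref{lem:decompositionLemma} to $x_0$ and $y$ to get $y-x_0=\sum_{i=1}^n\lambda_i y^i$ with $\lambda_i\in\mathbb{Z}_{\ge0}$ and $y^1,\dots,y^n$ elementary with respect to $T$, such that every partial combination $\tilde y\coloneqq x_0+\sum_{i=1}^n\mu_i y^i$ with $0\le\mu_i\le\lambda_i$ satisfies $T\tilde y\le b$. I claim any such $\tilde y$ with $\sum_i\mu_i\le m-|R|$ and $\gamma^\top\tilde y\in R\pmod*{m}$ already has all the desired properties. Indeed, \ref{lemitem:smallProducts} is immediate: if $d$ is TU-appendable to $T$, then $d^\top y^i\in\{-1,0,1\}$ by elementariness, so $d^\top(\tilde y-x_0)=\sum_i\mu_i d^\top y^i\le\sum_i\mu_i\le m-|R|$. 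For \ref{lemitem:objective}, if $x_0$ minimizes $c^\top x$ over the relaxation, then since $x_0+y^j$ is feasible for the relaxation whenever $\lambda_j\ge1$, we get $c^\top y^j\ge0$ for every such $j$, whence $c^\top\tilde y=c^\top x_0+\sum_j\mu_j c^\top y^j\le c^\top x_0+\sum_j\lambda_j c^\top y^j=c^\top y$. Hence it only remains to choose $\mu_i\in\{0,\dots,\lambda_i\}$ with $\sum_i\mu_i\le m-|R|$ and $\gamma^\top x_0+\sum_i\mu_i\gamma^\top y^i\in R\pmod*{m}$, in strongly polynomial time.

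\emph{Existence of a short selection.} List $\gamma^\top y^i$ with multiplicity $\lambda_i$, for $i=1,\dots,n$, to form a sequence of blocks, and set $R'\coloneqq(R-\gamma^\top x_0)\bmod m$; since $y$ is feasible, the total sum lies in $R'\pmod*{m}$. Repeatedly deleting a removable interval --- one whose complement still sums into $R'$ --- preserves the residue in $R'$, and by \cref{lem:feasibleResidueSum} this terminates once at most $m-|R|$ terms remain; the retained multiplicities are the sought $\mu_i$. This is the existence argument; the work is to make it efficient.

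\emph{Strongly polynomial implementation, and the main obstacle.} A naive realization of the deletion process runs in time polynomial in $m$, which is inadmissible. To avoid this I would exploit that the sequence has at most $n$ blocks. First do an \emph{intra-block reduction}: replace each $\lambda_i$ by $\lambda_i\bmod\bigl(m/\gcd(\gamma^\top y^i,m)\bigr)$, i.e.\ modulo the order of $\gamma^\top y^i$ in $\mathbb{Z}/m\mathbb{Z}$. This leaves $\sum_i\mu_i\gamma^\top y^i$ unchanged modulo $m$, brings every multiplicity below $m$, and ensures that inside any single block no two partial sums agree modulo $m$, so no removable interval lies within a block; consequently every removable interval spans at least two surviving blocks. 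One then shows that, for two \emph{adjacent} surviving blocks $p,q$, a largest removable interval supported on them is obtained by maximizing the number of removed copies subject to a single linear congruence in the two boundary multiplicities (a two-variable problem solvable in time polynomial in $\log m$), and that after this maximal removal no removable interval between $p$ and $q$ remains (a strictly larger solution of the same congruence would otherwise exist). A single sweep over all at most $n-1$ adjacent pairs therefore either zeroes out some block --- decreasing the number of surviving blocks --- or leaves no removable interval spanning exactly two blocks, so that any remaining removable interval, if the total still exceeds $m-|R|$, must span at least three surviving blocks and hence zeroes out at least one block when removed. After at most $n$ such sweeps, each scanning $O(n^2)$ block pairs and solving $O(n^2)$ small linear congruences, the total drops to at most $m-|R|$, and we output $\tilde y=x_0+\sum_i\mu_i y^i$. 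I expect this final part --- formalizing that maximal two-block removals cannot be re-enabled by later removals and bounding the number of sweeps --- to be the most delicate point.
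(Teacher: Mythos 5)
Your reduction to the residue-selection problem, the derivation of property~\ref{lemitem:smallProducts} from elementariness, and the argument that $c^\top y^i\geq 0$ (hence $c^\top\tilde y\leq c^\top y$) are all correct and match the paper's setup; the existence argument via \cref{lem:feasibleResidueSum} is also the paper's. The gap is exactly where you flag it: the strongly polynomial implementation, which is the actual content of the lemma. Your sweep argument rests on the claim that after a maximal removal supported on an adjacent pair $(p,q)$, no removable interval on $(p,q)$ can reappear, justified by ``a strictly larger solution of the same congruence would otherwise exist.'' This union argument only works when every removal preserves one and the same residue (i.e.\ removed sums are $\equiv 0\pmod m$). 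With $|R'|\geq 2$, an interleaved removal at another pair (or at the same pair towards a different target in $R'$) changes the running residue, and then the union of an earlier $(p,q)$-removal with a later one need not keep the \emph{original} total in $R'$, so it is not a candidate in the maximization you performed earlier and yields no contradiction. The same issue already breaks your preliminary claim that after the intra-block reduction ``no removable interval lies within a block'': a within-block interval can legitimately shift the total from one element of $R'$ to another. Without these two claims, neither the bound on the number of sweeps nor the termination structure follows, so strong polynomiality is not established.

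For comparison, the paper resolves this by splitting the deletion process into two phases and by always deleting a \emph{maximum-size} removable interval. While more than $m-1$ terms remain, the target set is shrunk to the singleton current residue, so every removed interval sums to $0\pmod m$; there the union argument is valid, the chosen intervals are pairwise disjoint, and a counting argument (each of the $n$ chunks contains at most two interval endpoints) bounds this phase by $n$ steps. Once at most $m-1$ terms remain, at most $|R|-1$ further deletions suffice, which is polynomial in the encoding of $R$. Each maximum interval is found by solving $O(n^2|S|)$ integer programs in three variables via Lenstra's algorithm. If you want to salvage your route, you would need either to restrict your two-block removals to residue-preserving ones (which essentially recreates the paper's phase~1 and still leaves the tail phase to handle), or to prove a genuinely new interleaving-robust maximality statement for general $R'$ — the obstruction above suggests the latter is false as stated.
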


Note that point~\ref{lemitem:objective} adds an additional property on the relation of the costs of the three vectors $x_0$, $y$, and $\tilde y$ that is useful in optimization settings.
To showcase two concrete applications of \cref{lem:transformSolutionEfficiently} in this overview, we show how \cref{lem:transformSolutionEfficiently} readily implies our flatness and proximity results, i.e., \cref{thm:flatDirections,thm:proximity}.
We start by showing \cref{thm:flatDirections}, which is a consequence of the following statement.
\begin{lemma}\label{lem:flatOrIrrelevant}
Consider an \RCCTUF{} problem, and let $d^\top x\leq \beta$ be one of its constraints. Either
\begin{enumerate}
\item $d$ is a flat direction of width at most $m-|R|-1$ for the underlying polyhedron, or
\item\label{lemitem:dropConstraint} the problem is feasible if and only if the \RCCTUF{} problem without the constraint $d^\top x\leq \beta$ is feasible.
\end{enumerate}
In case~\ref{lemitem:dropConstraint}, a solution of the initial problem can be obtained in strongly polynomial time from any solution of the initial problem without the constraint $d^\top x\leq \beta$.
\end{lemma}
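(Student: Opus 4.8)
The plan is to make the dichotomy effective by deducing the second alternative directly from \cref{lem:transformSolutionEfficiently}, applied not to the given problem but to the one obtained by deleting the constraint $d^\top x\le\beta$, and then deciding between the two alternatives by a single width computation. Throughout, write $\Pi$ for the given \RCCTUF{} problem, with constraint matrix $T$ and right-hand side $b$, and $\Pi'$ for the \RCCTUF{} problem obtained from $\Pi$ by deleting the row $d^\top x\le\beta$; call its data $T',b'$. Note that $d$ is TU-appendable to $T'$, since $\begin{psmallmatrix}T'\\ d^\top\end{psmallmatrix}$ is $T$ up to a permutation of rows, hence totally unimodular. Let $P=\{x\in\mathbb R^n\colon Tx\le b\}$. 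If $P=\emptyset$ (checkable in strongly polynomial time), then the width of $d$ over $P$ is $-\infty$ and we are in case~(i); so assume $P\neq\emptyset$.

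First I would compute $\delta_{\max}\coloneqq\max\{d^\top x\colon Tx\le b,\ x\in\mathbb Z^n\}$. Since $d^\top x\le\beta$ is one of the constraints, $\delta_{\max}$ is finite and at most $\beta$, and it can be found in strongly polynomial time because the constraint matrix is totally unimodular. Next I would test whether the system $Tx\le b$, $d^\top x\le\delta_{\max}-(m-|R|)$, $x\in\mathbb Z^n$ has a solution; this is again a feasibility problem over a totally unimodular matrix (duplicating a row of $T$ with a smaller right-hand side preserves total unimodularity), so it is decidable in strongly polynomial time. If it has no solution, then every integer point of $P$ has $d$-value in $\{\delta_{\max}-(m-|R|)+1,\ldots,\delta_{\max}\}$, so $d$ is a flat direction of $P$ of width at most $m-|R|-1$, i.e.\ case~(i) holds. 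Otherwise, fix such a solution $x_0$; then $x_0\in P$ (in particular $T'x_0\le b'$) and $d^\top x_0\le\delta_{\max}-(m-|R|)\le\beta-(m-|R|)$, and I claim case~(ii) holds.

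For case~(ii): the implication "$\Pi$ feasible $\Rightarrow$ $\Pi'$ feasible" is trivial, as every solution of $\Pi$ solves $\Pi'$. For the converse, let $y$ be any solution of $\Pi'$, so $T'y\le b'$ and $\gamma^\top y\in R\pmod*{m}$. If $d^\top y\le\beta$, then $y$ already solves $\Pi$. Otherwise, apply \cref{lem:transformSolutionEfficiently} to the \RCCTUF{} problem $\Pi'$, with feasible solution $y$ and relaxation solution $x_0$: this produces in strongly polynomial time a solution $\tilde y$ of $\Pi'$ with $d^\top(\tilde y-x_0)\le m-|R|$, using that $d$ is TU-appendable to $T'$. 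Then $d^\top\tilde y\le d^\top x_0+(m-|R|)\le\beta$, so $T\tilde y\le b$, and since also $\gamma^\top\tilde y\in R\pmod*{m}$, the vector $\tilde y$ solves $\Pi$. This proves case~(ii) together with its algorithmic addendum.

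The argument has no genuinely hard step once framed this way; the one idea that has to be found is that \cref{lem:transformSolutionEfficiently} must be invoked on the reduced problem $\Pi'$ and that the base point $x_0$ of the relaxation should be chosen as deep as possible inside the halfspace $d^\top x\le\beta$ — namely $m-|R|$ below $\delta_{\max}$ — so that the additive slack of $m-|R|$ guaranteed by that lemma is exactly absorbed by the deleted constraint. The remaining care concerns bookkeeping: handling the edge cases ($P=\emptyset$, or $y$ already satisfying $d^\top y\le\beta$) and checking that each auxiliary optimization stays over a totally unimodular system so that strong polynomiality is retained.
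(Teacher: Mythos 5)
Your proof is correct and follows essentially the same route as the paper: when the width of $d$ is at least $m-|R|$, pick a relaxation solution $x_0$ with $d^\top x_0\leq\beta-(m-|R|)$, note that $d$ is TU-appendable to the constraint matrix of the problem with the row $d^\top x\leq\beta$ removed, and apply \cref{lem:transformSolutionEfficiently} to turn a solution $y$ of the reduced problem into a solution $\tilde y$ with $d^\top\tilde y\leq d^\top x_0+m-|R|\leq\beta$. The additional material on deciding which case holds and the edge cases is just bookkeeping (the paper handles the width test later, in the proof of \cref{thm:flatDirections}, via Tardos' algorithm).
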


\begin{proof}
Assume that $d$ is a direction of width at least $m-|R|$, and let $x_0$ be feasible for the relaxation of the \RCCTUF{} problem such that $d^\top x_0 \leq \beta - m + |R|$.
It is enough to show that we can in strongly polynomial time obtain a feasible solution of the initial problem, assuming that we are given a feasible solution $y$ of the problem without the constraint $d^\top x\leq \beta$.
Applying \cref{lem:transformSolutionEfficiently} in this setting, we get that given $y$, we can in strongly polynomial time obtain another feasible solution $\tilde y$ such that $d^\top \tilde y \leq d^\top x_0 + m-|R| \leq \beta$, i.e., a solution that also satisfies the constraint $d^\top x\leq \beta$.
This proves the desired statement.
\end{proof}

\begin{proof}[Proof of \cref{thm:flatDirections}]
Consider an \RCCTUF{} problem and one of its constraints $d^\top x\leq \beta$.
Using a result of Tardos~\cite{tardos_1986_strongly}, we can in strongly polynomial time determine whether this constraint identifies a direction of width at most $m-|R|-1$ of the underlying polyhedron (namely, by optimizing the objectives $d^\top x$ and $-d^\top x$ over the polyhedron).
If not, by \cref{lem:flatOrIrrelevant}, the constraint can be dropped without changing the feasibility status.
Iterating over all constraints, we either find a flat direction, or we end up with a problem without inequality constraints that is trivially feasible, thus implying that the initial problem was feasible as well.
In that case, a solution of the initial problem can be constructed within the desired running time from a solution of the final problem through \cref{lem:flatOrIrrelevant}.
\end{proof}
Let us remark that the width $m-|R|-1$ of flat directions in infeasible problems is best possible for any size of $R$, as can be seen from the infeasible problems given by $\{x\in\mathbb{Z}\colon 0\leq x \leq m-\ell-1, x \in R_\ell\pmod{m}\}$ with $R_\ell=\{m-\ell,\ldots,m-1\}$ for $\ell\in[m-1]$.

\smallskip

Finally, we also show how \cref{lem:transformSolutionEfficiently} implies \cref{thm:proximity}. More precisely, we prove the following generalization, from which \cref{thm:proximity} follows immediately.

\begin{theorem}\label{thm:proximityGeneral}
Consider a feasible $R$-CCTU problem with modulus $m$ and constraint matrix $T$.
\begin{enumerate}
\item\label{thmitem:FproximityGeneral} For any feasible solution $x_0$ of the relaxation, there is a feasible solution $x$ of the $R$-CCTU problem such that for every vector $d$ that is TU-appendable to T, we have $d^\top(x-x_0)\leq m-|R|$.
\item\label{thmitem:proximityGeneral} For any optimal solution $x_0$ of the relaxation, there is an optimal solution $x$ of the $R$-CCTU problem such that for every vector $d$ that is TU-appendable to T, we have $d^\top(x-x_0)\leq m-|R|$, and vice versa.
\end{enumerate}
Moreover, in~\ref{thmitem:Fproximity} and~\ref{thmitem:proximityGeneral}, given $x_0$ and any feasible or optimal solution of the $R$-CCTU problem, respectively, a solution $x$ with the stated properties can be found in strongly polynomial time.
Also, in~\ref{thmitem:proximityGeneral}, given $x$, a solution $x_0$ with the stated properties can be found in strongly polynomial time.
\end{theorem}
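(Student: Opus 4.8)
The plan is to derive both parts of \cref{thm:proximityGeneral} directly from \cref{lem:transformSolutionEfficiently}: essentially all of the work is already packaged there, and what remains is a short reduction together with the observation that the one-sided bound in point~\ref{lemitem:smallProducts} of \cref{lem:transformSolutionEfficiently} is stated in exactly the form the theorem requires.

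For part~\ref{thmitem:FproximityGeneral}, I would argue as follows. Since the $R$-CCTU problem is feasible, fix a feasible solution $y$ of it (in the constructive version, $y$ is supplied as input). Then $y$ is a feasible solution of the underlying \RCCTUF{} problem, and the given $x_0$ is a solution of its relaxation, so \cref{lem:transformSolutionEfficiently} applies and produces, in strongly polynomial time, a feasible solution $\tilde y$ of the \RCCTUF{} problem---hence of the $R$-CCTU problem---with $d^\top(\tilde y - x_0)\leq m-|R|$ for every $d$ that is TU-appendable to $T$. Setting $x\coloneqq\tilde y$ gives the claim. To recover the $\ell_\infty$ bound $\|x-x_0\|_\infty\leq m-|R|$ of \cref{thm:proximity}, I would additionally note that if $d$ is TU-appendable to $T$ then so is $-d$ (negating a row of a TU matrix preserves total unimodularity), which upgrades the bound to $|d^\top(x-x_0)|\leq m-|R|$, and that each unit vector $e_i$ is TU-appendable (appending a row with a single $\pm1$ entry alters any determinant only by a $\pm1$ cofactor), so $|x_i-(x_0)_i|\leq m-|R|$ for all $i\in[n]$.

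For part~\ref{thmitem:proximityGeneral}, I would run the same construction starting from an optimal solution $y$ of the $R$-CCTU problem with respect to its objective $c$ (again supplied in the constructive version; otherwise its existence follows from $x_0$ being optimal for the relaxation, so that $c$ is bounded below on the nonempty $R$-CCTU feasible set, on which the minimum is then attained by a standard integer-programming argument). Since $x_0$ minimizes $c^\top x$ over the relaxation, point~\ref{lemitem:objective} of \cref{lem:transformSolutionEfficiently} guarantees $c^\top\tilde y\leq c^\top y$ for the returned $\tilde y$; but $\tilde y$ is $R$-CCTU-feasible and $y$ is optimal, so also $c^\top y\leq c^\top\tilde y$, whence $c^\top\tilde y=c^\top y$ and $x\coloneqq\tilde y$ is optimal, while still satisfying $d^\top(x-x_0)\leq m-|R|$ for all TU-appendable $d$. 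Strong polynomiality is inherited from \cref{lem:transformSolutionEfficiently}.

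I do not expect a genuine obstacle here: the substance lives entirely in \cref{lem:transformSolutionEfficiently} (and, beneath it, in the decomposition lemma \cref{lem:decompositionLemma} together with \cref{lem:feasibleResidueSum}), so the proof is a few lines of bookkeeping. The one thing to get right is the dictionary between the objects named in \cref{thm:proximityGeneral}---``$x_0$ feasible/optimal for the relaxation'', ``$y$ feasible/optimal for the $R$-CCTU problem''---and the hypotheses of \cref{lem:transformSolutionEfficiently}---``$x_0$ a solution of the relaxation'', ``$x_0$ minimizes $c^\top x$ over the relaxation'', ``$y$ a feasible solution''---which line up once the definitions of \RCCTUF{} and $R$-CCTU and their relaxations are unwound. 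If one also wants the non-constructive existence of an optimal $R$-CCTU solution in part~\ref{thmitem:proximityGeneral}, the one-line remark above (the $R$-CCTU feasible region is a finite union of lattice-coset slices of the polyhedron $\{x\colon Tx\leq b\}$, and a linear function bounded below on a nonempty such set attains its minimum) suffices.
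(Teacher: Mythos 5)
Your proposal is correct and takes essentially the same route as the paper: both parts are obtained by applying \cref{lem:transformSolutionEfficiently} to a feasible (respectively optimal) solution $y$ of the $R$-CCTU problem together with $x_0$, using property~\ref{lemitem:smallProducts} for the scalar-product bound and property~\ref{lemitem:objective} combined with optimality of $y$ to conclude $c^\top \tilde y = c^\top y$. Your extra remarks (the $\ell_\infty$ upgrade via $\pm e_i$ and the existence of an $R$-CCTU optimum) are harmless additions not needed for this statement.
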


\begin{proof}
For part~\ref{thmitem:FproximityGeneral}, apply \cref{lem:transformSolutionEfficiently} to the given problem with feasible solutions $y$ and $x_0$ of the problem and its relaxation, respectively, to obtain a feasible solution $\tilde y$.
Property~\ref{lemitem:smallProducts} in \cref{lem:transformSolutionEfficiently} states that $d^\top (\tilde y-x_0)\leq m-|R|$ for any $d\in\mathbb{Z}^n$ that is TU-appendable to the constraint matrix.
Moreover, if $y$ is given, we can also obtain $\tilde y$ in strongly polynomial time by \cref{lem:transformSolutionEfficiently}, hence $\tilde y$ has the properties of the solution $x$ claimed by \cref{thm:proximityGeneral}.

To also deduce the first part of~\ref{thmitem:proximityGeneral}, we proceed identically, but take $x_0$ to be an optimal solution of the relaxation with respect to the minimization objective $c^\top x$, and $y$ an optimal solution to the problem. In that case, on top of what we derived before, $\tilde y$ satisfies $c^\top \tilde y\leq c^\top y$ by property~\ref{lemitem:objective} in \cref{lem:transformSolutionEfficiently}.
Thus, because $y$ is optimal, this must be an equality and $\tilde y$ is optimal, as well.

For the other direction of~\ref{thmitem:proximityGeneral}, where we are given an optimal solution $x$ of the $R$-CCTU problem, we first determine any optimal solution $x_0$ of the relaxation.
This can be done in strongly polynomial time using the framework of Tardos~\cite{tardos_1986_strongly}.
Next, by applying \cref{lem:transformSolutionEfficiently} to $x$ and $x_0$, we can in strongly polynomial time obtain a feasible solution $\tilde x$ of the $R$-CCTU problem with $c^\top x\geq c^\top \tilde x$ such that $d^\top (\tilde x-x_0)\leq m-|R|$ for any $d\in\mathbb{Z}^n$ that is TU-appendable to the constraint matrix.
We claim that $\bar x_0\coloneqq x_0 + x - \tilde x$ has the desired properties.
First, $\bar x_0$ is feasible for the relaxation by \cref{lem:transformSolutionEfficiently}; additionally, because $x$ is an optimal solution of the $R$-CCTU problem, we must have $c^\top x = c^\top \tilde x$, hence $c^\top \bar x_0 = c^\top x_0$, and hence $\bar x_0$ must in fact be an optimal solution of the relaxation.
Moreover, for any $d\in\mathbb{Z}^n$ that is TU-appendable to the constraint matrix, we have $d^\top (x-\bar x_0) = d^\top (\tilde x-x_0) \leq m-|R|$, as desired.
\end{proof}

\begin{proof}[Proof of \cref{thm:proximity}]
Note that for every $i\in [n]$, the unit vector $e_i$ and its negative $-e_i$ are TU-appendable to every totally unimodular matrix. Thus, the solutions guaranteed by \cref{thm:proximityGeneral} satisfy
\begin{equation*}
\|x - x_0 \|_\infty = \max_{i\in[n]}\, \max\{e_i^\top (x - x_0), -e_i^\top (x - x_0)\} \leq m-|R|\enspace.\qedhere
\end{equation*}
\end{proof}

We postpone further applications of the decomposition lemma to \cref{sec:decompLemma}, and continue with an overview of our approach to deal with \RCCTUF\ problems.
The above discussion aimed at exemplifying how the decomposition lemma can be employed, and should help to better understand further implications, including settings that we state in the following overview of how to deal with \RCCTUF\ problems.

\subsection[Overview of our approach to \texorpdfstring{$R$}{R}-CCTUF problems and \texorpdfstring{\cref{thm:R=m-2}}{Theorem~\ref{thm:R=m-2}}]{\boldmath Overview of our approach to \RCCTUF{} problems and \cref{thm:R=m-2}\MOORdot\unboldmath}\label{sec:overviewRCCTUF}

When approaching \RCCTUF{} problems of the form
\begin{equation*}
Tx \leq b,\enspace \gamma^\top x \in R \pmod*{m},\enspace x\in\mathbb{Z}^n
\end{equation*}
with constant prime modulus $m$, we follow the general idea of decomposing the problem into smaller ones by applying Seymour's TU decomposition to the constraint matrix $T$. Exploiting Seymour's decomposition to approach problems that involve TU matrices is a standard approach that has been successfully used in a variety of contexts (see for example~\cite{dinitz_2014_matroid,artmann_2017_strongly,aprile_2021_regular}).
In particular, this includes the solution to parity-constrained TU problems presented in~\cite{artmann_2017_strongly}. However, going to congruency-constraints with modulus $3$ or larger creates substantial extra hurdles beyond prior techniques.
For completeness and clear references, we repeat Seymour's TU decomposition framework here, which breaks a TU matrix into smaller ones using so-called $1$-, $2$-, and $3$-sums, and pivoting operations, which are defined as follows.

\begin{definition}[$1$-, $2$-, and $3$-sums]
Let $A\in\mathbb{Z}^{k_A\times n_A}$, $B\in\mathbb{Z}^{k_B\times n_B}$, $e\in\mathbb{Z}^{k_A}$, $f\in\mathbb{Z}^{n_B}$, $g\in\mathbb{Z}^{k_B}$,  $h\in\mathbb{Z}^{n_A}$.
\begin{enumerate}
\item The \emph{$1$-sum} of $A$ and $B$ is
$
A \ksum[1] B \coloneqq \begin{psmallmatrix}
A & 0 \\ 0 & B
\end{psmallmatrix}
$.

\item The \emph{$2$-sum} of $\begin{pmatrix}
A & e
\end{pmatrix}$ and $\begin{psmallmatrix}
f^\top \\ B
\end{psmallmatrix}$ is
$
\begin{pmatrix}
A & e
\end{pmatrix} \ksum[2] \begin{psmallmatrix}
f^\top \\ B
\end{psmallmatrix} \coloneqq \begin{psmallmatrix}
A & ef^\top \\ 0 & B
\end{psmallmatrix}
$.
\item The \emph{$3$-sum} of $\begin{psmallmatrix}
A & e & e \\ h^\top & 0 & 1
\end{psmallmatrix}$ and $\begin{psmallmatrix}
0 & 1 & f^\top \\ g & g & B
\end{psmallmatrix}$ is
$
\begin{psmallmatrix}
A & e & e \\ h^\top & 0 & 1
\end{psmallmatrix} \ksum[3] \begin{psmallmatrix}
0 & 1 & f^\top \\ g & g & B
\end{psmallmatrix} \coloneqq \begin{psmallmatrix}
A & ef^\top \\ gh^\top & B
\end{psmallmatrix}
$.
\end{enumerate}
\end{definition}

\begin{definition}[Pivoting]
\label{def:pivoting}
Let $C\in\mathbb{Z}^{k\times n}$, $p\in\mathbb{Z}^n$, $q\in\mathbb{Z}^k$, and $\varepsilon\in\{-1,1\}$. The matrix obtained from pivoting on $\varepsilon$ in $T \coloneqq \begin{psmallmatrix}
\varepsilon & p^\top \\ q & C
\end{psmallmatrix}$, i.e., pivoting on the element $T_{11}$ of $T$, is
$
\pivot[11](T) \coloneqq \begin{psmallmatrix}
-\varepsilon & \varepsilon p^\top \\ \varepsilon q & C - \varepsilon q p^\top
\end{psmallmatrix}$.
More generally, $\pivot[ij](T)$ for indices $i$ and $j$ such that $T_{ij}\in\{-1,1\}$ is obtained from $T$ by first permuting rows and columns such that the element $T_{ij}$ is permuted to the first row and first column, then performing the above pivoting operation on the permuted matrix, and finally reversing the row and column permutations.
\end{definition}

It is well-known that a $1$-, $2$-, and $3$-sum is totally unimodular if and only if the two summands it is obtained from are, and a pivoted matrix is totally unimodular if and only if the original matrix is. Seymour's TU decomposition theorem states that a TU matrix is either very structured, or it can be decomposed using $1$-, $2$-, and $3$-sums, or pivoting steps. We use the following variation of the decomposition theorem, which provides some extra guarantees on the dimensions of the matrices appearing in the decomposition. It readily follows from classical statements of Seymour's decomposition for TU matrices (see~\cref{sec:seymour} for details).

\begin{theorem}[Seymour's TU decomposition]\label{thm:TUdecomp}
Let $T\in\mathbb{Z}^{k\times n}$ be a totally unimodular matrix. Then, one of the following cases holds.
\begin{enumerate}
\item\label{thmitem:TUdecomp_netw} $T$ or $T^\top$ is a network matrix.

\item\label{thmitem:TUdecomp_const} $T$ is, possibly after iteratively applying the operations of
\begin{itemize}
\item deleting a row or column with at most one non-zero entry,
\item deleting a row or column that appears twice or whose negation also appears in the matrix, and
\item changing the sign of a row or column,
\end{itemize}
equal to one of
\begin{equation*}
\begin{psmallmatrix*}[r]
 1 & -1 &  0 &  0 & -1 \\
-1 &  1 & -1 &  0 &  0 \\
 0 & -1 &  1 & -1 &  0 \\
 0 &  0 & -1 &  1 & -1 \\
-1 &  0 &  0 & -1 &  1
\end{psmallmatrix*}
\quad\text{and}\quad
\begin{psmallmatrix}
 1 &  1 &  1 &  1 &  1 \\
 1 &  1 &  1 &  0 &  0 \\
 1 &  0 &  1 &  1 &  0 \\
 1 &  0 &  0 &  1 &  1 \\
 1 &  1 &  0 &  0 &  1
\end{psmallmatrix}\enspace.
\end{equation*}

\item\label{thmitem:TUdecomp_sum} $T$ can, possibly after row and column permutations, be decomposed into a $1$-, $2$-, or $3$-sum of totally unimodular matrices with $n_A, n_B\geq 2$.

\item\label{thmitem:TUdecomp_pivot} $T$ can, after pivoting once and possibly performing row and column permutations, be decomposed into a $3$-sum of totally unimodular matrices with $n_A, n_B\geq 2$.
\end{enumerate}
Additionally, we can in time $\mathrm{poly}(n)$ decide which of the cases holds and determine the involved matrices.
\end{theorem}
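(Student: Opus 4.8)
The plan is to deduce \cref{thm:TUdecomp} from the classical form of Seymour's decomposition theorem for totally unimodular matrices~\cite{seymour_1980,schrijver1998theory}, spending the remaining effort only on putting the output of that theorem into the four normalized cases above --- above all to guarantee the dimension bounds $n_A,n_B\geq 2$ and to make explicit when a single pivot is needed. To set this up I would pass to the regular matroid $M[I\mid T]$ represented by $T$ with the identity columns as a distinguished basis, and recall that the operations listed in case~\ref{thmitem:TUdecomp_const} --- deleting rows/columns of support at most one, deleting duplicate or negated rows/columns, and rescaling rows or columns --- preserve total unimodularity and, on the matroid side, amount to deleting loops, coloops, and members of non-trivial parallel and series classes, together with changing the representation trivially. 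The classical theorem then states that, after exhaustively applying these operations, $T$ is (the transpose of) a network matrix, or equals one of two fixed exceptional $5\times5$ matrices, or $M[I\mid T]$ admits a $1$-, $2$-, or $3$-separation that splits $T$ --- possibly after a pivot --- into a $1$-, $2$-, or $3$-sum of smaller totally unimodular matrices.

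The graphic and cographic cases give case~\ref{thmitem:TUdecomp_netw} essentially for free: a fundamental-circuit representation of a graphic matroid with respect to a spanning-tree basis is a network matrix, and the listed operations preserve the property of being (the transpose of) a network matrix, so if the reduced matrix is one then so is $T$ (respectively $T^\top$). The exceptional case reduces to the purely finite claim that a totally unimodular matrix reducing to one of Seymour's exceptional matroids must, using only the listed operations, coincide with one of the two displayed $5\times5$ matrices, which is a bounded check and yields case~\ref{thmitem:TUdecomp_const}.

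The bulk of the work is normalizing the sum decompositions and securing the dimension bound. A $1$-separation makes $T$ block-diagonal up to permutation, hence a $1$-sum already in normal form. A $2$-separation puts $T$, after permutations, into a shape with a single off-diagonal block $D$ of rank one; writing $D=ef^\top$, where $e$ is (up to sign) a column of $T$ and $f$ a row of $T$, makes both summands inherit total unimodularity and realizes $T$ as a $2$-sum in normal form, and one checks that here --- unlike for $3$-sums --- no pivot is required. A $3$-separation likewise yields a form with two off-diagonal corners of rank at most one, but aligning it with the rigid border structure demanded by the $3$-sum operation may require one pivot; whether it does is exactly the dividing line between case~\ref{thmitem:TUdecomp_sum} (no pivot) and case~\ref{thmitem:TUdecomp_pivot} (one pivot). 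Finally, $n_A,n_B\geq 2$ is obtained by noting that any $2$- or $3$-sum in which one summand contributes a single column is itself one of the reduction operations of case~\ref{thmitem:TUdecomp_const} --- a parallel or series extension, or the insertion of a column or row of support at most one --- so once those reductions are exhausted, every remaining sum decomposition has both sides of size at least two. The algorithmic statement then follows from the facts that network-matrix recognition and Seymour's decomposition are computable in polynomial time, that the reductions, sign changes, and permutations are trivially polynomial, and that all arithmetic stays on entries in $\{-1,0,1\}$.

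The step I expect to be the main obstacle is matching a general $3$-separation to the rigid normal form of the $3$-sum: deciding whether $T$ is already in that form or whether exactly one pivot brings it there, and verifying that the pivot keeps the matrix totally unimodular while both resulting summands retain at least two columns. By comparison, the $1$- and $2$-sum normalizations, the finite exceptional-matrix check, and the running-time bookkeeping should be routine.
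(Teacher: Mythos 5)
Your overall strategy---take the classical Seymour decomposition for TU matrices and only normalize its output---is the same one the paper follows; in fact the paper does not reprove anything of substance but obtains the column bounds $n_A,n_B\geq 2$ by citing the refined decomposition of Artmann, Weismantel, and Zenklusen~\cite{artmann_2017_strongly}, which guarantees at least two \emph{rows} per summand, and applying it to $T^\top$ (see the paper's discussion in the section on Seymour's decomposition). The step you single out as the crux, namely securing $n_A,n_B\geq 2$ and the role of the single pivot in case~\ref{thmitem:TUdecomp_pivot}, is indeed the only nontrivial content---but the justification you offer for it does not work.

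Concretely, your claim that ``any $2$- or $3$-sum in which one summand contributes a single column is itself one of the reduction operations of case~\ref{thmitem:TUdecomp_const}'' is false for $3$-sums and insufficient for $2$-sums. If $T=\begin{psmallmatrix} A & ef^\top \\ gh^\top & B\end{psmallmatrix}$ is a $3$-sum with $n_B=1$, the bottom rows have the form $\begin{pmatrix} g_i h^\top & B_i\end{pmatrix}$; these are in general neither rows of support at most one nor duplicates or negations of other rows, so no case~\ref{thmitem:TUdecomp_const} operation applies and such a $T$ need not be reducible at all---yet the theorem must still place it in case~\ref{thmitem:TUdecomp_netw}, \ref{thmitem:TUdecomp_sum}, or \ref{thmitem:TUdecomp_pivot}. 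Even for a $2$-sum with $n_B=1$, one gets $T=\begin{psmallmatrix} A & e \\ 0 & B\end{psmallmatrix}$: the bottom rows are deletable, but what remains is $\begin{pmatrix} A & e\end{pmatrix}$, not a parallel/series extension, so you must recurse and then lift the resulting decomposition back through the reductions, which your sketch does not address. The underlying reason the bound is delicate is that the classical statements control the number of matroid \emph{elements} on each side of a $1$-/$2$-/$3$-separation, and elements comprise rows and columns alike; a side with many elements may still contribute only one (or zero) columns to its matrix block, so ``exhausting the reductions'' does not exclude degenerate sums. Handling exactly these degenerate separations---rebalancing or absorbing them into the base-block cases, possibly at the cost of the one pivot of case~\ref{thmitem:TUdecomp_pivot}---is the dedicated argument carried out in~\cite{artmann_2017_strongly} (for row bounds), which the paper invokes by transposition; your proof would need an argument of that kind, or an explicit citation of that refined statement, in place of the observation you give.
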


Cases~\ref{thmitem:TUdecomp_netw} and~\ref{thmitem:TUdecomp_const} are the cases where $T$ is a so-called \emph{base block} matrix.
We exploit the structure of those matrices to reduce \CCTUF{} problems with such a constraint matrix $T$ to certain combinatorial optimization problems with congruency constraints.
In particular, if $T$ is a network matrix, the corresponding problem can be interpreted as a congruency-constrained circulation problem. Here, we exploit a connection to exact weight matching problems \cite{camerini_1992_rpp} that results in an efficient randomized procedure.
For $T$ being the transpose of a network matrix, we present a reduction to a congruency-constrained submodular minimization problem, which can be solved (whenever $m$ is a prime power) by a recent algorithm by \textcite{nagele_2018_submodular}.
We expand on these connections in \cref{sec:baseBlocks}, thereby obtaining the following statement on the corresponding feasibility problems.

\begin{theorem}\label{thm:baseBlocks}
Let $T$ be a TU matrix for which case~\ref{thmitem:TUdecomp_netw} or~\ref{thmitem:TUdecomp_const} in \cref{thm:TUdecomp} holds. There is a strongly polynomial time randomized algorithm for \CCTUF{} problems with constraint matrix $T$ and constant prime power modulus.
\end{theorem}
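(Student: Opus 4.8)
The overall plan is to treat the two kinds of base-block matrices in \cref{thm:TUdecomp} separately: the sporadic $5\times 5$ matrices of case~\ref{thmitem:TUdecomp_const}, and the network matrices (and their transposes) of case~\ref{thmitem:TUdecomp_netw}. In each situation we reduce the given \CCTUF{} instance to a classical combinatorial problem equipped with a congruency constraint. Since $m$ is a \emph{constant} prime power throughout, we may replace $\gamma$ by $\gamma\bmod m$ and keep all residue bookkeeping inside the constant-size group $\mathbb{Z}/m\mathbb{Z}$, which is what will let the resulting reductions run in strongly polynomial time.

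For the sporadic matrices, the key observation is that, up to the allowed reduction operations, $T$ consists of at most a constant number of ``core'' columns --- copies, up to sign, of one of the five columns of the relevant $5\times 5$ matrix --- together with some near-empty columns (at most one non-zero entry), and symmetrically for rows. I would therefore show that each reduction operation can be undone on the level of \CCTUF{} problems in strongly polynomial time: sign flips of a column are the substitution $x_j\mapsto -x_j$, $\gamma_j\mapsto -\gamma_j$; sign flips and duplications or removals of rows only touch the inequality system and are handled by bookkeeping on $b$; a near-empty row reappears as a constraint $0\le b_i$ or $\pm x_j\le b_i$ (trivially checked, or folded into a box constraint); and a group of equal columns, respectively a near-empty column, can be collapsed to a single variable, because only the sum of the corresponding $x_j$'s enters $Tx\le b$, while their joint contribution to $\gamma^\top x$ modulo $m$ ranges over a coset of a subgroup of $\mathbb{Z}/m\mathbb{Z}$ and is therefore captured by one additional free variable with a suitable $\gamma$-coefficient. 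After all reductions we are left with a \CCTUF{} instance of constant dimension, which --- via the reduction of such a problem to an $m$-modular ILP noted earlier, together with Lenstra's algorithm \cite{lenstra_1983_IPfixed} --- is solvable in polynomial time; unwinding the reductions answers the original instance.

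For $T$ a network matrix, the defining tree lets us rewrite $Tx\le b$ as an integral flow-feasibility system on an auxiliary digraph with arc bounds, with $x$ the flow on the non-tree arcs and $\gamma$ an arc-weight vector; finding a feasible $x$ with $\gamma^\top x\equiv r\pmod{m}$ then becomes an integral circulation problem with a prescribed total weight modulo $m$. I would solve it by the algebraic, randomized route behind exact-weight matching: encoding circulations by a generating polynomial that, after reducing exponents, lives modulo $t^m-1$ and hence has degree below the constant $m$, and using an isolation-type argument together with polynomial identity testing to reduce to an exact-weight perfect matching computation of the kind handled by Camerini, Galbiati, and Maffioli \cite{camerini_1992_rpp}. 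This yields a one-sided-error randomized, strongly polynomial algorithm; passing to weights modulo the constant $m$ is precisely what keeps the otherwise pseudopolynomial exact-weight machinery strongly polynomial. For $T^\top$ a network matrix, the tree structure instead turns $Tx\le b$ into a system of difference constraints whose integral feasible region is a distributive lattice (it is closed under coordinate-wise minimum and maximum); searching for a lattice element on which the linear functional $\gamma^\top x$ takes a value in $R$ modulo $m$ can be phrased as a congruency-constrained submodular minimization problem over a ring family --- after cloning each element a bounded number of times to convert the weighted count induced by $\gamma\bmod m$ into an ordinary cardinality --- which, for constant prime-power $m$, is solved in strongly polynomial time by the algorithm of N\"agele, Sudakov, and Zenklusen \cite{nagele_2018_submodular}.

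The main obstacles I expect are getting these two translations exactly right --- in particular the submodular/lattice encoding for the transposed-network case, and ensuring it stays strongly polynomial despite a priori large potentials, which I would control beforehand using a flatness/width reduction in the spirit of \cref{thm:flatDirections} --- and the exact-weight circulation step itself: no deterministic algorithm for it is known, which is the sole source of the randomization and of the one-sided error in the statement, and one must verify that reducing weights modulo the constant $m$ genuinely produces a strongly polynomial running time rather than a merely pseudopolynomial one.
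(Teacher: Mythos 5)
Your treatment of the two network cases follows the paper's route in spirit: for network matrices the paper also reduces to a congruency-constrained circulation problem and then to exact-length circulations solved by the randomized algorithm of Camerini--Galbiati--Maffioli (\cref{lem:CCTUtoCCC,lem:CCCtoXLC,thm:solveNetwRPP}), and for transposed network matrices it also reduces to congruency-constrained submodular minimization over a lattice solved by N\"agele--Sudakov--Zenklusen (\cref{thm:solveTranspNetwPP}). However, you leave the ingredient that makes these reductions \emph{strongly} polynomial under-specified. The exact-weight machinery is pseudopolynomial in the \emph{capacities}, not only in the weights, so reducing $\gamma$ modulo $m$ does not suffice; the paper first normalizes the instance and invokes the proximity bound (\cref{thm:proximityGeneral}) to truncate all capacities at $m-1$. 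Similarly, in the transposed case the lattice you describe lives on potentials with a priori huge ranges; the paper's reduction to a ground set of size $(m-1)|V|$ hinges on showing (again via proximity, \cref{lem:CTCsmallChain}) that an optimal solution decomposes into a \emph{chain of at most $m-1$ tree cuts}. Your appeal to ``a flatness/width reduction in the spirit of \cref{thm:flatDirections}'' points at the wrong tool: \cref{thm:flatDirections} certifies infeasibility via flat directions, whereas what is needed is the proximity statement bounding all TU-appendable scalar products of some solution by $m-1$.

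The more serious gap is your handling of case~\ref{thmitem:TUdecomp_const}. Your structural premise --- that $T$ consists of sign-copies of the five core columns plus near-empty columns, so that within a group of duplicated columns only the sum of the variables enters $Tx\le b$ --- is false in general. The operations of \cref{thm:TUdecomp} are interleaved: after a column is duplicated, a later appended unit row can place its single nonzero in only one of the two copies, so columns stemming from the same core column need not be identical in the final matrix, and unit rows can impose \emph{individual} bounds on members of such a group. Consequently the group's contribution to $\gamma^\top x$ for a fixed group sum is not a full coset of a subgroup of $\mathbb{Z}/m\mathbb{Z}$, the collapse to a single variable per core column is invalid, and the resulting instance is not of constant dimension (indeed $T$ in this case can have arbitrarily many rows and columns, so no reduction to a constant-size ILP solvable by Lenstra can be expected along these lines). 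The paper instead keeps all variables and, using \cref{thm:proximityGeneral}, \emph{guesses} the $\ell\le 5$ aggregated scalar products $s_i^\top x$ over the columns stemming from each core column (each among $2m-1$ values); fixing these turns the core-derived rows into constants, and the residual constraint matrix is simultaneously a network matrix and the transpose of one (\cref{lem:reductionConstCore}), so the $m^{O(1)}$ subproblems are handed back to the two network-case algorithms (\cref{thm:constCore}). Without an argument of this kind, your case-(ii) reduction does not go through.
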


In the cases where the constraint matrix $T$ admits a decomposition as a $1$-, $2$-, or $3$-sum, i.e., case~\ref{thmitem:TUdecomp_sum} of \cref{thm:TUdecomp}, we can write $T=\begin{psmallmatrix}A & ef^\top \\ gh^\top & B\end{psmallmatrix}$. If $T$ is a $2$-sum, $g$ and $h$ will be zero vectors; if $T$ is a $1$-sum, also $e$ and $f$ will be zero vectors.
This matrix decomposition splits the variables $x$, the right-hand sides $b$, and the residue vector $\gamma$ into two parts accordingly. The \RCCTUF{} problem can then be rewritten as the problem of finding a feasible solution of the system
\begin{equation}\label{eq:structured-problem}
\begin{aligned}
\begin{pmatrix}
A & ef^\top \\ gh^\top & B
\end{pmatrix} \cdot \begin{pmatrix}
x_A \\ x_B
\end{pmatrix} &\leq \begin{pmatrix}
b_A \\ b_B
\end{pmatrix} \\
\gamma_A^\top x_A + \gamma_B^\top x_B & \in R \pmod{m} \\
x_A \in \mathbb{Z}^{n_A},\
x_B & \in \mathbb{Z}^{n_B}\enspace.
\end{aligned}
\end{equation}
For any fixed values of $\alpha\coloneqq f^\top x_B$ and $\beta\coloneqq h^\top x_A$, the above problem can be split into the two almost independent \CCTUF{} problems
\begin{equation}\label{eq:A-B-problem}\begin{minipage}{4cm}
\setlength{\abovedisplayskip}{0pt}$$\begin{aligned}
A x_A & \leq b_A - \alpha e \\
h^\top x_A & = \beta \\
\gamma_A^\top x_A & \equiv r_A \pmod{m} \\
x_A & \in \mathbb{Z}^{n_A}
\end{aligned}$$
\end{minipage}
\qquad\text{and}\qquad
\begin{minipage}{4cm}
\setlength{\abovedisplayskip}{0pt}$$\begin{aligned}
B x_B & \leq b_B - \beta g \\
f^\top x_B & = \alpha \\
\gamma_B^\top x_B & \equiv r_B \pmod{m} \\
x_B & \in \mathbb{Z}^{n_B}
\end{aligned}$$
\end{minipage}
\enspace,
\end{equation}
where we would like to find solutions $x_A$ and $x_B$ for residues $r_A$ and $r_B$ such that $r_A + r_B \in R \pmod{m}$.
Hence, this desired relation between the target residues $r_A$ and $r_B$ is the only dependence between the two problems once $\alpha$ and $\beta$ are fixed.
We refer to the problem on the left as the \emph{A-problem} and the problem on the right as the \emph{B-problem}.

A solution of the initial \RCCTUF{} problem can only exist for pairs $(\alpha,\beta)\in\mathbb{Z}^2$ for which both the $A$- and the $B$-problem are feasible. We denote this set by $\Pi\subseteq\mathbb{Z}^2$.
In \cref{sec:patterns}, we will see that $\Pi$ is a polyhedron that can be obtained by essentially projecting feasible solutions of the relaxation of our \RCCTUF{} problem down to the $(\alpha,\beta)$-space.
This will allow us to deduce structural properties of $\Pi$.
For now, we aim at narrowing down the values of $(\alpha, \beta)\in\Pi$ that we have to consider for finding a feasible solution. To this end, we use the following Lemma.
\begin{lemma}\label{lem:patternsBounds}
Consider an \RCCTUF{} problem of the form given in~\eqref{eq:structured-problem}. We can in strongly polynomial time obtain $\ell_i, u_i\in\mathbb{Z}$ with $u_i-\ell_i\leq m-|R|$ for $i\in\{0,1,2\}$ such that if the \RCCTUF{} problem has a solution, then it has one with $\ell_0 \leq \alpha+\beta \leq u_0$, $\ell_1 \leq \alpha \leq u_1$, and $\ell_2\leq \beta \leq u_2$, where $\alpha=f^\top x_B$ and $\beta=h^\top x_A$.
\end{lemma}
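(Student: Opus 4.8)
The plan is to derive the bounds from three successive applications of \cref{lem:transformSolutionEfficiently}, peeling off one coordinate functional at a time. Write $x=(x_A,x_B)$ and set $d_1\coloneqq(0,f^\top)$, $d_2\coloneqq(h^\top,0)$, and $d_0\coloneqq d_1+d_2=(h^\top,f^\top)$, so that $\alpha=d_1^\top x$, $\beta=d_2^\top x$, and $\alpha+\beta=d_0^\top x$ for every $x$. The starting point is that $d_0$, $d_1$, and $d_2$ are all TU-appendable to $T$. This does not follow merely from $T$ being totally unimodular (appending a sum of two rows of a TU matrix need not preserve total unimodularity), but it does follow from the $1$-/$2$-/$3$-sum structure of $T=\begin{psmallmatrix}A & ef^\top\\ gh^\top & B\end{psmallmatrix}$: appending, say, $(h^\top,f^\top)$ yields again a $3$-sum in which the second summand $\begin{psmallmatrix}0 & 1 & f^\top\\ g & g & B\end{psmallmatrix}$ gains the row $(1,1,f^\top)$, which differs from its row $(0,1,f^\top)$ by a unit row; a short case analysis---crucially using that cross terms such as $B_{ij}-f_jg_i$ lie in $\{-1,0,1\}$ because the summands are TU---shows this preserves total unimodularity, and analogously for $d_1$ (added row $(0,0,f^\top)$) and $d_2$ (row $(h^\top,0,0)$ of the first summand). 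The same argument shows that $d_1$ stays TU-appendable to $\begin{psmallmatrix}T\\ d_0^\top\end{psmallmatrix}$ and $d_2$ to $\begin{psmallmatrix}T\\ d_0^\top\\ d_1^\top\end{psmallmatrix}$.

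Let $P\coloneqq\{x\in\mathbb{R}^n\colon Tx\le b\}$. I would first compute $\ell_0\coloneqq\min\{d_0^\top x\colon x\in P\}$ (strongly polynomial via Tardos~\cite{tardos_1986_strongly}, using that $\begin{psmallmatrix}T\\ d_0^\top\end{psmallmatrix}$ is TU) and put $u_0\coloneqq\ell_0+m-|R|$; then $P_1\coloneqq\{x\in P\colon d_0^\top x\le u_0\}$, $\ell_1\coloneqq\min\{d_1^\top x\colon x\in P_1\}$, $u_1\coloneqq\ell_1+m-|R|$; and finally $P_2\coloneqq\{x\in P_1\colon d_1^\top x\le u_1\}$, $\ell_2\coloneqq\min\{d_2^\top x\colon x\in P_2\}$, $u_2\coloneqq\ell_2+m-|R|$. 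These depend only on $T,b,m,|R|$ and satisfy $u_i-\ell_i=m-|R|$. To see that a feasible \RCCTUF{} problem admits a solution meeting all three bounds, take any feasible solution $y$ and apply \cref{lem:transformSolutionEfficiently} to the problem with feasible solution $y$ and a relaxation solution minimizing $d_0^\top x$ over $P$; the output $\tilde y^1$ is feasible and satisfies $d_0^\top\tilde y^1\le\ell_0+m-|R|=u_0$, while $\tilde y^1\in P$ gives $d_0^\top\tilde y^1\ge\ell_0$. Since then $\tilde y^1\in P_1$, it is feasible for the \RCCTUF{} problem with constraint matrix $\begin{psmallmatrix}T\\ d_0^\top\end{psmallmatrix}$ and right-hand side $\begin{psmallmatrix}b\\ u_0\end{psmallmatrix}$; applying \cref{lem:transformSolutionEfficiently} to that problem with $\tilde y^1$ and a minimizer of $d_1^\top x$ over $P_1$ (using that $d_1$ remains TU-appendable to $\begin{psmallmatrix}T\\ d_0^\top\end{psmallmatrix}$) yields a feasible $\tilde y^2$ with $d_1^\top\tilde y^2\le u_1$, and membership $\tilde y^2\in P_1\subseteq P$ keeps $d_1^\top\tilde y^2\ge\ell_1$ and $\ell_0\le d_0^\top\tilde y^2\le u_0$. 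A third identical round with constraint matrix $\begin{psmallmatrix}T\\ d_0^\top\\ d_1^\top\end{psmallmatrix}$, right-hand side $\begin{psmallmatrix}b\\ u_0\\ u_1\end{psmallmatrix}$, and a minimizer of $d_2^\top x$ over $P_2$ produces a feasible $\tilde y^3$ with $d_2^\top\tilde y^3\le u_2$; since $\tilde y^3$ still lies in $P_2$, $P_1$, and $P$, the inequalities defining these polyhedra pin $d_2^\top\tilde y^3,d_1^\top\tilde y^3,d_0^\top\tilde y^3$ from below by $\ell_2,\ell_1,\ell_0$, so $\tilde y^3$ satisfies $\ell_i\le d_i^\top\tilde y^3\le u_i$ for all $i\in\{0,1,2\}$, i.e.\ the required bounds on $\alpha+\beta$, $\alpha$, $\beta$.

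Everything above runs in strongly polynomial time: three LP minimizations over TU systems and three calls to \cref{lem:transformSolutionEfficiently}. Two points need care. If $P=\emptyset$ the problem is infeasible and one outputs $\ell_i=u_i=0$; if some minimum is $-\infty$ I would first intersect $P$ with a box $-N\mathbf{1}\le x\le N\mathbf{1}$ for $N$ large enough not to destroy solvability (the rows $\pm e_i$ are TU-appendable to any TU matrix, so all the matrices above stay TU), which makes the auxiliary LPs bounded. In the $2$-sum case $h=0$, hence $d_2=0$ and the $\beta$-bound is trivially $0\le\beta\le 0$; in the $1$-sum case $\alpha=\beta=0$ throughout.

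I expect the real work to be concentrated in the structural claim of the first paragraph: certifying, from Seymour's $1$-/$2$-/$3$-sum decomposition, that $d_0,d_1,d_2$ (and the two further augmentations) are TU-appendable. The tempting shortcut ``$d_0$ is a sum of rows of $T$'' is invalid in general, and one must instead exploit that in a $3$-sum the cross blocks $ef^\top$ and $gh^\top$ are compatible with the summands---which is exactly what rules out the $2\times 2$ subdeterminants of absolute value $2$ that would otherwise appear. Once the appendability facts are in place, the remainder is bookkeeping with one-sided inequalities and the nested polyhedra $P\supseteq P_1\supseteq P_2$.
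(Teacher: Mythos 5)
Your proposal is correct and takes essentially the same route as the paper: the paper also establishes that $\begin{pmatrix}0 & f^\top\end{pmatrix}$, $\begin{pmatrix}h^\top & 0\end{pmatrix}$, and $\begin{pmatrix}h^\top & f^\top\end{pmatrix}$ are simultaneously TU-appendable via the sum structure (\cref{obs:simultTUappend}, which writes the augmented matrix explicitly as a $3$-sum of augmented TU summands---a cleaner substitute for your sketched case analysis), and then windows the three scalar products one at a time by adding the obtained constraints and re-invoking the machinery of \cref{lem:transformSolutionEfficiently}, packaged there as \cref{lem:productInterval} and \cref{cor:simultaneousBound}. Your direct use of \cref{lem:transformSolutionEfficiently} with a minimizer as $x_0$, with the lower bounds coming for free from membership in the nested polyhedra, is the same argument in slightly different packaging.
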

Note that $\alpha$, $\beta$, and $\alpha+\beta$ are scalar products of a solution of~\eqref{eq:structured-problem} with suitably chosen row vectors.
We show in \cref{sec:boundedProducts} that those rows are all TU-appendable to the constraint matrix, thus enabling the application of techniques from the previous section to prove existence of solutions with those scalar products bounded to the desired range.
Here, as a consequence of \cref{lem:patternsBounds}, we can restrict our attention to $O(m^2)$ many pairs $(\alpha, \beta)$ in the \emph{narrowed} set
\begin{equation*}
\Pi_{\text{narrowed}}\coloneqq \Pi\cap\left\{(\alpha, \beta)\in\mathbb{Z}^2 \colon \ell_0 \leq \alpha+\beta \leq u_0, \ell_1 \leq \alpha \leq u_1, \ell_2\leq \beta \leq u_2\right\}\enspace.
\end{equation*}
We will later see that properties of $\Pi$ imply that we can choose $\ell_i, u_i$ such that we even have
\begin{equation*}
\Pi_{\text{narrowed}}=\left\{(\alpha, \beta)\in\mathbb{Z}^2 \colon \ell_0 \leq \alpha+\beta \leq u_0, \ell_1 \leq \alpha \leq u_1, \ell_2\leq \beta \leq u_2\right\}\enspace.
\end{equation*}

One natural attempt at this point would be to explicitly try all $O(m^4)$ remaining combinations of $r_A$, $r_B$, and $(\alpha,\beta)\in\Pi_{\text{narrowed}}$, and recurse on the corresponding (now independent) $A$- and $B$-problems in~\eqref{eq:A-B-problem}.
If we could guarantee that both problems had about the same number of variables in each such step (more precisely, at least a constant fraction of the original variables), this would lead to a polynomial time procedure at least for constant moduli $m$:
The number of variables would go down by roughly a factor of two in every step; hence we would fall back to cases~\ref{thmitem:TUdecomp_netw} or~\ref{thmitem:TUdecomp_const} of \cref{thm:TUdecomp} after $O(\log n)$ many iterations at the latest, each increasing the number of subproblems by a factor of $O(m^4)$, giving a total running time bound of $m^{O(\log n)}$.\footnote{More generally, this enumerative approach is efficient whenever the depth of Seymour's decomposition is at most logarithmic in the input size.}
Unfortunately, the guarantees of \cref{thm:TUdecomp} are much weaker: We can only guarantee that both $A$ and $B$ have at least two columns, and if their sizes happen to be imbalanced in most decomposition steps, the above argument fails.

Still, we can always solve the relaxations of both problems for all $(\alpha,\beta)\in\Pi_{\text{narrowed}}$.
Without loss of generality, let us assume that the $B$-problem is the smaller among the $A$- and the $B$-problem (with respect to the number of columns in its constraint matrix, i.e., the number of variables).
Because $B$ has at most half the number of columns compared to $T$, it turns out that we can afford (in terms of running time) to recursively call \cref{thm:R=m-2} on an \RCCTUF{} version of the $B$-problem (i.e., the $B$-problem in~\eqref{eq:A-B-problem} with the congruency constraint replaced by $\gamma_B^\top x_B\in R_B\pmod*{m}$, for sets $R_B$ of the same size as the set $R$ in the original problem). Concretely, for any fixed $(\alpha, \beta)\in\Pi_{\text{narrowed}}$, at most $m-|R|+1$ such recursive calls suffice to determine up to $m-|R|+1$ different feasible residues of the $B$-problem (or fewer, if there are less than that many). We elaborate on why this is enough in what follows.

Let $\pi\colon \Pi_{\text{narrowed}}\to 2^{\{0, \ldots, m-1\}}$ be the function assigning to any $(\alpha, \beta)\in \Pi_{\text{narrowed}}$ the set $\pi(\alpha,\beta)\subseteq \{0,\ldots, m-1\}$ of residues $r_B\in\{0, 1, \ldots, m-1\}$ for which the $B$-problem is feasible.
We call $\pi$ a \emph{narrowed pattern} associated to the problem given in~\eqref{eq:structured-problem}.
Note that this pattern depends on the $3$-sum decomposition and the choice of $\ell_i$ and $u_i$ in \cref{lem:patternsBounds}, and hence may not be unique.
Also, we remark that a narrowed pattern can be seen as a restriction (to the narrowed domain $\Pi_{\text{narrowed}}$) of a \emph{global} pattern that maps any $(\alpha,\beta)\in\Pi$ to the corresponding set of feasible residues of the $B$-problem.

We can easily obtain a feasible solution for~\eqref{eq:structured-problem} if, among the solutions of the $B$-problem that we compute, we find a solution $x_B$ that fulfills $\gamma_A^{\top} x_A + \gamma_B^{\top} x_B \in R \pmod*{m}$, where $x_A$ is the computed solution to the relaxation of the $A$-problem.
Indeed, in this case, the concatenation of the two solutions $x_A$ and $x_B$ is feasible for the relaxation of \eqref{eq:structured-problem}.
In particular, if $|\pi(\alpha, \beta)| \geq m - |R| + 1$ for some $(\alpha, \beta)\in\Pi_{\text{narrowed}}$, we are guaranteed that there is such a feasible combination.
As explained above, through recursive calls to our procedure on the $B$-problem, we can decide whether we are in this case, and if so also compute $m - |R| + 1$ different feasible residues (and corresponding solutions).
Concretely, if we start from a problem with $|R| = m-2$, whenever we find a pair $(\alpha,\beta)$ of scalar products in $\Pi_{\text{narrowed}}$ with $|\pi(\alpha, \beta)|\geq 3$, we can find a feasible solution.
If $|\pi(\alpha, \beta)|\leq 2$ for all $(\alpha, \beta)\in\Pi_{\text{narrowed}}$, we study the pattern $\pi$ more closely.

One interesting special case is when $|\pi(\alpha, \beta)| = 1$ for all $(\alpha, \beta)\in\Pi_{\text{narrowed}}$, i.e., each of the $B$-problems is feasible for precisely one residue $r_B$. It turns out that in this case, $\pi$ is $\emph{linear}$ in the following sense.

\begin{definition}\label{def:linearity}
Let $\Pi\subseteq \mathbb{Z}^2$, and let $\pi\colon \Pi\to 2^{\{0,\ldots,m-1\}}$ for some $m\in\mathbb{Z}_{>0}$.
We say that $\pi$ is \emph{linear} if $|\pi(\alpha,\beta)|=1$ for all $(\alpha,\beta)\in\Pi$, and there exist $r_0,r_1,r_2\in\mathbb{Z}$ such that the mapping $r\colon \Pi\to\mathbb{Z}$ fulfilling $\pi(\alpha,\beta)=\{r(\alpha,\beta)\}$ satisfies $r(\alpha,\beta)\equiv r_0+r_1\alpha+r_2\beta\pmod*{m}$ for all $(\alpha,\beta)\in\Pi$.
\end{definition}

Linearity of $\pi$ and the shape of the domain $\Pi_{\text{narrowed}}$ makes it possible to encode the feasibility structure of the $B$-problem in only two variables $y_1$ and $y_2$ that represent the scalar products $\alpha$ and $\beta$, which allows for replacing $x_B$ with those new variables.

\begin{theorem}\label{thm:integration}
Consider an \RCCTUF{} problem of the form given in~\eqref{eq:structured-problem} and let $\pi$ an associated narrowed pattern. If $\pi$ is linear, then~\eqref{eq:structured-problem} can be reduced to the \RCCTUF{} problem
\begin{equation}\label{eq:integration}
\begin{array}{rcrcrcrcl}
       &      &             A x_A & + &   e y_1 &   &         & \leq   & b_A \\
       &      &        h^\top x_A &   &         & - &     y_2 &   =    & 0 \\
\ell_0 & \leq &                   &   &     y_1 & + &     y_2 & \leq   & u_0 \\
\ell_1 & \leq &                   &   &     y_1 &   &         & \leq   & u_1 \\
\ell_2 & \leq &                   &   &         &   &     y_2 & \leq   & u_2 \\
       &      & \gamma_A^\top x_A & + & r_1 y_1 & + & r_2 y_2 & \in    & r_0 + R \pmod{m}\\
       &      &               x_A &   &         &   &         & \in    & \mathbb{Z}^{n_A}\\
       &      &                   &   &     y_1 & , &     y_2 & \in    & \mathbb{Z}\\
\end{array}
\end{equation}
for suitable $\ell_0, u_0, \ell_1, u_1, \ell_2, u_2\in\mathbb{Z}$ with $u_i-\ell_i \leq m-|R|$ and $r_0, r_1, r_2\in \{0,1,\ldots, m-1\}$ that can be determined in strongly polynomial time. That is, the initial \RCCTUF{} problem is feasible if and only if~\eqref{eq:integration} is, and a solution of one problem can be transformed into one for the other in strongly polynomial time.
\end{theorem}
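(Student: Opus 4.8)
The plan is to prove the claimed reduction by setting up the natural correspondence between solutions of~\eqref{eq:structured-problem} and of~\eqref{eq:integration}, in which the $B$-block is ``integrated out'': the new integer variables $y_1, y_2$ stand for the two scalars $\alpha = f^\top x_B$ and $\beta = h^\top x_A$ through which the $B$-block couples to the rest of the problem, and linearity of $\pi$ lets us replace the residue contribution $\gamma_B^\top x_B$ of the $B$-block by the affine expression $\rho_0 + \rho_1\alpha + \rho_2\beta$ that it is forced to equal modulo $m$. Concretely, I would take the bounds $\ell_i, u_i$ to be those delivered by \cref{lem:patternsBounds}, in the refined form (established in \cref{sec:patterns}) for which $\Pi_{\text{narrowed}}$ equals the box $\{(\alpha,\beta)\in\mathbb{Z}^2 : \ell_0\le\alpha+\beta\le u_0,\ \ell_1\le\alpha\le u_1,\ \ell_2\le\beta\le u_2\}$, and set $(r_1,r_2)\equiv(\rho_1,\rho_2)$ and $r_0\equiv -\rho_0\pmod m$, reduced into $\{0,\ldots,m-1\}$. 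Since $\pi$ is a known function on $\Pi_{\text{narrowed}}$ (which has $O(m^2)$ points), the coefficients $\rho_0,\rho_1,\rho_2$ witnessing its linearity are obtained by evaluating $\pi$ at a constant number of affinely independent points of $\Pi_{\text{narrowed}}$ and solving a $3\times 3$ linear system over $\mathbb{Z}_m$; if $\Pi_{\text{narrowed}}$ is lower-dimensional, the unconstrained coefficients are chosen arbitrarily. All of this takes strongly polynomial time.

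For the forward direction, suppose~\eqref{eq:structured-problem} is feasible. By \cref{lem:patternsBounds} it has a solution $(x_A,x_B)$ with $\alpha:=f^\top x_B$ and $\beta:=h^\top x_A$ satisfying the box constraints; since $(\alpha,\beta)$ is the image of a feasible solution under the projection defining $\Pi$, also $(\alpha,\beta)\in\Pi$, hence $(\alpha,\beta)\in\Pi_{\text{narrowed}}$. Put $y_1:=\alpha$ and $y_2:=\beta$. Then the top block of~\eqref{eq:structured-problem} reads $Ax_A + \alpha e\le b_A$, i.e.\ $Ax_A + ey_1\le b_A$; the identity $h^\top x_A=\beta$ is $h^\top x_A - y_2=0$; the box constraints hold by choice; and since $x_B$ is feasible for the residue-free $B$-problem at $(\alpha,\beta)$, its residue $\gamma_B^\top x_B\bmod m$ lies in the singleton $\pi(\alpha,\beta)=\{\rho_0+\rho_1\alpha+\rho_2\beta\}$, so substituting into $\gamma_A^\top x_A + \gamma_B^\top x_B\in R\pmod m$ yields $\gamma_A^\top x_A + r_1y_1 + r_2y_2\in r_0+R\pmod m$. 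Thus $(x_A,y_1,y_2)$ solves~\eqref{eq:integration}. Conversely, given a solution $(x_A,y_1,y_2)$ of~\eqref{eq:integration}, set $\alpha:=y_1$ and $\beta:=y_2=h^\top x_A$; the $x_A$-part certifies that the residue-free $A$-problem is feasible at $(\alpha,\beta)$, while the box constraints place $(\alpha,\beta)$ in $\Pi_{\text{narrowed}}\subseteq\Pi$, so the residue-free $B$-problem is feasible at $(\alpha,\beta)$ as well; pick any feasible $x_B$ for it, which exists and is computable in strongly polynomial time since $\begin{psmallmatrix}f^\top\\B\end{psmallmatrix}$ is totally unimodular. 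Linearity forces $\gamma_B^\top x_B\equiv\rho_0+\rho_1\alpha+\rho_2\beta\pmod m$, and reversing the congruence computation shows $(x_A,x_B)$ satisfies all constraints of~\eqref{eq:structured-problem}. Both directions plainly transform a solution of one problem into a solution of the other in strongly polynomial time.

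It remains to check that~\eqref{eq:integration} is a legitimate \RCCTUF{} instance, i.e.\ that its constraint matrix is totally unimodular. I would verify this by exhibiting that matrix as a $3$-sum $\begin{psmallmatrix}A & e & e\\ h^\top & 0 & 1\end{psmallmatrix}\ksum[3] M$, where $\begin{psmallmatrix}A & e & e\\ h^\top & 0 & 1\end{psmallmatrix}$ is the left summand in the $3$-sum decomposition of $T$ (totally unimodular, being a summand of a totally unimodular matrix) and $M$ is a fixed, constant-size matrix encoding the three box constraints and the coupling $h^\top x_A = y_2$; a one-off computation shows that $M$ is totally unimodular, and $3$-sums preserve total unimodularity. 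In the cases where $T$ is only a $1$- or $2$-sum, the vectors $e$ or $h$ vanish and~\eqref{eq:integration} degenerates accordingly, but the analogous (and simpler) argument applies.

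The main obstacle I anticipate is not the bookkeeping above but the dependence on the refined choice of bounds with $\Pi_{\text{narrowed}}$ equal to the box: this equality is precisely what lets the simple box constraints in~\eqref{eq:integration} serve as a faithful surrogate for membership in $\Pi$ --- it is what permits us, in the converse direction, to manufacture the solution $x_B$ --- and establishing it relies on the structural description of $\Pi$ as (essentially) the projection of the relaxation's feasible region onto the $(\alpha,\beta)$-plane, which is developed in \cref{sec:patterns}. A minor but genuine care point is keeping the sign conventions in the congruency arithmetic consistent, so that the target residue set comes out as $r_0+R$ exactly as written in~\eqref{eq:integration} rather than as $R-r_0$.
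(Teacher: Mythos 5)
Your proposal is correct and follows essentially the same route as the paper, which proves a slightly more general sub-pattern version (\cref{thm:integrationSubpatterns}) and specializes it with $\widetilde{\pi}=\pi$: it uses the refined bounds making $\Pi_{\text{narrowed}}$ equal to the box (\cref{lem:narrowedPiIneqs}), substitutes the linear form for $\gamma_B^\top x_B$, and recovers $x_B$ in the converse direction by solving the TU relaxation of the $B$-problem. Your explicit TU verification of the new constraint matrix and the extraction of the coefficients $r_0,r_1,r_2$ (best done via residue differences at adjacent lattice points, as in \cref{cor:linearSubpattern}, rather than a general $3\times3$ system over $\mathbb{Z}_m$) are fine additions that do not change the argument.
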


Hence, when $\pi$ is linear, we aim at applying \cref{thm:integration} and continuing our procedure with the \RCCTUF{} problem~\eqref{eq:integration}. To make progress, we aim at obtaining a smaller problem, which, as before, we measure in terms of the number of variables.
Note that the number of variables of~\eqref{eq:integration} is the number of columns of $A$ plus $2$, which is the same as the number of columns of the original problem plus $2$ minus the number of columns of $B$.
However, recall that by \cref{thm:TUdecomp}, we are only guaranteed that the matrix $B$ has at least two columns---which, in the extreme case, is not enough to reduce the number of columns through \cref{thm:integration}. Nevertheless, the equality constraint in~\eqref{eq:integration} allows for eliminating a variable while keeping the TU structure of the constraint matrix, thus guaranteeing that we can make progress. The following theorem formalizes this result.

\begin{theorem}\label{thm:projection}
Let $\begin{psmallmatrix}
A & a_1 \\ a_2^\top & \alpha
\end{psmallmatrix}$ be a TU matrix with $\alpha\neq 0$. Then, the matrix $A-\alpha a_1a_2^\top$ is TU, and the two systems
$
\quad\left\{\begin{array}{rcrcl}
Ax & + & a_1y & \leq & b \\
a_2^\top x & + & \alpha y & = & \beta
\end{array}\right.$\quad and
$\quad\left\{\begin{array}{rcl}
\left(A-\alpha a_1a_2^\top\right) x & \leq & b-\alpha\beta a_1 \\
y & = & \alpha\beta - \alpha a_2^\top x
\end{array}\right.
$\quad are equivalent.
\end{theorem}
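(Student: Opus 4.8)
The plan is to establish the two assertions separately; both reduce to short verifications once $A-\alpha a_1 a_2^\top$ is recognized as the lower-right block produced by a pivoting operation. First I would record the one structural fact that drives everything: since $\begin{psmallmatrix} A & a_1 \\ a_2^\top & \alpha\end{psmallmatrix}$ is totally unimodular and $\alpha$ is one of its entries, we have $\alpha\in\{-1,1\}$, so $\alpha$ is its own inverse and $\alpha^2=1$; this will be used repeatedly without further comment.

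For the first claim, I would argue that $A-\alpha a_1 a_2^\top$ is exactly what results from pivoting on the entry $\alpha$ of the given matrix. Concretely, permuting the last row and the last column to the front (as in \cref{def:pivoting}) turns $\begin{psmallmatrix} A & a_1 \\ a_2^\top & \alpha\end{psmallmatrix}$ into $\begin{psmallmatrix} \alpha & a_2^\top \\ a_1 & A\end{psmallmatrix}$; applying the pivoting formula of \cref{def:pivoting} with $\varepsilon=\alpha$, $p=a_2$, $q=a_1$, and $C=A$ produces $\begin{psmallmatrix} -\alpha & \alpha a_2^\top \\ \alpha a_1 & A-\alpha a_1 a_2^\top\end{psmallmatrix}$, and undoing the permutation gives $\begin{psmallmatrix} A-\alpha a_1 a_2^\top & \alpha a_1 \\ \alpha a_2^\top & -\alpha\end{psmallmatrix}$. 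Since a pivoted matrix is totally unimodular if and only if the original one is, this matrix is TU, and hence so is its submatrix $A-\alpha a_1 a_2^\top$, using that submatrices of TU matrices are TU.

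For the equivalence of the two systems, I would first solve the equality constraint of the left-hand system for $y$: multiplying $a_2^\top x + \alpha y = \beta$ by $\alpha$ and using $\alpha^2=1$ yields $y = \alpha\beta - \alpha a_2^\top x$, which is precisely the defining relation for $y$ in the right-hand system. Thus in both systems $y$ is pinned to $x$ by the same affine formula, and it only remains to match the inequalities under this substitution: plugging $y=\alpha\beta-\alpha a_2^\top x$ into $Ax+a_1 y\leq b$ gives $(A-\alpha a_1 a_2^\top)x \leq b-\alpha\beta a_1$, which is exactly the inequality of the right-hand system, and every manipulation is reversible. Hence the identity on the $x$-coordinates is a bijection between the two solution sets. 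I would also note that this bijection preserves integrality: since $\alpha,\beta\in\mathbb{Z}$, the value $y=\alpha\beta-\alpha a_2^\top x$ is an integer whenever $x$ is, which is what makes the reduction usable inside \RCCTUF{} problems.

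I do not expect a genuine obstacle here: the statement is a bookkeeping lemma. The only points requiring mild care are (i) recording at the outset that $\alpha\in\{-1,1\}$, so that $\alpha^{-1}=\alpha$ and $\alpha^2=1$ can be invoked freely, and (ii) aligning $A-\alpha a_1 a_2^\top$ with the output of \cref{def:pivoting} through the right row/column permutation, so that the ``pivoting preserves total unimodularity'' fact applies directly rather than having to re-derive TU-ness of $A-\alpha a_1 a_2^\top$ from a determinant expansion (which would also work, via a Cauchy--Binet-type identity between minors of $A-\alpha a_1 a_2^\top$ and minors of the original matrix, but is more tedious).
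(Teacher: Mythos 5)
Your proposal is correct and follows essentially the same route as the paper's proof: total unimodularity of $A-\alpha a_1a_2^\top$ via a pivoting operation on the entry $\alpha$ (whose output contains it as a submatrix), and equivalence of the two systems by solving the equality constraint for $y$ using $\alpha\in\{-1,1\}$ and substituting into the inequalities. Your extra details (the explicit permutation bookkeeping and the remark on integrality) are fine but not substantively different from the paper's argument.
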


Combining \cref{thm:integration,thm:projection}, we can thus make progress in case of a linear narrowed pattern $\pi$.
For non-linear narrowed patterns, like the one exemplified in \cref{fig:pattern}, there are pairs $(\alpha,\beta)$ for which there is more than one residue available, i.e., $|\pi(\alpha,\beta)|$, which is an additional flexibility we can exploit as follows.

\begin{figure}
\centering
\newcommand{\figscale}{0.9}
\begin{tikzpicture}[scale=\figscale]
\begin{scope}[every node/.style={draw, thick, rectangle, minimum width=\figscale cm, minimum height=\figscale cm, inner sep=0pt, text width=\figscale cm, align=center}]
\foreach \x/\y/\fill/\text in {-1/ 1/none/{\{0,1\}},  0/ 1/none/{\{1\}},   -1/ 0/none/{\{0\}},    0/ 0/none/{\{0,1\}}, 0/-1/none/{\{0\}}  }
\node[fill=\fill, font=\scriptsize] (\x\y) at (\x, \y) {$\text$};
\end{scope}

\begin{scope}[every path/.style={gray}]
\draw[->] (-2.25, -2) to node[pos=1, anchor=north west, inner sep=1pt] {$\alpha$} (1, -2);
\draw[->] (-2, -2.25) to node[pos=1, anchor=south east, inner sep=1pt] {$\beta$} (-2, 2);
\foreach \x in {-1, ..., 0}
\draw (\x, -2.1) to node[pos=0, anchor=north, font=\footnotesize] {$\x$} (\x, -1.9);
\foreach \y in {-1, ..., 1}
\draw (-2.1, \y) to node[pos=0, anchor=east, font=\footnotesize] {$\y$} (-1.9, \y);
\end{scope}

\clip (-3, -2.5) rectangle (2, 2.5);

\end{tikzpicture} \caption{A non-linear pattern $\pi$ with support defined by $-1\leq\alpha\leq 0$, $-1\leq\beta\leq1$, and $-1\leq \alpha+\beta\leq 1$.}
\label{fig:pattern}
\end{figure}
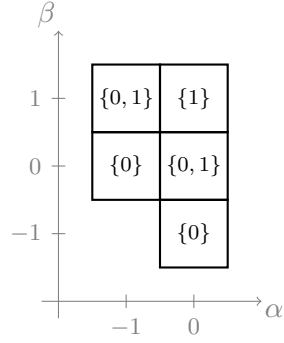

Concretely, consider a pair $(\alpha, \beta)\in\Pi_{\text{narrowed}}$ of scalar products with $\pi(\alpha, \beta) = \{r_B^1, \ldots, r_B^\ell\}$ for some $\ell>1$ and pairwise different $r_B^i\in\{0,\ldots,m-1\}$, and let $x_B^{1},\ldots, x_B^{\ell}$ be solutions of the relaxation of the $B$-problem with residues $\gamma_B^\top x_B^i=r_B^i$.
Observe that we can combine any feasible solution $x_A$ of the corresponding $A$-problem with any of the solutions $x_B^i$ to obtain feasible solutions $(x_A, x_B^i)$ of the relaxation of the initial \RCCTUF{} problem.
Thus, there is a solution with scalar products $(\alpha, \beta)$ if and only if the following variation of the $A$-problem is feasible, where $R'\coloneqq R-\pi(\alpha, \beta)=\{(r - r_B \bmod{m})\colon r\in R, r_B\in \pi(\alpha, \beta)\}$:
\begin{equation}\label{eq:AProbRedMoreThanOneRes}
\begin{array}{rcl}
A x_A             & \leq & b_A - \alpha e            \\
h^\top x_A        & =    & \beta                     \\
\gamma_A^\top x_A & \in  & R'   \pmod{m}             \\
x_A               & \in  & \mathbb{Z}^{n_A}\enspace.
\end{array}
\end{equation}
We will create a subproblem of the above form for each pair $(\alpha,\beta)\in \Pi_{\text{narrowed}}$ with $|\pi(\alpha,\beta)|\geq 2$, and recurse on these problems.
Doing so for all such scalar product pairs $(\alpha, \beta)\in\Pi_{\text{narrowed}}$, we create $O(m^2)$ many \RCCTUF{} problems to recurse on, each having at most $n-2$ many variables.
A key observation that allows for bounding the number of times we construct a problem of type~\eqref{eq:AProbRedMoreThanOneRes} and recurse on it is that problem~\eqref{eq:AProbRedMoreThanOneRes} is simpler than the problem we started with, because the set of target residues $R'$ strictly increased in size compared to $R$, whenever $m$ is a prime number.
This is a consequence of the Cauchy-Davenport Inequality stated below.
\begin{lemma}[Cauchy-Davenport Inequality] \label{lem:cauchyDavenport}
Let $m$ be a prime number and let $R_1, R_2\subseteq \{0, \ldots, m-1\}$. Then
$$
|\{(r_1 + r_2 \bmod{m})\colon r_1\in R_1, r_2\in R_2\}| \geq \min\{m, |R_1| + |R_2| - 1\}\enspace.
$$
\end{lemma}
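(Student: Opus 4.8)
The inequality in \cref{lem:cauchyDavenport} is the classical Cauchy--Davenport theorem, so the plan is to reproduce one of its standard proofs; I would use the Dyson \emph{$e$-transform}, which keeps the argument self-contained. Since $m$ is prime I identify $\{0,\ldots,m-1\}$ with the field $\mathbb{Z}_m\coloneqq\mathbb{Z}/m\mathbb{Z}$, so that the quantity to bound is $|R_1+R_2|$ with $R_1+R_2\coloneqq\{r_1+r_2 \colon r_1\in R_1,\ r_2\in R_2\}\subseteq\mathbb{Z}_m$, and I may assume $R_1,R_2\neq\emptyset$. First I would dispose of the case $|R_1|+|R_2|>m$: for every $g\in\mathbb{Z}_m$ the sets $R_1$ and $g-R_2$ together have more than $m$ elements, hence intersect, which exhibits $g$ as a sum $r_1+r_2$; thus $R_1+R_2=\mathbb{Z}_m$ and the bound holds with value $m$. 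It remains to handle $|R_1|+|R_2|\le m$, where the claim reads $|R_1+R_2|\ge|R_1|+|R_2|-1$.

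For this main case I would induct on $|R_2|$; cardinalities and sumsets are symmetric in the two sets, so there is no loss in fixing the second one as the induction variable. The base case $|R_2|=1$ is immediate, as $R_1+R_2$ is then a translate of $R_1$. For the step, assume $|R_2|\ge 2$ and also $R_1+R_2\neq\mathbb{Z}_m$ (else we are done). The tool is: for $e\in\mathbb{Z}_m$ set
\begin{equation*}
R_1^{(e)}\coloneqq R_1\cup(R_2+e),\qquad R_2^{(e)}\coloneqq R_2\cap(R_1-e)\enspace.
\end{equation*}
Two short verifications make this useful: (i) $R_1^{(e)}+R_2^{(e)}\subseteq R_1+R_2$, since $R_1+R_2^{(e)}\subseteq R_1+R_2$ (as $R_2^{(e)}\subseteq R_2$) and $(R_2+e)+R_2^{(e)}\subseteq (R_2+e)+(R_1-e)=R_1+R_2$ (as $R_2^{(e)}\subseteq R_1-e$); and (ii) $|R_1^{(e)}|+|R_2^{(e)}|=|R_1|+|R_2|$, because $R_2^{(e)}+e=(R_2+e)\cap R_1$ gives $|R_2^{(e)}|=|R_1\cap(R_2+e)|$, while inclusion--exclusion gives $|R_1^{(e)}|=|R_1|+|R_2|-|R_1\cap(R_2+e)|$. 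Then, granted an $e$ with $1\le|R_2^{(e)}|\le|R_2|-1$, I am done: $(R_1^{(e)},R_2^{(e)})$ is a pair of nonempty sets with the same cardinality sum (hence still $\le m$) but with strictly smaller second cardinality, so the induction hypothesis together with (i) yields $|R_1+R_2|\ge|R_1^{(e)}+R_2^{(e)}|\ge|R_1^{(e)}|+|R_2^{(e)}|-1=|R_1|+|R_2|-1$.

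The only genuine obstacle is producing such an $e$, and this is precisely where primality enters. One checks that $|R_2^{(e)}|\ge 1$ exactly when $e\in R_1-R_2$, and $|R_2^{(e)}|\le|R_2|-1$ exactly when $R_2+e\not\subseteq R_1$; so I want $e\in R_1-R_2$ with $R_2+e\not\subseteq R_1$. I would argue by contradiction: if no such $e$ exists, pick any $e_0\in R_1-R_2$ (nonempty) and any nonzero $d\in R_2-R_2$ (which exists since $|R_2|\ge 2$), and show by induction on $j$ that $e_0+jd\in R_1-R_2$, hence $R_2+e_0+jd\subseteq R_1$, for all $j\ge 0$: writing $d=s_1-s_2$ with $s_1,s_2\in R_2$, from $e_0+jd\in R_1-R_2$ we get $(e_0+jd)+s_1\in R_2+(e_0+jd)\subseteq R_1$, so $e_0+(j+1)d=\big((e_0+jd)+s_1\big)-s_2\in R_1-R_2$. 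Since $m$ is prime, the nonzero element $d$ generates $\mathbb{Z}_m$, so $\{jd\colon j\ge 0\}=\mathbb{Z}_m$ and therefore $R_1=\mathbb{Z}_m$; but then $R_1+R_2=\mathbb{Z}_m$, contradicting our assumption. This completes the plan. (As an aside, one could instead avoid the induction altogether: assuming $|R_1+R_2|\le|R_1|+|R_2|-2$, pick a superset $C\supseteq R_1+R_2$ with $|C|=|R_1|+|R_2|-2<m$ and apply the Combinatorial Nullstellensatz to $\prod_{c\in C}(x+y-c)$, using that the coefficient of $x^{|R_1|-1}y^{|R_2|-1}$ equals $\binom{|R_1|+|R_2|-2}{|R_1|-1}\not\equiv 0\pmod{m}$; but the $e$-transform argument is preferable if self-containedness is desired.)
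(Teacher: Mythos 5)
Your proof is correct. Note that the paper itself gives no argument for \cref{lem:cauchyDavenport}: it is the classical Cauchy--Davenport theorem, invoked as a known result, so there is no in-paper proof to compare against. Your Dyson $e$-transform argument is a complete and standard self-contained proof: the two verifications (the transform preserves $|R_1|+|R_2|$ and does not enlarge the sumset) are accurate, the induction on $|R_2|$ is set up correctly with the case split $|R_1|+|R_2|>m$ handled separately, and the only place primality is needed---producing $e\in R_1-R_2$ with $R_2+e\not\subseteq R_1$ via the fact that a nonzero $d\in R_2-R_2$ generates $\mathbb{Z}_m$---is argued correctly; the Combinatorial Nullstellensatz aside is also a valid alternative.
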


Consequently, after at most $m-|R|$ reduction steps, the target residues comprise all possible residues and the corresponding problem gets trivial.
Therefore, the total number of subproblems that are spawned can be bounded by $O(m^{2(m-|R|)})$.
It thus remains to discuss scalar products $(\alpha, \beta)\in\Pi_{\text{narrowed}}$ with $|\pi(\alpha, \beta)|=1$ that are not covered by the previous arguments.
\cref{fig:pattern} shows an example of a narrowed pattern that contains three scalar product pairs $(\alpha, \beta)$ with $|\pi(\alpha,\beta)|=1$ together with two pairs with $|\pi(\alpha,\beta)|=2$.
Again, explicitly solving the corresponding $A$-problems is not an option because we lack the necessary progress either in terms of the number of variables or the number of target residues.

Also, it is not possible to apply \cref{thm:integration} only to the pairs $(\alpha,\beta)\in\Pi_{\text{narrowed}}$ with $|\pi(\alpha,\beta)|=1$, because \cref{thm:integration} crucially relies on the shape of the full domain of $\pi$, which can be described by inequalities of the form $\ell_0 \leq \alpha+\beta \leq u_0$, $\ell_1 \leq \alpha \leq u_1$, and $ \ell_2\leq \beta \leq u_2$.
Therefore, we focus in this case on identifying a well-chosen linear \emph{sub-pattern} $\widetilde{\pi}$ of $\pi$, i.e., a mapping $\widetilde{\pi}$ with the properties that
\begin{enumerate*}
\item its domain is a subset of the domain of $\pi$ and can be described by inequalities of the above type,
\item $\widetilde{\pi}(\alpha, \beta)=\{r_{\alpha,\beta}\}$ for some $r_{\alpha,\beta}\in\pi(\alpha, \beta)$, and
\item $\tilde{\pi}$ is linear according to \cref{def:linearity}.
\end{enumerate*}
Loosely speaking, a sub-pattern covers some of the available residues in the $B$-problem, and it is structured enough so that we can apply a variation of \cref{thm:integration} to cover these options through a smaller problem.
If $|R|\geq m-2$, it turns out that one such sub-pattern is enough in the following sense.

\begin{lemma}\label{lem:coveringPattern}
Let $\pi\colon\Pi_{\text{narrowed}}\to 2^{\{0,\ldots,m-1\}}$ be a narrowed pattern associated to a feasible \RCCTUF{} problem of the form in~\eqref{eq:structured-problem} with prime modulus $m$ and $|R|\geq m-2$.
Then, we can in strongly polynomial time determine a linear sub-pattern $\widetilde\pi$ of $\pi$ such that one of the following holds:
\begin{enumerate}
\item\label{lemitem:goodCombo} There are $(\alpha, \beta)\in\Pi_{\text{narrowed}}$ with $|\pi(\alpha,\beta)|=1$ so that for any $x_A$ solving the relaxation of the $A$-problem with respect to $(\alpha, \beta)$, there is an $x_B$ solving the relaxation of the $B$-problem such that the combination $(x_A,x_B)$ is feasible for the \RCCTUF{} problem.

\item\label{lemitem:type1}
There is a feasible solution for some pair $(\alpha,\beta)\in \Pi_{\text{narrowed}}$ with $|\pi(\alpha,\beta)|\geq 2$.

\item\label{lemitem:mainBranch}
There is a feasible solution $(x_A,x_B)$ for some pair $(\alpha,\beta)\in\operatorname{dom}(\widetilde{\pi})$ such that $\widetilde\pi(\alpha,\beta)=\{\gamma_B^\top x_B\}$.
\end{enumerate}
\end{lemma}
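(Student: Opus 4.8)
The idea is to exploit that in the relevant parameter range the narrowed domain $\Pi_{\text{narrowed}}$ is a very small lattice polygon on which $\pi$ takes few values, reducing the statement to a bounded case distinction, and to use the decomposition/width machinery of \cref{sec:overviewDecompAndFlat} to link the combinatorial structure of $\pi$ to the $A$-block. We may assume $|R|=m-2$; the case $|R|=m$ is trivial, and $|R|=m-1$ is the analogous but easier situation where $m-|R|=1$, so $\Pi_{\text{narrowed}}$ has at most three lattice points (a point, an edge, or an affinely independent triple of determinant $\pm1$) and every single-valued map on it is already linear, so $\widetilde\pi=\pi$ settles it after the straightforward reduction to $|\pi|\le1$ below. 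So let $|R|=m-2$, i.e., $m-|R|=2$. Then each of the three defining intervals of $\Pi_{\text{narrowed}}$ has width at most $2$ (by \cref{lem:patternsBounds}), so $\Pi_{\text{narrowed}}$ has at most seven lattice points, which we enumerate explicitly. For each such $(\alpha,\beta)$ we compute $\pi(\alpha,\beta)$ --- or three distinct feasible residues of the $B$-problem --- by peeling off already-found residues and recursively invoking \cref{thm:R=m-2} on the strictly smaller $B$-problem with residue sets of size $\ge m-2$; this is strongly polynomial. If three distinct feasible residues turn up, \cref{lem:cauchyDavenport} gives $R-\pi(\alpha,\beta)=\{0,\dots,m-1\}$, so the relaxation of the $A$-problem at $(\alpha,\beta)$ --- which is feasible since $(\alpha,\beta)\in\Pi$ --- already yields a feasible solution of~\eqref{eq:structured-problem}, and outputting any one-point linear sub-pattern puts us in item~\ref{lemitem:type1}. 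Hence we may assume $|\pi(\alpha,\beta)|\le2$ everywhere, and we set $S_1:=\{(\alpha,\beta)\in\Pi_{\text{narrowed}}:|\pi(\alpha,\beta)|=1\}$.

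\textbf{Locating a witness.} Since the \RCCTUF{} problem is feasible, \cref{lem:patternsBounds} (together with the TU-appendability, shown in \cref{sec:boundedProducts}, of the rows realizing $\alpha$, $\beta$, and $\alpha+\beta$) provides a feasible solution $(x_A^{\star},x_B^{\star})$ with scalar products $(\alpha^{\star},\beta^{\star})\in\Pi_{\text{narrowed}}$. If $|\pi(\alpha^{\star},\beta^{\star})|=2$ this is already item~\ref{lemitem:type1}. Otherwise $(\alpha^{\star},\beta^{\star})\in S_1$; writing $\pi(\alpha^{\star},\beta^{\star})=\{r_B^{\star}\}$ we have $\gamma_B^{\top}x_B^{\star}\equiv r_B^{\star}$ and $\gamma_A^{\top}x_A^{\star}\in R-r_B^{\star}\pmod{m}$. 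Two observations will be used: (a) any $x_B$ solving the relaxation of the $B$-problem at $(\alpha^{\star},\beta^{\star})$ again satisfies $\gamma_B^{\top}x_B\equiv r_B^{\star}$, hence $(x_A^{\star},x_B)$ is feasible for~\eqref{eq:structured-problem}; (b) for any linear sub-pattern $\widetilde\pi$ with $(\alpha^{\star},\beta^{\star})\in\operatorname{dom}(\widetilde\pi)$ the inclusion $\widetilde\pi(\alpha^{\star},\beta^{\star})\subseteq\pi(\alpha^{\star},\beta^{\star})=\{r_B^{\star}\}$ forces equality, so such a $(x_A^{\star},x_B)$ certifies item~\ref{lemitem:mainBranch}. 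Thus, in the remaining case, it suffices to produce a linear sub-pattern whose domain contains all of $S_1$, or to exhibit a point of $S_1$ witnessing item~\ref{lemitem:goodCombo}.

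\textbf{The dichotomy (technical core).} For $(\alpha,\beta)\in\Pi_{\text{narrowed}}$ write $T_A(\alpha,\beta)$ for the set of residues $\gamma_A^{\top}x_A\bmod m$ attained by solutions $x_A$ of the relaxation of the $A$-problem at $(\alpha,\beta)$; a point $(\alpha,\beta)\in S_1$ witnesses item~\ref{lemitem:goodCombo} exactly when $T_A(\alpha,\beta)\subseteq R-\pi(\alpha,\beta)$. We run an $O(1)$-size search over pairs $(L,D)$, where $L\equiv r_0+r_1\alpha+r_2\beta\pmod{m}$ is a linear form and $D$ is a box-shaped subset of $\Pi_{\text{narrowed}}$, for one satisfying $S_1\subseteq D$ and $L(\alpha,\beta)\in\pi(\alpha,\beta)$ for all $(\alpha,\beta)\in D$; that the relevant linear systems over $\mathbb{Z}/m$ behave well uses primality of $m$ and the fact that $\Pi_{\text{narrowed}}$ is tiny, so that non-collinear triples there yield invertible $3\times 3$ systems apart from a few small-modulus configurations handled by hand. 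If the search succeeds, we output $\widetilde\pi:=L|_{D}$ and are in item~\ref{lemitem:mainBranch} by observation~(b). If it fails, $\pi|_{S_1}$ admits no linear realization over a box-shaped domain, and we argue that this, together with the structural facts of \cref{sec:overviewDecompAndFlat} controlling which residue sets $T_A(\alpha,\beta)$ can occur jointly with $(\alpha,\beta)\in\Pi$ under the width bounds, forces $T_A(\alpha,\beta)\subseteq R-\pi(\alpha,\beta)$ at some $(\alpha,\beta)\in S_1$, i.e., item~\ref{lemitem:goodCombo}. Every step consists of a bounded number of TU feasibility tests and recursive calls to the procedure on the strictly smaller $B$-problem, hence is strongly polynomial.

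\textbf{Main obstacle.} The delicate step is this last dichotomy --- translating the purely combinatorial failure ``$\pi|_{S_1}$ is not linearizable over a box'' into the statement ``$T_A(\alpha,\beta)\subseteq R-\pi(\alpha,\beta)$ for some $(\alpha,\beta)\in S_1$'' about residue reachability inside the $A$-block. This is where primality of $m$ is used (Cauchy--Davenport in the reduction, invertibility over $\mathbb{Z}/m$ in the search) and where the width/proximity tools of \cref{sec:overviewDecompAndFlat}, in the spirit of \cref{lem:transformSolutionEfficiently}, must be pushed through inside a block of Seymour's decomposition, alongside a finite but somewhat lengthy classification of the narrowed patterns with $|R|=m-2$ and $|\pi|\le2$ that can occur on the at-most-seven-point domain. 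Enumerating the domain shapes and the admissible pattern values is routine bookkeeping; the conceptual work lies in the bridge to $T_A$.
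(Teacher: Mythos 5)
Your framing is sound and matches the paper's setup: restricting attention to the at-most-seven-point narrowed domain, computing up to three residues per pair by recursing on the $B$-problem, dispatching $|\pi(\alpha,\beta)|\ge 3$ via Cauchy--Davenport, and observing that a point $(\alpha,\beta)$ with $|\pi(\alpha,\beta)|=1$ witnesses item~\ref{lemitem:goodCombo} exactly when every attainable $A$-residue lies in $R-\pi(\alpha,\beta)$. But the step you label the ``technical core'' is precisely the content of the lemma, and you assert it rather than prove it: nothing in \cref{sec:overviewDecompAndFlat} (neither \cref{lem:decompositionLemma} nor \cref{lem:transformSolutionEfficiently}) controls how the \emph{sets} of attainable $A$- and $B$-residues vary jointly across neighbouring scalar-product pairs. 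The paper needs a separate tool for exactly this, the averaging lemma (\cref{lem:averagingPatterns}), which re-splits the sum of two solutions for different $(\alpha,\beta)$ into two integral solutions for intermediate scalar products via TU-ness of a doubled system; that lemma drives the propagation of multi-valued points (\cref{lem:pushingTwos}), the linearity of singleton patterns on box-shaped domains (\cref{lem:equalDiff,cor:linearSubpattern}), and --- applied symmetrically to the $A$-side pattern $\pi_A$ --- the bridge \cref{lem:type2step}, which together with Cauchy--Davenport is what actually connects the combinatorics of $\pi$ to residue reachability in the $A$-block. No substitute for this machinery appears in your proposal.

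Moreover, the dichotomy you aim for (``if no linear form agrees with $\pi$ on a box containing $S_1$, then item~\ref{lemitem:goodCombo} holds at some point of $S_1$'') is stronger than what is true or needed, and your argument would break exactly where the paper has to work hardest. Take the pattern of \cref{fig:pattern} with $m=3$: the only box containing the three singleton points is the whole five-point domain, and the constraints $L(0,1)\equiv 1$, $L(0,-1)\equiv L(-1,0)\equiv 0$ force $L(0,0)\equiv 2\notin\{0,1\}$, so your search fails; yet the correct resolution there is \emph{not} that some singleton point satisfies item~\ref{lemitem:goodCombo}. The paper instead outputs a sub-pattern covering only part of $S_1$ (here the point $(-1,0)$, which has no admissible \cref{lem:type2step} configuration since $(1,0)$ and $(-1,2)$ leave the domain) and rescues the remaining singleton points conditionally: if a feasible solution sits there and item~\ref{lemitem:goodCombo} fails there, then $|\pi_A|\ge 2$ propagates to an adjacent multi-valued pair and Cauchy--Davenport yields item~\ref{lemitem:type1} at that neighbour. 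Establishing that every uncovered singleton point either admits such a configuration or can be absorbed into a linear sub-pattern built from averaged solutions is exactly what the interior/boundary case distinction (\cref{lem:nonEmptyInterior,lem:emptyInterior}) and the shape enumeration (\cref{lem:structureEmptyInterior}) accomplish; calling this ``routine bookkeeping'' plus an unproven ``bridge to $T_A$'' leaves the lemma unproved. (Two smaller points: your brute-force over all $m^3$ linear forms is only strongly polynomial for constant $m$, whereas the paper constructs the coefficients directly from solutions; and even in your $|R|=m-1$ aside, linearity of singleton patterns is a consequence of the averaging lemma, not a freebie, though the tiny domains there happen to make it harmless.)
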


Thus, to check feasibility for an \RCCTUF{} problem of the form~\eqref{eq:structured-problem}, we can first compute, for each pair $(\alpha,\beta)\in \Pi_{\text{narrowed}}$, a solution $x_A$ to the relaxation of the $A$-problem with respect to scalar products $(\alpha,\beta)$ and check whether there is a solution $x_B$ to the $B$-problem that, combined with $x_A$, leads to a feasible solution to the initial problem. If this is the case, we are done. Otherwise, we know that~\ref{lemitem:goodCombo} of \cref{lem:coveringPattern} does not hold, and therefore either~\ref{lemitem:type1} or~\ref{lemitem:mainBranch} must hold.
Moreover, as previously explained, we call our procedure recursively for pairs $(\alpha,\beta)\in \Pi_{\text{narrowed}}$ with $|\pi(\alpha,\beta)|\geq 2$, spawning independent and simpler (because we increase the size of the allowed target residues $R$) subproblems for the $A$-problem.  Hence, if~\ref{lemitem:type1} of \cref{lem:coveringPattern} applies, then one of these simpler subproblems will lead to a feasible solution to the original problem.
Finally, we apply (a slight extension of) \cref{thm:integration} using the linear sub-pattern $\widetilde{\pi}$ and \cref{thm:projection}, thereby reducing to problems with fewer variables.
By \cref{lem:coveringPattern}, we know that if there is a feasible solution, we will find one in the described procedure.
Altogether, we get to the following theorem.

\begin{theorem}\label{thm:decompProgress}
Consider an \RCCTUF{} problem with prime modulus $m$, $n$ variables, $\ell\in\{m-1, m-2\}$ many target residues, and a constraint matrix $T$ falling into case~\ref{thmitem:TUdecomp_sum} of \cref{thm:TUdecomp}. Let $p=\min\{n_A, n_B\}$ be the number of columns of the smaller matrix in the decomposition.
After solving less than $3(m-\ell+1)^2$ many \RCCTUF{} problems with $p$ variables, modulus $m$, and at most $\ell$ target residues, we can in strongly polynomial time determine either a solution of the problem, or a family $\mathcal{F}$ of at most
\begin{itemize}
\item one \RCCTUF{} problem with at most $n-p+1$ variables, modulus $m$, and $\ell$ target residues, and
\item $(m-\ell+1)^2$ \RCCTUF{} problems with $n-p$ variables, modulus $m$, and at least $\ell+1$ target residues
\end{itemize}
such that the initial \RCCTUF{} problem is feasible if and only if at least one problem in $\mathcal{F}$ is feasible. Also, a feasible solution to any problem in $\mathcal{F}$ can in strongly polynomial time be transformed to one of the initial problem.
\end{theorem}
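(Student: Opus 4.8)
The plan is to run, and then analyze the cost of, the procedure outlined in \cref{sec:overviewRCCTUF}. Write $\ell \coloneqq |R| \in \{m-1,m-2\}$; since $\ell \geq m-2$ we have $m-\ell+1 \leq 3$, so all of the ``$O(m-\ell)$''-sized objects below have absolute constant size, which is ultimately what makes the whole procedure strongly polynomial. First I would use \cref{thm:TUdecomp} to bring $T$ (after permutations and, in case~\ref{thmitem:TUdecomp_pivot}, one pivot) into the form $\begin{psmallmatrix} A & ef^\top \\ gh^\top & B \end{psmallmatrix}$ with $n = n_A + n_B$ and $n_A, n_B \geq 2$, where $g=h=0$ for a $2$-sum and $e=f=g=h=0$ for a $1$-sum, and assume w.l.o.g.\ that the $B$-side is the smaller one, so $n_B = p = \min\{n_A,n_B\} \leq n-2$. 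Rewriting the problem as~\eqref{eq:structured-problem} and fixing the scalar products $\alpha = f^\top x_B$ and $\beta = h^\top x_A$ splits it into the two essentially independent \CCTUF{} problems of~\eqref{eq:A-B-problem}, and \cref{lem:patternsBounds} supplies integers $\ell_i \leq u_i$ with $u_i - \ell_i \leq m-\ell$ for $i \in \{0,1,2\}$ such that feasibility is unaffected by restricting to $(\alpha,\beta) \in \Pi_{\text{narrowed}}$. As $\Pi_{\text{narrowed}}$ is the intersection of a box whose sides have length at most $m-\ell$ with the strip $\ell_0 \leq \alpha+\beta \leq u_0$, it contains at most $(m-\ell+1)^2$ integer points.

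Next I would compute a (partial) narrowed pattern: for each $(\alpha,\beta) \in \Pi_{\text{narrowed}}$, I determine $\min\{m-\ell+1,\ |\pi(\alpha,\beta)|\}$ feasible residues of the corresponding $B$-problem together with witnessing solutions of its relaxation, or certify $\pi(\alpha,\beta)$ exactly. Every query ``is there a feasible residue of this $B$-problem outside the set $G$ of those already found?'' is answered by a single recursive call of \cref{thm:R=m-2} to the \RCCTUF{} version of the $B$-problem for a suitable $R_B$ of size $\ell$: while $|G| \leq m-\ell$, the complement $\{0,\dots,m-1\}\setminus G$ has size at least $\ell$, so we may take $R_B$ inside it (or equal to it once $|G|$ is maximal), and the call either produces a new feasible residue outside $G$ or certifies the complement residue-infeasible. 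The constraint matrix $\begin{psmallmatrix} f^\top \\ B \end{psmallmatrix}$ of the $B$-problem is TU as a submatrix of the relevant summand of the decomposition, so this is a bona fide \RCCTUF{} instance with $p < n$ variables and $\ell$ target residues. An adaptive choice of the sets $R_B$ shows $m-\ell+1$ calls per pair suffice (one stops earlier as soon as $|\pi(\alpha,\beta)| \geq m-\ell+1$ is detected, in which case we are already done below), and a routine count over $\Pi_{\text{narrowed}}$ keeps the total number of recursive calls strictly below $3(m-\ell+1)^2$.

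With the partial pattern and its witness solutions in hand I would proceed in two steps. (i) For every $(\alpha,\beta) \in \Pi_{\text{narrowed}}$ whose $A$-problem has a feasible relaxation, compute a solution $x_A$ of that relaxation (a totally unimodular system, hence strongly polynomially solvable), and test whether some found residue $r_B \in \pi(\alpha,\beta)$, with witness $x_B$, satisfies $\gamma_A^\top x_A + r_B \in R \pmod{m}$; if so, output $(x_A,x_B)$, which is feasible for~\eqref{eq:structured-problem}. This test succeeds whenever some pair has $|\pi(\alpha,\beta)| \geq m-\ell+1$ (then $\pi(\alpha,\beta)$ meets the size-$\ell$ set $R-\gamma_A^\top x_A$ by pigeonhole), and also whenever case~\ref{lemitem:goodCombo} of \cref{lem:coveringPattern} holds. (ii) If nothing was output, assemble $\mathcal{F}$. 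For each $(\alpha,\beta) \in \Pi_{\text{narrowed}}$ with $|\pi(\alpha,\beta)| \geq 2$, add the reduced $A$-problem~\eqref{eq:AProbRedMoreThanOneRes} with $R' = R - \pi(\alpha,\beta)$; by the Cauchy--Davenport inequality (\cref{lem:cauchyDavenport}, using that $m$ is prime), $|R'| \geq \min\{m,\ \ell + |\pi(\alpha,\beta)| - 1\} \geq \ell+1$, and this problem has $n_A = n-p$ variables, giving at most $(m-\ell+1)^2$ problems with $n-p$ variables and at least $\ell+1$ target residues. Finally, apply \cref{lem:coveringPattern} to get a linear sub-pattern $\widetilde\pi$ of $\pi$, feed it into (the sub-pattern variant of) \cref{thm:integration}, and eliminate the variable $y_2$ from the equality $h^\top x_A - y_2 = 0$ via \cref{thm:projection}; this yields a single \RCCTUF{} problem with $n_A + 1 = n-p+1$ variables and $\ell$ target residues, whose constraint matrix is TU by \cref{thm:projection}.

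For soundness, note that \cref{thm:integration}, \cref{thm:projection}, and the Cauchy--Davenport reduction are all reversible, so a feasible solution of any member of $\mathcal{F}$ transforms in strongly polynomial time into one of~\eqref{eq:structured-problem}, hence of the original problem, while any $(x_A,x_B)$ output directly is feasible by construction. For completeness, assume the original problem is feasible: if step~(i) returns no solution then no pair has $|\pi(\alpha,\beta)| \geq m-\ell+1$, so \cref{lem:coveringPattern} applies and, its case~\ref{lemitem:goodCombo} having been ruled out by step~(i), we are in case~\ref{lemitem:type1} --- whence some reduced $A$-problem~\eqref{eq:AProbRedMoreThanOneRes} in $\mathcal{F}$ is feasible --- or in case~\ref{lemitem:mainBranch} --- whence the integrated problem in $\mathcal{F}$ is feasible. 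Everything other than the fewer than $3(m-\ell+1)^2$ recursive calls (the decomposition of \cref{thm:TUdecomp}, \cref{lem:patternsBounds}, the constantly many linear-programming solves, \cref{lem:coveringPattern}, \cref{thm:integration}, \cref{thm:projection}, and the reductions) runs in strongly polynomial time, and there are only constantly many such steps because $m-\ell+1 \leq 3$. I expect the pattern-computation step to be the main obstacle: arranging the adaptive at-most-$(m-\ell+1)$ queries per pair so that the aggregate stays strictly below $3(m-\ell+1)^2$, and making sure the information produced --- feasible residues \emph{together with} witness solutions of the $B$-relaxation, for \emph{all} of $\Pi_{\text{narrowed}}$ --- is exactly what \cref{lem:coveringPattern} and the constructions~\eqref{eq:AProbRedMoreThanOneRes} and~\eqref{eq:integration} consume; a secondary point is to verify total unimodularity of every constraint matrix arising along the way, so that each subproblem is a genuine \RCCTUF{} instance.
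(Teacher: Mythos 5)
Your overall route is the same as the paper's (narrow the scalar products via \cref{lem:patternsBounds}, compute a truncated pattern by recursive calls on the $B$-problem, then branch according to \cref{lem:coveringPattern} using the pigeonhole/direct-combination check, the Cauchy--Davenport reduction~\eqref{eq:AProbRedMoreThanOneRes}, and \cref{thm:integrationSubpatterns} plus \cref{thm:projection}), but the pattern-computation step --- which you yourself flag as the main obstacle --- contains a genuine error. You query the $B$-problem with target sets $R_B$ of size exactly $\ell$ chosen \emph{inside} the complement of the already-found residues $G$, and claim that an infeasible call ``certifies the complement residue-infeasible.'' When $R_B$ is a proper subset of the complement (which happens already at the very first query, since the complement then has size $m>\ell$), infeasibility only certifies that no residue of $R_B$ is attainable; the up to $m-\ell$ excluded residues may still lie in $\pi(\alpha,\beta)$. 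This exactness is not a cosmetic issue: the Cauchy--Davenport branch needs $R'=R-\pi(\alpha,\beta)$ with the \emph{full} set $\pi(\alpha,\beta)$ subtracted (otherwise a feasible solution whose $B$-residue lies outside your known subset need not yield a feasible reduced $A$-problem, breaking completeness), and the construction of the linear sub-pattern $\widetilde\pi$ in \cref{lem:coveringPattern} requires knowing correctly which pairs have $|\pi(\alpha,\beta)|=1$. The paper avoids this by always querying with the full complement of $G$, i.e.\ residue sets of sizes $m,m-1,m-2$ (the first being just the relaxation), so that every infeasible call is a true certificate that $\pi(\alpha,\beta)=G$. Your scheme can be repaired --- seed $G$ with a residue obtained from the $B$-relaxation and pad each $\ell$-sized query with residues already certified infeasible, so that feasible answers outside $G$ are forced and infeasible answers are certificates --- but this argument is missing from the proposal.

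A secondary, related shortfall is the call count: with up to $m-\ell+1$ recursive calls per pair and only the non-strict bound $|\Pi_{\text{narrowed}}|\leq(m-\ell+1)^2$ that you state, you get $(m-\ell+1)^3$, which for $\ell=m-2$ equals $27$ and is \emph{not} ``less than $3(m-\ell+1)^2$.'' The paper gets strictness from $|\Pi_{\text{narrowed}}|<(m-\ell+1)^2$ (the constraint $\ell_0\leq\alpha+\beta\leq u_0$ of width at most $m-\ell$ necessarily cuts a corner of the $(m-\ell+1)\times(m-\ell+1)$ box) together with the fact that the first call per pair is the plain relaxation; your ``routine count'' needs one of these observations to be made explicit. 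The remaining parts of your argument --- soundness and completeness of the family $\mathcal{F}$, the pigeonhole argument when $|\pi(\alpha,\beta)|\geq m-\ell+1$, the variable counts $n-p$ and $n-p+1$, and the strongly polynomial transformations back --- match the paper's proof.
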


Finally, it remains to cover the case where the constraint matrix $T$ falls into case~\ref{thmitem:TUdecomp_pivot} of \cref{thm:TUdecomp}, i.e., only after pivoting, a decomposition step is possible.
It turns out that such \RCCTUF{} problems can be rewritten as a problem of the same type with the pivoted constraint matrix and one extra constraint that is a variable bound, thus subsequently allowing for a decomposition step without interfering with the progress that was made before (the number of variables and the number of target residues stay the same in the described transformation). The following theorem formalizes this.

\begin{theorem}\label{thm:pivoting}
Consider an \RCCTUF{} problem with constraint matrix $T$ for which case~\ref{thmitem:TUdecomp_pivot} of \cref{thm:TUdecomp} applies, i.e., $\pivot[ij](T)$ admits a decomposition as a $3$-sum according to \cref{thm:TUdecomp}. Then we can in strongly polynomial time determine an \RCCTUF{} problem of the form
\begin{equation}\label{eq:pivotedProblem}
\overline{T} y \leq \overline{b},\enspace y_j \leq \delta,\enspace \overline{\gamma}^\top y\in R \pmod*{m},\enspace y\in\mathbb{Z}^n\enspace,
\end{equation}
where $\overline{T}$ is, up to changing the sign in some rows and columns, the matrix $\pivot[ij](T)$, and solutions of the initial problem can be transformed to solutions of~\eqref{eq:pivotedProblem} in strongly polynomial time, and vice versa.
\end{theorem}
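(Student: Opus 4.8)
The plan is to implement a single pivot as an explicit unimodular change of variables. This turns all but one of the original inequalities directly into the rows of $\pivot[ij](T)$ together with a single variable bound, and the remaining row of $\pivot[ij](T)$ is then added back as a bound on the pivoted variable whose right-hand side is calibrated, via the proximity estimate of \cref{lem:transformSolutionEfficiently}, so that the feasibility status of the congruency-constrained problem is preserved.

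After relabeling rows and columns we may assume $i=j=1$ and write $T=\begin{psmallmatrix}\varepsilon & p^\top\\ q & C\end{psmallmatrix}$ with $\varepsilon=T_{11}\in\{-1,1\}$, so that $\pivot[11](T)=\begin{psmallmatrix}-\varepsilon & \varepsilon p^\top\\ \varepsilon q & C-\varepsilon q p^\top\end{psmallmatrix}$. Splitting $x=(x_1,x')$, introduce new variables $y=(y_1,y')$ by $y_1\coloneqq\varepsilon x_1+p^\top x'$ and $y'\coloneqq x'$; this is a linear bijection of $\mathbb{Z}^n$ with inverse $x_1=\varepsilon y_1-\varepsilon p^\top y'$, $x'=y'$. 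Under this substitution, the first original constraint $\varepsilon x_1+p^\top x'\le b_1$ becomes the variable bound $y_1\le b_1$, which is the constraint $y_j\le\delta$ with $\delta\coloneqq b_1$; the $\ell$-th original constraint for $\ell\ge 2$, i.e.\ $q_\ell x_1+C_{\ell,\cdot}x'\le b_\ell$, becomes $\varepsilon q_\ell y_1+(C_{\ell,\cdot}-\varepsilon q_\ell p^\top)y'\le b_\ell$, which is exactly the $\ell$-th row of the system $\pivot[11](T)\,y\le b$; and the congruency constraint becomes $\overline\gamma^\top y\in R\pmod*{m}$ with $\overline\gamma_1=\varepsilon\gamma_1$, $\overline\gamma'=\gamma'-\varepsilon\gamma_1 p$. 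We have thus recovered every row of $\pivot[11](T)$ except the first, together with the desired bound and congruency constraint, with integer data and a totally unimodular constraint matrix (pivoting preserves total unimodularity, and so does appending a unit row).

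It remains to account for the first row of $\pivot[11](T)$, namely $-\varepsilon y_1+\varepsilon p^\top y'\le\overline b_1$. Substituting the inverse map, its left-hand side equals $-x_1$, so this row is an artificial lower bound $x_1\ge-\overline b_1$ not present in the original problem, and we must pick $\overline b_1$ so that adding it preserves feasibility. Note that $-e_1$ is TU-appendable to $T$. If the original problem is infeasible, then so is the new one for any choice of $\overline b_1$, since every solution of the new problem maps, through the inverse map, to a solution of the original; in particular, if already the relaxation $Tx\le b$ is infeasible we are done (and this is detectable in strongly polynomial time). Otherwise, fix any integral solution $x_0$ of $Tx\le b$, computable in strongly polynomial time. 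If the original problem is feasible, \cref{lem:transformSolutionEfficiently} applied with $d=-e_1$ yields a feasible solution $\tilde x$ with $-e_1^\top(\tilde x-x_0)\le m-|R|$, i.e.\ $\tilde x_1\ge(x_0)_1-(m-|R|)$. Setting $\overline b_1\coloneqq(m-|R|)-(x_0)_1$ therefore guarantees that the added bound does not cut off $\tilde x$, so the original \RCCTUF{} problem is feasible if and only if the new one is. For the transfer of solutions, a solution of the new problem maps through the inverse map to a solution of the original, and given a solution of the original, first applying \cref{lem:transformSolutionEfficiently} gives one with $x_1\ge-\overline b_1$, which maps to a solution of the new problem. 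Every step is explicit matrix arithmetic plus one strongly polynomial call for the relaxation, so the reduction is strongly polynomial. Undoing the initial relabeling of rows and columns (and, if one prefers to define $y_1$ as the slack $b_1-\varepsilon x_1-p^\top x'$, flipping the signs of the pivot row and pivot column) then brings the constraint matrix to $\pivot[ij](T)$ up to sign changes of some rows and columns and the extra bound to the form $y_j\le\delta$.

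I expect the main obstacle to be conceptual rather than computational: realizing that the natural substitution produces one inequality fewer than $\pivot[ij](T)$ has rows, that the missing row is exactly an artificial bound on the pivoted variable, and that its right-hand side must be chosen using the proximity bound $m-|R|$ so that feasibility of the congruency-constrained problem is neither created nor destroyed. Once this is in place, verifying that the remaining rows agree with those of $\pivot[ij](T)$ and that solutions transfer in strongly polynomial time is routine.
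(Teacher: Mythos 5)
Your proof is correct and takes essentially the same route as the paper: a unimodular change of variables $y_1=\varepsilon x_1+p^\top x'$ realizes the pivot, the original first constraint becomes the variable bound $y_j\leq\delta$, and the missing first row of $\pivot[ij](T)$ is an artificial bound on $x_1$ whose right-hand side is calibrated via the proximity bound $m-|R|$ so that feasibility is preserved. The only cosmetic difference is that the paper appends the extra bound \emph{before} substituting, obtaining it directly from \cref{lem:productInterval}, whereas you add the missing row afterwards and justify it by applying \cref{lem:transformSolutionEfficiently} with $d=-e_1$ — the same underlying mechanism.
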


Leveraging \cref{thm:TUdecomp,thm:baseBlocks,thm:decompProgress,thm:pivoting}, we can conclude our main result, \cref{thm:R=m-2}.
\begin{proof}[Proof of \cref{thm:R=m-2}]
Consider an \RCCTUF{} problem with modulus $m$ and $\ell\geq m-2$ target residues.
If $\ell = m$, a solution can be found in strongly polynomial time by solving the relaxation of the problem using the framework of Tardos~\cite{tardos_1986_strongly}.
Else, we apply \cref{thm:TUdecomp} to the constraint matrix $T$.
If case~\ref{thmitem:TUdecomp_netw} or~\ref{thmitem:TUdecomp_const} of \cref{thm:TUdecomp} applies, \cref{thm:baseBlocks} guarantees that we can efficiently solve the corresponding problem.
If case~\ref{thmitem:TUdecomp_pivot} applies, we can reduce the problem to one where case~\ref{thmitem:TUdecomp_sum} applies through \cref{thm:pivoting}.
Finally, if case~\ref{thmitem:TUdecomp_sum} of \cref{thm:TUdecomp} applies, we apply \cref{thm:decompProgress} to reduce the problem to several smaller problems on which we recursively call our procedure.
Through these recursive calls, the initial \RCCTUF{} problem is reduced to several simpler \RCCTUF{} problems, where each of them has either $m$ many target residues or its constraint matrix is a base block matrix.

We first bound the number of such simpler \RCCTUF{} problems that we obtain.
Let $f(n,\ell)$ be the smallest upper bound on the number of such problems that we obtain through our reductions when starting from an instance with $n$ variables and $\ell$ target residues. We claim that
\begin{equation*}
f(n,\ell)\leq m^{2(m-\ell)}\cdot n^{m-\ell+3\log_2 m+2}\enspace.
\end{equation*}
Indeed, note that $f(n, \ell)=1$ for $n\leq 3$ and all $\ell\leq m$, and $f(n, m)=1$ for all $n$, and assume that the inequality holds for all instances of up to $n-1$ variables. By \cref{thm:decompProgress} and this assumption, we get, for some $p\in\{2,\ldots,\lfloor\sfrac n2\rfloor\}$, the desired
\begin{align*}
f(n, \ell) &\leq 3m^2 f(p, \ell) + f(n-p+1, \ell) + m^2f(n-p, \ell+1)\\
&\leq m^{2(m-\ell)}n^{m-\ell+3\log_2 m+2}\Bigg(\underbrace{\left(\frac pn\right)^{2} + \left(\frac{n-p+1}{n}\right)^{2} + \frac{n-p}{n^2}}_{\leq 1}\Bigg) \leq  m^{2(m-\ell)}n^{m-\ell+3\log_2 m+2}\enspace.
\end{align*}

Now observe that each of the at most $f(n,\ell)$ many subproblems can either be solved directly in strongly polynomial time as stated earlier (if it is a problem with $m$ target residues), or we can apply the strongly polynomial randomized algorithm provided by \cref{thm:baseBlocks} to each of them $\log_n(nf(n,\ell))=O(1)$ many times to correctly solve each problem with error probability at most $\sfrac{1}{nf(n,\ell)}$. Thus, by a union bound, we can solve all these problems (and thus the initial problem) correctly with probability $1-\sfrac1n$. To finish the proof, it remains to observe that the time for solving the discussed problems clearly dominates the time needed for transformations and solution propagation.
\end{proof}

\section{Proof and further implications of the decomposition lemma\MOORdot}\label{sec:decompLemma}

For the sake of presentation, we postpone the proof of the decomposition lemma (\cref{lem:decompositionLemma}) and \cref{lem:transformSolutionEfficiently} to the end of this section and start by showing additional implications, namely \cref{thm:R=m-1} and \cref{lem:patternsBounds}.

\subsection[An alternative approach to \texorpdfstring{$R$}{R}-CCTUF problems with \texorpdfstring{$|R|=m-1$}{|R|=m-1}: Proving \texorpdfstring{\cref{thm:R=m-1}}{Theorem~\ref{thm:R=m-1}}]{\boldmath An alternative approach to \RCCTUF{} problems with $|R|=m-1$: Proving \cref{thm:R=m-1}\MOORdot\unboldmath}

In this section, we prove that \RCCTUF{} problems with $|R|=m-1$ can be solved deterministically and in strongly polynomial time, as stated by \cref{thm:R=m-1}.
This result is closely linked to our flatness statement, \cref{thm:flatDirections}, which already guarantees that if none of the constraint matrix rows of the \RCCTUF{} problem is a flat direction of the underlying polyhedron with width $m-|R|-1$, then the problem can be solved efficiently.
For $|R|=m-1$, the width in this statement is $0$, i.e., the corresponding constraint is a tight constraint for the full underlying polyhedron.
Using \cref{thm:projection}, we can see that in this case of non-full-dimensional underlying polyhedra, we can easily project to a lower-dimensional space.

\begin{lemma}\label{lem:dimensionReduction}
Consider an \RCCTUF{} problem in $n\geq 2$ variables with a constraint that is tight for all points in the underlying polyhedron. We can in strongly polynomial time determine an \RCCTUF{} problem in $n-1$ variables such that solutions of the first problem can be transformed to solutions of the second problem in strongly polynomial time, and vice versa.
\end{lemma}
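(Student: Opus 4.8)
The plan is to eliminate one variable using \cref{thm:projection}. Let $d^\top x\le\beta$ be the tight constraint in question; here $d^\top$ is a row of $T$, $\beta$ is the corresponding entry of $b$, and by hypothesis $d^\top x=\beta$ holds for every $x$ in the underlying polyhedron $P=\{x\in\mathbb{R}^n\colon Tx\le b\}$. Note that $d\neq 0$, since a tight constraint with $d=0$ would impose no condition on $x$. Because $d^\top x=\beta$ is satisfied throughout $P$, appending the equation $d^\top x=\beta$ to the system $Tx\le b$ does not change its (integer) solution set, and it renders the original inequality $d^\top x\le\beta$ redundant. Fix an index $j$ with $d_j\neq 0$; after permuting variables we may assume $j=n$, and we write $x=(x',x_n)$ with $x'\in\mathbb{Z}^{n-1}$.

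Next I would bring the system $\{Tx\le b,\ d^\top x=\beta\}$ into the form handled by \cref{thm:projection}. Moving the $d$-row of $T$ to the bottom (keeping the $n$-th column, which contains $d_n$, in last position) yields a matrix $\begin{psmallmatrix}A & a_1\\ a_2^\top & \alpha\end{psmallmatrix}$ with $\alpha:=d_n\in\{-1,1\}$; being a row permutation of $T$, it is totally unimodular. Here $A$ together with the column $a_1$ comprises the rows of $T$ other than the $d$-row with the last column split off, $b'$ denotes the corresponding entries of $b$, and the bottom row $(a_2^\top,\alpha)$ equals $d^\top$. \cref{thm:projection} then tells us that $\widehat T:=A-\alpha a_1a_2^\top$ is totally unimodular and that $\{Ax'+a_1x_n\le b',\ a_2^\top x'+\alpha x_n=\beta\}$ is equivalent to
$$\widehat T x'\le \widehat b,\qquad x_n=\alpha\beta-\alpha a_2^\top x',$$
with $\widehat b:=b'-\alpha\beta a_1$. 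Substituting this expression for $x_n$ into the congruency constraint, $\gamma^\top x=\gamma'^\top x'+\gamma_n x_n=(\gamma'-\gamma_n\alpha a_2)^\top x'+\gamma_n\alpha\beta$, so with $\widehat\gamma:=\gamma'-\gamma_n\alpha a_2\in\mathbb{Z}^{n-1}$ and $\widehat R:=\{(r-\gamma_n\alpha\beta)\bmod m\colon r\in R\}$ (note $|\widehat R|=|R|$), the constraint $\gamma^\top x\in R\pmod*{m}$ becomes $\widehat\gamma^\top x'\in\widehat R\pmod*{m}$. The output is the \RCCTUF{} problem $\widehat T x'\le\widehat b$, $\widehat\gamma^\top x'\in\widehat R\pmod*{m}$, $x'\in\mathbb{Z}^{n-1}$, all of whose data is obtained from the input by a bounded amount of arithmetic, hence in strongly polynomial time.

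Finally I would verify the two-way correspondence of solutions. If $x=(x',x_n)$ solves the original problem, then $x\in P$, so $d^\top x=\beta$, i.e.\ $a_2^\top x'+\alpha x_n=\beta$; by \cref{thm:projection} this gives $\widehat T x'\le\widehat b$, and moreover $\widehat\gamma^\top x'=\gamma^\top x-\gamma_n\alpha\beta\in\widehat R\pmod*{m}$, so $x'$ solves the new problem. Conversely, given a solution $x'$ of the new problem, set $x_n:=\alpha\beta-\alpha a_2^\top x'$, which is an integer since $\alpha\in\{-1,1\}$ and $\beta,a_2,x'$ are integral; by \cref{thm:projection} the pair $(x',x_n)$ satisfies $\{Tx\le b,\ d^\top x=\beta\}$, in particular $Tx\le b$, and $\gamma^\top x=\widehat\gamma^\top x'+\gamma_n\alpha\beta\in R\pmod*{m}$, so $x=(x',x_n)$ solves the original problem. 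Both transformations are affine with integer coefficients, hence computable in strongly polynomial time. The only genuinely non-mechanical point is the opening move of adding the equation $d^\top x=\beta$ "for free" — this is exactly where the tightness hypothesis is used — together with bookkeeping the shifted target-residue set $\widehat R$; once \cref{thm:projection} is invoked, the TU structure is preserved automatically and there is no further obstacle.
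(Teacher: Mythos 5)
Your proof is correct and follows essentially the same route as the paper: permute so the tight constraint becomes the last row with a nonzero (hence $\pm1$) entry in the last column, use tightness to treat it as an equality, eliminate $x_n$ via \cref{thm:projection}, and substitute the resulting affine expression for $x_n$ into the congruency constraint. Your bookkeeping of the shifted residue set $\widehat R$ is in fact slightly more careful than the paper's displayed reduced problem, but there is no substantive difference in approach.
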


\begin{proof}
After permuting variables and constraints, we may assume that the inequality system in the given \RCCTUF{} problem has the form
\begin{equation*}
\begin{pmatrix}
\overline T & a_1 \\ a_2^\top & \alpha
\end{pmatrix} \begin{pmatrix}
\overline x \\ x_n
\end{pmatrix}
\leq
\begin{pmatrix}
\overline b \\ b_n
\end{pmatrix},
\quad\text{where}\quad
T = \begin{pmatrix}
\overline T & a_1 \\ a_2^\top & \alpha
\end{pmatrix},\quad
x = \begin{pmatrix}
\overline x \\ x_n
\end{pmatrix},\quad\text{and}\quad
b = \begin{pmatrix}
\overline b \\ b_n
\end{pmatrix},
\end{equation*}
such that $a_2^\top \overline x + \alpha x_n = b_n$ is a constraint that is tight for any solution to the relaxation of the \RCCTUF{} problem and $\alpha\neq 0$.
By \cref{thm:projection}, $(\overline y, y_n)$ is a solution of the above system if and only if $\overline y$ solves the TU system $\left(\overline T-\alpha a_1a_2^\top\right)\overline x \leq \overline b$, and $y_n=\alpha b_n- \alpha a_2^\top \overline y$.
Therefore, the original \RCCTUF{} problem can be reduced in strongly polynomial time to the following \RCCTUF{} problem with only $n-1$ variables:
\begin{equation*}
 \overline{T} \overline x \leq \overline b,\enspace
 (\overline \gamma-\alpha\gamma_na_2)^\top \overline x \not\equiv r-\alpha\gamma_nb_n\pmod*{m},\enspace
\overline x\in\mathbb{Z}^{n-1}\enspace. \qedhere
\end{equation*}
\end{proof}

Although not exploited here, we remark that the above reduction of non-full-dimensional problems also applies to the optimization version of the considered problem. Now, combining \cref{lem:dimensionReduction,thm:flatDirections}, we immediately obtain a proof of \cref{thm:R=m-1}.

\begin{proof}[Proof of \cref{thm:R=m-1}]
We start by observing that using a result of Tardos~\cite{tardos_1986_strongly}, we can solve linear programs over the underlying polyhedron of a given \RCCTUF{} problem in strongly polynomial time, and hence, we can also detect in strongly polynomial time whether there is a tight constraint.
If there is no tight constraint, then the problem can be solved by \cref{thm:flatDirections}.
Otherwise, the problem is trivial when $n=1$, and if $n\geq 2$, we can repeatedly apply \cref{lem:dimensionReduction} until we obtain a problem with $n=1$, or one that does not have tight constraints.
Note that the number of variables reduces by $1$ in each application of \cref{lem:dimensionReduction}, hence there are less than $n$ iterations.
We conclude the proof by observing that solutions of a problem without tight constraints that stem from \cref{lem:dimensionReduction} can be transformed back to solutions of the initial problem in strongly polynomial time by the very same lemma.
\end{proof}

\subsection{Bounded scalar products\MOORdot}\label{sec:boundedProducts}

The goal of this subsection is to deduce \cref{lem:patternsBounds}, which we use to restrict the search space for solutions of \RCCTUF{} problems.
It turns out that this lemma is an implication of a more general result that we expand on below.

\begin{lemma}\label{lem:productInterval}
Consider a feasible \RCCTUF{} problem with constraint matrix $T$ and modulus $m$, and let $d$ be TU-appendable to $T$.
We can determine in strongly polynomial time $\ell, u\in\mathbb{Z}$ with $u-\ell \leq m-|R|$ such that the \RCCTUF{} problem has a feasible solution $x_0$ if and only if it has one with $\ell \leq d^\top x_0\leq u$.
\end{lemma}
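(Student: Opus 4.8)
The plan is to choose $\ell$ and $u$ according to the behaviour of the linear functional $x\mapsto d^\top x$ over the relaxation polyhedron $P\coloneqq\{x\in\mathbb{R}^n\colon Tx\leq b\}$, and to turn an arbitrary feasible solution of the \RCCTUF{} problem into one landing in the chosen interval by invoking \cref{lem:transformSolutionEfficiently}. The one technical input we rely on is that, since $\begin{psmallmatrix}T\\ d^\top\end{psmallmatrix}$ is totally unimodular and $b$ is integral, whenever the quantity $\ell^\ast\coloneqq\min\{d^\top x\colon Tx\leq b,\ x\in\mathbb{Z}^n\}$ is finite it equals the corresponding LP optimum and is attained at an integral point $x_0$ that, together with $\ell^\ast$ itself, can be computed in strongly polynomial time via Tardos's algorithm~\cite{tardos_1986_strongly}; the same holds for the maximum. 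If $P=\emptyset$ the \RCCTUF{} problem is infeasible and we may return $\ell=u=0$, so from now on assume $P\neq\emptyset$. Note also $m-|R|\geq 0$ since $R\subseteq\{0,\ldots,m-1\}$.

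First I would treat the case where $\ell^\ast$ is finite. Let $x_0\in P\cap\mathbb{Z}^n$ with $d^\top x_0=\ell^\ast$, and set $\ell\coloneqq\ell^\ast$ and $u\coloneqq\ell^\ast+(m-|R|)$. The ``if'' direction of the claimed equivalence is trivial. For ``only if'', given any feasible solution $y$ of the \RCCTUF{} problem, \cref{lem:transformSolutionEfficiently} applied to $y$ and $x_0$ yields a feasible solution $\tilde y$ with $d^\top(\tilde y-x_0)\leq m-|R|$, hence $d^\top\tilde y\leq u$; and $d^\top\tilde y\geq\ell^\ast=\ell$ because $\tilde y$ satisfies $T\tilde y\leq b$ and $\ell^\ast$ is the minimum of $d^\top$ over such points. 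The case where $u^\ast\coloneqq\max\{d^\top x\colon Tx\leq b,\ x\in\mathbb{Z}^n\}$ is finite is symmetric: $-d$ is also TU-appendable to $T$ (changing the sign of the appended row preserves total unimodularity), so applying \cref{lem:transformSolutionEfficiently} with an integral maximizer $x_0$ of $d^\top$ and reading off its conclusion for the vector $-d$ gives a feasible $\tilde y$ with $d^\top\tilde y\geq u^\ast-(m-|R|)$ and $d^\top\tilde y\leq u^\ast$; so here I set $u\coloneqq u^\ast$ and $\ell\coloneqq u^\ast-(m-|R|)$.

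The remaining, and genuinely subtle, case is when $d^\top x$ is unbounded over $P$ from below (so $\ell^\ast=-\infty$): here \cref{lem:transformSolutionEfficiently} alone only yields the two-sided estimate $|d^\top(\tilde y-x_0)|\leq m-|R|$, i.e.\ an interval of width $2(m-|R|)$, which is too weak. I would fix this by first reducing to the previous case. Since the LP $\min\{d^\top x\colon Tx\leq b\}$ is unbounded and $P\neq\emptyset$, there is an integral recession direction $w$ of $P$, i.e.\ $Tw\leq 0$, with $d^\top w<0$, and such a $w$ is computable in strongly polynomial time (take a rational point of $\{w\colon Tw\leq 0,\ d^\top w\leq -1\}$ and clear denominators). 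Now consider the \RCCTUF{} problem obtained from the original one by appending the constraint $d^\top x\leq 0$; its constraint matrix $\begin{psmallmatrix}T\\ d^\top\end{psmallmatrix}$ is still totally unimodular, and $-d$ remains TU-appendable to it. This modified problem has exactly the same feasibility status as the original: any feasible solution $y$ of the original, replaced by $y+kmw$ for a large enough $k\in\mathbb{Z}_{>0}$, stays integral, still satisfies $Tx\leq b$ (as $Tw\leq 0$) and $\gamma^\top x\in R\pmod{m}$ (as $\gamma^\top(kmw)\equiv 0$), and additionally has $d^\top x\leq 0$; conversely any feasible solution of the modified problem is feasible for the original. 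For the modified problem one has $\max\{d^\top x\colon Tx\leq b,\ d^\top x\leq 0,\ x\in\mathbb{Z}^n\}=\min\{0,u^\ast\}$, which is finite, so the symmetric argument from the previous paragraph applies to it and produces $u=\min\{0,u^\ast\}$, $\ell=u-(m-|R|)$, and a feasible $\tilde y$ with $\ell\leq d^\top\tilde y\leq u$; since the modified problem's feasible solutions are precisely the feasible solutions of the original one satisfying $d^\top x\leq 0$, $\tilde y$ is in particular feasible for the original problem and lies in $[\ell,u]$.

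In all cases $u-\ell\leq m-|R|$, and computing $\ell$ and $u$ only requires $\ell^\ast$, $u^\ast$, and (in the last case) the recession direction $w$, all obtainable in strongly polynomial time; the invocations of \cref{lem:transformSolutionEfficiently} are used only in the correctness argument and, being strongly polynomial themselves, also show that the in-interval solution can be produced constructively from any feasible solution. The main obstacle, as indicated, is the doubly-unbounded behaviour of $d^\top x$ on the relaxation, where no extreme point in direction $d$ is available; the key idea is that one may first append a harmless one-sided bound $d^\top x\leq 0$ (harmless because one can always slide a feasible solution arbitrarily far along a recession direction, and by multiples of $m$ to preserve the residue), restoring finiteness of the relevant optimum over the relaxation.
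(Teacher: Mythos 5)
Your proof is correct, and it reaches the conclusion by a somewhat different route than the paper. The paper's own proof computes $\eta_{\min}$ and $\eta_{\max}$ of $d^\top x$ over the LP relaxation, observes that \emph{any} window $\{\ell,\ldots,u\}\subseteq\{\eta_{\min},\ldots,\eta_{\max}\}$ of width $m-|R|$ works, and proves this by adding both constraints $\ell\leq d^\top x\leq u$ to the system and invoking \cref{lem:flatOrIrrelevant} twice (once per added constraint); since that lemma is already available, this is short and, in particular, requires no case distinction for unbounded ranges --- infinite $\eta_{\min}$ or $\eta_{\max}$ causes no trouble because only the existence of a width-$(m-|R|)$ window inside the range is used. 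You instead go back to \cref{lem:transformSolutionEfficiently} directly, anchoring $x_0$ at an integral minimizer (or maximizer) of $d^\top x$ over the relaxation so that one side of the interval comes from the proximity bound and the other side comes for free from optimality of $x_0$; this forces you to treat the doubly unbounded case separately, which you handle correctly by appending $d^\top x\leq 0$ (total unimodularity and TU-appendability of $\pm d$ are preserved) and restoring equi-feasibility via a slide $y\mapsto y+kmw$ along an integral recession direction, which keeps $Tx\leq b$, integrality, and the residue class. The trade-off: the paper's argument is more uniform and yields the extra flexibility of placing the window anywhere in $[\eta_{\min},\eta_{\max}]$ (which it exploits implicitly when iterating in \cref{cor:simultaneousBound}), while yours avoids any reliance on \cref{lem:flatOrIrrelevant} at the cost of a three-way case analysis and the recession-direction trick; both ultimately rest on the same engine, \cref{lem:transformSolutionEfficiently}, and both are strongly polynomial.
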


\begin{proof}
Let $Tx\leq b$ be the inequality system in the \RCCTUF{} problem. To start with, we can in strongly polynomial time determine $\eta_{\max}\coloneqq \max\{d^\top x\colon Tx\leq b,\ x\in\mathbb{R}^n \}$ and $\eta_{\min}\coloneqq \min\{d^\top x\colon Tx\leq b,\ x\in\mathbb{R}^n\}$.
If $\eta_{\max}-\eta_{\min}\leq m-|R|$, we can choose $u=\eta_{\max}$ and $\ell=\eta_{\min}$, and there is nothing to show.
Otherwise, we claim that the statement holds for any choice of $\ell,u\in\{\eta_{\min},\ldots,\eta_{\max}\}$ with $u-\ell \leq m-|R|$.
To see this, consider any such choice of $\ell$ and $u$ and consider the given \RCCTUF{} problem with the constraints $\ell\leq d^\top x \leq u$ added to the inequality system.
Because by construction, $d$ is a flat direction of width exactly $m-|R|$ for that problem, applying twice \cref{lem:flatOrIrrelevant} (once for each of the two constraints that we added) gives that the problem with the constraints added is feasible if and only if the original one is feasible.
\end{proof}

Note that if we are given vectors $d_1,\ldots,d_p$ that are all simultaneously TU-appendable to the constraint matrix of the problem, we can apply \cref{lem:productInterval} iteratively with the TU-appendable vectors $d_i$, adding the obtained constraints $\ell_i\leq d_i^\top x\leq u_i$ to the system in each step. This immediately implies the following corollary.

\begin{corollary}\label{cor:simultaneousBound}
Consider a feasible \RCCTUF{} problem with constraint matrix $T$ and modulus $m$, and let $d_1,\ldots,d_p$ be simultaneously TU-appendable to $T$.
We can determine in strongly polynomial time $\ell_i, u_i\in\mathbb{Z}$ with $u_i-\ell_i \leq m-|R|$ for $i\in[p]$ such that the \RCCTUF{} problem has a feasible solution $x_0$ if and only if it has one with $\ell_i \leq d^\top x_0\leq u_i$ for all $i\in[p]$.
\end{corollary}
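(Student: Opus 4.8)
The plan is to apply \cref{lem:productInterval} exactly $p$ times, once for each of the vectors $d_1,\ldots,d_p$, folding the two box inequalities $\ell_i\le d_i^\top x\le u_i$ returned by the lemma into the constraint system before proceeding to the next vector. Concretely, I would set $T^{(0)}\coloneqq T$ and $b^{(0)}\coloneqq b$, and for $i=1,\ldots,p$ apply \cref{lem:productInterval} to the \RCCTUF{} problem with constraint matrix $T^{(i-1)}$, right-hand side $b^{(i-1)}$, and TU-appendable vector $d_i$ to obtain $\ell_i,u_i\in\mathbb{Z}$ with $u_i-\ell_i\le m-|R|$; then let $T^{(i)}$ be $T^{(i-1)}$ with the two rows $d_i^\top$ and $-d_i^\top$ appended, and $b^{(i)}$ be $b^{(i-1)}$ with the entries $u_i$ and $-\ell_i$ appended.

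Before each call I need to check that $T^{(i-1)}$ is totally unimodular and that $d_i$ is TU-appendable to $T^{(i-1)}$. Both follow from the hypothesis that $d_1,\ldots,d_p$ are \emph{simultaneously} TU-appendable to $T$, which I read as: the matrix $\widehat T$ obtained by appending all the rows $d_1^\top,\ldots,d_p^\top$ to $T$ is totally unimodular. Since total unimodularity is preserved by deleting rows, by changing the sign of a row, and by adding a duplicate of an existing row (possibly with a sign flip) --- exactly the operations appearing in case~\ref{thmitem:TUdecomp_const} of \cref{thm:TUdecomp} --- both $T^{(i-1)}$ and $\begin{psmallmatrix}T^{(i-1)}\\ d_i^\top\end{psmallmatrix}$ are, up to such operations, row-submatrices of $\widehat T$ and hence totally unimodular. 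Feasibility of the problem with constraint matrix $T^{(i-1)}$ is maintained inductively: the problem with $T^{(0)}$ is feasible by assumption, and \cref{lem:productInterval} guarantees that feasibility is unchanged when passing from $T^{(i-1)}$ to $T^{(i)}$.

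It then remains to chain the equivalences. \cref{lem:productInterval} says that for each $i$ the problem with constraint matrix $T^{(i-1)}$ is feasible if and only if the problem with constraint matrix $T^{(i)}$ (the same problem with $\ell_i\le d_i^\top x\le u_i$ added) is feasible. Composing these for $i=1,\ldots,p$, the original \RCCTUF{} problem is feasible if and only if the system $Tx\le b$ together with all the box constraints $\ell_i\le d_i^\top x\le u_i$, $i\in[p]$, admits a feasible solution --- which is exactly the claim, since any such solution in particular satisfies $Tx\le b$. The running time is $p$ invocations of the strongly polynomial routine of \cref{lem:productInterval}, each preceded by an augmentation adding two rows, so the overall procedure is strongly polynomial.

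The only genuine obstacle is the bookkeeping in the second paragraph: checking that appending the already-derived box constraints keeps the constraint matrix totally unimodular and keeps the not-yet-processed $d_j$'s TU-appendable. I expect this to be entirely routine once the meaning of ``simultaneously TU-appendable'' is pinned down and the standard closure properties of TU matrices under row deletion, row duplication, and sign changes are invoked.
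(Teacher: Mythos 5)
Your proposal is correct and follows essentially the same route as the paper, which likewise obtains the corollary by applying \cref{lem:productInterval} iteratively to $d_1,\ldots,d_p$ and adding the obtained constraints $\ell_i\leq d_i^\top x\leq u_i$ to the system in each step. The only difference is that you spell out the (routine) bookkeeping that simultaneous TU-appendability, together with closure of total unimodularity under row deletion, duplication, and sign changes, keeps each augmented constraint matrix totally unimodular and each remaining $d_j$ TU-appendable, which the paper leaves implicit.
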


Now \cref{lem:patternsBounds} follows immediately from \cref{cor:simultaneousBound} after observing the following.

\begin{observation}\label{obs:simultTUappend}
Consider a matrix $T$ that is a $3$-sum of the form $T=\begin{psmallmatrix}
A & ef^\top \\ gh^\top & B
\end{psmallmatrix}$.
Then, the rows $\begin{pmatrix} 0 & f^\top \end{pmatrix}$, $\begin{pmatrix} h^\top & 0 \end{pmatrix}$, and $\begin{pmatrix} h^\top & f^\top\end{pmatrix}$ are simultaneously TU-appendable to $T$.
\end{observation}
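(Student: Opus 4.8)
The plan is to exploit the $3$-sum structure of $T$ together with the fact that a $1$-, $2$-, or $3$-sum of matrices is totally unimodular if and only if both of its summands are. We may assume $T$ is totally unimodular (otherwise the claim is vacuous), so in a $3$-sum decomposition $T = M_1 \ksum[3] M_2$ the summands $M_1 = \begin{psmallmatrix} A & e & e\\ h^\top & 0 & 1\end{psmallmatrix}$ and $M_2 = \begin{psmallmatrix} 0 & 1 & f^\top\\ g & g & B\end{psmallmatrix}$ are themselves totally unimodular. Writing $\widehat T$ for the matrix obtained from $T$ by appending the three rows $\begin{pmatrix} h^\top & 0\end{pmatrix}$, $\begin{pmatrix} h^\top & f^\top\end{pmatrix}$, and $\begin{pmatrix} 0 & f^\top\end{pmatrix}$, the assertion ``$\begin{pmatrix} 0 & f^\top\end{pmatrix}$, $\begin{pmatrix} h^\top & 0\end{pmatrix}$, $\begin{pmatrix} h^\top & f^\top\end{pmatrix}$ are simultaneously TU-appendable to $T$'' is exactly the statement that $\widehat T$ is totally unimodular. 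I would establish this by exhibiting $\widehat T$ itself as a $3$-sum of two totally unimodular matrices.

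To this end, enlarge the summands by inserting rows compatible with the $3$-sum pattern: let $M_1'$ be $M_1$ with the two extra rows $\begin{pmatrix} h^\top & 0 & 0\end{pmatrix}$ and $\begin{pmatrix} h^\top & 1 & 1\end{pmatrix}$ inserted above its last row, and let $M_2'$ be $M_2$ with the extra row $\begin{pmatrix} 0 & 0 & f^\top\end{pmatrix}$ inserted below its first row. One checks that $M_1' = \begin{psmallmatrix} \tilde A & \tilde e & \tilde e\\ h^\top & 0 & 1\end{psmallmatrix}$ with $\tilde A = \begin{psmallmatrix} A\\ h^\top\\ h^\top\end{psmallmatrix}$ and $\tilde e = \begin{psmallmatrix} e\\ 0\\ 1\end{psmallmatrix}$, and that $M_2' = \begin{psmallmatrix} 0 & 1 & f^\top\\ \tilde g & \tilde g & \tilde B\end{psmallmatrix}$ with $\tilde g = \begin{psmallmatrix} 0\\ g\end{psmallmatrix}$ and $\tilde B = \begin{psmallmatrix} f^\top\\ B\end{psmallmatrix}$ — exactly the shapes required for a $3$-sum. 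A direct computation then gives $M_1' \ksum[3] M_2' = \begin{psmallmatrix} \tilde A & \tilde e f^\top\\ \tilde g h^\top & \tilde B\end{psmallmatrix}$, which after permuting rows equals $\widehat T$. Hence it only remains to check that $M_1'$ and $M_2'$ are totally unimodular.

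This last step is the main obstacle, because appending a row to a totally unimodular matrix is in general \emph{not} TU-preserving, so the special structure of the $3$-sum is genuinely used. The point is that each added row of $M_1'$ (resp.\ $M_2'$) equals the special row $\begin{pmatrix} h^\top & 0 & 1\end{pmatrix}$ of $M_1$ (resp.\ $\begin{pmatrix} 0 & 1 & f^\top\end{pmatrix}$ of $M_2$) up to a single $\pm 1$ in one of the two ``special'' columns, and those two columns of $M_1$ (resp.\ $M_2$) agree outside that single special row. I would verify total unimodularity of $M_1'$ by a routine case distinction on a square submatrix $N$: if $N$ avoids the two added rows it is a submatrix of $M_1$; if $N$ uses an added row together with the special row, subtracting the special row from the added row turns it into a restricted $\pm$ unit vector, and cofactor expansion reduces $\det N$ to the determinant of a submatrix of $M_1$ (or shows $\det N = 0$); and if $N$ uses an added row but not the special row, then — since the added rows have equal entries in the two special columns — those columns are indistinguishable inside $N$, so $N$ reduces to a submatrix of $M_1$, or of $\begin{psmallmatrix} A & e\\ h^\top & 0\end{psmallmatrix}$, $\begin{psmallmatrix} A & e\\ h^\top & 1\end{psmallmatrix}$, or $\begin{psmallmatrix} A & e\\ h^\top & 0\\ h^\top & 1\end{psmallmatrix}$, each of which is totally unimodular (the first two are submatrices of $M_1$, and the third reduces to them and to $\begin{psmallmatrix} A\\ h^\top\end{psmallmatrix}$ by the same argument). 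The check for $M_2'$ is symmetric. Since the $3$-sum of totally unimodular matrices is totally unimodular, $\widehat T = M_1' \ksum[3] M_2'$ is totally unimodular, which proves the observation.
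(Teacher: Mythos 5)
Your proof is correct and takes essentially the same route as the paper: the paper likewise realizes the matrix with the three appended rows as a $3$-sum of enlarged summands built from $\begin{psmallmatrix} A & e & e \\ h^\top & 0 & 1\end{psmallmatrix}$ and $\begin{psmallmatrix} 0 & 1 & f^\top \\ g & g & B\end{psmallmatrix}$, only distributing the extra rows differently (it puts $(0,1,1)$ into the first summand and $(1,1,f^\top)$, $(1,1,0)$ into the second). Your explicit case analysis verifying total unimodularity of the enlarged summands supplies the detail the paper dismisses as ``easily seen,'' and it goes through.
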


\begin{proof}
Observe that
\begin{equation}\label{eq:extendedSum}
\begin{pmatrix}
A & ef^\top \\ 0 & f^\top \\ h^\top & f^\top \\ h^\top & 0 \\ gh^\top & B
\end{pmatrix}  = \begin{pmatrix}
A & e & e \\ 0 & 1 & 1 \\ h^\top & 0 & 1
\end{pmatrix} \ksum[3] \begin{pmatrix}
0 & 1 & f^\top \\ 1 & 1 & f^\top \\ 1 & 1 & 0 \\ g & g & B
\end{pmatrix}.
\end{equation}
Recall that because the totally unimodular matrix $T$ decomposes into a $3$-sum of the two matrices
$\begin{psmallmatrix}
A & e & e \\ h^\top & 0 & 1
\end{psmallmatrix}$ and $\begin{psmallmatrix}
0 & 1 & f^\top \\ g & g & B
\end{psmallmatrix}$, we know that these matrices are totally unimodular, as well.
It can be easily seen that this implies total unimodularity of the two summands in~\eqref{eq:extendedSum}, and hence also of the $3$-sum of the two matrices.
\end{proof}

\begin{proof}[Proof of \cref{lem:patternsBounds}]
By \cref{cor:simultaneousBound} above, it is enough to show that the vectors $\begin{pmatrix} 0 & f^\top \end{pmatrix}$, $\begin{pmatrix} h^\top & 0 \end{pmatrix}$, and $\begin{pmatrix} h^\top & f^\top\end{pmatrix}$ are simultaneously TU-appendable to $T$.
The latter is true, as seen in \cref{obs:simultTUappend} above.
\end{proof}

Finally, we note that the assumption of simultaneous TU-appendability in \cref{cor:simultaneousBound} is necessary to obtain ranges of width $m-|R|$ for each scalar product.
More generally, if we want to obtain bounds simultaneously for all TU-appendable vectors, our general proximity result, \cref{thm:proximityGeneral}, only implies ranges of width $2(m-|R|)+1$.

\subsection[Proof of the decomposition lemma (\texorpdfstring{\cref{lem:decompositionLemma}}{Lemma~\ref{lem:decompositionLemma}}) and \texorpdfstring{\cref{lem:transformSolutionEfficiently}}{Lemma~\ref{lem:transformSolutionEfficiently}}]{Proof of the decomposition lemma (\cref{lem:decompositionLemma}) and \cref{lem:transformSolutionEfficiently}\MOORdot}\label{sec:proofDecompLemma}

In order to prove \cref{lem:decompositionLemma} we first show a key property of pointed polyhedral cones defined by TU matrices (which we also call \emph{TU cones}), from which will later derive \cref{lem:decompositionLemma}.
To this end, we recall that, for a polyhedral cone $C\coloneqq \{x\in \mathbb{R}^n \colon Ax \leq 0\}$ (where $A\in \mathbb{Q}^{k\times n}$), an \emph{extremal ray} of $C$ is a non-zero vector $r\in C$ that lies on a $1$-dimensional face of $C$.
Moreover, we use the following notion of \emph{elementary extremal ray}.
\begin{definition}[Elementary extremal ray]
An extremal ray $r$ of a polyhedral cone $C\subseteq \mathbb{R}^n$ is $\emph{elementary}$ if $r\in \mathbb{Z}^n$ and the greatest common divisor of the coordinates of $r$ is one.
\end{definition}
Hence, for every rational cone $C$ and every extremal ray $r$ of the cone, there is some unique $\lambda > 0$ such that $\lambda r$ is an elementary extremal ray of $C$.

\cref{lem:decompPointedTUCone} below shows that any point in a pointed cone $C$ that is defined by a TU matrix can be integrally decomposed into few elementary extremal rays in strongly polynomial time. We highlight that the crucial part of \cref{lem:decompPointedTUCone} is that the coefficients $\lambda_i$ can be chosen to be integral. Note that, despite the cone being defined by a TU matrix, the elementary extremal rays in \cref{lem:decompPointedTUCone} have to be well-chosen because the set of elementary extremal rays of $C$ does not form a totally unimodular matrix.\footnote{Indeed, cones defined by TU matrices can have exponentially many elementary extremal rays. This follows for example by the well-known fact that the bipartite matching polytope $P$, which can be described by a TU matrix, has vertices $v\in \vertices(P)$ with exponentially many edges incident to them. Hence, the set of constraints of $P$ that are tight at $v$ define a TU cone (when shifted such that $v$ becomes the origin) with exponentially many elementary extremal rays.}
Hence, even if a set of $n$ elementary extremal rays of $C$ spans $y$, it may be that the decomposition of $y$ into a conic combination of these elementary extremal rays requires non-integral coefficients. (This is arguably the case to be expected without choosing the rays carefully.)
\begin{lemma}\label{lem:decompPointedTUCone}
Let $T\in \{-1,0,1\}^{k\times n}$ be a totally unimodular matrix such that the cone $C\coloneqq \{x\in \mathbb{R}^n \colon Tx \leq 0\}$ is pointed, and let $y\in C \cap \mathbb{Z}^n$. Then one can determine in strongly polynomial time elementary extremal rays $y^1,\ldots, y^n\in \mathbb{Z}^n$ of $C$ and coefficients $\lambda_1,\ldots, \lambda_n\in \mathbb{Z}_{\geq 0}$ such that $y=\sum_{i=1}^n \lambda_i y^i$.
\end{lemma}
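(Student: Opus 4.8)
The plan is to proceed by induction on $n$, peeling off one elementary extremal ray at a time while keeping all coefficients integral. First I would handle the base case and the degenerate situations: if $y=0$ we are done with all $\lambda_i=0$, and if $y$ itself lies on an extremal ray we scale it to the unique elementary extremal ray and output a single term. In general, the idea is to find one extremal ray $y^1$ of $C$ and an integer $\lambda_1\geq 1$ such that $y-\lambda_1 y^1\in C\cap\mathbb{Z}^n$ still holds, and moreover $y-\lambda_1 y^1$ lies in a \emph{lower-dimensional} face of $C$ — so that recursing on the (pointed, TU-defined) cone given by that face, which can be taken to involve at most $n-1$ variables, finishes the job with at most $n-1$ further terms.

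Concretely, the key step is the following. Since $C=\{x\colon Tx\leq 0\}$ is pointed, write $T_I$ for the submatrix of rows of $T$ that are tight at $y$ (i.e. $T_I y=0$), and note $y$ lies in the relative interior of the face $F=C\cap\{T_I x=0\}$. If $F$ is a ray we are in the degenerate case above; otherwise $\dim F\geq 2$, so $F$ (equivalently, the nullspace of $T_I$ intersected with $\{T x\leq 0\}$) has an extremal ray $y^1$, which I would take to be the elementary one. Walk from $y$ in the direction $-y^1$: because $F$ is pointed (being a face of the pointed cone $C$) and does not contain the line through $y^1$, the ray $\{y-t y^1\colon t\geq 0\}$ leaves $F$ at some finite $t^*>0$, where a new constraint $T_j x\leq 0$ (with $T_j y^1>0$) becomes tight. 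Set $\lambda_1=\lfloor t^*\rfloor$; I claim $\lambda_1\geq 1$ and $\lambda_1$ is in fact equal to $t^*$ here, because $t^* = (T_j y)/(T_j y^1)$ and, crucially, total unimodularity forces $T_j y^1\in\{-1,0,1\}$ — indeed $\bigl(\begin{smallmatrix}T_I\\ T_j\end{smallmatrix}\bigr)$ is TU and $y^1$ spans the nullspace-intersection defining the edge, so the value of an integral linear form that is TU-appendable to $T_I$ on the primitive edge direction $y^1$ is $\pm1$ (this is exactly the "elementary vector" phenomenon the paper builds on). Hence $t^*\in\mathbb{Z}_{>0}$, $y':=y-\lambda_1 y^1\in\mathbb{Z}^n\cap C$, and $y'$ satisfies $T_I y'=0$ and $T_j y'=0$, so it lies in a strictly smaller face.

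To make the recursion formally an instance of the same lemma with fewer variables, I would restrict to the face: after reindexing, pass to coordinates on the lattice $\{x\colon T_I x=0\}\cap\mathbb{Z}^n$ (which, by TU-ness of $T_I$, has a basis whose coordinate matrix is itself unimodular, so integrality is preserved both ways), obtaining a pointed cone defined by a TU matrix in at most $n-1$ variables; apply the induction hypothesis to the image of $y'$ there; and pull the resulting decomposition back. Each ray produced is an extremal ray of the face, hence of $C$, and pulls back to an elementary integer vector. This yields $y=\lambda_1 y^1+\sum_{i=2}^{n}\lambda_i y^i$ with all $\lambda_i\in\mathbb{Z}_{\geq 0}$ and all $y^i$ elementary extremal rays of $C$. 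For the running time, each step requires: computing the tight rows, one linear-programming-type computation over a TU system to find the edge direction $y^1$ (doable in strongly polynomial time via Tardos~\cite{tardos_1986_strongly}), computing $t^*$ as a single ratio, and a change of lattice basis; there are at most $n$ steps, so the whole procedure is strongly polynomial.

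The main obstacle I expect is the claim that $\lambda_1$ can be taken integral, i.e. that $t^*=(T_j y)/(T_j y^1)$ is already an integer — this is where total unimodularity must be used in an essential way (the footnote warns that naive coefficients are non-integral), and the cleanest route is to argue $T_j y^1=\pm1$ by exhibiting $T_j^\top$ (restricted appropriately) as TU-appendable to the matrix cutting out the edge and invoking that primitive edge directions of TU cones take values $\pm1$ under such forms; some care is needed to phrase this correctly when several constraints become tight simultaneously at $t^*$ (pick any one of them, the argument is unaffected). A secondary, more routine obstacle is bookkeeping the lattice change of coordinates so that "elementary" (gcd of coordinates equal to one) is genuinely preserved; this follows because the coordinate transformation is unimodular.
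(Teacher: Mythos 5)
Your scheme is, at its core, the same greedy procedure as the paper's: at each step take an elementary extremal ray $y^1$ of the minimal face of $C$ containing the current point, subtract the largest multiple of $y^1$ that keeps the point in $C$ (a min-ratio computation over the non-tight rows), observe that a new constraint becomes tight so the dimension of the minimal face strictly drops, and stop after at most $n$ steps. Where you genuinely diverge is in the crux, the integrality of the step length. The paper intersects $C$ with the reversed cone $\{x\colon -Tx\leq -Ty\}$, obtaining a polytope described by a TU system with integral right-hand side; the point $\lambda_1 y^1$ is a vertex of that polytope, hence integral, and primitivity of $y^1$ then forces $\lambda_1\in\mathbb{Z}_{\geq 0}$. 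You instead argue that the min-ratio denominator $T_j y^1$ has absolute value $1$. That fact is true, but as written it is asserted rather than proved: ``primitive edge directions of TU cones take values $\pm 1$ under TU-appendable forms'' is essentially the content of \cref{lem:extremalRayIsExtremalT}, which the paper proves separately (via a convex-combination argument in an integral TU-described polyhedron) and whose proof does not use \cref{lem:decompPointedTUCone}, so invoking it is not circular; alternatively, a short Cramer's-rule argument works: take $n-1$ linearly independent rows of $T$ vanishing on $y^1$, adjoin $T_j$, and use that the resulting nonsingular TU matrix has an integral inverse, so $y^1$ equals $T_j y^1$ times an integer vector and primitivity gives $|T_j y^1|=1$. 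With that ingredient supplied, your route is sound; it gives a more ``local'' explanation of integrality, while the paper's polytope trick lets it prove the decomposition before ever needing the $\pm 1$ fact (which it only uses afterwards to connect extremal rays to elementarity in \cref{lem:decompositionLemma}).

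Two smaller points to fix. First, a sign slip: since $y^1\in C$ we have $T_j y^1\leq 0$ for every row, so when walking from $y$ in direction $-y^1$ the rows that eventually become tight are those with $T_j y^1<0$, not $T_j y^1>0$; finiteness of $t^*$ then follows because $Ty^1=0$ would contradict pointedness. Second, the recursion via a lattice change of coordinates is both unnecessary and under-justified: it is not clear that the reduced constraint matrix (non-tight rows multiplied by a lattice basis of $\{x\colon T_Ix=0\}\cap\mathbb{Z}^n$) is again TU, and you never need to leave the ambient space --- as in the paper, simply iterate on $y'=y-\lambda_1 y^1$ with the updated tight rows, using the strict decrease of the dimension of the minimal face to bound the number of iterations by $n$.
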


\begin{proof}
We prove the statement by determining successively pairs $(\lambda_i, y^i)$ of the desired decomposition of $y$.
We start by explaining how we compute $\lambda_1$ and $y^1$, and then highlight how to iterate the procedure to obtain the full decomposition of $y$.
To obtain a first coefficient $\lambda_1$ and vector $y^1$ of the desired decomposition of $y$, we define an auxiliary polytope $P_1$ by
\begin{align*}
P_1 \coloneqq C\cap C_1\enspace,\quad\text{where}\quad
C_1 \coloneqq \left\{x\in \mathbb{R}^n \colon -Tx \leq -Ty\right\}\enspace.
\end{align*}
Hence,
\begin{align*}
P_1 \coloneqq \left\{x\in \mathbb{R}^n \colon \begin{pmatrix} T\\ -T \end{pmatrix}x \leq \begin{pmatrix} 0\\ -Ty\end{pmatrix}\right\}\enspace.
\end{align*}
Note that $C_1$ can be interpreted as a reversed version of $C$ with apex at $y$.
Also note that $P_1$ is a polytope because $C$ is pointed.
Indeed, if $P_1$ were unbounded, there would need to be a non-zero vector $r\in \mathbb{R}^n$ with $Tr\leq 0$ and $-T r\leq 0$, which implies $Tr=0$ and contradicts that $C$ is pointed.
Moreover, as highlighted above, observe that $P_1$ can be described by the constraint matrix $\begin{psmallmatrix} T \\ -T\end{psmallmatrix}$, which is TU.

Let $T^{\scriptscriptstyle =}$ be the set of constraints of $C$ that are tight at $y$. Hence, $T^{\scriptscriptstyle =} y = 0$. Similarly, let $T^{\scriptscriptstyle <}$ denote the remaining constraints of $C$, which are the ones not tight at $y$. Hence, $T^{\scriptscriptstyle <} y < 0$.
	In addition, without loss of generality, we may assume that the rows in $T^{\scriptscriptstyle <}$ are linearly independent of those of $T^{\scriptscriptstyle =}$; for otherwise they are redundant and we can drop them.
	Let $y^1$ be any extremal ray of
\begin{equation*}
Q_1 \coloneqq \left\{x\in \mathbb{R}^n \colon T^{\scriptscriptstyle{=}}x = 0, T^{\scriptscriptstyle{<}}x \leq 0 \right\}\enspace.
\end{equation*}
Note that $Q_1$ is pointed because $Q_1 \subseteq C$ and $C$ is pointed; thus, it has extremal rays.
Such an extremal ray $y^1$ can be computed efficiently via standard techniques.\footnote{Any vertex $u \in \mathbb{R}^n_{\geq 0}$ of the polytope $P'\coloneqq Q_1 \cap \{x\in \mathbb{R}^n \colon 1^\top x \leq 1\}$, with $u \neq 0$, induces an extremal ray of $Q_1$. Hence, it is enough to compute an optimal vertex solution of the linear program $\max \{1^\top x\colon  x \in P'\}$, which can be done in polynomial time via standard methods. Note that all numbers/coefficients involved in this linear program are small (actually they are all within $\{-1,0,1\}$). Hence, the running time is thus trivially strongly polynomial in the original input size.}
By rescaling $y^1$, we can assume without loss of generality that $y^1 \in \mathbb{Z}^n$ is an elementary extremal ray of $Q_1$. Let
\begin{equation*}
\lambda_1 \coloneqq\max\left\{ \lambda \in \mathbb{R}_{\geq 0} \colon -T^{\scriptscriptstyle <}(\lambda y^1) \leq -T^{\scriptscriptstyle <} y\right\}\enspace,
\end{equation*}
that is, $\lambda_1$ captures how far in the direction of the elementary extremal ray $y^1$ we can go, when starting from the origin, while staying within $P_1$.
The constraints of the above optimization problem are of the form $\lambda a_i \leq b_i$ for $i\in [\ell]$, with $a_i \coloneqq -(T^{\scriptscriptstyle <} y^1)_i$ and $b_i \coloneqq -(T^{\scriptscriptstyle <} y)_i$.
By definition of $T^{\scriptscriptstyle <}$, we have $T^{\scriptscriptstyle <} y < 0$, and thus $b_i > 0$ for all $i\in [\ell]$. Hence,
\begin{equation*}
\lambda_1 = \min\left\{\frac{b_i}{a_i}\colon i \in [\ell]\text{ with } a_i > 0\right\}\enspace,
\end{equation*}
which shows that $\lambda_1$ can be computed in strongly polynomial time by first computing $a_i$ and $b_i$ for $i\in [\ell]$ and then determining the minimizing ratio $\sfrac{b_i}{a_i}$.

Note that $\lambda_1 y^1$ must be a vertex of $P_1$. This follows because $\lambda_1 y^1 \in P_1$ by construction, and $y^1$ is an extremal ray of $Q_1$ (it thus lies on a face of $Q_1$ of dimension $1$), and therefore $y^1$ is also an extremal ray of $P_1$ because $Q_1$ is a face of $P_1$.\footnote{Here we use the basic polyhedral fact that a face of a face of a polyhedron is a face of the polyhedron.}
Hence, $\lambda_1 y^1$ is a face of $P_1$ of dimension $0$, i.e., $\lambda_1 y^1 \in \vertices(P_1)$.
Moreover, because $P_1$ is described by a TU system, its set of vertices must be all integral, and hence $\lambda_1 y^1 \in \mathbb{Z}^n$.
Furthermore, we must also have that $\lambda_1 \in \mathbb{Z}_{\geq 0}$.
If not, then we can write $\lambda_1 = \sfrac{p}{q}$ with $p,q \in \mathbb{Z}_{\geq 0}$ such that their greatest common divisor $\gcd(p,q)$ equals $1$ and $q \geq 2$.
As $\lambda_1 y^1 \in \mathbb{Z}^n$, we must have that $q$ divides $p y^1_i$ for all $i \in [n]$.
However, this implies that $q$ divides $y^1_i$ for all $i\in [n]$, which follows from $\gcd(p,q)=1$ and a well-known basic number theory result.\footnote{More precisely, we use that for any $a,b,c \in \mathbb{Z}$ with $\gcd(a,b)=1$, if $a$ divides $bc$ then $a$ divides $c$.}
But this contradicts with $y^1$ being elementary.

We now proceed inductively on the vector $y'\coloneqq y-\lambda_1 y^1$.
Note that by construction we have $T y'\leq 0$, and can thus reiterate the above-explained approach with the vector $y'$ instead of $y$.
Let $T_1^{\scriptscriptstyle{=}}$ be the rows of $T$ that correspond to constraints of $Tx \leq 0$ that are tight at $y'$; hence, $T_1^{\scriptscriptstyle{=}} y' = 0$.
Analogously as before, let $T_1^{\scriptscriptstyle{<}}$ be the other rows, which correspond to constraints of $Tx \leq 0$ that are not tight at $y'$.
As before, we then define
\begin{equation*}
Q_2 \coloneqq \left\{x\in \mathbb{R}^n \colon T_1^{\scriptscriptstyle{=}}x =0, T_1^{\scriptscriptstyle{<}}x \leq 0\right\}\enspace,
\end{equation*}
compute an elementary extremal ray of $Q_2$ and continue as above.
Note that $\dim(Q_2)< \dim(Q_1)$, because $y' \coloneqq y-\lambda_1 y^1$ was chosen such that a new constraint of $Tx \leq 0$ that was not tight at $y$ became tight at $y'$.
Hence, this procedure will terminate after at most $\dim(Q_1)\leq n$ many iterations.
If the procedure terminates in less than $n$ iterations, in which case we get a decomposition with fewer than $n$ terms, we can add arbitrary extremal rays with zero coefficients to the decomposition to obtain the claimed $n$ many terms.
\end{proof}

The following statement shows that elementary extremal rays of a TU cone are elementary with respect to the TU matrix defining the cone.
This property links the notions of elementary extremal ray and of being elementary with respect to a TU matrix.
\begin{lemma}\label{lem:extremalRayIsExtremalT}
Let $T\in \{-1,0,1\}^{k\times n}$ be a totally unimodular matrix and $r\in \mathbb{Z}^n$ be an elementary extremal ray of $C\coloneqq \{x\in \mathbb{R}^n \colon Tx \leq 0\}$. Then $r$ is elementary with respect to $T$.
\end{lemma}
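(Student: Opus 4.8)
The plan is to use that an extremal ray lies on a one-dimensional face, so the constraints of $T$ tight at $r$ span a hyperplane, and to combine this with a Cramer-type formula for the (one-dimensional) kernel of that rank-$(n-1)$ system. Concretely, I would first record the standard polyhedral fact: since $r$ is an extremal ray of $C=\{x:Tx\le 0\}$, it lies on a $1$-dimensional face $F$ of $C$, and $\operatorname{aff}(F)=\{x: T^{=}x=0\}$, where $T^{=}$ collects the rows of $T$ that are tight on $F$; hence $\operatorname{rank}(T^{=})=n-1$. Choosing $n-1$ linearly independent rows of $T^{=}$ yields a matrix $M\in\{-1,0,1\}^{(n-1)\times n}$ with $Mr=0$, and since $\ker M$ is one-dimensional and $r\neq 0$, the vector $r$ spans $\ker M$.

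Next I would introduce the integral vector $r^{*}\in\mathbb{Z}^{n}$ of signed maximal minors of $M$, namely $r^{*}_{j}:=(-1)^{n+j}\det M^{(j)}$, where $M^{(j)}\in\mathbb{Z}^{(n-1)\times(n-1)}$ is $M$ with its $j$-th column deleted. Two standard facts are needed: (i) $Mr^{*}=0$, obtained by Laplace-expanding along the repeated last row of the matrix built from $M$ by appending one of its own rows; and (ii) $r^{*}\neq 0$, since $\operatorname{rank}(M)=n-1$ guarantees that some $\det M^{(j)}$ is nonzero. Thus $r^{*}\in\ker M\setminus\{0\}$, so $r^{*}=\lambda r$ for some $\lambda\in\mathbb{Q}\setminus\{0\}$. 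Here is where the hypothesis that $r$ is \emph{elementary} enters: writing $\lambda=p/q$ in lowest terms, the equality $q\,r^{*}=p\,r$ forces $q\mid r_{j}$ for all $j$, hence $q$ divides $\gcd$ of the coordinates of $r$, which is $1$; therefore $\lambda\in\mathbb{Z}\setminus\{0\}$, and in particular $|\lambda|\ge 1$.

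Finally, take any $d$ that is TU-appendable to $T$. Then $\binom{M}{d^{\top}}$ is an $n\times n$ submatrix of the totally unimodular matrix $\binom{T}{d^{\top}}$ — it is obtained by deleting those rows of $T$ not chosen for $M$ — so $\det\binom{M}{d^{\top}}\in\{-1,0,1\}$. Laplace-expanding $\det\binom{M}{d^{\top}}$ along its last row gives $\det\binom{M}{d^{\top}}=\sum_{j}(-1)^{n+j}d_{j}\det M^{(j)}=d^{\top}r^{*}=\lambda\,d^{\top}r$. Hence $\lambda\,d^{\top}r\in\{-1,0,1\}$, and since $d^{\top}r\in\mathbb{Z}$ and $|\lambda|\ge 1$, we conclude $d^{\top}r\in\{-1,0,1\}$. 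As $d$ was an arbitrary TU-appendable vector, $r$ is elementary with respect to $T$.

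I expect the only genuinely non-routine point to be the passage through the minor vector $r^{*}$: one has to recognize that the natural integral generator of $\ker M$ is a rational multiple of $r$, and then invoke primitivity of $r$ to force that multiple to be a \emph{nonzero integer} — this is precisely what rules out $|d^{\top}r|$ being large. The remaining ingredients (affine hull of a face has codimension equal to the rank of the tight constraints, the Laplace/Cramer identities, and closure of total unimodularity under taking square submatrices) are standard and can be cited or verified in a line each.
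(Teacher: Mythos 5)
Your proof is correct, but it follows a genuinely different route from the paper's. The paper argues by contradiction using polyhedral integrality: assuming $\eta\coloneqq d^\top r\notin\{-1,0,1\}$ for some TU-appendable $d$, it considers the integral polyhedron $Z=\{x\colon Tx\le 0,\ d^\top x=1\}$ (integral precisely because $d$ is TU-appendable), notes that $(\sfrac1\eta)r$ is the unique point of $Z$ on the one-dimensional face through $r$ and is non-integral by primitivity of $r$, and derives a contradiction from the fact that a point on a $1$-face can only be written as a convex combination of points of that same face. You instead work directly with the tight constraints: you pick $n-1$ linearly independent tight rows $M$, build the Cramer-type kernel vector $r^*$ of signed maximal minors, use primitivity of $r$ to show $r^*=\lambda r$ with $\lambda$ a nonzero \emph{integer}, and then bound $d^\top r^*=\det\begin{psmallmatrix} M\\ d^\top\end{psmallmatrix}\in\{-1,0,1\}$ because this is (up to row order) a square submatrix of the TU matrix obtained by appending $d^\top$ to $T$. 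All steps check out: $\operatorname{rank}(T^{=})=n-1$ is the standard affine-hull fact for a $1$-dimensional face, $Mr^*=0$ and $r^*\neq 0$ are the usual Laplace/rank arguments, and the gcd argument forcing $\lambda\in\mathbb{Z}$ mirrors the number-theoretic step the paper itself uses elsewhere (in its proof of the cone-decomposition lemma). Your approach is more explicit and elementary—it never invokes integrality of TU polyhedra or convex decompositions, only determinant identities—whereas the paper's argument is shorter to state and reuses the same face-based convexity reasoning it employs in the surrounding lemmas; the determinant identity $d^\top r^*=\det\begin{psmallmatrix} M\\ d^\top\end{psmallmatrix}$ also makes quantitatively transparent exactly where total unimodularity caps $|d^\top r|$ at $1$.
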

\begin{proof}
With the goal of deriving a contradiction, assume that there is a vector $d\in \{-1,0,1\}^n$ that is TU-appendable to $T$ and such that $\eta\coloneqq d^{\top} r\not\in \{-1,0,1\}$.
Without loss of generality, we assume $\eta > 0$, which can be achieved by replacing $d$ by $-d$ if necessary.
We denote by
\begin{equation*}
L\coloneqq \left\{\lambda r \colon \lambda \geq 0\right\}
\end{equation*}
the $1$-dimensional face of $C$ on which $r$ lies.
Note that $(\sfrac{1}{\eta})\cdot r$ lies in the polyhedron $Z$ defined by
\begin{align*}
Z\coloneqq \left\{x\in \mathbb{R}^n \colon Tx\leq 0, d^\top x = 1\right\}\enspace.
\end{align*}
Hence, because $\begin{psmallmatrix}
T \\ d^\top
\end{psmallmatrix}$ is TU (recall that $d$ is TU-appendable to $T$), $(\sfrac{1}{\eta})\cdot r$ can be written as a convex combination of integer points in $Z$, say
\begin{equation}\label{eq:decomopOfScaledDownYi}
\frac{1}{\eta}\cdot r = \sum_{j=1}^{q} \mu_j z_j\enspace,
\end{equation}
with $\mu_j \geq 0, z_j\in Z\cap \mathbb{Z}^n$ for $j\in [q]$, and $\sum_{j=1}^q \mu_j =1$.
Observe that $(\sfrac{1}{\eta})\cdot r$ is the only point on $L$ that is also in $Z$ because $d^{\top} r \neq 0$, i.e.,
\begin{equation*}
L\cap Z = \left\{(\sfrac{1}{\eta})\cdot r\right\}\enspace.
\end{equation*}
As $(\sfrac{1}{\eta})\cdot r \not\in \mathbb{Z}^n$, because $r$ is elementary and $\eta>1$, we have
\begin{equation*}
z_j\not\in L \quad \forall j\in [q]\enspace.
\end{equation*}
However, this leads to a contradiction because it implies that the decomposition~\eqref{eq:decomopOfScaledDownYi} expresses a point on the $1$-dimensional face $L$ of $C$ as a convex combination of points in $C$, none of which lies on $L$.
This is impossible because any convex combination that describes a point on a $1$-face of a polyhedron needs to use terms on the same face.
\end{proof}

We are now ready to prove \cref{lem:decompositionLemma}.

\begin{proof}[Proof of \cref{lem:decompositionLemma}]
Because the statement is invariant under a shift of the coordinate system, we can assume $x_0=0$ for convenience.
(Formally, instead of considering $Tx\leq b$ and $x_0,y$, we consider the system $Tx \leq b-Tx_0$ and replace $x_0$ and $y$ by the origin and $y-x_0$, respectively.)
Moreover, we observe that we can assume that the system $Tx \leq b$ contains, for each $i\in [n]$, the constraint
\begin{equation}\label{eq:makeTConePointed}
\begin{cases}
x_i \geq 0 &\text{if } y_i \geq 0\enspace,\\
x_i \leq 0 &\text{if } y_i <0\enspace.
\end{cases}
\end{equation}
Indeed, by adding these constraints, the thus obtained system $\widetilde{T}x \leq \widetilde{b}$ is still a TU system for which both the origin and $y$ are feasible.
Moreover, a decomposition of $y$ with respect to this new system $\widetilde{T} x \leq \widetilde{b}$ has the desired properties because a vector is TU-appendable to $T$ if and only if it is TU-appendable to $\widetilde{T}$, which implies that a vector is elementary w.r.t.~$T$ if and only if it is elementary w.r.t.~$\widetilde{T}$.\footnote{The fact that TU-appendability to $T$ is the same as TU-appendability to $\widetilde{T}$ is an immediate consequence of the fact that adding rows that are all-zero except for a single $1$ or $-1$ entry to any TU matrix preserves TU-ness.}
Hence, we assume from now on that $Tx \leq b$ contains the constraints~\eqref{eq:makeTConePointed}, which implies that $T$ has full column rank.

We now define a TU matrix $\overline{T}\in \{-1,0,1\}^{k\times n}$ which is obtained from $T$ by changing the sign of some of its rows.
More precisely for each row $w^{\top}$ of $T$, the matrix $\overline{T}$ contains a row
\begin{equation*}
\begin{cases}
\phantom{-}w^{\top} &\text{if } w^{\top} y \leq 0\enspace,\\
-w^{\top}           &\text{if } w^{\top} y > 0\enspace.
\end{cases}
\end{equation*}
We define
\begin{equation*}
C\coloneqq \left\{ x\in \mathbb{R}^n \colon \overline{T}x \leq 0\right\}\enspace.
\end{equation*}
Note that $C$ is pointed because $\overline{T}$ has full column rank, which follows from $T$ having full column rank.
We now apply \cref{lem:decompPointedTUCone} to the TU matrix $\overline{T}$ and point $y$.
This leads to a decomposition of $y$ as $y=\sum_{i=1}^n \lambda_i y^i$ such that, for $i\in [n]$, we have $\lambda_i \in \mathbb{Z}_{\geq 0}$ and $y^i$ is an elementary extremal ray of $C$.
We claim that this decomposition has the desired properties.

Note that by \cref{lem:extremalRayIsExtremalT}, each vector $y^i$ for $i\in [n]$ is elementary with respect to $\overline{T}$.
It is therefore also elementary with respect to $T$, because $\overline{T}$ and $T$ have the same set of TU-appendable rows as they are the same matrices up to sign changes of some of the rows.

It remains to show that for any coefficients $\mu_1,\ldots, \mu_n\in \mathbb{Z}_{\geq 0}$  with $\mu_i\leq \lambda_i$ for $i\in [n]$, we have that the vector
\begin{equation*}
\widetilde{y} \coloneqq x_0 + \sum_{i=1}^n \mu_i y^i = \sum_{i=1}^n \mu_i y^i
\end{equation*}
satisfies $T \widetilde{y} \leq b$.
To this end consider a constraint $w^{\top} x \leq \beta$ of the system $Tx \leq b$.
We distinguish between whether $w^{\top}$ or $-w^{\top}$ is a row of $\overline{T}$.
If $w^{\top}$ is a row of $\overline{T}$, then
\begin{equation*}
w^\top \widetilde{y} = \sum_{i=1}^n \mu_i w^{\top} y^i \leq 0 \leq \beta\enspace,
\end{equation*}
where the first inequality follows from $w^\top y^i \leq 0$ because $y^i$ is a ray of $C$, and the second inequality follows from the fact that the origin is feasible for the system $Tx \leq b$, which implies that all right-hand sides are non-negative.

Consider now the case where $-w^{\top}$ is a row of $\overline{T}$. Then we have
\begin{equation*}
w^{\top} \widetilde{y} = w^{\top} y - \sum_{i=1}^n \left(\lambda_i -\mu_i\right) w^{\top} y^i \leq w^{\top} y \leq \beta\enspace,
\end{equation*}
where the first inequality follows from $\lambda_i \geq \mu_i$ together with $w^{\top} y^i \geq 0$, which holds because $\overline{T} y^i \leq 0$ and $\overline{T}$ contains the row $-w^{\top}$, and the last inequality follows from $Ty \leq b$, which contains the constraint $w^{\top} y \leq \beta$.
Hence, $\widetilde{y}$ fulfills all constraints of the system $Tx\leq b$, as desired, which finishes the proof.
\end{proof}

\begin{proof}[Proof of \cref{lem:transformSolutionEfficiently}]
By applying \cref{lem:decompositionLemma} to the solutions $y$ and $x_0$ of the system $Tx\leq b$ of the given \RCCTUF{} problem, we obtain in strongly polynomial time $y^1,\ldots,y^n\in\mathbb{Z}^n$ and $\lambda_1,\ldots,\lambda_n\in\mathbb{Z}_{\geq 0}$ such that $y = x_0 + \sum_{i=1}^n \lambda_i y^i$ and
\begin{enumerate*}
\item $d^\top y^i\in\{-1,0,1\}$ for all $i\in[n]$ and all $d$ that are TU-appendable to $T$, and
\item $\tilde y = x_0 + \sum_{i=1}^n \mu_iy^i$ is feasible for $Tx\leq b$ for any choice of $\mu_i\in\{0,\ldots,\lambda_i\}$.
\end{enumerate*}
By these properties, in order to prove \cref{lem:transformSolutionEfficiently}, it is enough to identify in strongly polynomial time $\mu_i\in\{0,\ldots,\lambda_i\}$ with $\sum_{i=1}^n \mu_i\leq m-|R|$ such that $\gamma^\top \tilde y = \gamma^\top x_0 + \sum_{i=1}^n\mu_i \gamma^\top y^i\in R\pmod*{m}$. Denoting $\Lambda=\sum_{i=1}^n\lambda_i$ and
\begin{align}\label{eq:defineResidues}
R'=\{(r -\gamma^\top x_0 \bmod{m})\colon r\in R\}\enspace, \quad\text{as well as}\qquad
\begin{aligned}
r_1=\ldots=r_{\lambda_1}&=\gamma^\top y^1\enspace,\\ r_{\lambda_1+1}=\ldots=r_{\lambda_1+\lambda_2}&=\gamma^\top y^2\enspace,\\
 &\ \;\vdots \\
r_{\lambda_1+\ldots+\lambda_{n-1}+1} = \ldots = r_{\Lambda}&=\gamma^\top y^n\enspace,
\end{aligned}
\end{align}
we can formulate this problem as follows: We start from the sum $\sum_{i\in S_0} r_i\in R'\pmod*{m}$ with $S_0=[\Lambda]$, and our goal is to identify a subset $S\subseteq S_0$ of size at most $m-|R|=m-|R'|$ such that $\sum_{i\in S}r_i\in R'\pmod*{m}$, as well.
By \cref{lem:feasibleResidueSum}, we know that if $|S_0| > m-|R'|$, there exists an interval $I_1=\{i_1^1,\ldots, i_2^1\}$ with $i_1^1, i_2^1\in S_0$ and $i_1^1 < i_2^1$ such that for $S_1 = S_0\setminus I_1$, we have $\sum_{i\in S_1} r_i\in R'\pmod*{m}$.
Iterating this argument, we obtain that for $j=1,2,\ldots$ and while $|S_{j-1}|>m-|R'|$, there exists an interval $I_j=\{i_1^j,\ldots, i_2^j\}$ with $i_1^j, i_2^j\in S_0$ and $i_1^j < i_2^j$ such that $I_j \cap S_{j-1}\neq \emptyset$, and for $S_j = S_{j-1}\setminus I_j$, we have $\sum_{i\in S_j} r_j\in R'\pmod*{m}$.
For clarity, we remark that in step $j$, we are removing the terms with indices in $S_{j-1}\cap I_j$ from the sum.
Moreover, while these indices are consecutive in the sum that we consider in step $j$, they may not be so in the original sum $\sum_{i=1}^n r_i$, as indices in $I_j\setminus S_{j-1}$ correspond to terms that were removed in earlier steps.
For this reason, an index $i\in[\Lambda]$ may well be contained in several intervals $I_j$.

Because $I_j\cap S_{j-1}\neq\emptyset$, the number of terms in the sum strictly decreases in every step, so the procedure terminates, which shows existence of the desired solution $\tilde y$, as already pointed out in \cref{sec:overviewDecompAndFlat}.
To arrive at a suitably short sum in strongly polynomial time, we split the deletion process into two phases:
\begin{description}
\item[Phase 1:]
Steps $j$ such that $|S_{j-1}|>m-1$, i.e., the sum has more than $m-1$ terms. \\
Hence, the above arguments can be applied with $R'$ replaced by the singleton set $\{(\sum_{i\in S_0}r_i \bmod{m})\}$ such that the sums $\sum_{i\in S_j} r_i$ obtained in this phase all have the same residue.
Equivalently, terms that sum to $0\pmod*{m}$ are removed in every step, i.e., $\sum_{i\in  S_{j-1}\cap I_j}r_i\equiv 0\pmod*{m}$.
\item[Phase 2:]
Steps $j$ such that $|S_{j-1}|\leq m-1$, i.e., the sum has at most $m-1$ terms. \\
In this case, at most $|R|-1$ further deletion steps suffice to reduce to at most $m-|R|$ many terms.
\end{description}
A way to perform the steps in strongly polynomial time both in phase~1 and phase~2, as well as a strongly polynomial bound on the number of steps in phase~1 is provided by the following two claims:
\begin{enumerate}[label=(\alph*), ref=(\alph*)]
\item\label{claimitem:maxInterval} We can, in every step of the described procedure and in strongly polynomial time, determine an interval to delete of maximum possible size, i.e., determine $I_j$ such that $|S_{j-1}\cap I_j|$ is maximized.
\item\label{claimitem:2nsteps} If in every step, $I_j$ is chosen according to point~\ref{claimitem:maxInterval}, the procedure ends after at most $n$ steps.
\end{enumerate}
Together,~\ref{claimitem:maxInterval} and~\ref{claimitem:2nsteps} immediately prove \cref{lem:transformSolutionEfficiently}.
To proof the two claims, let us start with focusing on claim~\ref{claimitem:2nsteps}.
First, we observe that in phase~1, choosing $I_j$ to maximize $|S_{j-1}\cap I_j|$ implies that no two intervals will overlap, i.e., $I_j\cap I_k=\emptyset$ for all intervals $I_j$ and $I_k$ that we construct in this phase.
To see this, assume for the sake of deriving a contradiction that $I_\ell$ is an interval that overlaps with some earlier intervals $I_{j_1},\ldots,I_{j_t}$ with $j_1 < \ldots < j_t < \ell$, and choose the minimum $\ell$ with this property.
In particular, we thus know that the intervals $I_{j_1},\ldots,I_{j_t}$ do not overlap with each other and with any other intervals $I_j$ with $j<\ell$.
This implies that in step $j_1$, $I'\coloneqq I_\ell\cup I_{j_1}\cup\ldots\cup I_{j_t}$ is a candidate interval: Indeed, taking $I'$ would remove the terms
$$
\smashoperator[r]{\sum_{i\in S_{j_1-1}\cap I'}}\ r_i = \sum_{i\in I'} r_i = \sum_{p=1}^t \sum_{i\in I_{j_p}} r_i + \smashoperator[r]{\sum_{i\in I_\ell\setminus\bigcup_{p=1}^t I_{j_p}}}\ r_i = \sum_{p=1}^t \smashoperator[r]{\sum_{i\in S_{j_p-1}\cap I_{j_p}}}\ r_i + \smashoperator[r]{\sum_{i\in S_{\ell-1}\cap I_\ell}}\ r_i \equiv 0 \pmod{m}\enspace,
$$
where we use that $I_\ell$ is the first interval that overlaps with other intervals, and that because we are in phase~1, each individual sum in the last expression is $0\pmod*{m}$.
Moreover, note that $S_{j_1-1}\cap I_{j_1} \subsetneq S_{j_1-1}\cap I'$, hence $|S_{j_1-1}\cap I_{j_1}| < |S_{j_1-1}\cap I'|$, contradicting the choice of $I_{j_1}$ to maximize $|S_{j_1-1}\cap I_{j_1}|$.
Thus, the intervals $I_j$ obtained in phase~1 are all disjoint, hence in particular, we always have $S_{j-1}\cap I_j = I_j$, i.e., in step~$j$, we remove precisely the terms with indices in $I_j$ from the sum.

Next, recall the way that residues $r_i$ were defined in~\eqref{eq:defineResidues}: They come in $n$ chunks of equal residues, namely with indices in $C_1=\{1, \ldots, \lambda_1\}$, $C_2=\{\lambda_1+1, \ldots, \lambda_1+\lambda_2\}$, \ldots, $C_n=\{\lambda_1+\ldots+\lambda_{n-1}+1, \ldots, \Lambda\}$.
We observe that each of those chunks $C_i$ can contain at most $2$ endpoints of intervals $I_j$ that are constructed during phase~1. To see this, assume for the sake of deriving a contradiction that one $C_\ell$ contains at least three interval endpoints. We distinguish two cases:
\begin{itemize}
\item $C_\ell$ contains both endpoints of an interval $I_j=\{i_1^j, \ldots, i_2^j\}$, and (at least) one endpoint of $I_k=\{i_1^k, \ldots, i_2^k\}$. Intervals do not overlap, so assume without loss of generality that $i_2^j < i_1^k$ and choose $k$ such that $i_1^k$ is smallest possible. We claim that instead of $I_j$ or $I_k$ (whichever was deleted first), we could also have chosen the larger interval $I'=\{i_1^k - i_2^j + i_1^j, \ldots, i_2^k\}$: Indeed,
$$
\sum_{i\in I'}r_i = \sum_{i=i_1^k - i_2^j + i_1^j-1}^{i_1^k - 1} r_i + \sum_{i = i_1^k}^{i_2^k} r_i = \sum_{i\in I_j} r_i + \sum_{i\in I_k}r_i \equiv 0 \pmod{m}\enspace,
$$
where we use that $r_i = r_{i'}$ for all $i, i'\in C_\ell$, and that because we are in phase~1, each individual sum in the last expression is $0\pmod*{m}$.
Because $|I_j|, |I_k|<|I'|$, this contradicts the choice of intervals $I_j$ such that $|S_{j-1}\cap I_j|=|I_j|$ is maximized.

\item $C_\ell$ does not contain both endpoints of any interval $I_j$. This implies that every interval that has one endpoint in $C_\ell$ contains at least one of the minimum or maximum indices in $C_\ell$. Consequently, if $C_\ell$ contains at least three endpoints, one of these two indices is covered by at least two intervals, contradicting that intervals are disjoint in phase~1.
\end{itemize}
This proves that every $C_i$ can contain at most $2$ endpoints of intervals constructed in phase~1, hence there can be at most $n$ such intervals, and phase~1 ends after at most $n$ steps. This proves claim~\ref{claimitem:2nsteps}.\footnote{We remark that a slightly more careful analysis, in particular of endpoints in $C_1$ and $C_n$, immediately improves this bound to $n-1$, but this is not needed for our purpose.}

Finally, and to complete the proof of \cref{lem:transformSolutionEfficiently}, we focus on claim~\ref{claimitem:maxInterval} above, i.e., on how to efficiently find intervals $I_j$ maximizing $|S_{j-1}\cap I_j|$.
To this end, let us recall what the situation is:
We are given $r_1,\ldots,r_\Lambda$ as defined in~\eqref{eq:defineResidues}, and a set $S$ of target residues (in phase~1, $S$ will contain a single residue; in phase~2, it will be equal to $R'$ from~\eqref{eq:defineResidues}) such that $\sum_{i\in [\Lambda]} r_i\in S\pmod*{m}$, and the goal is to identify an interval $I=\{i_1,\ldots,i_2\}\subseteq[\Lambda]$ such that $\sum_{i\in[\Lambda]\setminus I} r_i\in S\pmod*{m}$, and $|I|$ has maximum possible size.
Observe that if we update the values $\lambda_i$, $\Lambda$, and $r_i$ accordingly to reflect the remaining sum after each step of the procedure, this is the precise setup that we are faced with in each step.
In what follows, we show that an optimal interval $I=\{i_1,\ldots,i_2\}$ can be identified after solving $O(n^2 |S|)$ many IPs with a constant number of variables and a constant number of constraints.

To see this, let $C_j$ and $C_k$ (as defined earlier), with $1\leq j\leq k\leq n$, be such that $i_1\in C_j$ and $i_2\in C_k$.
If $j<k$, then $i_1 = \sum_{i=1}^{j-1}\lambda_i + \tau_1$ for some $\tau_1\in[\lambda_j]$, and $i_2 = \sum_{i=1}^{k-1}\lambda_i + \tau_2$ for some $\tau_2\in[\lambda_k]$,
\begin{equation*}
\sum_{i\in[\Lambda]\setminus I} r_i = \sum_{i\in[\Lambda]} r_i - \left(
(\lambda_j-\tau_1+1)r_{\lambda_1+\ldots+\lambda_j+1} + \sum_{i=j+1}^{k-1}\lambda_i r_{\lambda_i+\ldots+\lambda_i+1} + \tau_2 r_{\lambda_1+\ldots+\lambda_{k}+1}
\right)\enspace,
\end{equation*}
and thus
\begin{equation*}
\sum_{i\in[\Lambda]\setminus I} r_i \in S\pmod{m}
\quad\iff\quad
-\tau_1 r_{\lambda_1+\ldots+\lambda_j+1} + \tau_2 r_{\lambda_1+\ldots+\lambda_k+1} \in S' \pmod*{m}\enspace,
\end{equation*}
where $S'$ is a shifted version of $S$.
Moreover, observe that $|I|=\lambda_j-\tau_1+1 + \sum_{i=j+1}^{k-1}\lambda_i + \tau_2$, hence $|I|$ is of maximum size if $\tau_2-\tau_1$ is maximized.
Altogether, we obtain that $(\tau_1,\tau_2)$ is an optimal solution of
\begin{equation}\label{eq:IP_j<k}
\begin{array}{rrrcl}
\max_{s\in S'} & \max & \tau_2-\tau_1 \hfill & \\
&& -\tau_1 r_{\lambda_1+\ldots+\lambda_j+1} + \tau_2 r_{\lambda_1+\ldots+\lambda_k+1} & = & zm + s \\
&& \tau_1 & \in & [\lambda_j]\\
&& \tau_2 & \in & [\lambda_k]\\
&& z & \in & \mathbb{Z}\enspace.
\end{array}
\end{equation}
Similarly, if $j=k$, then $i_1=\sum_{i=1}^{j-1}\lambda_i + \tau_1$ and $i_2 = \sum_{i=1}^{j-1}\lambda_i + \tau_2$ for some $\tau_1,\tau_2\in[\lambda_j]$ with $\tau_1\leq \tau_2$, and we have
\begin{multline*}
\sum_{i\in[\Lambda]\setminus I} r_i = \sum_{i\in[\Lambda]} r_i - (\tau_2-\tau_1+1)r_{\lambda_1+\ldots+\lambda_j+1} \in S\pmod{m}\\
\quad\iff\quad
(\tau_2-\tau_1) r_{\lambda_1+\ldots+\lambda_j+1} \in S' \pmod*{m}\enspace,
\end{multline*}
where again, $S'$ is a shifted version of $S$.
Moreover, $|I|=\tau_2-\tau_1+1$, hence $|I|$ is of maximum size if $\tau_2-\tau_1$ is maximized.
Thus, we obtain that $(\tau_1,\tau_2)$ is an optimal solution of
\begin{equation}\label{eq:IP_j=k}
\begin{array}{rrrcl}
\max_{s\in S'} & \max & \tau_2-\tau_1 \hfill & \\
&& (\tau_2-\tau_1) r_{\lambda_1+\ldots+\lambda_j+1} & = & zm + s \\
&& \tau_1 & \leq & \tau_2 \\
&& \tau_1, \tau_2 & \in & [\lambda_j]\\
&& z & \in & \mathbb{Z}\enspace.
\end{array}
\end{equation}
Finally, observe that to solve the problems in~\eqref{eq:IP_j<k} and~\eqref{eq:IP_j=k}, it is enough to solve the inner maximization problem for every $s\in S'$.
Given $s$, these maximization problems are integer programs with $3$ variables and a constant number of constraints, and can thus be solved in time polynomial in the encoding size of the IP using Lenstra's algorithm~\cite{lenstra_1983_IPfixed}, which is strongly polynomial in the size of the \RCCTUF{} problem.
Moreover, for fixed $j$ and $k$, it is immediate that a solution of~\eqref{eq:IP_j<k} or~\eqref{eq:IP_j=k} (if it exists) corresponds to a largest possible interval $I$ with endpoints in $C_j$ and $C_k$.
Altogether, by going through the $O(n^2)$ many options for $j,k\in[n]$, we can in strongly polynomial time determine the optimal interval $I$. This proves claim~\ref{claimitem:maxInterval}, and thus concludes the proof of \cref{lem:transformSolutionEfficiently}.
\end{proof}

\section{Solving base block problems\MOORdot}\label{sec:baseBlocks}

In this section, we discuss how to solve \CCTU{} problems---and thus also \CCTUF{} and \RCCTUF{} problems---whose constraint matrices are base block matrices, i.e., matrices falling into case~\ref{thmitem:TUdecomp_netw} or~\ref{thmitem:TUdecomp_const} of \cref{thm:TUdecomp}. Note that we can always assume to start with a \CCTU{} problem whose relaxation is feasible, which we can check in strongly polynomial time; for otherwise, the \CCTU{} problem is clearly infeasible. Hence, we assume feasibility of the relaxation throughout this section.
To start with, let us recall the definition of a \emph{network matrix}.
\begin{definition}\label{def:networkMatrix}
A matrix $T$ is a \emph{network matrix} if the rows of $T$ can be indexed by the edges of a directed spanning tree $(V, U)$, and the columns can be indexed by the edges of a directed graph $(V, A)$ on the same vertex set, such that for every arc $a=(v, w)\in A$ and every arc $u\in U$,
$$
T_{u,a} = \begin{cases}
1 & \text{if the unique $v$-$w$ path in $U$ passes through $u$ forwardly,}\\
0 & \text{if the unique $v$-$w$ path in $U$ does not pass through $u$,}\\
-1 & \text{if the unique $v$-$w$ path in $U$ passes through $u$ backwardly.}
\end{cases}
$$
\end{definition}

Note that here, a directed graph is called a spanning tree if it is a spanning tree when ignoring edge directions. Moreover, we remark that we allow graphs to have several parallel edges connecting the same two vertices. In particular, the graph $(V,A)$ in the above definition may have parallel edges, which correspond to identical columns of $T$. An important fact for our purposes is the following.

\begin{lemma}[see, for example, \cite{schrijver1998theory}]\label{lem:networkMatrixGraphs}
Given a matrix $T$, one can in strongly polynomial time recognize whether it is a network matrix. If so, a directed graph $(V,A)$ and a directed tree $(V, U)$ as in \cref{def:networkMatrix} can be found efficiently.
\end{lemma}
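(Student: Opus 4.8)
The plan is to reduce the recognition of network matrices to the classical \emph{graph realization problem} for binary matroids, for which strongly polynomial algorithms are well known. First, since every network matrix has entries in $\{-1,0,1\}$, I would reject $T$ immediately if it has any other entry. Otherwise, writing $m$ for the number of rows of $T$, consider the binary matrix $M \coloneqq [\,I_m \mid |T|\,]$ over $GF(2)$, where $|T|$ denotes the entrywise absolute value. The key correspondence is this: if $T$ is the network matrix of a digraph with directed spanning tree $U$ indexing the rows and non-tree arc set $A$ indexing the columns, then the column matroid of $[\,I_m\mid T\,]$ over $\mathbb{Q}$ equals the cycle matroid $M(G)$ of the underlying undirected graph $G$, with the $I_m$-columns corresponding to the spanning tree $U$; and since network matrices are totally unimodular, this matroid agrees with the binary matroid of $M$. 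Hence graphicness of the binary matroid of $M$ is necessary for $T$ to be a network matrix, and one checks that -- together with a compatible sign pattern, handled below -- it is also sufficient.

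The first, and main, step is therefore to decide whether the binary matroid of $M$ is graphic and, if so, to construct a graph $G$ realizing it with edges labeled by the columns of $[\,I_m\mid T\,]$. This is exactly the graph realization problem, solvable in strongly polynomial time by classical algorithms (Tutte's graph realization algorithm, and the algorithm of Bixby and Cunningham adapted to network matrices; see also \cite{schrijver1998theory}); this subroutine is where essentially all the work lies, and in a final write-up I would simply cite it. If the matroid is not graphic, $T$ is not a network matrix. Otherwise, since the $I_m$-columns form a basis of the matroid, they form a spanning tree of $G$ -- after, if necessary, identifying one vertex from each connected component to make $G$ connected, an operation that does not change the cycle matroid -- and I would take this spanning tree as the undirected tree underlying $U$ and the remaining edges as the undirected graph underlying $A$.

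It then remains to fix orientations. Orient $G$ arbitrarily to obtain a network matrix $T'$ with $|T'| = |T|$. Both $T$ and $T'$ are totally unimodular with the same support, so by the uniqueness (up to row and column negations) of totally unimodular signings of a fixed support pattern -- Camion's theorem -- $T$ arises from $T'$ by negating a set of rows and a set of columns; the corresponding $\pm1$ multipliers, or a certificate that none exist (in which case $T$ is not a network matrix), can be computed in strongly polynomial time by propagating signs along a spanning forest of the bipartite support graph and testing consistency on the remaining edges. Finally, negating a row of a network matrix corresponds to reversing the associated tree arc and negating a column corresponds to reversing the associated non-tree arc, so applying these reversals to the orientation of $G$ yields directed graphs $(V,U)$ and $(V,A)$ whose network matrix is exactly $T$. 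The only genuine obstacle is the graph realization subroutine; the reduction to it, and the sign-fixing afterwards, are routine bookkeeping.
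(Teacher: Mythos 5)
Your proposal is correct, with the caveat that the paper itself does not prove this lemma at all --- it is quoted as a known result with a pointer to Schrijver's book, where the recognition of network matrices is obtained by exactly the kind of machinery you invoke (graph realization for binary matroids in the style of Tutte and Bixby--Cunningham, plus a signing argument). So you are not diverging from the paper; you are reconstructing the standard proof behind its citation. The reduction itself is sound: $[\,I_m\mid T\,]$ arises from the incidence matrix of $(V,U\cup A)$ by a basis change, so its rational column matroid is the cycle matroid of the underlying graph, and total unimodularity of network matrices lets you pass to the binary matroid of the support; graphicness is then necessary, and your sign-fixing step makes it sufficient. Two points in your sketch deserve the emphasis you partly give them: first, the realization returned by the subroutine need not be the ``intended'' graph (graphic realizations are only unique up to 2-isomorphism), but your argument never needs that --- Camion's uniqueness of TU signings is applied to whatever orientation $T'$ you produce, and if the $\pm1$ row/column multipliers exist they translate into arc reversals, while if they fail to exist (or if $T$ itself is not a consistent signing, e.g.\ not TU) Camion rules out $T$ being a network matrix; second, the consistency check $\epsilon_r\epsilon_c=T_{rc}T'_{rc}$ via spanning-forest propagation on the bipartite support graph is indeed a complete test, since any solution is determined per connected component up to a global flip. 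Modulo citing the graph-realization algorithm and Camion's theorem as black boxes --- the same level of reliance on the literature as the paper's own citation --- the argument is complete.
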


In the subsequent three sections, we distinguish three cases, namely whether the constraint matrix $T$ of the \CCTU{} problem that we consider is a network matrix, the transpose of a network matrix, or a matrix stemming from the constant-size matrices given in case~\ref{thmitem:TUdecomp_const} of \cref{thm:TUdecomp}. As indicated above, we show in each case that the corresponding \CCTU{} problem can be  solved efficiently under some assumptions, thus implying \cref{thm:baseBlocks}, which covers the corresponding feasibility problems.

In the case of network matrices and their transposes, we perform reductions to combinatorial problems.
In this context, it is convenient to transform the \CCTU{} problems into a more structured class of \CCTU{} problems, which we call \emph{normalized \CCTU{}} problems and are defined as follows.
\begin{definition}[Normalized \CCTU{} problem]
A problem of the form
\begin{equation}\label{eq:normalizedCCTU}
\min \left\{c^\top x \colon Tx \leq b,\ \gamma^{\top} x \equiv r \pmod*{m}, x\in \mathbb{Z}^n_{\geq 0}\right\}
\end{equation}
fulfilling that the origin is an optimal solution to the relaxation of~\eqref{eq:normalizedCCTU}, is called a \emph{normalized \CCTU{}} problem.
\end{definition}
Note that the right-hand side $b$ of a normalized \CCTU{} problem is non-negative because the origin is feasible.
As we briefly discuss in the following, it is not hard to see that one can assume to deal with normalized \CCTU{} problems, as formalized in the following observation.
\begin{observation}\label{obs:normalization}
Every \CCTU{} problem can be reduced in strongly polynomial time to a normalized \CCTU{} problem. Furthermore, if the constraint matrix of the first problem is a base block matrix, the constraint matrix of the latter problem is a base block matrix of the same type.
\end{observation}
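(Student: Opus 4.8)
The plan is to transform a general \CCTU{} problem into normalized form in two stages, mirroring standard reductions while checking that each step preserves the type of base block matrix. First I would handle the nonnegativity requirement $x \in \mathbb{Z}^n_{\geq 0}$ and the optimality of the origin for the relaxation. Starting from $\min\{c^\top x \colon Tx \leq b,\ \gamma^\top x \equiv r \pmod*{m},\ x \in \mathbb{Z}^n\}$, I would first solve the relaxation $\min\{c^\top x \colon Tx \leq b,\ x \in \mathbb{Z}^n\}$ using the framework of Tardos~\cite{tardos_1986_strongly}: if it is infeasible, the \CCTU{} problem is infeasible and we are done (reduce to a trivial normalized instance); if it is unbounded we can report that; otherwise let $x_0$ be an optimal integral vertex solution. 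Shifting coordinates by $x_0$, i.e., replacing $x$ by $x + x_0$, turns the system into $Tx \leq b - Tx_0 =: b'$ with the origin an optimal solution of the relaxation and $b' \geq 0$; the congruency constraint becomes $\gamma^\top x \equiv r - \gamma^\top x_0 \pmod*{m}$, and the objective $c^\top x$ changes only by the additive constant $c^\top x_0$, which does not affect minimizers. This shift does not touch $T$ at all, so the constraint matrix — and in particular its status as a base block of a given type — is unchanged.

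Next I would enforce $x \in \mathbb{Z}^n_{\geq 0}$. Because the relaxation (after shifting) has the origin as an optimal vertex, every variable $x_i$ is bounded from one side at optimality; the standard device is to split each variable into its positive and negative parts, $x_i = x_i^+ - x_i^-$ with $x_i^+, x_i^- \geq 0$, but this would double columns and could disturb the base block structure. Instead, since $0$ is an optimal vertex of the relaxation, the reduced-cost/complementary-slackness structure lets us pick for each coordinate a sign $\sigma_i \in \{-1, +1\}$ such that substituting $x_i \mapsto \sigma_i x_i$ with the new $x_i \geq 0$ keeps the origin optimal; concretely one reads $\sigma_i$ off an optimal dual solution. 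Flipping the sign of column $i$ of $T$ is exactly the operation "changing the sign of a row or column" permitted in case~\ref{thmitem:TUdecomp_const}, and it maps a network matrix to a network matrix (reorienting the corresponding arc) and a transpose-of-network matrix to one of the same kind; it also replaces $c_i$ by $\sigma_i c_i$ and $\gamma_i$ by $\sigma_i \gamma_i$. After these sign flips the problem has the form~\eqref{eq:normalizedCCTU} with the origin feasible (hence $b \geq 0$) and optimal for the relaxation, which is precisely a normalized \CCTU{} problem.

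The main obstacle I expect is the bookkeeping in the second stage: arguing cleanly that an optimal vertex $x_0 = 0$ of the relaxation admits a consistent choice of signs $\sigma_i$ making each $x_i \geq 0$ compatible with continued optimality, and that this can be extracted in strongly polynomial time. This follows from LP duality — at the vertex $0$ there is a basis of tight constraints of $Tx \leq b'$ whose defining submatrix of $T$ is $\pm 1$-unimodular (as $T$ is TU), and the signs are determined by the sign pattern of the optimal reduced costs $c - T^\top y$ for an optimal dual $y$ — but one must be careful when the optimum is degenerate or $c = 0$, in which case any sign works and one simply fixes $\sigma_i = +1$. All the invocations of Tardos's algorithm and the linear-algebra steps run in strongly polynomial time in the encoding size of the original \CCTU{} problem, which gives the claimed running time. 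Finally, since every transformation used (coordinate shift, column sign flips) is among the structure-preserving operations listed in \cref{thm:TUdecomp} and sends network matrices and their transposes to matrices of the same type, the constraint matrix of the resulting normalized problem is a base block matrix of the same type as the original, completing the proof.
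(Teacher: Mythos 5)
Your first stage (solving the relaxation with Tardos' algorithm and shifting by an optimal solution $x_0$ so that the origin becomes optimal, leaving $T$ untouched) matches the paper. The second stage, however, has a genuine gap. To get $x\in\mathbb{Z}^n_{\geq 0}$ you propose to fix one sign $\sigma_i\in\{-1,+1\}$ per coordinate (read off an optimal dual / reduced costs) and then impose $x\geq 0$ after flipping columns. This keeps the origin optimal for the \emph{relaxation}, but it is not a valid reduction of the \CCTU{} problem: restricting to the orthant $\{x\colon \sigma_i x_i\geq 0\ \forall i\}$ changes the feasible region of the congruency-constrained problem, and the dual of the relaxation carries no information about $\gamma$, $r$, $m$, so it cannot tell you which orthant (if any single one) contains a feasible or optimal congruent solution. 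Concretely, take $n=1$, constraints $-1\leq x\leq 0$, $c=0$, $m=2$, $r=1$: after your shift the origin is already optimal for the relaxation, the only feasible \CCTU{} point is $x=-1$, and your default choice $\sigma_1=+1$ (which your rule prescribes when $c=0$) produces a normalized problem that is infeasible while the original is feasible. With more variables, no rule based on the relaxation alone can identify a correct orthant, since locating the congruent optimum is essentially the problem being solved.

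The paper avoids this by doing exactly what you rejected: it splits every variable as $x=x^+-x^-$ with $x^+,x^-\geq 0$, which is a genuine equivalence (no integer point is lost). Your reason for rejecting it---that doubling columns could destroy the base block structure---is unfounded: the resulting constraint matrix is $[\,T\ -T\,]$ together with unit rows for the non-negativity constraints, and each base block type (network matrices, transposes of network matrices, and the matrices stemming from the constant-size cores) is closed under copying columns, changing signs of columns, and appending rows with a single non-zero entry. So the correct fix is to keep your stage one and replace your sign-flip stage by the standard variable splitting.
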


\begin{proof}
Indeed, consider an arbitrary \CCTU{} problem (with feasible relaxation)
\begin{equation}\label{eq:cctuWlogNormalized}
\min\left\{c^\top x \colon Tx \leq b,\ \gamma^\top x \equiv r \pmod*{m},\ x\in\mathbb{Z}^n \right\}\enspace.
\end{equation}
An equivalent \CCTU{} problem where the origin is an optimal solution to its relaxation can simply be obtained by a standard shifting argument.
To this end, assume first that the relaxation has a finite optimal solution.
In this case we compute such a finite optimal solution $x_0$, and then substitute $x$ by $x'+x_0$ to obtain the equivalent \CCTU{} problem
\begin{equation*}
\min\left\{c^\top x' \colon Tx' \leq b',\ \gamma^\top x' \equiv r' \pmod*{m},\ x'\in\mathbb{Z}^n \right\}\enspace,
\end{equation*}
where $b'=b-Tx_0$ and $r'=r-\gamma^\top x_0$.
Clearly, the origin is an optimal solution to the relaxation of this transformed problem.
In case the relaxation is unbounded, we know by \cref{lem:unboundedness} that~\eqref{eq:cctuWlogNormalized} is either infeasible or unbounded.
Hence, it is unbounded if and only if it is feasible.
Moreover, \cref{lem:unboundedness} allows for obtaining efficiently a description of a set of unbounded solutions from any solution to \eqref{eq:cctuWlogNormalized}.
Hence, in this case, the optimization problem for \eqref{eq:cctuWlogNormalized} is equivalent to its feasibility version, and we can therefore replace the objective $c$ by an all-zeros objective.
This brings us back to the first case where the relaxation has a finite optimum.

Furthermore, to reduce to non-negative variables we can use another standard transformation that replaces every variable $x\in\mathbb{Z}$ by the difference $x^+ - x^-$ of two non-negative variables $x^+, x^-\in\mathbb{Z}_{\geq 0}$.
Notably, these substitutions change the constraint matrix, but it can be observed that base block matrices remain base block matrices of the same type.\footnote{Indeed, if we start with a constraint matrix $T$, this transformation to non-negative variables will lead to constraints described by the constraint matrix $[T\ -T]$ together with non-negativity constraints.
Moreover, each of the base block matrix types is closed under copying columns, changing the signs of columns, and adding rows with a single non-zero entry.
}
Applying this reduction on top of the previous one, we maintain that the origin is an optimal solution to the relaxation, thus obtaining \cref{obs:normalization}.
\end{proof}

Moreover, note that by our proximity result, \cref{thm:proximity}, we obtain that a normalized \CCTU{} problem has an optimal solution $x^*$ with $\|x^*\|_{\infty}\leq m-1$.
Due to the non-negativity of the variables in a normalized \CCTU{} problem, we thus obtain that there is an optimal solution $x^*$ with $x^*_i \in \{0,\ldots, m-1\}$ for each entry $i\in [n]$.
This is a property we repeatedly exploit in our reductions developed in the following.

\subsection{Network matrices\MOORdot}

In this section, we show that \CCTU{} problems with unary encoded objectives and constraint matrices that are network matrices can be solved efficiently using a randomized algorithm.

\begin{theorem}\label{thm:solveNetwRPP}
There is a strongly polynomial time randomized algorithm for \CCTU{} problems with unary encoded objectives, constant modulus and constraint matrices that are network matrices.
\end{theorem}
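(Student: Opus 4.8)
The plan is to reduce a normalized \CCTU{} problem with a network constraint matrix to a congruency-constrained flow (circulation) problem, and then to an exact-weight perfect matching problem, which can be solved by the randomized algorithm of Camerini, Galbiati, and Maffioli~\cite{camerini_1992_rpp}. First I would invoke \cref{obs:normalization} to assume we work with a normalized \CCTU{} problem~\eqref{eq:normalizedCCTU}, so the origin is optimal for the relaxation, $b\geq 0$, and by the proximity discussion above there is an optimal solution $x^*$ with entries in $\{0,\ldots,m-1\}$. Using \cref{lem:networkMatrixGraphs}, compute a directed tree $(V,U)$ and directed graph $(V,A)$ realizing $T$ as a network matrix. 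The key structural observation is that a vector $x\in\mathbb{Z}^A_{\geq0}$ with $Tx\leq b$ corresponds, via the standard network-matrix/flow correspondence, to choosing flow values on the arcs $A$ whose induced ``tension'' on each tree arc $u\in U$ is at most $b_u$; equivalently, after adding slack arcs for the tree edges and a suitable feedback structure, to an integral circulation in an auxiliary digraph with lower/upper capacities derived from $b$ (and the bound $x_i\le m-1$ from proximity, which keeps capacities polynomially bounded even though $b$ need not be). The objective $c^\top x$ becomes an arc-cost function, and the congruency constraint $\gamma^\top x\equiv r\pmod m$ becomes a constraint that the total $\gamma$-weight of the circulation is $\equiv r\pmod m$.

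Next I would handle the two constraints — minimizing $c^\top x$ and hitting residue $r$ modulo $m$ — simultaneously. Since the objective is unary-encoded and $m$ is constant, the plan is to guess the optimal value $v^*$ of $c^\top x$ (polynomially many candidates, as $\|x^*\|_\infty\le m-1$ bounds $|c^\top x^*|$ polynomially) and, for each guess, decide feasibility of: an integral circulation of cost exactly $v^*$ whose $\gamma$-weight is $\equiv r\pmod m$. To enforce ``cost exactly $v^*$'' and ``$\gamma$-weight $\equiv r$'' at once, augment the arc weights: give each arc a single combined integer weight that packs the cost contribution and the $\gamma$-contribution (the latter reduced mod $m$, lifted to a bounded range), so that the exact-weight circulation problem with one scalar target weight captures both. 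Because $m$ and the per-arc magnitudes are polynomially bounded, the target weight is polynomially bounded.

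The final step is the reduction from exact-weight circulation to exact-weight perfect matching. Integral circulations with capacities reduce in the standard way to $b$-matchings / perfect matchings in an associated bipartite-like gadget graph (split each arc, add capacity gadgets), and arc weights transfer to edge weights; this is the classical reduction, and crucially it is weight-preserving so the exact-weight target is preserved. Then apply the randomized pseudo-polynomial algorithm of~\cite{camerini_1992_rpp} for exact-weight perfect matching; since all weights are polynomially bounded (thanks to unary objective, constant $m$, and the proximity bound $x^*_i\in\{0,\ldots,m-1\}$), ``pseudo-polynomial'' becomes genuinely (strongly, after the usual care with the combinatorial structure) polynomial here, and the algorithm is Monte Carlo with one-sided error of the type described in the paper's footnote on randomized algorithms. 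Running this over the polynomially many guesses of $v^*$ and taking the best feasible one yields an optimal \CCTU{} solution, and translating the matching/circulation back through the chain of reductions gives the actual vector $x$.

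The main obstacle I expect is the bookkeeping in the network-matrix-to-circulation translation, in particular controlling capacities: $b$ itself may be encoded in binary and large, so one must genuinely use the proximity bound $x^*_i\le m-1$ to cap arc flows at $m-1$ (and argue this loses no optimal solution) before the capacities become polynomially bounded; only then is the exact-weight matching instance of polynomial size and the~\cite{camerini_1992_rpp} algorithm efficient. A secondary subtlety is packing the cost target and the mod-$m$ residue target into a single scalar weight without blow-up and without spurious solutions — this needs a careful choice of the lifting so that distinct $(\text{cost},\ \gamma\bmod m)$ pairs map to distinct scalar targets within the bounded range, which works precisely because both coordinates live in polynomially bounded sets.
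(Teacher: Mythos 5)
Your proposal follows essentially the same route as the paper: normalize via \cref{obs:normalization}, use the proximity bound to cap flows at $m-1$ so capacities stay polynomially bounded, reduce to a congruency-constrained circulation on the graph coming from the network matrix, pack the cost and the $\gamma$-contributions into a single arc weight with an exact-weight target, and invoke the randomized pseudo-polynomial algorithm of Camerini, Galbiati, and Maffioli. The only cosmetic differences are that the paper binary-searches the cost target and enumerates the lifted residue totals (its \cref{lem:CCCtoXLC}) rather than guessing the optimal value directly, and it cites the exact-length-circulation result as a black box instead of re-deriving the matching reduction.
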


Our approach in this case is to exploit the graph structure that comes with network matrices to interpret \CCTU{} problems (or, more precisely, normalized \CCTU{} problems) with network constraint matrices as minimum-cost congruency-constrained circulation problems in certain directed graphs. To get started, let us recall that a circulation $f$ in a directed graph $G=(V,A)$ with capacities $u\colon A\to\mathbb{Z}_{\geq 0}$ is a mapping $f\colon A\to\mathbb{Z}_{\geq 0}$ such that $f(a)\leq u(a)$ for every arc $a\in A$, and $f(\delta^+(v)) = f(\delta^-(v))$ for every vertex $v\in V$. Given arc lengths $\ell \colon A\to\mathbb{Z}$, the length of a circulation $f$ is $\ell(f)\coloneqq\sum_{a\in A}\ell(a)f(a)$. Note that here, arc lengths are allowed to be negative.

A congruency-constrained circulation problem is formally defined as follows.
\begin{mdframed}[innerleftmargin=0.5em, innertopmargin=0.5em, innerrightmargin=0.5em, innerbottommargin=0.5em, userdefinedwidth=0.95\linewidth, align=center]
{\textbf{Congruency-Constrained Circulation (\hypertarget{prb:CCC}{CCC}):}}
Let $G=(V,A)$ be a directed graph with capacities $u\colon A\to\mathbb{Z}_{\geq 0}$, arc lengths $\ell\colon A\to\mathbb{Z}$, and let $\eta\colon A\to\mathbb{Z}$, $r\in\mathbb{Z}$, and $m\in\mathbb{Z}_{>0}$. Find a minimum-length circulation $f\colon A\to\mathbb{Z}_{\geq 0}$ in the given network such that $\sum_{a\in A}\eta(a)f(a) \equiv r\pmod*{m}$.
\end{mdframed}

\noindent The lemma below reduces \CCTU{} problems with constraint matrices that are network matrices to \CCC{} problems.
\begin{lemma}\label{lem:CCTUtoCCC}
\CCTU{} problems with modulus $m$, objective vector $c$, and constraint matrices that are network matrices can be reduced in strongly polynomial time to \CCC{} problems with modulus $m$, capacities $u$ within $\{0, \ldots, m-1\}$, and arc lengths $\ell$ with $\|\ell\|_\infty\leq \|c\|_{\infty}$.
\end{lemma}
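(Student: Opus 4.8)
The idea is to use the graph-theoretic description of network matrices from \cref{def:networkMatrix}. Given a normalized \CCTU{} problem with network constraint matrix $T$ (we may assume normalized form by \cref{obs:normalization}), we have a directed spanning tree $(V,U)$ whose edges index the rows of $T$ and a directed graph $(V,A)$ whose edges index the columns, and these can be computed efficiently by \cref{lem:networkMatrixGraphs}. The variables $x\in\mathbb{Z}^n_{\geq 0}$ are indexed by the arcs $a\in A$, and the constraint $Tx\le b$ says that for each tree edge $u\in U$, the signed sum of $x_a$ over arcs $a$ whose fundamental path in $U$ uses $u$ (with sign $\pm1$ according to the direction) is at most $b_u$. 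The standard interpretation is: route $x_a$ units of flow along the fundamental $v$-$w$ path in $U$ for each arc $a=(v,w)\in A$; then $(Tx)_u$ is the net flow across $u$. So the natural object is the circulation on $U\cup A$ (tree edges given infinite or suitably large capacity, non-tree arcs $A$ carrying the $x_a$), and $Tx\le b$ becomes a bound on the flow through each tree edge. To get a genuine capacitated circulation problem in the \CCC{} format, I would add, for each tree edge $u$, a reverse arc of capacity $b_u$ (recall $b\ge0$ since the problem is normalized), so that a feasible assignment $x$ extends to a circulation iff $Tx\le b$; conversely every circulation restricts to such an $x$. The objective $c^\top x$ translates to arc lengths $\ell$ supported on $A$ (with $\ell\equiv0$ on the auxiliary tree arcs), and the congruency data $\gamma^\top x\equiv r\pmod m$ translates directly to $\eta\colon A\to\mathbb Z$ with $\eta=\gamma$ on $A$ and $\eta=0$ on the tree arcs.

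The remaining point is the capacity bound $\{0,\dots,m-1\}$ demanded in the statement. For the non-tree arcs $a\in A$ (carrying $x_a$), this is exactly the consequence of \cref{thm:proximity} noted in the text just before this subsection: a normalized \CCTU{} problem has an optimal solution $x^*$ with $x^*_i\in\{0,\dots,m-1\}$, so we may cap each $x_a$ at $m-1$ without losing the optimum. For the auxiliary reverse tree arcs I would likewise need their capacities to lie in $\{0,\dots,m-1\}$; since $b_u$ can be large, the fix is to observe that once the $x_a$ are capped at $m-1$, the net flow through any tree edge is bounded by (number of arcs)$\cdot(m-1)$, so only a bounded slack is ever used — but cleanly, the better route is to first \emph{reduce the right-hand side}: by the proximity/flatness machinery (or simply by the fact that only the bounded-range solutions matter), one can replace $b$ by $b' \coloneqq b$ truncated appropriately, or split each high-capacity reverse arc into parallel arcs of capacity $\le m-1$; splitting an arc into $\lceil b_u/(m-1)\rceil$ parallel copies would not be strongly polynomial, so instead I would argue that it suffices to take reverse-arc capacity $\min\{b_u,\,(m-1)\cdot(\text{indegree/outdegree count})\}$, which is polynomially bounded in the relevant quantities and, combined with unary-encoded costs, keeps everything strongly polynomial. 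I would state this truncation as a small lemma: capping all capacities at $\min\{b_u, n(m-1)\}$ does not change the set of attainable cost/residue pairs among solutions with $x\le (m-1)\mathbf 1$.

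Concretely, the steps are: (i) invoke \cref{obs:normalization} to reduce to a normalized \CCTU{} problem and \cref{lem:networkMatrixGraphs} to obtain $(V,U)$ and $(V,A)$; (ii) build the digraph $G$ on vertex set $V$ with arc set $A$ (capacities $m-1$, lengths from $c$, weights from $\gamma$) together with, for each $u\in U$, a forward arc of capacity $n(m-1)$ and a reverse arc of capacity $\min\{b_u,n(m-1)\}$, both of length and weight $0$; (iii) set $r$ unchanged; (iv) prove the correspondence: for $x\in\mathbb Z^n_{\ge0}$ with $x\le(m-1)\mathbf1$, pushing $x_a$ along fundamental paths and balancing on the tree arcs gives a circulation $f$ with $\ell(f)=c^\top x$ and $\sum_a\eta(a)f(a)=\gamma^\top x$, feasible in $G$ iff $Tx\le b$; and conversely a circulation of $G$ projects (restrict to $A$) to such an $x$, using that the tree-edge flow conservation forces the $A$-part to be routed consistently along fundamental paths; (v) conclude that optima and residues match, so any algorithm for \CCC{} with capacities in $\{0,\dots,m-1\}$ solves the original problem — modulo replacing the $n(m-1)$ capacities on the $U$-arcs by a bounded number of parallel unit-style arcs if one insists literally on capacities $\le m-1$, which is a routine but slightly fiddly rescaling.

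The main obstacle I anticipate is precisely the bookkeeping in step (v): getting the auxiliary tree-arc capacities into the literal range $\{0,\dots,m-1\}$ while preserving both correctness (every relevant $x$ still corresponds to a circulation) and strong polynomiality of the reduction. The clean way, and the one I would ultimately write up, is to first apply \cref{thm:proximity} to pass to solutions in $\{0,\dots,m-1\}^n$, then note that the induced flow on each tree edge lies in a window of width $< n(m-1)$, shift each tree edge's flow variable so that window starts at $0$ (this shift only changes the already-zero-weight, zero-length tree arcs), and finally split the resulting bounded-width window into $O(\log m + \log n)$ parallel arcs with capacities that are powers of two truncated into $\{0,\dots,m-1\}$-sized pieces. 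All the graph-correspondence arguments (step iv) are standard network-matrix-to-flow dictionary facts and should go through without difficulty.
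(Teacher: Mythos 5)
Your overall plan (normalize via \cref{obs:normalization}, use \cref{lem:networkMatrixGraphs} to get the tree $(V,U)$ and the arc set, route each $x_a$ along its fundamental tree path, cap the non-tree arcs at $m-1$ via proximity) matches the paper's construction, but there is a genuine gap exactly at the point you flag: forcing the \emph{tree-arc} capacities into $\{0,\ldots,m-1\}$. The idea you are missing is cancellation of antiparallel flow on each tree edge: the paper puts both orientations $u$ and $\cev{u}$ of every tree edge into the circulation graph, cancels flow on these $2$-cycles, and then observes that the surviving flow on the pair $(u,\cev{u})$ is exactly the positive/negative part of the \emph{net} quantity $f(u)-f(\cev{u}) = d^\top x$, where $d^\top$ is the row of $T$ indexed by $u$. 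Since both $d$ and $-d$ are TU-appendable to $T$, \cref{thm:proximityGeneral} gives $|d^\top x|\le m-1$ for a well-chosen optimal $x$, so after cancellation \emph{every} tree arc carries at most $m-1$ units; capacities $\min\{b_u,m-1\}$ on $u$ and $m-1$ on $\cev{u}$ then both enforce $d^\top x\le b_u$ and admit all the relevant solutions. Your bound of $n(m-1)$ is a bound on the \emph{gross} flow from routing along fundamental paths, which is simply not needed.

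Because you keep the gross flows, your repair attempts do not work as sketched. First, splitting a tree arc of capacity $n(m-1)$ into $O(\log m+\log n)$ parallel arcs each of capacity at most $m-1$ cannot carry flows up to $n(m-1)$ (you would need $\Omega(n)$ such arcs), so the "powers of two truncated into $\{0,\ldots,m-1\}$-sized pieces" step is arithmetically impossible; and shifting "windows" of flow on a capacitated arc is not an operation available in the \CCC{} format. Second, and more fundamentally, without cancellation a capacity on one orientation of a tree edge bounds the gross flow in that direction, not the net flow $d^\top x$; so putting $\min\{b_u,\cdot\}$ on one of the two tree arcs either cuts off feasible solutions $x$ whose gross flow across $u$ exceeds $b_u$ while $d^\top x\le b_u$, or fails to encode the constraint $Tx\le b$ at all, depending on which orientation you cap. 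The fix is precisely the paper's: define $f$ from the path-routing $g$ by subtracting $\min\{g(u),g(\cev u)\}$ on each tree-edge pair, verify via the fundamental cut identity that $f(u)-f(\cev u)=d^\top x$, and invoke proximity to conclude $f(u)\le\min\{b_u,m-1\}$ and $f(\cev u)\le m-1$; the converse direction (reading off $x$ from a capacitated circulation) then goes through as you describe.
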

\begin{proof}
First of all, we know by \cref{obs:normalization} that any \CCTU{} problem with a constraint matrix that is a network matrix can be efficiently reduced to a normalized \CCTU{} problem with a constraint matrix of the same type.
Thus, assume we are given a normalized problem of the form
\begin{equation*}
\min\left\{c^\top x \colon Tx \leq b,\ \gamma^\top x \equiv r \pmod*{m},\ x\in\mathbb{Z}^n_{\geq 0} \right\}
\end{equation*}
with a network matrix $T$.
By \cref{thm:proximityGeneral}, we have that there is an optimal solution $x$ to the above problem with $|d^\top x|\leq m-1$ for all $d$ that are TU-appendable to $T$.

We now define a \CCC{} problem to which the above \CCTU{} problem reduces.
To this end, let $(V, U)$ be the directed tree whose edges index the rows of the network matrix $T$, and let $(V, E)$ be the digraph whose edges index the columns of $T$, as described in \cref{def:networkMatrix}.
Let $G$ be the directed graph with vertex set $V$ and edge set $A\coloneqq U\cup \cev{U} \cup \cev{E}$, where $\cev{U}\coloneqq\{(w, v)\colon (v, w)\in U\}$ and analogously $\cev{E}\coloneqq \{(w,v) \colon (v,w) \in E\}$.
Moreover, for an arc $u=(v, w)$, denote by $\cev{u}=(w,v)$ the corresponding reverse arc.
We define the capacities $u\colon A \to\mathbb{Z}_{\geq 0}$, lengths $\ell\colon A\to\mathbb{Z}$, and values $\eta\colon A\to \mathbb{Z}$ of the \CCC{} problem as follows. For all $a\in A$,
\begin{align*}
u(a) &\coloneqq \begin{cases}
\min\{b_a, m-1\} & \text{if $a\in U$}\\ m-1 & \text{if $a\in\cev{U}\cup \cev{E}$}
\end{cases}\enspace,\\
\ell(a) &\coloneqq \begin{cases}
c_{\cev{a}} & \text{if $a\in \cev{E}$} \\ 0 & \text{if $a\in U\cup \cev{U}$}
\end{cases}\enspace,\quad\text{and}\\
\eta(a) &\coloneqq \begin{cases}
\gamma(\cev{a}) & \text{if } a\in \cev{E} \\
0               & \text{if } a\in U\cup \cev{U}
\end{cases}\enspace.
\end{align*}
Moreover, the modulus and target residue of the \CCC{} problem are the same as of the \CCTU{} problem, i.e., $m$ and $r$, respectively.
This concludes the definition of the \CCC{} problem to which we reduce.

Finally, the desired statement follows directly from the following claim, which relates solutions of the \CCTU{} problem to feasible circulations of the above-defined \CCC{} problem.
\begin{claim}\label{claim:relRelaxedSolsCirculations}
There is a solution of the \CCC{} problem of length no larger than the optimal value of the \CCTU{} problem.
Conversely, given a circulation $f$ for the \CCC{} problem, one can compute in strongly polynomial time a solution $x$ of the \CCTU{} problem with $c^{\top} x = \ell(f)$.
\end{claim}

To see the forward direction of the claim, we start with an optimal solution $x$ to the \CCTU{} problem.
By \cref{thm:proximityGeneral}, we can assume that $|d^{\top} x| \leq m-1$ for all $d$ that are TU-appendable to $T$.
In particular, this implies $x\in \{0,\ldots, m-1\}^E$.

We now start by defining a circulation $g \colon A\to\mathbb{Z}_{\geq 0}$ (that may violate the capacity constraints given by $u$) by
\begin{equation}\label{eq:fTildeDef}
g(a) \coloneqq \sum_{e\in E} x(e)\left(\chi^{\cev{e}}(a) + \chi^{P_e}(a)\right)\enspace,
\end{equation}
where, for every $e=(v, w)\in E$, the set $P_e\subseteq U \cup \cev{U}$ is the unique path from $v$ to $w$ in $U\cup \cev U$ that has all edges directed from $v$ to $w$.
Finally, the circulation $f\colon A\to \mathbb{Z}_{\geq 0}$ that corresponds to $x$ is obtained from $g$ by canceling out flows on arcs in opposite directions.
Formally, we set
\begin{equation*}
f(a) \coloneqq \begin{cases}
g(a) &\text{if } a\in \cev{E}\enspace,\\
g(a) - \min\{g(a),g(\cev{a})\}     &\text{if }a\in U \cup \cev{U}\enspace.\\
\end{cases}
\end{equation*}
Hence, one can interpret $f$ as being obtained from $g$ by canceling flow on $2$-cycles.
By the definition of the lengths $\ell$, one immediately obtains $\ell(f) = c^{\top} x$ as desired.
Moreover, because $x$ is integral, we have that $g$ is integral and therefore also $f$.
Also, $\sum_{a\in A} \eta(a) f(a) = \gamma^\top x \equiv r \pmod*{m}$.
It remains to observe that $f$ is a circulation, i.e., each vertex has the same in-flow as out-flow with respect to $f$ and $f$ fulfills the capacity constraints given by $u$.

Note that each vertex has the same in- and out-flow with respect to $g$, because every term in~\eqref{eq:fTildeDef} corresponds to sending a flow of $x(e)$ along the cycle $\cev{e}\cup P_e$.
Because $f$ is obtained from $g$ by canceling flow on $2$-cycles, also $f$ has the same in- and out-flow at every vertex.

It remains to verify that the capacities given by $u$ are respected by $f$.
The capacities of arcs $a\in \cev{E}$, which are $u(a)=m-1$, are fulfilled by $f$ because $x(e)\leq m-1$.
Consider now an arc $a\in U$ and denote by $C_a\subseteq V$ the unique cut in $(V,U)$ that satisfies $\delta^+(C_a)=\{a\}$ and $\delta^-(C_a)=\emptyset$.
Such a cut exists as $(V,U)$ is a tree.
Because $f$ is a circulation, we have
\begin{multline}\label{eq:linkFlowToSolution}
0 = f(\delta^+(C_a)) - f(\delta^-(C_a)) = f(a) - f(\cev{a}) + f(\delta^+(C_a)\cap {\cev{E}}) - f(\delta^-(C_a)\cap{\cev{E}})\\
\iff\qquad f(a) - f(\cev{a}) = \sum_{e\in E\colon a\in P_e} x(e) - \sum_{e\in E\colon \cev{a}\in P_e} x(e)\enspace.
\end{multline}
Observe that the difference of the last two sums is precisely $d^\top x$, where $d$ is the row vector of $T$ indexed by $u$.
Because $d^{\top} x\leq b_a$ is a constraint of the original normalized \CCTU{} problem, we have
$f(a) - f(\cev{a}) = d^{\top} x \leq b_a$.
Moreover, because both $d^{\top}$ and $-d^{\top}$ are TU-appendable to the constraint matrix $T$, we obtain by \cref{thm:proximityGeneral} that $-m+1 \leq f(a) - f(\cev{a}) \leq m-1$.
Hence,
\begin{equation*}
-(m-1) \leq f(a)-f(\cev{a}) \leq \min\{b_a, m-1\}\enspace.
\end{equation*}
The above inequality implies $f(a) \leq \min\{b_a, m-1\} + f(\cev{a}) = u(a) + f(\cev{a})$ and $f(\cev{a})\leq m-1 + f(a) = u(\cev{a}) + f(a)$.
Note that because $f$ has by definition a value of zero on either $a$ or $\cev{a}$, and $u(a)\geq 0$, it follows from these inequalities that both $f(a) \leq u(a)$ and $f(\cev{a})\leq u(\cev{a})$ hold.
Thus, $f$ also fulfills the capacity constraints for all arcs in $U\cup \cev{U}$.

\medskip

For the backward direction of \cref{claim:relRelaxedSolsCirculations}, assume that we are given an integral circulation $f$ in $G$ respecting the capacity constraints $u$, and define $x(e) \coloneqq f(\cev{e})$ for all $e\in E$. Note that we thus obtain $x$ in strongly polynomial time.
Again,~\eqref{eq:linkFlowToSolution} holds and the right-hand side is $d^\top x$, where $d$ is the row indexed by $a$ in $T$.
Non-negativity of $f$ and the capacity constraints then imply for all $a\in A$ that
$$d^\top x = f(a) - f(\cev{a}) \leq f(a) \leq b_a\enspace.$$
Hence, $x$ satisfies all constraints $Tx\leq b$ and is non-negative due to non-negativity of $f$.
Moreover, we again have
\begin{equation*}
\gamma^{\top} x = \sum_{e\in E} \gamma(e) x(e) = \sum_{e\in \cev{E}} \eta(\cev{e}) f(\cev{e}) = \sum_{a\in A} \eta(a) f(a) \equiv r \pmod*{m}\enspace.
\end{equation*}
Hence, the vector $x$ is feasible for the \CCTU{} problem.
This proves the claim, which in turn implies the statement of \cref{lem:CCTUtoCCC}, as desired.
\end{proof}

We remark that for modulus $m=2$, an analogous reduction to the one we used in the proof of \cref{lem:CCTUtoCCC} was already done in~\cite{artmann_2017_strongly}. Our reduction is a generalization of that one.
For the special case with modulus $m=2$, the resulting \CCC{} problems are non-trivial only if $r=1$, i.e., when the goal is to find an odd circulation. This can easily be reduced to finding a shortest odd cycle in a suitable auxiliary graph, which can be solved via standard techniques.
For general $m$, however, the solution structure can be significantly more complex.
We observe and exploit a connection to the so-called \emph{exact length circulation problem}, where the goal is to find a circulation whose length is equal to a given value.

\begin{mdframed}[innerleftmargin=0.5em, innertopmargin=0.5em, innerrightmargin=0.5em, innerbottommargin=0.5em, userdefinedwidth=0.95\linewidth, align=center]
{\textbf{Exact Length Circulation (\hypertarget{prb:XLC}{XLC}):}}
Let $G=(V,A)$ be a digraph with capacities $u\colon A\to\mathbb{Z}_{>0}$ and arc lengths $\ell \colon A\to\mathbb{Z}$. Given $L\in\mathbb{Z}$, find a circulation $f$ in the given network such that $\ell(f)=L$.
\end{mdframed}

\noindent Exact length circulation problems can be solved using a randomized pseudopolynomial algorithm, as shown by \textcite{camerini_1992_rpp}.
They reduce the problem to an exact cost perfect matching problem, which can then be reduced to computing the coefficients of a well-defined polynomial.
The following theorem summarizes the result of \textcite{camerini_1992_rpp} for \XLC{}.
\begin{theorem}[\cite{camerini_1992_rpp}]
There is a randomized algorithm for \XLC{} problems in a directed graph $G=(V,E)$ with capacities $u\colon A\to\mathbb{Z}_{\geq 0}$ in time $\mathrm{poly}(|V|, \max_{a\in A}u(a), \max_{a\in A}|\ell(a)|)$.
\end{theorem}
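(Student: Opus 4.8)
The plan is to follow the two-stage strategy of~\textcite{camerini_1992_rpp}: first reduce \XLC{} to an \emph{exact-weight perfect matching} problem, and then solve the latter by extracting a single coefficient of a polynomial via a randomized identity test. As a preliminary normalization I would merge parallel arcs of equal length (adding capacities), so that $|A| = O(|V|^2\max_a|\ell(a)|)$, replace each arc $a$ by $u(a)$ parallel unit-capacity copies, and note that after this an integral circulation of $G$ becomes a $\{0,1\}$-circulation, i.e.\ an arc set $F$ with $\deg^-_F(v)=\deg^+_F(v)$ at every vertex, with the same length; the resulting arc count $O(|V|^2\max_a|\ell(a)|\max_a u(a))$ is polynomial in the parameters appearing in the running-time bound. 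Since $|\ell(f)|\le |A|\max_a|\ell(a)|\max_a u(a)$ for every circulation $f$, we may also assume $|L|$ is at most this bound, reporting infeasibility otherwise.

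\textbf{Reduction to exact-weight perfect matching.} I would build a bipartite graph $H$ with an $\ell$-side vertex $\ell_{v,a}$ for each arc $a\in\delta^-(v)$ and an $r$-side vertex $r_{v,a}$ for each arc $a\in\delta^+(v)$ (each side having $|A|$ vertices). For every arc $a=(u,v)$ add an edge $\{r_{u,a},\ell_{v,a}\}$ of weight $0$ (``$a$ is unused''), and at every vertex $v$ add all edges $\{\ell_{v,a},r_{v,b}\}$ with $a\in\delta^-(v)$, $b\in\delta^+(v)$ of weight $\ell(a)+\ell(b)$ (``the circulation passes through $v$ from $a$ to $b$''). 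In a perfect matching $M$ of $H$, declaring an arc $a=(u,v)$ \emph{used} iff $\{r_{u,a},\ell_{v,a}\}\notin M$ yields a $\{0,1\}$-circulation $F$, and conversely every $F$ arises this way; the key point is that the balance $\deg^-_F(v)=\deg^+_F(v)$ is exactly what lets the unmatched head- and tail-slots at $v$ be paired up by ``through'' edges. Each used arc contributes its length once at its head and once at its tail, so $w(M)=2\ell(F)$; hence \XLC{} with target $L$ is feasible iff $H$ has a perfect matching of weight $2L$, and such a matching is turned back into a circulation in strongly polynomial time. After shifting all edge weights of $H$ by a fixed constant (every perfect matching of $H$ uses exactly $|A|$ edges) we may assume the weights are non-negative, with maximum weight $W=O(\max_a|\ell(a)|)$ and adjusted target still polynomial in the parameters.

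\textbf{Solving exact-weight perfect matching algebraically.} Introduce a formal variable $y$ and an indeterminate $x_e$ for each edge $e$ of $H$, and let $B=B(x,y)$ be the $|A|\times|A|$ matrix with rows indexed by the $\ell$-vertices, columns by the $r$-vertices, and $B_{ij}=x_e y^{w(e)}$ if $i,j$ are joined by an edge $e$ of $H$ (and $0$ otherwise). Then
\begin{equation*}
\det(B)\;=\;\sum_{M\ \text{perfect matching of }H}\pm\Bigl(\prod_{e\in M} x_e\Bigr)\,y^{w(M)}\enspace,
\end{equation*}
and since distinct matchings give distinct monomials in the $x_e$ there is no cancellation: the coefficient of $y^{2L}$ in $\det(B)$ is a non-zero polynomial in the $x_e$ exactly when $H$ has a perfect matching of weight $2L$. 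To test this I would fix a finite field $\mathbb{F}$ of sufficiently large polynomial size, substitute independent uniform random values for the $x_e$, and recover the resulting univariate polynomial $p(y)\in\mathbb{F}[y]$ — of degree at most $|A|\cdot W$, which is polynomial in $|V|,\max_a u(a),\max_a|\ell(a)|$ — by evaluating $\det(B)$ at $|A|W+1$ points of $\mathbb{F}$ (each a numeric $|A|\times|A|$ determinant, $O(|A|^3)$ field operations) and interpolating, and then read off the coefficient of $y^{2L}$. We answer ``feasible'' iff that coefficient is non-zero: if no perfect matching of weight $2L$ exists the coefficient is identically $0$ and the answer is always correct, and otherwise the Schwartz--Zippel lemma makes the random evaluation non-zero with probability at least $1-\sfrac1n$ for $\mathbb{F}$ large enough (or after a few independent repetitions). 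To exhibit an actual matching, and hence an actual circulation, I would use self-reducibility: process the edges of $H$ one by one, tentatively delete an edge, re-run the test, and keep the edge only if its deletion destroys feasibility.

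\textbf{Main obstacle.} The delicate part is the algebraic step: the generic variables $x_e$ are precisely what rules out cancellation between perfect matchings of equal weight (without them $\det(B)$ would only detect a signed count), and the degree of $p(y)$ — i.e.\ the largest attainable weight $|A|\cdot W$ — must remain polynomial in $|V|$, $\max_a u(a)$, and $\max_a|\ell(a)|$, which is exactly why the algorithm is \emph{pseudo}polynomial and why the capacity blow-up and the weight shift have to be tracked carefully. Verifying the bijection in the matching reduction (perfect matchings of $H$ versus $\{0,1\}$-circulations with the stated weights) is then a routine check once the per-vertex gadget is in place.
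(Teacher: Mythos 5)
Your sketch is correct and follows essentially the same route that the paper attributes to \textcite{camerini_1992_rpp} for this (cited, not re-proved) theorem: reduce the capacitated circulation problem via unit-capacity copies and a per-vertex gadget to exact-weight bipartite perfect matching, then detect the target weight as a coefficient of a determinant polynomial with random substitutions (Schwartz--Zippel) and recover a solution by self-reducibility. The bookkeeping you flag (capacity blow-up, weight shift, degree bounds) is exactly what keeps the algorithm pseudopolynomial in $|V|$, $\max_a u(a)$, and $\max_a|\ell(a)|$, matching the stated running time.
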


Thus, it remains to build the connection between \CCC{} and \XLC{} problems.
We achieve this by integrating the contributions $\eta(a)$ of every arc towards the congruency constraint into its length, and searching for the minimum length of a suitable circulation using binary search, thereby obtaining the following lemma.
\begin{lemma}\label{lem:CCCtoXLC}
A \CCC{} problem in a graph $G=(V,A)$ with arc lengths $\ell\colon A\to\mathbb{Z}$, capacities $u\colon A\to\{0, 1, \ldots, m-1\}$, and modulus $m$ can be polynomially reduced to $\mathrm{poly}(m, |V|, |A|, \max_{a\in A}|\ell(a)|)$ many \XLC{} problems in $G$ with the same capacities.
\end{lemma}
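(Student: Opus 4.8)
The plan is to fold both the length objective and the congruency constraint of a \CCC{} problem into a \emph{single} exact-length constraint of \XLC{} problems on the same graph, exploiting that, because the capacities are bounded by $m-1$, every relevant quantity lives in a polynomially bounded range. First I would preprocess: replacing $r$ by $r \bmod m$ and each value $\eta(a)$ by $\eta(a)\bmod m\in\{0,\ldots,m-1\}$ does not change $\sum_{a\in A}\eta(a)f(a)\bmod m$ for any circulation $f$, so we may assume $r\in\{0,\ldots,m-1\}$ and $\eta(a)\in\{0,\ldots,m-1\}$. Since every circulation $f$ obeys $0\le f(a)\le u(a)\le m-1$, we obtain $0\le \eta(f)\coloneqq\sum_{a\in A}\eta(a)f(a)\le N_\eta\coloneqq(m-1)^2|A|$ and $|\ell(f)|\le L_{\max}\coloneqq(m-1)|A|\max_{a\in A}|\ell(a)|$. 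Setting $N\coloneqq N_\eta+1$, I would then define a single new length function $\hat\ell(a)\coloneqq N\ell(a)+\eta(a)$, which satisfies $\max_{a\in A}|\hat\ell(a)|=\mathrm{poly}(m,|A|,\max_{a\in A}|\ell(a)|)$ and for which $\hat\ell(f)=N\ell(f)+\eta(f)$ for every circulation $f$.

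Next, for every integer $L$ with $|L|\le L_{\max}$ and every integer $e$ with $0\le e\le N_\eta$ and $e\equiv r\pmod{m}$, I would solve the \XLC{} instance on $G=(V,A)$ with the same capacities $u$, length function $\hat\ell$, and target value $\widehat L\coloneqq NL+e$. The number of such pairs $(L,e)$ is $O(L_{\max}\cdot(N_\eta/m))=\mathrm{poly}(m,|V|,|A|,\max_{a\in A}|\ell(a)|)$, as required. The crucial observation is unique recovery: if $f$ is a feasible circulation for the instance with target $NL+e$, then $N\ell(f)+\eta(f)=NL+e$ with $0\le\eta(f)\le N-1$ and $0\le e\le N-1$, so $N(\ell(f)-L)=e-\eta(f)$ has absolute value at most $N-1$ and therefore vanishes; hence $\ell(f)=L$ and $\eta(f)=e\equiv r\pmod{m}$, i.e.\ $f$ is a feasible \CCC{} circulation of length exactly $L$. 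Conversely, if the \CCC{} problem is feasible and $f^\ast$ is a minimum-length circulation satisfying the congruency constraint, then $(\ell(f^\ast),\eta(f^\ast))$ lies in the enumerated grid, so the corresponding \XLC{} instance is feasible (with $\hat\ell(f^\ast)=N\ell(f^\ast)+\eta(f^\ast)$). Thus it suffices to run over the pairs $(L,e)$ in increasing order of $L$, output a circulation obtained from the first feasible \XLC{} instance encountered (which, by optimality of $f^\ast$ and by scanning $L$ increasingly, then has length equal to the \CCC{} optimum), and report infeasibility if no instance is feasible.

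The one genuinely delicate point is that the reduction is required to stay within the \emph{same} graph $G$ and the \emph{same} capacities $u$, so one cannot add slack or gadget arcs to convert "length at most $L$" queries into exact-length queries; this is what forces the enumeration over the (polynomially sized) ranges of both $\ell(f)$ and $\eta(f)$, and it is also what makes it necessary to verify that the combined lengths $\hat\ell(a)$ and the targets $\widehat L$ remain polynomially bounded, so that the subsequent \XLC{} solver of Camerini, Galbiati, and Maffioli~\cite{camerini_1992_rpp} indeed runs within the claimed pseudopolynomial bound. Beyond this, the argument is routine bookkeeping: checking the two magnitude bounds $N_\eta$ and $L_{\max}$, and the elementary fact that a multiple of $N$ of absolute value below $N$ is zero.
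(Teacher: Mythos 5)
Your proposal is correct and follows essentially the same route as the paper: both fold $\eta$ into the low-order digits of a combined length function via a scaling factor exceeding the maximum possible value of $\sum_{a\in A}\eta(a)f(a)$ (which is bounded because $u(a)\leq m-1$), enumerate the polynomially many exact targets congruent to $r$, and recover $\ell(f)$ and $\eta(f)$ uniquely from the combined value. The only cosmetic difference is that you scan the polynomially bounded range of $L$ linearly while the paper invokes a binary search over $L$; both stay within the claimed polynomial number of \XLC{} calls.
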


\begin{proof}
Note that in any \CCC{} problem, we may assume without loss of generality that $\eta(a)\in\{0,\ldots,m-1\}$ by reducing the values modulo $m$.
Now, for every arc $a$ in a given \CCC{} problem, define a new length function $\widetilde{\ell}(a)=\ell(a)\cdot m^2 |A| + \eta(a)$.
We thus have $\widetilde{\ell}(f) = \ell(f) \cdot m^2 |A| + \sum_{a\in A}\eta(a)f(a)$, and because $\sum_{a\in A}\eta(a)f(a) < m^2 |A|$, we can retrieve both $\ell(f)$ and $\sum_{a\in A}\eta(a)f(a)$ from $\widetilde{\ell}(f)$.
Consequently, finding a circulation of length $L$ with $\sum_{a\in A}\eta(a)f(a)\equiv r\pmod{m}$ is equivalent to solving \XLC{} problems in $G$ with respect to lengths $\widetilde{\ell}$ and with target length $\widetilde{L}=L\cdot m^2|A| + km + r$ for all $k\in\{0,\ldots, m|A|-1\}$.
We can find the smallest $L$ for which there is a \CCC{} solution of length $L$ by binary search in $O(\log (m|A|\cdot\max_{a\in A}|\ell(a)|))$ iterations, because $|\ell(f)| = \left|\sum_{a\in A}\ell(a)f(a)\right| \leq m|A|\cdot\max_{a\in A}|\ell(a)|$.
Altogether, this gives the desired result.
\end{proof}

Combining the above findings, we conclude this section with a proof of \cref{thm:solveNetwRPP}.

\begin{proof}[Proof of \cref{thm:solveNetwRPP}]
By \cref{lem:CCTUtoCCC}, a \CCTU{} problem whose constraint matrix is a network matrix can be reduced in strongly polynomial time to a \CCC{} problem with $u(a)\leq m-1$ for all $a\in A$.
By \cref{lem:CCCtoXLC}, this problem further reduces to $\mathrm{poly}(m, |V|, |A|, \max_{a\in A}|c(a)|)$ many \XLC{} problems, where each of them can be solved in $\mathrm{poly}(|V|, \max_{a\in A}u(a), \max_{a\in A}|c(a)|)=\mathrm{poly}(|V|, m, \max_{a\in A}|c(a)|)$ time using a randomized algorithm.
Thus, overall, we obtain that there is a randomized algorithm to solve a \CCTU{} problem whose constraint matrix is a network matrix in time $\mathrm{poly}(m, |V|, |A|, \max_{a\in A}|c(a)|)$, i.e., a strongly polynomial algorithm if the objective $c$ is given in unary encoding and $m$ is a constant.
\end{proof}

\subsection{Transposes of network matrices\MOORdot}

The purpose of this section is to prove the following theorem.

\begin{theorem}\label{thm:solveTranspNetwPP}
There is a strongly polynomial time algorithm for \CCTU{} problems with constant prime power modulus and constraint matrices that are transposed network matrices.
\end{theorem}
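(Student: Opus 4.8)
The plan is to mirror the strategy used for network matrices in the previous section, but now exploit the graph structure \emph{dual} to a network matrix, i.e.\ the fact that a transposed network matrix is the incidence structure of a directed tree with respect to a collection of fundamental paths, so that a \CCTU{} problem with such a constraint matrix becomes a combinatorial optimization problem over cuts (or, more precisely, over vertex-potential type objects) rather than over flows. First I would invoke \cref{obs:normalization} to reduce to a normalized \CCTU{} problem, where the origin is optimal for the relaxation, the variables are in $\mathbb{Z}_{\geq 0}$, and (by \cref{thm:proximity}) there is an optimal solution $x^*$ with $x^*_i\in\{0,\ldots,m-1\}$. Writing $T=N^\top$ for a network matrix $N$ associated (via \cref{lem:networkMatrixGraphs}) to a directed tree $(V,U)$ and a digraph $(V,E)$, the rows of $T$ are indexed by $E$ and the columns by $U$; a constraint $T x \le b$ then says that for each $e=(v,w)\in E$, the signed sum of the variables $x_u$ along the fundamental $v$–$w$ path in $U$ is at most $b_e$. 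Such "path-sum" constraints are exactly the kind that define a lattice of closed sets / a ring family, which is the natural home for submodular minimization.

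The key step is to massage the problem into a congruency-constrained submodular minimization problem of the form solvable by Nägele, Sudakov, and Zenklusen~\cite{nagele_2018_submodular}, which handles minimizing a submodular function over sets of cardinality $\equiv r \pmod m$ for constant prime power $m$. Concretely, because $x^* \in \{0,\ldots,m-1\}^n$, I would encode each variable $x_u$ in "unary" via $m-1$ copies (or, better, via a telescoping encoding $x_u = \sum_{j=1}^{m-1} x_{u,j}$ with $x_{u,1}\ge x_{u,2}\ge \cdots$), turning the search for $x^*$ into the search for a subset $S$ of a ground set of size $O(nm)$. The linear objective $c^\top x$ becomes a modular (hence submodular) function of $S$; the inequality constraints $Tx \le b$, being path-sum upper bounds, cut out a ring family of feasible $S$ (using that the tree structure makes the family of feasible sets closed under union and intersection after the telescoping encoding enforces monotonicity within each variable block), which can be enforced either by restricting the submodular function to that lattice or by adding a large penalty term that preserves submodularity; and the congruency constraint $\gamma^\top x \equiv r\pmod m$ becomes a constraint of the form $|S|_\gamma \equiv r \pmod m$ where $|S|_\gamma$ is a weighted count — which, by the standard trick of replicating ground-set elements according to their $\gamma$-weight modulo $m$ (and observing that in the encoding each element contributes a fixed residue), reduces to an honest cardinality-mod-$m$ constraint. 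Then \cite{nagele_2018_submodular} solves it in strongly polynomial time for constant prime power $m$, and the resulting optimal set $S^*$ is decoded back to an optimal $x^*$ for the \CCTU{} problem.

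I expect the main obstacle to be verifying that the feasible region $\{x : Tx \le b\}$, after the non-negativity and telescoping-encoding reductions, really does translate into a \emph{ring family} (lattice of subsets closed under union and intersection) on which the objective restricts to a submodular function — and doing so in a way that keeps the ground set polynomially bounded and the encoding strongly polynomial. The transpose-of-network structure is exactly what should make this work (fundamental paths in a tree have the laminar-like intersection behavior needed), but pinning down the right encoding so that "signed path sums bounded above" becomes a down-closed family in the Boolean lattice (and handling the signs $\pm 1$ coming from the tree orientation, which is where one must be careful) is the delicate part. A secondary technical point is that \cite{nagele_2018_submodular} requires $m$ a constant prime power, which is why the theorem is stated under that hypothesis; and that we must ensure the submodular minimization oracle can be implemented in strongly polynomial time given the explicit graph description, which follows since evaluating $c^\top x$ and checking the path-sum constraints are both clearly efficient. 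Once the reduction is set up correctly, the rest is a direct application of the cited algorithm together with the proximity bound from \cref{thm:proximity} that legitimizes the bounded encoding.
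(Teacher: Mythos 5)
You have the right high-level strategy --- normalize, use proximity to bound entries by $m-1$, and reduce to congruency-constrained submodular minimization over a lattice solved by N\"agele--Sudakov--Zenklusen for constant prime power $m$, with the weighted-residue-to-cardinality replication trick --- and this is indeed the paper's route. But the step you yourself flag as delicate is where the argument breaks: after the unary/telescoping encoding of the arc variables $x_u$, the feasible region $\{x\colon Tx\leq b\}$ does \emph{not} form a ring family. Union and intersection of encoded sets correspond to componentwise $\max$ and $\min$ of the vectors $x$, and a fundamental-path constraint whose path traverses two tree arcs forwardly, say $x_{u_1}+x_{u_2}\leq 1$, is not closed under componentwise $\max$: the points $(1,0)$ and $(0,1)$ are feasible but their join $(1,1)$ is not. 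Only pure difference constraints (one $+1$ and one $-1$) are lattice-closed, and general path-sum constraints are not, so there is no lattice on which to run the submodular minimization, and no penalty trick fixes this since the obstruction is in the constraint family itself, not the objective.

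The paper's proof resolves exactly this by changing variables before encoding: any feasible $x$ is written as $x=\sum_{i=1}^{\ell}\chi^{\delta^+(S_i)}$ for vertex sets $S_i\subseteq V$ with $\delta^-(S_i)=\emptyset$ (the ``Constrained Tree Cuts'' reformulation), these sets are uncrossed into a chain $S_\ell\subseteq\cdots\subseteq S_1$, and the proximity bound (via the TU-appendable vectors $t_{vw}$) shows one may take $\ell\leq m-1$. The lattice is then built on $m-1$ copies of the \emph{vertex} set, where the former path-sum constraints become implication constraints of the form ``if $v_i\in C$ then $w_{i-b_a}\in C$'', which \emph{are} closed under union and intersection; the cost is modular because $\chi^{\delta^+(S)}+\chi^{\delta^+(T)}=\chi^{\delta^+(S\cup T)}+\chi^{\delta^+(S\cap T)}$ for in-arc-free $S,T$. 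Your opening remark about ``vertex-potential type objects'' points in this direction, but without the cut-chain reformulation, the uncrossing lemma, and the chain-length bound $\ell\leq m-1$, the reduction to a CCSM instance does not go through, so the proposal as written has a genuine gap at its central step.
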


To achieve this result, we again exploit the graph structure coming with network matrices. This time, we reduce \CCTU{} problems (or, more precisely and equivalently, normalized \CCTU{} problems) to certain directed cut problems of the following form.

\begin{mdframed}[innerleftmargin=0.5em, innertopmargin=0.5em, innerrightmargin=0.5em, innerbottommargin=0.5em, userdefinedwidth=0.95\linewidth, align=center]
{\textbf{Constrained Tree Cuts (\hypertarget{prb:CTC}{CTC}):}}
Let $T=(V, U)$ be a directed tree, $A\subseteq V\times V$ and $b\colon A\to\mathbb{Z}_{\geq0}$. Let $c\colon U\to\mathbb{Z}$ be arc costs, $\alpha\colon V\to\mathbb{Z}$, $r\in\mathbb{Z}$, and $m\in\mathbb{Z}_{>0}$. Find a family of sets $S_1,\ldots,S_\ell\subseteq V$ minimizing the total cost $\sum_{i=1}^{\ell} c(\delta^+(S_i))$ such that
\begin{enumerate}
\item\label{prbitem:CTC_inedges} $\delta^-(S_i)=\emptyset$ for all $i\in[\ell]$,
\item\label{prbitem:CTC_coverage} $|\{i\in[\ell]\colon v\in S_i\}| - |\{i\in[\ell]\colon w \in S_i\}| \leq b_a$ for all $a=(v, w)\in A$, and
\item\label{prbitem:CTC_cong} $\sum_{i=1}^{\ell} \alpha(S_i)\equiv r\pmod*{m}$, where $\alpha(S_i)\coloneqq\sum_{v\in S_i} \alpha(v)$.
\end{enumerate}
\end{mdframed}

\noindent We highlight that in \CTC{} problems, the number $\ell\in\mathbb{Z}_{\geq 0}$ of sets that are returned is not fixed upfront; in the extreme case, we might even return an empty family, i.e., use $\ell=0$.
Moreover, we also allow the sets $S_i$ to be empty or equal to $V$, opposed to the typical setting in cut problems where this is usually excluded.
\CTC{} problems inherit many structural properties from \CCTU{} problems, including structural results on optimal solutions. These will allow us to further reduce \CTC{} problems to directed congruency-constrained minimum cut problems, for which efficient algorithms are known for the case of the modulus $m$ being a constant prime power~\cite{nagele_2018_submodular}. In \CTC{} problems, we call the constraint~\ref{prbitem:CTC_cong} the \emph{congruency constraint}, and we refer to the problem obtained after dropping that constraint as the \emph{relaxation} of the \CTC{} problem.

We start by showing the reduction from normalized \CCTU{} problems to \CTC{} problems.
More concretely, to every normalized \CCTU{} problem $\min\{c^\top x \colon Tx \leq b,\ \gamma^\top x \equiv r \pmod*{m},\ x\in\mathbb{Z}^n_{\geq 0} \}$ with $T$ being the transpose of a network matrix and such that $T$ does not contain identical rows (otherwise, one row of the identical rows corresponds to a redundant constraint and can be deleted), we associate the following \CTC{} problem:
The tree $(V,U)$ and the extra arc set $A\subseteq V\times V$ are those coming with the network constraint matrix through \cref{def:networkMatrix}, $b\colon A\to\mathbb{Z}_{\geq 0}$ is the right-hand side vector of the \CCTU{} problem (which is non-negative because we assume the \CCTU{} problem to be normalized), $\alpha\colon V\to\mathbb{Z}$ is defined by $\alpha(v)\coloneqq \gamma(\delta^+(v)) - \gamma(\delta^-(v))$ for all $v\in V$, and costs $c$ as well as $r$ and $m$ are left unchanged.\footnote{Assuming that $T$ does not contain identical rows implies that no parallel arcs are needed in $A$, which justifies the assumption $A\subseteq V\times V$.}
To relate feasible solutions of \CCTU{} problems and the associated \CTC{} problem, we prove the following result.

\begin{lemma}\label{lem:CCTUtoCTCfeasibility}
Consider a normalized \CCTU{} problem whose constraint matrix has no identical rows and is the transpose of a network matrix, and the associated \CTC{} problem. Let $S_1,\ldots,S_\ell\subseteq V$ with $\delta^{-}(S_i)=\emptyset$ for all $i\in[\ell]$, and define $x=\sum_{i=1}^\ell \chi^{\delta^+(S_i)}$. Then $x$ is a feasible \CCTU{} solution if and only if $S_1,\ldots,S_\ell$ is a feasible \CTC{} solution.
Moreover, if both are feasible, their objective values are the same.
\end{lemma}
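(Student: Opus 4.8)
The plan is to check, constraint by constraint, that under the stated correspondence the data of the normalized \CCTU{} problem translates \emph{exactly} into the data of the associated \CTC{} problem, so that feasibility and objective value are preserved in both directions simultaneously. Throughout, write $T=N^\top$ with $N$ the network matrix: then the columns of $T$, hence the variables $x$, are indexed by the tree edges $U$, while the rows of $T$, hence the right-hand sides $b$, are indexed by the arc set $A$, and all of $\delta^+,\delta^-$ below are taken in the tree $(V,U)$.

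The whole argument hinges on one elementary identity: \emph{if $S\subseteq V$ satisfies $\delta^-(S)=\emptyset$, then for every tree edge $u=(p,q)\in U$ we have $\chi^{\delta^+(S)}_u=\mathbf{1}[p\in S]-\mathbf{1}[q\in S]$.} Indeed $\delta^-(S)=\emptyset$ excludes the case $p\notin S,\,q\in S$, and in each of the three remaining cases both sides agree and lie in $\{0,1\}$. Applying this to $x=\sum_{i=1}^\ell\chi^{\delta^+(S_i)}$ gives, first, that $x$ is a sum of $0/1$ vectors, so $x\in\mathbb{Z}^n_{\geq 0}$ automatically; and second, $c^\top x=\sum_{u\in U}c_u x_u=\sum_{i=1}^\ell\sum_{u\in\delta^+(S_i)}c_u=\sum_{i=1}^\ell c(\delta^+(S_i))$, which is exactly the \CTC{} cost. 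This already settles the non-negativity requirement and the ``same objective value'' part of the statement.

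For the inequality system, I would fix an arc $a=(v,w)\in A$ and let $v=p_0,p_1,\dots,p_k=w$ be the tree path, with $u_j\in U$ the edge joining $p_j$ and $p_{j+1}$. By \cref{def:networkMatrix}, $(Tx)_a=\sum_{u}N_{u,a}x_u$ is the sum over $j$ of $+x_{u_j}$ when $u_j$ is traversed forwardly (i.e.\ $u_j=(p_j,p_{j+1})$) and $-x_{u_j}$ when traversed backwardly (i.e.\ $u_j=(p_{j+1},p_j)$). Substituting the identity above for each $S_i$ and checking both orientations shows that in either case the contribution of $u_j$ equals $\sum_{i}(\mathbf{1}[p_j\in S_i]-\mathbf{1}[p_{j+1}\in S_i])$, since the two sign reversals in the backward case cancel; hence the sum over $j$ telescopes to $(Tx)_a=|\{i:v\in S_i\}|-|\{i:w\in S_i\}|$, so $Tx\leq b$ is precisely condition~\ref{prbitem:CTC_coverage}. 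For the congruency constraint, I would expand $\alpha(v)=\gamma(\delta^+(v))-\gamma(\delta^-(v))$ and regroup by tree edge to get $\alpha(S_i)=\sum_{v\in S_i}\alpha(v)=\sum_{u=(p,q)\in U}\gamma_u(\mathbf{1}[p\in S_i]-\mathbf{1}[q\in S_i])=\sum_u\gamma_u\chi^{\delta^+(S_i)}_u=\gamma(\delta^+(S_i))$, again using $\delta^-(S_i)=\emptyset$ in the penultimate step. Therefore $\gamma^\top x=\sum_i\gamma(\delta^+(S_i))=\sum_i\alpha(S_i)$, so $\gamma^\top x\equiv r\pmod*{m}$ is precisely condition~\ref{prbitem:CTC_cong}. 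Since condition~\ref{prbitem:CTC_inedges} is part of the hypothesis on the $S_i$, this yields the claimed equivalence, with equal objective values.

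The only delicate point is the sign bookkeeping in the telescoping step: one must correctly match ``forward/backward along the $v$--$w$ path'' from the network-matrix definition with the tail/head orientation of each tree edge, and it is exactly the assumption $\delta^-(S_i)=\emptyset$ that collapses $\chi^{\delta^+(S_i)}_u$ into a difference of two indicators, making the per-edge contributions orientation-independent so that they telescope cleanly. Beyond getting these signs right I do not anticipate any genuine difficulty.
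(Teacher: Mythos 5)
Your proof is correct and follows essentially the same route as the paper: the paper isolates the identity $t_{vw}^\top x = |\{i\colon v\in S_i\}|-|\{i\colon w\in S_i\}|$ in a separate lemma (proved by counting how the $v$-$w$ tree path crosses each cut $S_i$), whereas you derive the same identity from the per-edge relation $\chi^{\delta^+(S_i)}_u=\mathbf{1}[p\in S_i]-\mathbf{1}[q\in S_i]$ plus telescoping, which is just a different bookkeeping of the same computation. The treatment of the congruency constraint and the objective is identical to the paper's, and your explicit check of non-negativity is a harmless extra.
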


The main ingredient in \cref{lem:CCTUtoCTCfeasibility} is to relate inequality constraints of the \CCTU{} problem and the constraints~\ref{prbitem:CTC_coverage} in the associated \CTC{} problems. We use this relation again later, and hence state it independently here before using it to prove \cref{lem:CCTUtoCTCfeasibility}.

\begin{lemma}\label{lem:CCTUtoCTCconstraints}
Let $(V,U)$ be a directed spanning tree, let $S_1,\ldots,S_\ell\subseteq V$ with $\delta^{-}(S_i)=\emptyset$ for all $i\in[\ell]$, and denote $x=\sum_{i=1}^\ell \chi^{\delta^+(S_i)}$. Then for any $v,w\in V$, the vector $t_{vw}\in\{-1,0,1\}^{U}$ defined by
$$
\forall u\in U\colon\quad
t_{vw}(u) = \begin{cases}
1 & \text{if the unique $v$-$w$ path in $U$ passes through $u$ forwardly,}\\
0 & \text{if the unique $v$-$w$ path in $U$ does not pass through $u$,}\\
-1 & \text{if the unique $v$-$w$ path in $U$ passes through $u$ backwardly}
\end{cases}
$$
satisfies $t_{vw}^\top x = |\{i\in[\ell]\colon v\in S_i\}| - |\{i\in[\ell]\colon w \in S_i\}|$.
\end{lemma}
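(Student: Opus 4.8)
The plan is to reduce the claimed identity to a one-set statement and then prove it by a telescoping argument along the unique $v$--$w$ path in $(V,U)$. First I would expand $t_{vw}^\top x$ using $x=\sum_{i=1}^\ell \chi^{\delta^+(S_i)}$ and linearity, getting $t_{vw}^\top x=\sum_{i=1}^\ell\sum_{u\in\delta^+(S_i)}t_{vw}(u)$. Hence it suffices to prove that for a single set $S\subseteq V$ with $\delta^-(S)=\emptyset$ one has $\sum_{u\in\delta^+(S)}t_{vw}(u)=[v\in S]-[w\in S]$, where $[\,\cdot\,]$ denotes the $0/1$ indicator; summing this identity over $S=S_1,\dots,S_\ell$ then yields $|\{i\colon v\in S_i\}|-|\{i\colon w\in S_i\}|$, as required.

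To establish the one-set identity, fix $S$ with $\delta^-(S)=\emptyset$, so that every edge of $U$ with exactly one endpoint in $S$ is oriented out of $S$, i.e.\ lies in $\delta^+(S)$. Write the $v$--$w$ path in $U$ as a vertex sequence $v=p_0,p_1,\dots,p_k=w$. Edges of $U$ not on this path contribute $0$ (as $t_{vw}$ vanishes on them), and among the $k$ path edges, the ones lying in $\delta^+(S)$ are exactly those joining a vertex of $S$ to a vertex of $V\setminus S$. For the $j$-th path edge: if $p_{j-1}\in S$ and $p_j\notin S$, the edge is oriented $(p_{j-1},p_j)$ and is thus traversed forwardly, so $t_{vw}=+1$ on it; if $p_{j-1}\notin S$ and $p_j\in S$, the edge is oriented $(p_j,p_{j-1})$ and is traversed backwardly, so $t_{vw}=-1$; otherwise the edge is not in $\delta^+(S)$, and the corresponding indicator difference vanishes anyway. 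Hence $\sum_{u\in\delta^+(S)}t_{vw}(u)=\sum_{j=1}^k\big([p_{j-1}\in S]-[p_j\in S]\big)=[v\in S]-[w\in S]$.

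An arguably cleaner route that I would actually write up is to observe that $t_{vw}$ is the unit $v$--$w$ flow in the tree: letting $M\in\{-1,0,1\}^{V\times U}$ be the arc--vertex incidence matrix of $(V,U)$ (entry $+1$ at the tail, $-1$ at the head of each arc), one has $Mt_{vw}=e_v-e_w$, again by telescoping along the path. Since $\delta^-(S)=\emptyset$, summing the rows of $M$ indexed by $S$ gives the row vector $\chi^{\delta^+(S)}$, so $\sum_{u\in\delta^+(S)}t_{vw}(u)=\big(\sum_{z\in S}M_{z,\cdot}\big)t_{vw}=\sum_{z\in S}(e_v-e_w)_z=[v\in S]-[w\in S]$, and we conclude by summing over $S_1,\dots,S_\ell$. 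The whole argument is bookkeeping; the only point that needs care is matching the ``forwardly/backwardly'' convention defining $t_{vw}$ with the fact that $\delta^-(S)=\emptyset$ forces every $S$-to-$(V\setminus S)$ tree edge to be oriented out of $S$, and making sure the telescoping handles the endpoints $v$ and $w$ correctly; I do not anticipate a genuine difficulty.
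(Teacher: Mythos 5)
Your proof is correct and follows essentially the same route as the paper: reduce by linearity to a single set $S$ with $\delta^-(S)=\emptyset$ and show that the signed crossings of the $v$--$w$ path with $\delta^+(S)$ telescope to $[v\in S]-[w\in S]$, which is exactly the paper's per-set argument before summing over $i$. Your incidence-matrix variant is just a compact repackaging of the same telescoping, so nothing substantively different is gained or lost.
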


\begin{proof}
By definition of $x$, we have that
\begin{equation*}
t_{vw}^\top x = \sum_{i=1}^\ell \sum_{u\in\delta^+(S_i)} t_{vw}(u)\enspace.
\end{equation*}
For fixed $i\in[\ell]$ and by definition of $t_{vw}$, the non-zero terms in the inner sum correspond to edges $u$ that are oriented from a vertex inside $S_i$ to a vertex outside $S_i$, and that lie on the unique $v$-$w$ path $P$ in $U$.
Recall that $\delta^-(S_i)=\emptyset$, hence the sum in fact has one non-zero term for every time the path $P$ crosses from one side of $S_i$ to the other.
More precisely, there is a term $+1$ for every time the path $P$ crosses from a vertex inside $S_i$ to one outside $S_i$, and a term $-1$ for every time the path $P$ crosses from a vertex outside $S_i$ to one inside $S_i$.
Consequently, the total value of the sum only depends on where the start- and endpoints $v$ and $w$ are located with respect to $S_i$: If $v\in S_i$ and $w\notin S_i$, for example, $P$ will cross from a vertex inside $S_i$ to one outside $S_i$ one more time than the other way round, hence the sum will be $+1$.
Generally, we get that $\sum_{u\in\delta^+(S_i)} t_{vw}(u)=1_{v\in S_i}-1_{w\in S_i}$, and thus
\begin{equation*}
t_{vw}^\top x = \sum_{i=1}^\ell \big(1_{v\in S_i}-1_{w\in S_i}\big) = \sum_{i=1}^\ell 1_{v\in S_i}-\sum_{i=1}^\ell 1_{w\in S_i} = |\{i\in[\ell]\colon v\in S_i\}| - |\{i\in[\ell]\colon w \in S_i\}|\enspace.\qedhere
\end{equation*}
\end{proof}

\begin{proof}[Proof of \cref{lem:CCTUtoCTCfeasibility}]
We start by showing that $x$ is feasible for the inequality system $Tx\leq b$ of the \CCTU{} problem if and only if $S_1,\ldots,S_\ell$ is feasible for constraint~\ref{prbitem:CTC_coverage} of the \CTC{} problem.
To this end, consider a row of the constraint matrix $T$ that is indexed by the arc $a=(v,w)\in A\times A$, and note that this row is precisely the vector $t_{vw}^\top$, with $t_{vw}$ as defined in \cref{lem:CCTUtoCTCconstraints}.
Consequently, the corresponding constraint $t_{vw}^\top x \leq b_a$ of the \CCTU{} problem is, by \cref{lem:CCTUtoCTCconstraints}, equivalent to $|\{i\in[\ell]\colon v\in S_i\}| - |\{i\in[\ell]\colon w \in S_i\}|\leq b_a$, which is one of the constraints in~\ref{prbitem:CTC_coverage} in the \CTC{} problem (namely the one for the arc $a=(v,w)\in A$).
Thus, we conclude that $Tx\leq b$ is equivalent to constraint~\ref{prbitem:CTC_coverage} in the \CTC{} problem.
Next, we observe that
\begin{align*}
\sum_{i=1}^\ell \alpha(S_i) &= \sum_{i=1}^\ell \sum_{v\in S_i}\left(\gamma(\delta^+(v))-\gamma(\delta^-(v))\right) \\
&= \sum_{i=1}^\ell\left(\gamma(\delta^+(S_i))-\gamma(\delta^-(S_i))\right) = \sum_{i=1}^\ell \gamma^\top \chi^{\delta^+(S_i)} = \gamma^\top x \enspace,
\end{align*}
and hence $\sum_{i=1}^\ell \alpha(S_i)\equiv r\pmod*{m}$ if and only if $\gamma^\top x\equiv r\pmod*{m}$.
Together, we obtain that $x$ is a feasible \CCTU{} solution if and only if $S_1,\ldots,S_\ell$ is a feasible \CTC{} solution.
To finish the proof of the lemma, we observe that the objectives of the \CCTU{} solution $x$ and the \CTC{} solution $S_1,\ldots,S_\ell$ are equal because $c^\top x = \sum_{i=1}^\ell c^\top\chi^{\delta^+(S_i)} = \sum_{i=1}^\ell c(\delta^+(S_i))$.
\end{proof}

By showing that for any feasible \CCTU{} solution $x$, there exist sets $S_1,\ldots,S_\ell\subseteq V$ with $\delta^{-}(S_i)=\emptyset$ and $x=\sum_{i=1}^\ell \chi^{\delta^+(S_i)}$, and combining this with \cref{lem:CCTUtoCTCfeasibility}, we thus obtain the following.

\begin{lemma}\label{lem:CCTUtoCTCsolutions}
Consider a normalized \CCTU{} problem whose constraint matrix has no identical rows and is the transpose of a network matrix, and the associated \CTC{} problem as constructed above.
\begin{enumerate}
\item\label{lemitem:feasibility} For every feasible solution $x$ of the \CCTU{} problem, there is a feasible solution $S_1,\ldots,S_\ell$ of the \CTC{} problem with the same objective value such that $x=\sum_{i=1}^\ell \chi^{\delta^+(S_i)}$.
\item\label{lemitem:optimality} For every optimal solution $x$ of the \CCTU{} problem, there is an optimal solution $S_1,\ldots,S_\ell$ of the \CTC{} problem such that $x=\sum_{i=1}^\ell \chi^{\delta^+(S_i)}$.
\end{enumerate}
\end{lemma}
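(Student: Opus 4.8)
The plan is to supply the one ingredient announced just before \cref{lem:CCTUtoCTCsolutions}: that every feasible solution $x$ of a normalized \CCTU{} problem can be written as $x=\sum_{i=1}^{\ell}\chi^{\delta^+(S_i)}$ for sets $S_i\subseteq V$ with $\delta^-(S_i)=\emptyset$; the lemma then follows by combining this with \cref{lem:CCTUtoCTCfeasibility}. For the decomposition I would use one elementary observation about the directed spanning tree $(V,U)$ underlying the transposed network matrix: for each arc $u=(p,q)\in U$, let $S_u\subseteq V$ be the vertex set of the connected component of the forest $(V,U\setminus\{u\})$ that contains $p$. Since $(V,U)$ is a tree, deleting $u$ splits it into exactly two components, and $u$ is the unique arc of $U$ joining them; it is directed from $S_u$ to $V\setminus S_u$. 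Hence $\delta^+(S_u)=\{u\}$ and $\delta^-(S_u)=\emptyset$, i.e., $\chi^{\delta^+(S_u)}$ equals the unit vector having a $1$ in the coordinate indexed by $u$ and $0$ elsewhere (recall that the variables $x$ correspond to the columns of $T$, i.e., to the arcs of $(V,U)$).

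Given this, part~\ref{lemitem:feasibility} is immediate. A feasible solution $x$ of the normalized \CCTU{} problem lies in $\mathbb{Z}^n_{\geq 0}$, so $x=\sum_{u\in U}x(u)\,\chi^{\delta^+(S_u)}$; reading the right-hand side as a family that lists each $S_u$ with multiplicity $x(u)$ (so $\ell=\sum_{u\in U}x(u)$) yields sets $S_1,\dots,S_\ell$ with $\delta^-(S_i)=\emptyset$ and $x=\sum_{i=1}^{\ell}\chi^{\delta^+(S_i)}$. Since $x$ is \CCTU{}-feasible, \cref{lem:CCTUtoCTCfeasibility} makes this family \CTC{}-feasible and guarantees that the two objective values coincide, which is exactly part~\ref{lemitem:feasibility}. (The construction is explicit; if one starts from an optimal solution, the proximity bound of \cref{thm:proximity} gives $x_i\in\{0,\dots,m-1\}$, so $\ell$ is polynomially bounded, although this is not needed here.)

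For part~\ref{lemitem:optimality} I would take $x$ optimal for the \CCTU{} problem and apply part~\ref{lemitem:feasibility} to obtain a \CTC{}-feasible family $S_1,\dots,S_\ell$ with $x=\sum_i\chi^{\delta^+(S_i)}$ and the same objective value. It remains to see that this family is optimal for the \CTC{} problem. If it were not, there would be a \CTC{}-feasible family $S_1',\dots,S_{\ell'}'$ of strictly smaller cost; setting $x'=\sum_i\chi^{\delta^+(S_i')}$, \cref{lem:CCTUtoCTCfeasibility} would make $x'$ a \CCTU{}-feasible solution whose objective equals the cost of $S_1',\dots,S_{\ell'}'$, hence strictly below the objective of $x$, contradicting optimality of $x$. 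Thus the family produced in part~\ref{lemitem:feasibility} is \CTC{}-optimal, giving part~\ref{lemitem:optimality}.

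I do not expect a substantial obstacle. The only point demanding care is the orientation bookkeeping in \cref{def:networkMatrix} and \cref{lem:CCTUtoCTCconstraints}: one has to confirm that the component $S_u$ satisfies $\delta^-(S_u)=\emptyset$ (rather than $\delta^+(S_u)=\emptyset$) for the chosen convention, which is immediate once one notes that $u$ is the unique arc across the cut $(S_u,V\setminus S_u)$ and points out of $S_u$. A secondary, purely formal check is that the \CTC{} problem genuinely permits repeated sets and an unbounded number $\ell$ of them, so that listing $S_u$ with multiplicity $x(u)$ is legitimate --- which it does by definition.
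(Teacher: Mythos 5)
Your proposal is correct and follows essentially the same route as the paper: the paper also decomposes $x$ as $\sum_{u\in U} x(u)\chi^{\delta^+(C_u)}$ using, for each tree arc $u$, the unique cut $C_u$ with $\delta^+(C_u)=\{u\}$ and $\delta^-(C_u)=\emptyset$ (your explicit tail-component construction of $S_u$ is exactly this cut), and then invokes \cref{lem:CCTUtoCTCfeasibility} for part~\ref{lemitem:feasibility} and the same contradiction argument for part~\ref{lemitem:optimality}.
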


\begin{proof}
\begin{enumerate}
\item Note that because $(V, U)$ is a tree, for every $u\in U$, there is a unique cut $C_u\subseteq V$ with $\delta^+(C_u)=\{u\}$ and $\delta^-(C_u)=\emptyset$. By definition, we have $x = \sum_{u\in U} x(u)\chi^{\delta^+(C_u)}$. Consequently, by \cref{lem:CCTUtoCTCfeasibility}, the collection consisting of $x(u)$ times the set $C_u$ for all $u\in U$ is a feasible \CTC{} solution, and its objective value is the same as the objective value of $x$ in the \CCTU{} problem.
\item By part~\ref{lemitem:feasibility}, it is enough to prove that the associated \CTC{} problem does not have solutions with objective value less than the value $c^\top x$ of $x$.
If there was such a \CTC{} solution, say $S_1',\ldots,S_{\ell'}'$, of value strictly less than $c^\top x$, then by \cref{lem:CCTUtoCTCfeasibility}, we know that $x'=\sum_{i=1}^{\ell'}\chi^{\delta^+(S_i')}$ is a feasible \CCTU{} solution of the same objective value---but this is a contradiction, since we assumed $x$ to be optimal for the \CCTU{} problem.\qedhere
\end{enumerate}
\end{proof}

In other words, the above immediately implies that \CCTU{} problems can be reduced to \CTC{} problems.

\begin{corollary}\label{cor:CCTUtoCTC}
Every normalized \CCTU{} problem whose constraint matrix has no identical rows and is the transpose of a network matrix can be strongly polynomially reduced to the associated \CTC{} problem, i.e., the \CTC{} problem can be obtained in strongly polynomial time, and any optimal \CTC{} solution can in strongly polynomial time be transformed to an optimal \CCTU{} solution.
\end{corollary}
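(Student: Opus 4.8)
The plan is to assemble \cref{cor:CCTUtoCTC} directly from \cref{lem:CCTUtoCTCfeasibility} and \cref{lem:CCTUtoCTCsolutions}, since all the substantive work has already been done there; what remains is only to observe that constructing the associated \CTC{} instance and transforming solutions back are both strongly polynomial. First I would verify the construction of the \CTC{} instance. Given the normalized \CCTU{} problem whose constraint matrix $T$ is the transpose of a network matrix and has no identical rows, I apply \cref{lem:networkMatrixGraphs} to $T^\top$ to obtain in strongly polynomial time a directed tree $(V,U)$ and a digraph $(V,A)$ as in \cref{def:networkMatrix}; since $T$ has no identical rows, no two arcs of $A$ are parallel, so we may indeed take $A\subseteq V\times V$. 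The right-hand side $b$ is nonnegative by normalization and is used directly as the bound function; the vertex weights $\alpha(v)\coloneqq\gamma(\delta^+(v))-\gamma(\delta^-(v))$ are computed from $\gamma$ by a single pass over the arcs; and $c$, $r$, $m$ are copied unchanged. All of this is strongly polynomial in the input size.

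Next I would pin down the relation of feasibility and of optimal values. By part~\ref{lemitem:feasibility} of \cref{lem:CCTUtoCTCsolutions}, every feasible \CCTU{} solution gives rise to a feasible \CTC{} solution of the same objective value, so the \CTC{} optimum is at most the \CCTU{} optimum; in particular, the \CCTU{} problem can only be feasible if the associated \CTC{} problem is. Conversely, for any feasible \CTC{} solution $S_1,\ldots,S_\ell$, \cref{lem:CCTUtoCTCfeasibility} shows that $x\coloneqq\sum_{i=1}^\ell\chi^{\delta^+(S_i)}$ is a feasible \CCTU{} solution with $c^\top x=\sum_{i=1}^\ell c(\delta^+(S_i))$, so the \CCTU{} optimum is at most the \CTC{} optimum. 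Hence the two problems have the same feasibility status and the same optimal value, which is exactly the ``reduction'' half of the corollary.

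Finally I would handle the back-transformation of an optimal \CTC{} solution. Given an optimal \CTC{} solution $S_1,\ldots,S_\ell$ (presented explicitly, so that its family of sets is part of the given data and $\ell$ is bounded by its encoding size), I form $x\coloneqq\sum_{i=1}^\ell\chi^{\delta^+(S_i)}$ in strongly polynomial time. By \cref{lem:CCTUtoCTCfeasibility}, $x$ is \CCTU{}-feasible with objective value equal to the optimal \CTC{} value, which by the previous paragraph equals the optimal \CCTU{} value; therefore $x$ is an optimal \CCTU{} solution. (Equivalently, one may invoke part~\ref{lemitem:optimality} of \cref{lem:CCTUtoCTCsolutions} for the optimality comparison in the other direction.) Combining the three steps yields all assertions of \cref{cor:CCTUtoCTC}.

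I do not expect a genuine obstacle here: every nontrivial claim is already established in \cref{lem:CCTUtoCTCfeasibility} and \cref{lem:CCTUtoCTCsolutions}, so the ``proof'' is essentially bookkeeping. The only point requiring a moment's care is that the back-transformation is strongly polynomial, which holds because an explicitly given \CTC{} solution already encodes its (input-size-many) sets, so summing their cut-indicator vectors costs polynomial time in the combined size of the instance and the solution.
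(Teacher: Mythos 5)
Your proposal is correct and follows essentially the same route as the paper: construct the associated \CTC{} instance via \cref{lem:networkMatrixGraphs}, use \cref{lem:CCTUtoCTCsolutions} and \cref{lem:CCTUtoCTCfeasibility} to equate optimal values, and map an optimal \CTC{} solution $S_1,\ldots,S_\ell$ back to $x=\sum_{i=1}^\ell\chi^{\delta^+(S_i)}$. The only cosmetic difference is that you re-derive the equality of optima from part~\ref{lemitem:feasibility} of \cref{lem:CCTUtoCTCsolutions} together with \cref{lem:CCTUtoCTCfeasibility}, whereas the paper cites part~\ref{lemitem:optimality} directly; the content is the same.
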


\begin{proof}
The \CTC{} problem associated to a \CCTU{} problem can be constructed in strongly polynomial time, in particular because from the constraint matrix $T$, the tree $T=(V,U)$ and the extra arcs $A\subseteq V\times V$ can be obtained in polynomial time (in the encoding size of $T$) through \cref{lem:networkMatrixGraphs}.
\cref{lem:CCTUtoCTCsolutions}~\ref{lemitem:optimality} shows that optimal solutions of the \CCTU{} problem and the \CTC{} problem have the same values.
Moreover, by \cref{lem:CCTUtoCTCfeasibility}, any solution $S_1,\ldots,S_\ell$ of the \CTC{} problem immediately gives a feasible solution $x=\sum_{i=1}^\ell \chi^{\delta^+(S_i)}$ of the \CCTU{} problem with the same value (and note that $x$ can be computed in strongly polynomial time).
Thus if $S_1,\ldots,S_\ell$ is optimal for the \CTC{} problem, then so is $x$ for the \CCTU{} problem.
\end{proof}

We remark that the above reduction gives \CTC{} instances with $\alpha(V)=0$.
It turns out that because the underlying graph $(V,U)$ is a tree, this condition is enough to uniquely determine corresponding values $\gamma\colon U\to\mathbb{Z}$ such that $\alpha(v) = \gamma(\delta^+(v)) - \gamma(\delta^-(v))$ for all $v\in V$, which allows us to also reduce \CTC{} problems to \CCTU{} problems in that case.
As for our purposes, the direction covered by \cref{cor:CCTUtoCTC} is enough, we leave the details of this argument to the reader.
To be able to exploit the reduction given in \cref{cor:CCTUtoCTC}, we continue with studying the structure of \CTC{} solutions in more detail, with the goal to identify patterns that help for finding optimal \CTC{} solutions efficiently.

\begin{lemma}\label{lem:chainSolution}
Consider a \CTC{} problem and let $S_1,\ldots,S_\ell$ be a feasible solution. Then there exists a feasible solution $T_1,\ldots,T_{\ell}$ such that $T_{\ell}\subseteq T_{\ell-1}\subseteq\ldots\subseteq T_{1}$ and $\sum_{i=1}^\ell \chi^{\delta^+(S_i)} = \sum_{i=1}^{\ell}\chi^{\delta^+(T_i)}$.
\end{lemma}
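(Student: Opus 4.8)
The plan is to uncross the family $S_1,\ldots,S_\ell$ one pair at a time, using the standard operation that replaces two non-nested sets by their intersection and union, and to show that under the in-degree condition this operation keeps everything relevant fixed while strictly increasing a bounded integer potential; the process then terminates at a chain.

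Call a pair $(S_i,S_j)$ \emph{crossing} if $S_i\not\subseteq S_j$ and $S_j\not\subseteq S_i$. As long as the current family (still with $\ell$ members, possibly with repetitions) contains a crossing pair, I would replace $S_i$ and $S_j$ by $S_i\cap S_j$ and $S_i\cup S_j$. First I would check that feasibility is preserved. Since $\delta^-(S_i)=\delta^-(S_j)=\emptyset$, an arc of $U$ entering $S_i\cap S_j$ (resp.\ $S_i\cup S_j$) would have to enter $S_i$ or $S_j$, so $\delta^-(S_i\cap S_j)=\delta^-(S_i\cup S_j)=\emptyset$, which gives constraint~\ref{prbitem:CTC_inedges}. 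The pointwise identity $\chi^{S_i}+\chi^{S_j}=\chi^{S_i\cap S_j}+\chi^{S_i\cup S_j}$ shows that $\sum_k\chi^{S_k}$, and hence every coverage count $|\{k:v\in S_k\}|$, is unchanged; this preserves constraint~\ref{prbitem:CTC_coverage}, and since $\sum_k\alpha(S_k)=\sum_{v\in V}\alpha(v)\,|\{k:v\in S_k\}|$, it also preserves the congruency constraint~\ref{prbitem:CTC_cong}.

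The key point is that $\sum_k\chi^{\delta^+(S_k)}$ is also preserved. Checking edge by edge, for $u=(p,q)\in U$ one gets
\[
\bigl(\chi^{\delta^+(A)}+\chi^{\delta^+(B)}-\chi^{\delta^+(A\cap B)}-\chi^{\delta^+(A\cup B)}\bigr)_u
= 1_{p\in A\setminus B}\,1_{q\in B\setminus A} + 1_{p\in B\setminus A}\,1_{q\in A\setminus B},
\]
which is nonzero only if $u$ joins $A\setminus B$ and $B\setminus A$; but such an arc would lie in $\delta^-(A)$ or $\delta^-(B)$, both empty. Hence $\chi^{\delta^+(S_i)}+\chi^{\delta^+(S_j)}=\chi^{\delta^+(S_i\cap S_j)}+\chi^{\delta^+(S_i\cup S_j)}$, so the operation leaves $\sum_k\chi^{\delta^+(S_k)}$ invariant (and, pairing with $c$, also the objective). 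For termination, writing $a=|S_i\setminus S_j|>0$, $b=|S_j\setminus S_i|>0$, $c=|S_i\cap S_j|$, one computes $|S_i\cap S_j|^2+|S_i\cup S_j|^2-|S_i|^2-|S_j|^2=c^2+(a+b+c)^2-(a+c)^2-(b+c)^2=2ab>0$, so $\Phi\coloneqq\sum_k|S_k|^2$ strictly increases at every step; since $\Phi\in\mathbb{Z}$ with $0\le\Phi\le\ell|V|^2$, only finitely many steps occur.

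When the process halts, no pair is crossing, i.e.\ any two sets of the resulting family are nested; relabelling them by inclusion (ties broken arbitrarily) gives $T_\ell\subseteq\ldots\subseteq T_1$. By the invariants established above, $T_1,\ldots,T_\ell$ is feasible and $\sum_k\chi^{\delta^+(T_k)}=\sum_k\chi^{\delta^+(S_k)}$, which is exactly the claim. The main thing to get right is the uncrossing identity for \emph{directed} cuts together with the observation that its error term vanishes precisely because $\delta^-(S_i)=\emptyset$ — in a general digraph uncrossing would not preserve $\sum_k\chi^{\delta^+(S_k)}$ — while the remaining verifications are routine.
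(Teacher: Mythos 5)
Your proposal is correct and follows essentially the same route as the paper: uncross non-nested pairs via intersection/union, check that constraints~\ref{prbitem:CTC_inedges}--\ref{prbitem:CTC_cong} and $\sum_k\chi^{\delta^+(S_k)}$ are preserved (with the error term in the directed uncrossing identity vanishing because $\delta^-(S_i)=\delta^-(S_j)=\emptyset$), and terminate using the strictly increasing potential $\sum_k|S_k|^2\leq\ell|V|^2$. The only cosmetic difference is that you verify the cut identity arc by arc and the congruency constraint via coverage counts, whereas the paper states the identity with explicit correction terms $\chi^{U(S_j\setminus S_k,\,S_k\setminus S_j)}$ and uses modularity of $\alpha$ directly; the content is identical.
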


\begin{proof}
If for all $j,k\in[\ell]$, we have $S_j\subseteq S_k$ or $S_k\subseteq S_j$, there is nothing to prove, because relabeling the sets to satisfy $S_\ell\subseteq S_{\ell-1}\subseteq\ldots\subseteq S_1$ will give the desired solution.
Thus, assume that there are two sets $S_j$ and $S_k$ for $j,k\in[\ell]$ such that $S_j\not\subseteq S_k$ and $S_k\not\subseteq S_j$.
We claim that removing the sets $S_j,S_k$ from the solution and adding the sets in $S_j\cup S_k$ and $S_j\cap S_k$ instead gives another feasible solution for the \CTC{} problem such that the sum $\sum_{i=1}^\ell \chi^{\delta^+(S_i)}$ is unchanged.
To see this, observe the following:
\begin{itemize}
\item $\delta^-(S_j\cup S_k) = \delta^-(S_j\cap S_k) = \emptyset$ because an arc entering the union or intersection of the two sets would enter at least one of the sets, but we know that $\delta^-(S_j) = \delta^-(S_k) = \emptyset$.
Thus, $\delta^-(S_i)=\emptyset$ holds for all $S_i$ in the new solution.
\item For any vertex $v\in V$, the number of sets in the solution that contain $v$ is invariant under replacing two sets with their union and intersection, hence the left-hand side of any constraint in condition~\ref{prbitem:CTC_coverage} of \CTC{} problems remains the same, and thus the constraints in condition~\ref{prbitem:CTC_coverage} of \CTC{} problems holds for the new solution, as well.
\item We have $\alpha(S_j) + \alpha(S_k) = \alpha(S_j\cup S_k) + \alpha(S_j\cap S_k)$, so the congruency-constraint is fulfilled by the new solution if and only if the initial solution fulfilled it.
\item Finally, it generally holds that
\begin{equation*}
 \chi^{\delta^+(S_j)} + \chi^{\delta^+(S_k)}
 = \chi^{\delta^+(S_j\cup S_k)} + \chi^{\delta^+(S_j\cap S_k)} + \chi^{U(S_j\setminus S_k, S_k\setminus S_j)} + \chi^{U(S_k\setminus S_j, S_j\setminus S_k)}\enspace,
\end{equation*}
where, for vertex sets $V_1,V_2\subseteq V$, we denote by $U(V_1,V_2)\subseteq U$ all arcs of $U$ with tail in $V_1$ and head in $V_2$.
Because $\delta^-(S_j) = \delta^-(S_k)=\emptyset$, we have $U(S_j\setminus S_k, S_k\setminus S_j)=U(S_k\setminus S_j, S_j\setminus S_k)=\emptyset$, which implies that the last two terms of the right-hand side above are zero. Consequently, $\chi^{\delta^+(S_j)} + \chi^{\delta^+(S_k)} = \chi^{\delta^+(S_j\cup S_k)} + \chi^{\delta^+(S_j\cap S_k)}$, and thus the sum $\sum_{i=1}^\ell \chi^{\delta^+(S_i)}$ is unchanged under the replacement step, as well.
\end{itemize}
Thus, as long as there are two sets $S_j$ and $S_k$ such that $S_j\not\subseteq S_k$ and $S_k\not\subseteq S_j$, we can replace them by $S_j\cup S_k$ and $S_j\cap S_k$ while maintaining feasibility for the \CTC{} problem and not changing the sum $\sum_{i=1}^\ell \chi^{\delta^+(S_i)}$.
To see that this procedure ends, note that in any step, the potential function $\Phi(S_1,\ldots,S_\ell)\coloneqq\sum_{i=1}^{\ell}|S_i|^2\in\mathbb{Z}$ strictly increases.
The latter follows from the fact that, for any two sets $A$ and $B$ with $A\nsubseteq B$ and $B\nsubseteq A$, we always have $|A|^2 + |B|^2 < |A\cap B|^2 + |A\cup B|^2$.
Obviously, $\Phi(S_1,\ldots,S_\ell)\leq \ell |V|^2$, so the procedure terminates after less than $\ell |V|^2$ many steps with a solution that has the desired properties.
\end{proof}

In the next lemma, we prove that in \CTC{} problems that are obtained via a reduction from \CCTU{} problems, there even exist optimal solutions that consist of a chain $S_\ell\subseteq\ldots\subseteq S_1$ with a bounded number of sets, namely $\ell\leq m-1$.
This closely links back to our general proximity result, \cref{thm:proximityGeneral}, from which we know that a normalized \CCTU{} problem has an optimal solution $x^*$ such that for any vector $d\in\mathbb{Z}^n$ that is TU-appendable to the constraint matrix $T$, we have $d^\top x^*\leq m-1$.
In the proof of the following lemma, we show that the optimal \CTC{} solution corresponding to such a \CCTU{} solution $x^*$ has the desired properties.

\begin{lemma}\label{lem:CTCsmallChain}
Consider a normalized \CCTU{} problem with modulus $m$ whose constraint matrix has no identical rows and is the transpose of a network matrix.
Then, the associated \CTC{} problem has an optimal solution $S_1,\ldots,S_\ell$ such that $S_\ell\subseteq S_{\ell-1}\subseteq\ldots\subseteq S_1$ and $\ell\leq m-1$.
\end{lemma}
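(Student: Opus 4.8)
The plan is to obtain the chain from the general proximity statement and the two structural lemmas on \CTC{} solutions. First I would start from an optimal \CCTU{} solution of controlled size: since the problem is normalized, the origin is an optimal solution of its relaxation, so \cref{thm:proximityGeneral}~\ref{thmitem:proximityGeneral} (applied with $R=\{r\}$, hence $|R|=1$ and $m-|R|=m-1$, and with $x_0=0$) yields an optimal \CCTU{} solution $x^*$ with $d^\top x^*\leq m-1$ for every $d$ that is TU-appendable to $T$. Applying \cref{lem:CCTUtoCTCsolutions}~\ref{lemitem:optimality} to this $x^*$ produces an optimal \CTC{} solution whose cut sum equals $x^*$, and then \cref{lem:chainSolution} rearranges it into a chain $T_{\ell_0}\subseteq\cdots\subseteq T_1$ while preserving both $\sum_i\chi^{\delta^+(T_i)}=x^*$ and, hence, the objective $\sum_i c(\delta^+(T_i))=c^\top x^*$; so this chain is again optimal.

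Next I would prune trivial members. Any set equal to $\emptyset$ or to $V$ contributes $\chi^{\delta^+(\cdot)}=0$ and $c(\delta^+(\cdot))=0$, so discarding all such members preserves the cut sum and the objective. It also preserves feasibility of the \CTC{} constraints: condition~\ref{prbitem:CTC_inedges} is inherited; in condition~\ref{prbitem:CTC_coverage} removing $\emptyset$ changes neither side and removing $V$ decreases both $|\{i\colon v\in S_i\}|$ and $|\{i\colon w\in S_i\}|$ by one, leaving the difference unchanged; and for condition~\ref{prbitem:CTC_cong} we have $\alpha(\emptyset)=0$ and, in the instances arising from the reduction, $\alpha(V)=\sum_{v\in V}\alpha(v)=\sum_{v\in V}\bigl(\gamma(\delta^+(v))-\gamma(\delta^-(v))\bigr)=0$ since each arc is counted once positively and once negatively. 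Relabel the surviving sets as $S_1\supseteq S_2\supseteq\cdots\supseteq S_\ell$ (all nonempty and proper); by the above we still have $\sum_{i=1}^{\ell}\chi^{\delta^+(S_i)}=x^*$ and $\delta^-(S_i)=\emptyset$ for all $i$, and this remains an optimal \CTC{} solution.

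Finally I would bound $\ell$. If $\ell=0$ there is nothing to prove, so assume $\ell\geq 1$ and pick $v\in S_\ell$ and $w\in V\setminus S_1$, which exist because $S_\ell\neq\emptyset$ and $S_1\neq V$. Since the $S_i$ form a chain, $v\in S_i$ for all $i\in[\ell]$ and $w\notin S_i$ for all $i\in[\ell]$, so by \cref{lem:CCTUtoCTCconstraints} the vector $t_{vw}$ satisfies $t_{vw}^\top x^*=|\{i\in[\ell]\colon v\in S_i\}|-|\{i\in[\ell]\colon w\in S_i\}|=\ell$. If $v=w$ then $t_{vw}=0$; otherwise $t_{vw}$ is TU-appendable to $T$, because $T$ is the transpose of a network matrix $N$ on the tree $(V,U)$, appending the row $t_{vw}^\top$ corresponds to adding the arc $(v,w)$ to the underlying digraph, the resulting matrix is still a network matrix (same tree), hence TU, and its transpose $\begin{psmallmatrix}T\\ t_{vw}^\top\end{psmallmatrix}$ is TU as well. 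Therefore $\ell=t_{vw}^\top x^*\leq m-1$ by the choice of $x^*$, and $S_1,\dots,S_\ell$ is the desired optimal \CTC{} solution.

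The routine but most error-prone step is the second paragraph: one must make sure that pruning the trivial sets $\emptyset$ and $V$ genuinely leaves a feasible \CTC{} solution, which hinges on the identity $\alpha(V)=0$ for the instances coming from the reduction; the remaining work is just assembling \cref{thm:proximityGeneral}, \cref{lem:CCTUtoCTCsolutions}, \cref{lem:chainSolution}, and \cref{lem:CCTUtoCTCconstraints} in the right order, together with the observation that $t_{vw}$ is TU-appendable.
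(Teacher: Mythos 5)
Your proof is correct and follows essentially the same route as the paper's: apply \cref{thm:proximityGeneral} with $x_0=0$ to get an optimal $x^*$ with $d^\top x^*\leq m-1$ for all TU-appendable $d$, pass to an optimal \CTC{} solution via \cref{lem:CCTUtoCTCsolutions}, make it a chain via \cref{lem:chainSolution}, discard $\emptyset$ and $V$ using $\alpha(\emptyset)=\alpha(V)=0$, and bound $\ell$ by the TU-appendability of $t_{vw}$ together with \cref{lem:CCTUtoCTCconstraints}. As a minor remark, your choice $v\in S_\ell$ and $w\in V\setminus S_1$ is the one that actually makes ``all sets contain $v$ and none contain $w$'' true (the paper writes $v\in S_1$, $w\in V\setminus S_\ell$, which appears to be a small slip), so your version is if anything cleaner.
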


\begin{proof}
Let $x^*$ be an optimal solution of the \CCTU{} problem such that for every vector $d\in\mathbb{Z}^n$ that is TU-appendable to the constraint matrix $T$, we have $d^\top x^*\leq m-1$.
Such a solution exists due to \cref{thm:proximityGeneral} because the \CCTU{} problem is normalized, and hence $x_0=(0 \enspace 0 \enspace \ldots \enspace 0 )^\top\in\mathbb{Z}^n$ is an optimal solution of its relaxation.
By \cref{lem:CCTUtoCTCsolutions}, there exists an optimal solution $S_1,\ldots,S_\ell$ of the associated \CTC{} problem such that $x^* = \sum_{i=1}^\ell \chi^{\delta^+(S_i)}$, and by \cref{lem:chainSolution}, we may even choose the sets $S_i\subseteq V$ such that they form a chain, i.e., $S_\ell\subseteq S_{\ell-1}\subseteq\ldots\subseteq S_1$.
Moreover, we may assume that $S_i\neq\emptyset$ and $S_i\neq V$ for all $i$: Such sets could be removed from the solution family without affecting feasibility of the solution (the left-hand sides of constraints in point~\ref{prbitem:CTC_coverage} of \CTC{} problems will remain the same, and because $\alpha(\emptyset)=0$ and $\alpha(V)=\sum_{v\in V}\alpha(v)=\sum_{v\in V}\gamma(\delta^+(v))-\gamma(\delta^-(v))=0$, the congruency constraint will still be satisfied, as well) and the objective value (which is the same because $\delta^+(V)=\delta^+(\emptyset)=\emptyset$, and thus $c(\delta^+(V))=c(\delta^+(\emptyset))=0$).

We claim that with the above assumptions, we have $\ell\leq m-1$.
To see this, choose $v\in S_1$ and $w\in V\setminus S_\ell$.
Note that such $v$ and $w$ exist by the assumption that $S_i\neq\emptyset$ and $S_i\neq V$ for all $i\in[\ell]$.
Let $t_{vw}\in\{-1,0,1\}^U$ be defined as in \cref{lem:CCTUtoCTCconstraints}, where $(V,U)$ is the directed tree indexing the columns of the constraint matrix $T$ of the \CCTU{} problem according to \cref{def:networkMatrix}.
By definition, $t_{vw}$ is TU-appendable to the matrix $T$, as we can add the arc $(v,w)$ to the arc (multi-)set indexing the rows of $T$ according to \cref{def:networkMatrix} and thereby obtain that the matrix $T$ with extra row $t_{vw}$ is the transpose of a network matrix again, and hence TU.
Consequently, by the choice of the optimal solution $x^*$, we have $t_{vw}^\top x^*\leq m-1$.
On the other hand, \cref{lem:CCTUtoCTCconstraints} implies that
$$
t_{vw}^\top x^* = |\{i\in[\ell]\colon v\in S_i\}| - |\{i\in[\ell]\colon w \in S_i\}| = \ell\enspace,
$$
because by choice of $v$ and $w$, all sets $S_i$ contain $v$, but none of them contain $w$.
Altogether, this gives $\ell\leq m-1$, as desired.
\end{proof}

Thus, by \cref{lem:chainSolution}, it is enough to find an optimal solution of a \CTC{} problem associated to a \CCTU{} problem such that the sets in the solution form a chain of (at most) $m-1$ cuts. This bounded number of cuts allows for a reduction to submodular minimization problems with congruency constraints of the following type.

\begin{mdframed}[innerleftmargin=0.5em, innertopmargin=0.5em, innerrightmargin=0.5em, innerbottommargin=0.5em, userdefinedwidth=0.95\linewidth, align=center]
{\textbf{Congruency-Constrained Submodular Minimization (\hypertarget{prb:CCSM}{CCSM}):}}
Given a submodular function $f\colon\mathcal{L} \rightarrow \mathbb{Z}$ defined on a lattice $\mathcal{L}\subseteq 2^N$, $\gamma\colon N\to\mathbb{Z}$, $m \in \mathbb{Z}_{>0}$, and $r\in \{0,\ldots, m-1\}$, find a minimizer of $\min\{f(C)\colon C\in\mathcal{L}, \gamma(C)\equiv r\pmod*{m}\}$.
\end{mdframed}

\noindent Such problems were studied by \textcite{nagele_2018_submodular}, where an algorithm for solving problems of this kind if $m$ is a constant prime power modulus was presented.
We remark that \cite{nagele_2018_submodular} studies a slightly less general setup than stated above, namely $\gamma \equiv 1$, where the constraint $\gamma(S)\equiv r\pmod*{m}$ translates to $|S|\equiv r\pmod{m}$.
For that case, algorithms with running time $|N|^{2m+O(1)}$ were presented.
However, the setting with general $\gamma$ can be readily reduced to that with $\gamma\equiv 1$ by replacing every element $v\in N$ by $t = (\gamma(v)\bmod{m})$ many elements $v_1,\ldots,v_t$ with $\gamma(v_i)=1$ and updating the lattice and the function correspondingly.
Observing that this reduction blows up the ground set by a factor of at most $m$, we thus get the following immediate generalization of Theorem 1.1 in \cite{nagele_2018_submodular}.

\begin{theorem}\label{thm:CCSM}
For any prime power $m\in\mathbb{Z}_{>0}$, \CCSM{} problems can be solved in $(m|N|)^{2m+O(1)}$ time.
\end{theorem}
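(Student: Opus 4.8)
The plan is to reduce the general \CCSM{} problem to the special case $\gamma\equiv 1$, which is exactly the setting handled by \textcite{nagele_2018_submodular}: their Theorem~1.1 solves \CCSM{} instances on a lattice $\mathcal{L}\subseteq 2^N$ with the cardinality constraint $|S|\equiv r\pmod*{m}$ in time $|N|^{2m+O(1)}$ whenever $m$ is a constant prime power. The reduction I have in mind blows up the ground set by a factor of at most $m$, so plugging a ground set of size $\leq m|N|$ into their bound immediately yields the claimed running time $(m|N|)^{2m+O(1)}$.

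First I would normalize the instance: replacing $\gamma$ by the function $v\mapsto (\gamma(v)\bmod m)$ changes neither feasibility of any set nor the objective, so we may assume $\gamma(v)\in\{0,\ldots,m-1\}$. For each $v\in N$ pick the unique $t_v\in\{1,\ldots,m\}$ with $t_v\equiv\gamma(v)\pmod*{m}$ (thus $t_v=\gamma(v)$ if $\gamma(v)\geq 1$, and $t_v=m$ if $\gamma(v)=0$), and introduce a block $B_v$ of $t_v$ fresh copies of $v$. Put $N'\coloneqq\bigcup_{v\in N}B_v$, so $|N'|=\sum_{v\in N}t_v\leq m|N|$, and let $\gamma'\equiv 1$ on $N'$. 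The map $\varphi\colon 2^N\to 2^{N'}$ given by $\varphi(C)\coloneqq\bigcup_{v\in C}B_v$ is an injective homomorphism with respect to union and intersection because the blocks $B_v$ are pairwise disjoint; hence $\mathcal{L}'\coloneqq\varphi(\mathcal{L})$ is again a lattice, $\varphi$ restricts to a lattice isomorphism $\mathcal{L}\to\mathcal{L}'$, and $f'\coloneqq f\circ\varphi^{-1}\colon\mathcal{L}'\to\mathbb{Z}$ is submodular since $\varphi$ carries the meet and join of $\mathcal{L}$ to those of $\mathcal{L}'$. Evaluation and membership oracles for $f'$ and $\mathcal{L}'$ are obtained in constant time from those of $f$ and $\mathcal{L}$: a subset of $N'$ lies in $\mathcal{L}'$ iff it is a union of full blocks $B_v$ whose index set belongs to $\mathcal{L}$.

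Next I would verify that the congruency constraint transfers correctly. For any $C\in\mathcal{L}$ we have $\gamma'(\varphi(C))=|\varphi(C)|=\sum_{v\in C}t_v\equiv\sum_{v\in C}\gamma(v)=\gamma(C)\pmod*{m}$, so $\varphi$ induces a bijection between $\{C\in\mathcal{L}\colon\gamma(C)\equiv r\pmod*{m}\}$ and $\{C'\in\mathcal{L}'\colon\gamma'(C')\equiv r\pmod*{m}\}$ that preserves the objective value. Consequently, applying Theorem~1.1 of \cite{nagele_2018_submodular} to the \CCSM{} instance $(f',\mathcal{L}',\gamma'\equiv 1,m,r)$ produces a minimizer $C'$, and $\varphi^{-1}(C')$ is a minimizer of the original instance. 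The running time is $|N'|^{2m+O(1)}\leq(m|N|)^{2m+O(1)}$, which dominates the $O(|N'|)$ time for constructing the reduced instance and reading off the solution, establishing the theorem.

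The argument is essentially routine, so there is no real obstacle; the only points deserving a line of care are (i) elements with $\gamma(v)\equiv 0\pmod*{m}$ must be replaced by a full block of $m$ tied copies rather than deleted (a literal reading of "$t=(\gamma(v)\bmod m)$ copies" would be incorrect here), which still keeps the blow-up factor at $m$ and makes such an element contribute $0$ to $\gamma'(\cdot)\bmod m$ regardless of selection, and (ii) one should note explicitly that Theorem~1.1 of \cite{nagele_2018_submodular}, though stated for the cardinality constraint $|S|\equiv r\pmod*{m}$, is precisely the $\gamma\equiv 1$ instance of \CCSM{} over a general lattice, so their algorithm applies verbatim with no further modification.
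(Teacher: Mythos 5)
Your proposal is correct and follows essentially the same route as the paper: reduce to the $\gamma\equiv 1$ case by replacing each element with copies of unit weight so that the ground set grows by a factor of at most $m$, and then invoke Theorem~1.1 of \cite{nagele_2018_submodular}. Your explicit treatment of elements with $\gamma(v)\equiv 0\pmod*{m}$ via a full block of $m$ copies is a sensible tightening of the paper's brief sketch (which literally prescribes $\gamma(v)\bmod m$ copies), but it does not change the argument in substance.
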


It remains to discuss our reduction from \CTC{} problems to \CCSM{} problems.

\begin{lemma}\label{lem:CTCtoCCSM}
Consider a \CTC{} problem with constant modulus $m$. Finding a feasible solution of minimum cost among all solutions that consist of at most $m-1$ sets $S_1,\ldots,S_\ell$ with $S_\ell\subseteq S_{\ell-1}\subseteq\ldots\subseteq S_1$ can be strongly polynomially reduced to a \CCSM{} problem with modulus $m$, i.e., the \CCSM{} problem can be obtained in strongly polynomial time, and an optimal solution of that problem can be transformed to an optimal \CTC{} solution in strongly polynomial time.
\end{lemma}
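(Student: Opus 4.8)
The plan is to encode the choice of a chain $S_\ell\subseteq\cdots\subseteq S_1$ of at most $m-1$ sets as a single set in an appropriately designed lattice, so that the cost function and the congruency constraint of the \CTC{} problem translate to a submodular function and a linear congruency constraint on that lattice. Concretely, I would take $m-1$ disjoint copies $V^{(1)},\ldots,V^{(m-1)}$ of the vertex set $V$ and let the ground set be $N = V^{(1)}\cup\cdots\cup V^{(m-1)}$. A chain $S_\ell\subseteq\cdots\subseteq S_1$ with $\ell\le m-1$ is represented by the set $C = \bigcup_{i=1}^{\ell} S_i^{(i)}$, where $S_i^{(i)}$ denotes the copy of $S_i$ sitting in $V^{(i)}$; note that after padding with empty sets we may always assume exactly $\ell=m-1$ sets. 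The subtlety is that an \emph{arbitrary} subset of $N$ corresponds to a tuple $(S_1,\ldots,S_{m-1})$ of subsets of $V$ that need \emph{not} form a chain, so I would restrict attention to the sublattice $\mathcal{L}\subseteq 2^N$ consisting exactly of those $C$ whose projections to the copies form a chain. One checks that $\mathcal{L}$ is a lattice: it is closed under union and intersection because if $(S_1,\ldots,S_{m-1})$ and $(T_1,\ldots,T_{m-1})$ are both chains then so are the coordinatewise unions and intersections.

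**Next I would define the submodular objective.** For $C\in\mathcal{L}$ with projections $S_1\supseteq\cdots\supseteq S_{m-1}$, set $f(C) = \sum_{i=1}^{m-1} c(\delta^+(S_i))$ if every $S_i$ satisfies $\delta^-(S_i)=\emptyset$, and $f(C) = +\infty$ (or a sufficiently large sentinel value, e.g.\ $1 + \sum_i \sum_{u\in U}|c(u)|$ on one more copy of the problem) otherwise. Here $\delta^+$ is taken in the tree $(V,U)$ as in the \CTC{} problem. The key point is that the cut function $S\mapsto c(\delta^+(S))$ restricted to the lattice of ``downward-closed'' sets $\{S\colon \delta^-(S)=\emptyset\}$ is submodular (a standard fact: it is a cut function, hence submodular, and the constraint $\delta^-(S)=\emptyset$ carves out a sublattice), and a sum over independent copies of submodular functions is submodular; the $+\infty$ convention is compatible with submodularity since $\{S\colon\delta^-(S)=\emptyset\}$ is itself a lattice, so one can either work on that sublattice directly or use a large finite penalty. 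Then I would define $\gamma^N\colon N\to\mathbb{Z}$ by setting $\gamma^N$ on each copy $v^{(i)}$ of a vertex $v$ equal to $\alpha(v)$; this way $\gamma^N(C) = \sum_{i=1}^{m-1}\alpha(S_i)$, so the \CCSM{} congruency constraint $\gamma^N(C)\equiv r\pmod*{m}$ is exactly constraint~\ref{prbitem:CTC_cong} of the \CTC{} problem. The remaining \CTC{} constraint~\ref{prbitem:CTC_coverage}, namely $|\{i\colon v\in S_i\}| - |\{i\colon w\in S_i\}|\le b_a$ for $a=(v,w)\in A$, is not a lattice constraint, so I would instead \emph{absorb it into $f$} as well: set $f(C)=+\infty$ whenever some coverage constraint is violated. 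Since $\mathcal{L}$ already forces a chain, the quantity $|\{i\colon v\in S_i\}|$ equals the largest index $i$ with $v\in S_i$, and one verifies that the set of chains respecting all coverage constraints is again a sublattice of $\mathcal{L}$ (coordinatewise union and intersection of two feasible chains is feasible, because for each arc $a=(v,w)$ the function $C\mapsto|\{i:v\in S_i\}|-|\{i:w\in S_i\}|$ is, on chains, both submodular in one direction and supermodular in the other in a way that preserves the upper bound — this needs a short check).

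**The main obstacle I anticipate** is exactly this last point: verifying that the constraints~\ref{prbitem:CTC_coverage} cut out a sublattice, so that $f$ (with $+\infty$ penalties) is genuinely a submodular function on a lattice rather than on an arbitrary set system. The cleanest route is to observe that on the lattice $\mathcal{L}$ of chains, for a fixed vertex $v$ the function $C\mapsto \mathrm{rk}_v(C):=|\{i:v\in S_i\}|$ is \emph{modular} (it is additive over the copies: $\mathrm{rk}_v(C) = \sum_i \mathbf{1}[v\in S_i^{(i)}]$, and membership in a single copy is modular), hence $C\mapsto \mathrm{rk}_v(C)-\mathrm{rk}_w(C)$ is modular on $\mathcal{L}$, and a modular inequality $\mathrm{rk}_v - \mathrm{rk}_w \le b_a$ always defines a sublattice. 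So the coverage constraints are harmless after all once we are already on $\mathcal{L}$. Having established that $(\mathcal{L},f,\gamma^N,m,r)$ is a bona fide \CCSM{} instance with $|N|=(m-1)|V|$, I would finish by noting: (i) by \cref{lem:chainSolution} and the chain representation, minimizers of this \CCSM{} instance are in value-preserving bijection with optimal \CTC{} solutions consisting of at most $m-1$ chained sets — given an optimal $C\in\mathcal{L}$ with $f(C)<\infty$, its projections (after discarding empty ones) are a feasible chained \CTC{} solution of the same cost, and conversely; (ii) the reduction is computable in strongly polynomial time since $m$ is constant, so $|N|=(m-1)|V|$ is polynomial, the lattice $\mathcal{L}$ and the constraints defining it are described by polynomially many modular inequalities, and the submodular function $f$ is evaluated by $O(m)$ cut computations plus constantly many feasibility checks; (iii) transforming the \CCSM{} optimum back to a \CTC{} solution is just reading off the projections, again strongly polynomial. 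This establishes \cref{lem:CTCtoCCSM}.
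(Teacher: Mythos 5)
Your construction is essentially the paper's: the same ground set of $m-1$ copies of $V$, the same chain encoding, the same transfer of $\alpha$ to $\gamma$ on the copies, and the same objective $\sum_i c(\delta^+(S_i))$. The only structural difference is where the constraints live: the paper encodes the chain condition, the condition $\delta^-(S_i)=\emptyset$, \emph{and} the coverage constraints~\ref{prbitem:CTC_coverage} all as membership implications (``if $a\in C$ then $b\in C$'') defining $\mathcal{L}$, which are trivially closed under union and intersection, and then shows $f$ is finite and in fact modular on $\mathcal{L}$; you keep only the chain condition in $\mathcal{L}$ and absorb the rest into $f$ via penalties. That variant can be made to work, but two of the justifications you lean on are false as stated and need repair.

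First, ``it is a cut function, hence submodular'' is not valid here, because the arc costs $c$ come from the \CCTU{} objective and may be negative, and weighted directed cut functions with negative weights are not submodular in general. What saves the step is precisely the restriction to sets with $\delta^-(S)=\emptyset$: for two such sets the arc sets between $S\setminus T$ and $T\setminus S$ in either direction are empty, so $c(\delta^+(S))+c(\delta^+(T))=c(\delta^+(S\cup T))+c(\delta^+(S\cap T))$ holds with equality regardless of signs --- this modularity computation is exactly what the paper does, and you should argue it rather than invoke generic submodularity of cuts. Second, your ``cleanest route'' claim that a modular inequality always defines a sublattice is wrong: with $g(C)=|C|$ on a two-element ground set and bound $1$, the sets $\{1\}$ and $\{2\}$ are feasible but their union is not. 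The coverage constraint does cut out a sublattice of the chain lattice, but for a different reason: on chains, $\mathrm{rk}_v(C\cup D)=\max(\mathrm{rk}_v(C),\mathrm{rk}_v(D))$ and $\mathrm{rk}_v(C\cap D)=\min(\mathrm{rk}_v(C),\mathrm{rk}_v(D))$, and a short case distinction (equivalently, rewriting the constraint as the implication that $v_i\in C$ with $i-b_a\geq 1$ forces $w_{i-b_a}\in C$, as the paper does) gives closure under union and intersection; this is the ``short check'' you deferred, and it is the actual content of the step. Finally, since \CCSM{} as defined takes $f\colon\mathcal{L}\to\mathbb{Z}$, you must replace $+\infty$ by a large finite sentinel and then argue separately that an optimum at sentinel level certifies that no constrained chain with the required residue exists; building all constraints into $\mathcal{L}$, as in the paper, avoids this extra bookkeeping.
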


\begin{proof}
Consider a \CTC{} instance with the usual notation. We construct a \CCSM{} instance on a ground set $N$ with a lattice $\mathcal{L}\subseteq 2^N$ whose sets correspond to feasible solutions of the relaxation of the given \CTC{} problem that have the desired chain structure. Moreover, we show that the function $f\colon \mathcal{L}\to\mathbb{Z}$ assigning to each set in $\mathcal{L}$ the value of the corresponding \CTC{} solution is a modular function. The last step will then be to observe that we can define a congruency constraint of the type appearing in \CCSM{} problems that is equivalent to the congruency constraint in the \CTC{} problem.

Let the ground set $N$ consist of $m-1$ copies of the vertex set $V$ of the tree in the \CTC{} instance, i.e., $N \coloneqq \bigcup_{i=1}^{m-1} V_i$, where $V_i = \{v_i\colon v\in V\}$.
Sets $C\subseteq N$ are in one-to-one correspondence with set families $S_1,\ldots,S_{\ell}\subseteq V$ that satisfy $\ell\leq m-1$ as follows: Given $C$, the corresponding set family is given by $S_i = \{v\in V\colon v_i\in C\}$, and vice versa, given a set family $S_1,\ldots,S_\ell$ with $\ell\leq m-1$, the corresponding subset of $N$ is $C=\bigcup_{i=1}^\ell \{v_i\colon v\in S_i\}$.
Now let us define a set $\mathcal{L}\subseteq 2^N$ such that $C\subseteq N$ is in $\mathcal{L}$ if and only if all three of the following are satisfied:
\begin{enumerate}
\item\label{item:lattice_chain} If $v_i\in C$ for some $v\in V$ and $i\in[m-1]$, then $v_j\in C$ for all $j\leq i$.
\item\label{item:lattice_inarcs} For every $(v,w)\in U$, if $w_i\in C$ for some $i\in[m-1]$, then $v_i\in C$.
\item\label{item:lattice_constraint} For every $(v,w)\in A$, if $v_i\in C$ and $i-b_a\geq 1$, then $w_{i-b_a}\in C$.
\end{enumerate}

\begin{claim}\label{claim:feasibleChain}
Sets $C\in\mathcal{L}$ are precisely those subsets of $N$ that correspond to set families $S_1,\ldots,S_\ell$ with $\ell\leq m-1$ that have chain structure $S_\ell\subseteq\ldots\subseteq S_1$ and are feasible solutions of the relaxation of the given \CTC{} problem.
\end{claim}

To see the claim, we start by observing that a set $C\subseteq N$ satisfies~\ref{item:lattice_chain} if and only if the corresponding sets $S_1,\ldots,S_\ell$ satisfy $S_i\subseteq S_j$ for all $i\geq j$: If $C$ satisfies~\ref{item:lattice_chain}, then $v\in S_i$, we get $v_i\in C$, which implies $v_j\in C$ because $i\geq j$, and thus $v\in S_j$. For the other way round, if $S_i\subseteq S_j$ for all $i\geq j$, then if $v_i\in C$ for some $v\in V$ and $i\in[m-1]$, we have $v\in S_i$, and thus for all $i\geq j$, it follows that $v\in S_j$, and thus $v_j\in C$.

Next,~\ref{item:lattice_inarcs} is satisfied by $C\subseteq N$ if and only if the corresponding sets $S_1,\ldots,S_\ell$ satisfy $\delta^-(S_i)=\emptyset$: $C$ does not satisfy~\ref{item:lattice_inarcs} if and only if there exist $(v,w)\in U$ and $i\in[m-1]$ such that $w_i\in C$, but $v_i\notin C$. The latter is equivalent to $w\in S_i$ and $v\notin S_i$, i.e., $\delta^-(S_i)\neq\emptyset$.

Finally, consider a set $C$ satisfying~\ref{item:lattice_chain} above. We show that the corresponding sets $S_1,\ldots,S_\ell$ then satisfy constraint~\ref{prbitem:CTC_coverage} of \CTC{} problems if and only if $C$ also satisfies~\ref{item:lattice_constraint} above.
To start with, note that by the previous arguments, we know that because $C$ satisfies~\ref{item:lattice_chain}, we have $S_\ell\subseteq\ldots\subseteq S_1$.
Consequently,
\begin{equation*}
|\{i\in[\ell]\colon v \in S_{i}\}|=\max\{i\in[m-1]\colon v_i\in C\} \qquad \forall v\in V\enspace,
\end{equation*}
hence a constraint of the form $|\{i\in[\ell]\colon v\in S_i\}| - |\{i\in[\ell]\colon w \in S_i\}| \leq b_a$ for some $a=(v,w)\in A$ is satisfied if and only if $\max\{i\in[m-1]\colon v_i\in C\} - \max\{i\in[m-1]\colon w_i\in C\} \leq b_a$, which in turn is guaranteed to hold if and only if $C$ satisfies~\ref{item:lattice_constraint} above, as desired. This proves \cref{claim:feasibleChain}.

\begin{claim}\label{claim:lattice}
$\mathcal{L}$ is a lattice.
\end{claim}

To prove this claim, we show that for any $C_1,C_2\in\mathcal{L}$, we also have $C_1\cap C_2\in \mathcal{L}$ and $C_1\cup C_2\in\mathcal{L}$.
We do so by showing that the intersection and union satisfy~\ref{item:lattice_chain} to~\ref{item:lattice_constraint} above.
Note that all three conditions are of the form ``If $a\in C$, then $b\in C$'', for different choices of $a,b\in N$.
It is generally true that if such conditions hold for two sets $C_1$ and $C_2$, then they also hold for $C_1\cap C_2$ and $C_1\cup C_2$:
If $a\in C_1\cap C_2$, then $a\in C_1$ and $a\in C_2$, hence also $b\in C_1$ and $b\in C_2$, and thus $b\in C_1\cap C_2$.
Also, if $a\in C_1\cup C_2$, then there is $\varepsilon\in\{0,1\}$ such that $a\in C_\varepsilon$, hence also $b\in C_\varepsilon$, and thus $b\in C_1\cup C_2$.
This proves \cref{claim:lattice}.

\medskip

As already indicated above, let $f\colon\mathcal{L}\to\mathbb{Z}$ be defined as follows: For $C\in\mathcal{L}$, if $S_1,\ldots,S_\ell$ is the corresponding solution of the relaxation of the \CTC{} problem, then $f(C)=\sum_{i=1}^{m-1} c(\delta^+(S_i))$.
In other words, $f$ assigns to each $C\in\mathcal{L}$ the objective value of the corresponding \CTC{} solution.

We claim that for any two sets $C,D\in \mathcal{L}$, we have $f(C)+f(D)=f(C\cap D)+f(C\cup D)$.
To this end, observe that if $S_1,\ldots, S_\ell\subseteq V$ correspond to $C$ and $T_1,\ldots,T_\ell'\subseteq V$ correspond to $D$, we may introduce $S_{\ell+1}=\ldots=S_{m-1}=\emptyset$ and $T_{\ell'+1}=\ldots=T_{m-1}=\emptyset$ and then obtain
\begin{align*}
f(C)+f(D) &= \sum_{i=1}^{m-1} c(\delta^+(S_i)) + c(\delta^+(T_i)) = \sum_{i=1}^{m-1} c(\delta^+(S_i\cap T_i)) + c(\delta^+(S_i\cup T_i))\\
&= f(C\cap D) + f(C\cup D)\enspace,
\end{align*}
where the middle inequality exploits that $\chi^{\delta^+(S_i)} + \chi^{\delta^+(T_i)} = \chi^{\delta^+(S_i\cap T_i)} + \chi^{\delta^+(S_i\cup T_i)}$, which holds because $\delta^-(S_i)=\delta^-(T_i)=\emptyset$ for $i\in[m-1]$ due to the fact that $S_1,\ldots,S_\ell$ and $T_1,\ldots,T_{\ell'}$ are feasible solutions for the relaxation of the \CTC{} problem and thus satisfy constraint~\ref{prbitem:CTC_inedges} of that problem type.

Finally, define $\gamma\colon C\to\mathbb{Z}$ by $\gamma(v_i) = \alpha(v)$ for all $v\in V$ and $i\in[m-1]$.
This implies that for any $C\in \mathcal{L}$ and a corresponding solution $S_1,\ldots,S_\ell$ of the \CTC{} problem's relaxation,
$$
\gamma(C) = \sum_{i=1}^{m-1} \gamma(C\cap S_i) = \sum_{i=1}^{\ell} \alpha(S_i)\enspace,
$$
and hence $\gamma(C)\equiv r\pmod*{m}$ if and only if $\sum_{i=1}^\ell \alpha(S_i)\equiv r\pmod*{m}$.

Altogether, we obtain that $C$ is an optimal solution of the \CCSM{} problem given by $N$, $\mathcal{L}$, $f$ and $\gamma$ if and only if the corresponding sets $S_1,\ldots,S_\ell$ form an optimal solution of the \CTC{} problem with chain structure $S_\ell\subseteq\ldots\subseteq S_1$ and $\ell\leq m-1$.
Observing that the \CCSM{} problem can be obtained from the \CTC{} problem in strongly polynomial time (recall that $m$ is assumed to be a constant), and that transforming a \CCSM{} solution to a \CTC{} solution is immediate, finishes the proof.
\end{proof}

Finally, combining \cref{cor:CCTUtoCTC,lem:chainSolution,lem:CTCtoCCSM}, we can conclude \cref{thm:solveTranspNetwPP}.

\begin{proof}[Proof of \cref{thm:solveTranspNetwPP}]
Given a \CCTU{} problem, by \cref{cor:CCTUtoCTC} it is enough to solve the associated \CTC{} problem.
By \cref{lem:CTCsmallChain}, this problem has an optimal solution with chain structure and at most $m-1$ cuts---which is precisely the type of problem that can be strongly polynomially reduced to a congruency-constrained submodular minimization problem by \cref{lem:CTCtoCCSM}.
Note that in these reductions, the modulus $m$ of the involved congruency-constraints is invariant, and $m$ is a constant prime power by assumption. Hence, the final congruency-constrained submodular minimization problem is one with constant prime power modulus.
Such problems can be solved in strongly polynomial time by~\cref{thm:CCSM}.
\end{proof}

\subsection{Matrices stemming from particular constant-size matrices\MOORdot}\label{sec:constCore}

To complete the study of base block \CCTU{} problems, we now cover \CCTU{} problems with constraint matrices that fall into case~\ref{thmitem:TUdecomp_const} of \cref{thm:TUdecomp}.
In other words, we study matrices that can be obtained from the two matrices
\begin{equation}\label{eq:specialMatrices}
\begin{pmatrix*}[r]
 1 & -1 &  0 &  0 & -1 \\
-1 &  1 & -1 &  0 &  0 \\
 0 & -1 &  1 & -1 &  0 \\
 0 &  0 & -1 &  1 & -1 \\
-1 &  0 &  0 & -1 &  1
\end{pmatrix*}
\quad\text{and}\quad
\begin{pmatrix}
 1 &  1 &  1 &  1 &  1 \\
 1 &  1 &  1 &  0 &  0 \\
 1 &  0 &  1 &  1 &  0 \\
 1 &  0 &  0 &  1 &  1 \\
 1 &  1 &  0 &  0 &  1
\end{pmatrix}
\end{equation}
by repeatedly appending unit vector rows or columns, appending a copy of a row or column, and inverting the sign of a row or column.
More generally, our arguments apply to any constraint matrices that can be obtained from constant-size matrices by repeatedly applying the aforementioned operations.
More formally, let us introduce the following notion of a \emph{core} of a totally unimodular matrix.

\begin{definition}
Let $T$ be a totally unimodular matrix. A submatrix of $T$ is a \emph{core} of $T$ if it is a smallest possible submatrix of $T$ that can be obtained by iteratively deleting
\begin{enumerate}
\item any row or column with at most one non-zero entry, or
\item any row or column appearing twice or whose negation is also in the matrix.
\end{enumerate}
\end{definition}

It can be observed that up to row and column permutations and sign changes of rows and columns, every totally unimodular matrix has a unique core, which we denote by $\operatorname{core}(T)$.
Still, let us remark that we do not need uniqueness for our arguments and working with \emph{any} core would be enough for us.
In the context of \CCTU{} problems, we show the following theorem.

\begin{theorem}\label{thm:constCore}
\CCTU{} problems with modulus $m$ and a constraint matrix $T$ that has a core of constant size can be solved in strongly polynomial time
\begin{enumerate}
\item by a randomized algorithm if the objective is unary encoded and $m$ is constant, or
\item by a deterministic algorithm if $m$ is a constant prime power.
\end{enumerate}
\end{theorem}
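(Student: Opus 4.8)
The plan is to reduce, in strongly polynomial time, a \CCTU{} problem whose constraint matrix $T$ has a core of constant size to a constant number of \CCTU{}-type problems of a kind we already know how to solve. First I would apply \cref{obs:normalization} to assume the problem is normalized, so that the origin is optimal for its relaxation; \cref{thm:proximity} (with $|R|=1$) then yields an optimal solution $x^\ast$ with $\|x^\ast\|_\infty\le m-1$, i.e.\ $x^\ast\in\{0,\ldots,m-1\}^n$ by non-negativity of the variables. Adding the box constraints $0\le x_j\le m-1$ keeps the constraint matrix totally unimodular (as $\begin{psmallmatrix}T\\I\\-I\end{psmallmatrix}$ is still TU) and does not change the optimum, so the problem retains its type while every variable is now confined to $\{0,\ldots,m-1\}$.

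Next I would undo, one operation at a time, the deletions defining $\operatorname{core}(T)$, passing each time to an equivalent \CCTU{}-type problem whose constraint matrix is one step closer to $\operatorname{core}(T)$. A zero row is dropped (or certifies infeasibility) and a unit row is absorbed as a (possibly two-sided) bound on a single variable; a duplicated or negated row is merged with its copy into one (possibly two-sided) constraint. For columns, a maximal group of columns that agree up to sign with a common vector $v$ is replaced by a single \emph{aggregate} variable recording the signed column-sum; exploiting $x_j\in\{0,\ldots,m-1\}$, a dynamic program over the (at most $n$) columns of the group, over the attainable value of the partial column-sum (at most $n(m-1)$), and over the residue of the partial $\gamma$-sum modulo $m$ precomputes, in strongly polynomial time, the minimum cost of realising each (aggregate value, residue) pair. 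The same device handles columns that become unit columns only after some rows are deleted, since those columns meet the core rows in a fixed pattern and are otherwise constrained only through bounds. After $O(\operatorname{size}(T))$ steps the constraint matrix equals $\operatorname{core}(T)$, and what remains is a problem in $O(1)$ ``core'' variables in $\{0,\ldots,m-1\}$ and $O(1)$ aggregate variables, with a constant-size inequality system, the congruency constraint, and a precomputed cost profile attached to each aggregate variable.

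To finish, I would enumerate the $m^{O(1)}$ assignments of the core variables together with the $\gamma$-residue carried by each aggregate class (a constant number of possibilities, since $m$ and the number of classes are constant). For a fixed assignment, the remaining task is a minimum-cost congruency-constrained problem on $O(1)$ aggregate variables whose domains are only polynomially bounded and whose cost is given by the precomputed profiles; I would route it, via graph constructions analogous to those in \cref{thm:solveNetwRPP} and \cref{thm:solveTranspNetwPP}, to a congruency-constrained circulation problem — solved by the randomized pseudopolynomial algorithm of \textcite{camerini_1992_rpp}, which is strongly polynomial when the objective is unary encoded and $m$ is constant — or to a congruency-constrained submodular minimization problem solved deterministically by \cref{thm:CCSM} when $m$ is a constant prime power. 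Taking the best solution over all enumerated assignments and propagating it back through the reductions (the cost profiles also record a realising sub-assignment) yields an optimal solution of the original problem and both claimed guarantees.

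The main obstacle is the bookkeeping in the second step: although $\operatorname{core}(T)$ has constant size, the operations building $T$ from it can be deeply interleaved (for instance, columns that only drop to a single non-zero after a cascade of row deletions), so one must verify that undoing them really collapses the problem onto $\operatorname{core}(T)$ plus a constant number of aggregate variables whose interaction with the core and with the congruency constraint is controlled, and that the aggregation dynamic programs and the final reductions respect total unimodularity and strong polynomiality throughout. The proximity bound of \cref{thm:proximity} is exactly what makes the aggregation dynamic programs polynomial.
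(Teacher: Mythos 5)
There is a genuine gap in the second step: the claim that undoing the core-forming operations collapses the problem onto $\operatorname{core}(T)$ plus a \emph{constant} number of aggregate variables is false. The rows and columns of $T$ that do not stem from the core can form a block of size linear in $T$ with nontrivial structure: a row appended with at most one nonzero entry can later acquire many nonzeros (each time a column is copied or a unit column is appended with its nonzero in that row), so non-core rows couple many columns from different ``classes''; moreover, the number of distinct columns up to sign need not be constant at all (a large identity block has an empty core but $n$ pairwise distinct columns). Even the grouping itself is shaky, since two columns stemming from the same core column agree up to sign only on the core-stemming rows and may differ on rows appended later, so replacing such a group by a single aggregate variable does not preserve the constraint system. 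Consequently, your per-class dynamic programs (which see only the aggregate value, the residue, and variable bounds) cannot encode the non-core inequality constraints that tie classes together, and the ``final'' problem is not of constant size --- after fixing the core interaction you are still facing a full-size congruency-constrained problem, which is exactly the hard part.

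The paper's route (\cref{lem:reductionConstCore}) avoids this: it appends an $\ell\times\ell$ identity to the core and replays the construction, obtaining $\ell$ TU-appendable rows $s_1,\ldots,s_\ell$ supported on the columns stemming from the respective core columns; by the proximity bound (\cref{thm:proximityGeneral}) one may guess the $\ell$ scalar products $s_i^\top x\in\{-m+1,\ldots,m-1\}$ of an optimal solution, i.e.\ $(2m-1)^\ell=m^{O(\ell)}$ guesses, and each guess fixes $T^{11}x^1$ because every core-stemming row is a linear combination of the $s_i$. The residual problem keeps \emph{all} variables, but its constraint matrix (the original one with the core entries zeroed out) is simultaneously a network matrix and the transpose of a network matrix, so it is handed to \cref{thm:solveNetwRPP} (randomized, unary objective, constant $m$) or \cref{thm:solveTranspNetwPP} (deterministic, constant prime power $m$). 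Your first step (normalization plus the $\|x^*\|_\infty\le m-1$ proximity bound) and your intention to invoke these two base-block theorems are in the right spirit, but they must be applied to the full-size residual problem after guessing the core scalar products, not to a constant-size surrogate; as written, the compression step on which your argument rests does not go through.
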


In particular, \cref{thm:constCore} shows that \CCTU{} problems with constant prime power modulus and constraint matrices that fall into case~\ref{thmitem:TUdecomp_const} of \cref{thm:TUdecomp} can be solved in strongly polynomial time.
\cref{thm:constCore} immediately follows from the following more concrete lemma by solving each of the $m^{O(\ell)}$ many \CCTU{} problems using \cref{thm:solveNetwRPP} or \cref{thm:solveTranspNetwPP}.

\begin{lemma}\label{lem:reductionConstCore}
Consider a \CCTU{} problem with modulus $m$ and constraint matrix $T$, and let $\ell$ be the number of columns of $\operatorname{core}(T)$. The \CCTU{} problem can be reduced to $m^{O(\ell)}$ many \CCTU{} problems, with constraint matrices of size linear in the size of $T$, that are network matrices and transposes of network matrices at the same time.
\end{lemma}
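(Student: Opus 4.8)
The plan is to guess the values of the (few) variables attached to a core of $T$, so that after substituting these values the residual constraint matrix has an \emph{empty} core; empty-core totally unimodular matrices turn out to be simultaneously network matrices and transposes of network matrices, which is exactly the structure needed to invoke \cref{thm:solveNetwRPP,thm:solveTranspNetwPP}. Concretely, I would first normalize the problem via \cref{obs:normalization}, so that the origin is an optimal solution of the relaxation. By proximity (\cref{thm:proximity}\ref{thmitem:proximity}, with $|R|=1$), if the \CCTU{} problem is feasible and bounded it has an optimal solution $x^\ast$ with $\|x^\ast\|_\infty\le m-1$, i.e.\ $x^\ast_j\in\{-(m-1),\dots,m-1\}$ for every $j$. (If the relaxation is unbounded — detectable via Tardos' framework — the \CCTU{} problem is either infeasible or unbounded, which can be decided by running the reduction below on the feasibility version; so it suffices to treat the bounded case.) In strongly polynomial time one computes a core of $T$ by greedily deleting reducible rows and columns; let $J_0\subseteq[n]$ with $|J_0|=\ell$ be its set of column indices.

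For each of the at most $(2m-1)^\ell=m^{O(\ell)}$ vectors $v\in\{-(m-1),\dots,m-1\}^{J_0}$, substitute $x_j=v_j$ for $j\in J_0$ to obtain a residual \CCTU{} problem with constraint matrix $T':=T_{[n]\setminus J_0}$, right-hand side $b-T_{J_0}v$, residue vector $\gamma_{[n]\setminus J_0}$, target residue $r-\gamma_{J_0}^\top v$, and objective $c_{[n]\setminus J_0}$ (up to the additive constant $c_{J_0}^\top v$). The matrix $T'$ has $\{-1,0,1\}$ entries and dimensions at most those of $T$, so each residual problem has the required size; moreover the optimum of the original problem equals the minimum over $v$ of the residual optima (infeasible residuals counting as $+\infty$), a solution of a residual problem extends — by reinserting $v$ — to one of the original problem, and the original is infeasible iff every residual is. Hence it remains only to show that each $T'$ is a network matrix \emph{and} the transpose of a network matrix.

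First, $\operatorname{core}(T')=\emptyset$: the reduction sequence taking $T$ to $\operatorname{core}(T)$ is still valid on $T'$ (deleting columns preserves both ``has at most one nonzero entry'' and ``is a copy or negation of another row''), and since this sequence deletes every column in $J_0$, which is already absent from $T'$, it reduces $T'$ to the empty matrix. The crux is the claim that a totally unimodular matrix with empty core is a network matrix and the transpose of one. To see it, pass to the matroid $M:=M[I\mid T']$ and match the core-reduction steps on the matrix with matroid operations on $M$: deleting a zero column of $T'$ is deleting a loop; deleting a column with a single nonzero entry or a duplicate/negated column is deleting one element of a parallel pair; deleting a zero row of $T'$ is deleting a coloop; and deleting a duplicate/negated row of $T'$ is contracting one element of a series pair. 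Thus $\operatorname{core}(T')=\emptyset$ means $M$ reduces to the empty matroid by deletions of loops and coloops and by simplifications and cosimplifications of parallel and series pairs, which is precisely the classical characterization of matroids all of whose connected components are loops, coloops, or series-parallel matroids. Loops, coloops, and series-parallel matroids are all graphic and cographic (series-parallel matroids are the cycle matroids of series-parallel, hence planar, graphs), and direct sums preserve both properties, so $M$ is graphic and cographic; equivalently, $T'$ is a network matrix and $(T')^\top$ is a network matrix. \cref{thm:constCore} then follows by solving each of the $m^{O(\ell)}$ residual problems with \cref{thm:solveNetwRPP} (unary objective, constant $m$) or with \cref{thm:solveTranspNetwPP} (constant prime power modulus).

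The step I expect to be the main obstacle is making the matroid dictionary of the previous paragraph fully rigorous — in particular the correspondence between duplicate/negated rows of $T'$ and series pairs of $M[I\mid T']$, and citing exactly the form of the ``reduces to the empty matroid $\Leftrightarrow$ every component is a loop, coloop, or series-parallel matroid'' statement — together with stating cleanly the standard-but-easy-to-misstate equivalences ``$N$ is a network matrix $\Leftrightarrow$ $M[I\mid N]$ is graphic'' and ``$M[I\mid N]$ is cographic $\Leftrightarrow$ $N^\top$ is a network matrix''. A minor point, which is why normalization is done first, is keeping the residual problems of size linear in that of $T$: after normalization the guesses $v$ lie in $\{-(m-1),\dots,m-1\}$, so the new right-hand side $b-T_{J_0}v$ does not blow up.
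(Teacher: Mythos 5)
There is a genuine gap, and it sits exactly at the step you flagged as routine rather than at the matroid dictionary you expected to be the obstacle: the claim that $\operatorname{core}(T')=\emptyset$ after deleting the $\ell$ core columns is false. The reduction sequence that produces $\operatorname{core}(T)$ deletes the columns \emph{outside} $J_0$ (the core columns are the ones that survive), and several of those deletions may be justified by the deleted column being a copy (or negation) of a column in $J_0$; once the columns of $J_0$ are removed, those justifications disappear, so the sequence is not valid on $T'=T_{[n]\setminus J_0}$. Concretely, let $C$ be one of the two $5\times 5$ matrices of case~\ref{thmitem:TUdecomp_const} of \cref{thm:TUdecomp} and take $T=\begin{pmatrix} C & C\end{pmatrix}$, i.e.\ every column of $C$ duplicated. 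Then $\operatorname{core}(T)=C$ and $\ell=5$, but after fixing the five variables of $J_0$ the residual constraint matrix is $C$ itself, which is neither a network matrix nor the transpose of one; so your residual problems do not have the structure needed to invoke \cref{thm:solveNetwRPP,thm:solveTranspNetwPP}. Guessing the \emph{values of individual core-column variables} cannot work in general, because copies (and sign-flipped copies) of core columns remain as free variables and still carry the core structure.

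The paper avoids this by guessing, for each core column $i$, the \emph{aggregated scalar product} $s_i^\top x$, where the support of $s_i$ consists of \emph{all} columns of $T$ that stem from column $i$ of the core (copies included). Each row $(s_i^\top\ 0)$ is TU-appendable to $T$ (append an identity block to the core before rebuilding $T$), so by \cref{thm:proximityGeneral} an optimal solution of the normalized problem has all these products in $\{-(m-1),\dots,m-1\}$, giving the same $(2m-1)^{\ell}$ count. Fixing $\sigma=(s_1^\top x,\dots,s_\ell^\top x)$ determines $T^{11}x^1$ (every core-block row is a linear combination of the $s_i$), and one rewrites the system keeping all variables, with constraints $Sx^1\le\sigma$, $-Sx^1\le-\sigma$, $T^{12}x^2\le b^1-\tau$, $T^{21}x^1+T^{22}x^2\le b^2$; the new constraint matrix is the one obtained by redoing the construction with the core entries replaced by zeros, which is simultaneously a network matrix and the transpose of one. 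Your final structural claim (empty-core TU matrices are network and co-network), argued via series-parallel matroids, is in the spirit of what the paper uses for that zero-core matrix, but it is applied to the wrong matrix: $T'$ need not have empty core, and that is where the proposal breaks.
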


\begin{proof}
Assume that we are given a normalized \CCTU{} problem, which has the form
\begin{equation*}
\min\left\{c^\top x \colon Tx \leq b,\ \gamma^\top x \equiv r \pmod*{m},\ x\in\mathbb{Z}^n_{\geq 0} \right\}\enspace,
\end{equation*}
where $T$ is a matrix that is obtained as follows: Start from the matrix $C=\operatorname{core}(T)$ that has $\ell$ many columns, and repeatedly append unit rows or columns, append a copy of a row or column, and invert the sign of a row or column.
In this process, we say that a row or column \emph{stems} from $C$ if it either is a row or column of $C$, or it was obtained by copying a row or column that stems from $C$.
Thus, we may rewrite the inequality system in the form
\begin{equation}\label{eq:partitionedSystem}
\begin{pmatrix}
T^{11} & T^{12} \\ T^{21} & T^{22}
\end{pmatrix}\begin{pmatrix}
x^{1} \\ x^{2}
\end{pmatrix} \leq \begin{pmatrix}
b^{1} \\
b^{2}
\end{pmatrix}\enspace,
\end{equation}
where $T^{11}$ comprises the rows and columns of $T$ that stem from $C$, and the remaining matrix as well as the variables $x$ and the right-hand side $b$ are split accordingly.
Note that while $T$ is achieved as a construction starting from the TU matrix $C$, we could also start from the totally unimodular matrix obtained from $C$ by appending a $\ell\times \ell$ identity matrix, and then perform the same operations to obtain a totally unimodular matrix of the form
\begin{equation}\label{eq:extendedPartitionedMatrix}
\begin{pmatrix}
S & 0 \\ T^{11} & T^{12} \\ T^{21} & T^{22}
\end{pmatrix}\enspace.
\end{equation}
Here $S$ has $\ell$ many rows $s_i^\top$ for $i\in[\ell]$ where, without loss of generality, the support of $s_i^\top$ comprises precisely those columns that stem from column $i$ of $C$ in the construction.
Our approach is to guess the $\ell$ many scalar products $s_i^\top x^1$ of an optimal solution $x^*=(x^1\enspace x^2)$, and thereby reduce the problem to an easier one.

To this end, note that the rows $(s_i^\top\enspace 0)$ are TU-appendable to the constraint matrix $T$ because the matrix in~\eqref{eq:extendedPartitionedMatrix} is TU. Thus, because we work with a normalized problem, we know that there exists an optimal solution $x^* = (x^1\enspace x^2)$ of the \CCTU{} problem such that $s_i^\top x^{1} \in\{-m+1, \ldots, m-1\}$ (see \cref{thm:proximityGeneral}).
Consequently, it is enough to consider $(2m-1)^{\ell}$ many combinations of values that these scalar products may admit.
Once we fix those values, we also know the value of $T^{11}x^{1}$:
Indeed, it is easy to see that every row $t$ of $T^{11}$ is a linear combination of the rows $s_i$, and hence $t^\top x$ is a linear combination of $s_i^\top x$.
Thus, for any guess $\sigma = (\sigma_1,\ldots, \sigma_{\ell})$ of the $\ell$ many scalar products $s_1^{\top} x^1,\dots, s_{\ell}^{\top} x^1$, and after computing $\tau = T^{11}x^1$, we may rewrite the system~\eqref{eq:partitionedSystem} in the form
\begin{align}\label{eq:systemAfterGuess}
\begin{pmatrix}
S & 0 \\ -S & 0 \\
0 & T^{12} \\ T^{21} & T^{22}
\end{pmatrix}\begin{pmatrix}
x^{1} \\ x^{2}
\end{pmatrix} \leq \begin{pmatrix}
\sigma \\
-\sigma \\
b^{1} - \tau \\
b^{2}
\end{pmatrix}\enspace.
\end{align}
We claim that the new constraint matrix is a network matrix and the transpose of a network matrix at the same time. To this end, observe that the matrix
\begin{equation}\label{eq:matReplacedWithZeros}
\begin{pmatrix}
S & 0 \\0 & T^{12} \\ T^{21} & T^{22}
\end{pmatrix}
\end{equation}
can be obtained by performing the same steps as we perform to obtain the matrix in~\eqref{eq:extendedPartitionedMatrix}, but replacing the entries of $C$ with zeros in the starting matrix. This makes the starting matrix being a network matrix and the transpose of a network matrix at the same time, and this property is invariant under the operations that we perform when constructing the matrix. Thus, the matrix in~\eqref{eq:matReplacedWithZeros} is a network matrix and the transpose of a network matrix at the same time, and hence, so is the constraint matrix in~\eqref{eq:systemAfterGuess}.

To sum up, we reduce a \CCTU{} problem with a constraint matrix that has core with $\ell$ columns, to $(2m-1)^\ell$ many \CCTU{} problems with constraint matrices that are a network matrix and the transpose of a network matrix at the same time. Also note that the size of the new constraint matrix is linear in the size of the original constraint matrix. This proves the lemma.
\end{proof}

We remark that instead of guessing all $\ell$ many scalar products in the proof of \cref{lem:reductionConstCore}, we could also guess all but four of them: This would guarantee that the resulting constraint matrix of the reduced problems has a core that consists of at most $4$ rows, and hence does not fall into case~\ref{thmitem:TUdecomp_const} of \cref{thm:TUdecomp}, and we can fall back to another case for solving the reduced problems.
In particular, when applying \cref{lem:reductionConstCore} to a constraint matrix $T$ falling into case~\ref{thmitem:TUdecomp_const} of \cref{thm:TUdecomp}, guessing the scalar product of a single row would be enough.

\section[Further details of our approach to \texorpdfstring{$R$}{R}-CCTUF problems]{\boldmath Further details of our approach to \RCCTUF{} problems\MOORdot\unboldmath}\label{sec:patterns}

In this section, we fill in details and formal proofs supplementing the overview of our approach to \RCCTUF{} problems given in \cref{sec:overview}.

\subsection{Seymour's decomposition of TU matrices\MOORdot}\label{sec:seymour}

\cref{thm:TUdecomp} is, up to the constraints $n_A, n_B\geq 2$, one naturally equivalent way of stating Seymour's decomposition theorem for TU matrices (see, for example, \cite{seymour_1980} or \cite{schrijver1998theory}).
The version presented in \cref{thm:TUdecomp} is a variation thereof that additionally guarantees lower bounds on the number of rows $n_A$ and $n_B$ of the blocks $A$ and $B$, respectively, obtained in $1$-, $2$-, and $3$-sums, namely $n_A, n_B\geq 2$.
Similar bounds were achieved by Artmann, Weismantel, and Zenklusen in~\cite{artmann_2017_strongly}: They lower bound the number of \emph{rows} $k_A$ and $k_B$ of the two blocks $A$ and $B$ by $2$---hence applying their theorem to the transpose of a TU matrix gives the version that we need.

Although not exploited in our results, we remark that the method presented in~\cite{artmann_2017_strongly} in fact allows for obtaining the lower bounds on the number of columns and the number of rows of $A$ and $B$ simultaneously, i.e., it can be guaranteed that in any $1$-, $2$-, and $3$-sum, both matrices are at least $2\times 2$ matrices.

\subsection{Patterns\MOORdot}

Recall that if the constraint matrix of the \RCCTUF{} problem that we consider is a $1$-, $2$-, or $3$-sum, the problem can be written in the form
\begin{equation*}
\begin{aligned}
\begin{pmatrix}
A & ef^\top \\ gh^\top & B
\end{pmatrix} \cdot \begin{pmatrix}
x_A \\ x_B
\end{pmatrix} &\leq \begin{pmatrix}
b_A \\ b_B
\end{pmatrix} \\
\gamma_A^\top x_A + \gamma_B^\top x_B & \in R \pmod{m} \\
x_A \in \mathbb{Z}^{n_A},\
x_B & \in \mathbb{Z}^{n_B}\enspace,
\end{aligned}
\end{equation*}
as also given in~\eqref{eq:structured-problem}. After fixing $\alpha=f^\top x_B$ and $\beta=h^\top x_A$, the above problem splits into an $A$-problem and a $B$-problem as in~\eqref{eq:A-B-problem} (whose only link is through the original congruency constraint, which translates into $r_A+r_B\in R$).
Also recall that we let $\Pi\subseteq\mathbb{Z}^2$ denote all pairs $(\alpha, \beta)$ for which both the $A$-problem and the $B$-problem are feasible, and that by \cref{lem:patternsBounds} we know that if the initial problem is feasible, then it is also feasible for a pair of scalar products $(\alpha,\beta)\in\Pi$ that additionally satisfy
$\ell_0 \leq \alpha+\beta \leq u_0$, $\ell_1 \leq \alpha \leq u_1$, $\ell_2\leq \beta \leq u_2$ for bounds $\ell_i, u_i$ that we can determine in strongly polynomial time, and that satisfy $u_i-\ell_i\leq m-|R|$. For this reason, we defined a narrowed down version of $\Pi$, namely
\begin{equation}\label{eq:PiNarrowed}
\Pi_{\text{narrowed}} \coloneqq \Pi \cap \{(\alpha,\beta)\in\mathbb{Z}^2\colon \ell_0 \leq \alpha+\beta \leq u_0,\ \ell_1 \leq \alpha \leq u_1,\ \ell_2\leq \beta \leq u_2\}\enspace,
\end{equation}
and only look for solutions with scalar products $(\alpha,\beta)\in\Pi_{\text{narrowed}}$.
We also remind the reader that a narrowed pattern associated to the problem is given by $\pi\colon \Pi_{\text{narrowed}}\to 2^{\{0, \ldots, m-1\}}$, where $\pi(\alpha, \beta)$ is the set of residues $r_B\in\{0, \ldots, m-1\}$ for which the $B$-problem is feasible.

\subsubsection*{The shape of pattern supports\MOORdot}
In what follows, we prove the following lemma on the shape of $\Pi_{\text{narrowed}}$.

\begin{lemma}\label{lem:narrowedPiIneqs}
In the above setup, we can in strongly polynomial time determine $\ell_i', u_i'$ for $i\in\{0,1,2\}$ with $u_i'-\ell_i'\leq m-|R|$ such that
$$
\Pi_{\text{narrowed}} = \{(\alpha,\beta)\in\mathbb{Z}^2\colon \ell_0' \leq \alpha+\beta \leq u_0',\ \ell_1' \leq \alpha \leq u_1',\ \ell_2'\leq \beta \leq u_2'\}\enspace.
$$
\end{lemma}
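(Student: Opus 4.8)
The plan is to show that $\Pi_{\text{narrowed}}$, which is by definition the intersection of $\Pi$ with a box described by the three pairs of inequalities $\ell_0\le\alpha+\beta\le u_0$, $\ell_1\le\alpha\le u_1$, $\ell_2\le\beta\le u_2$, is itself exactly such a box (with possibly tightened bounds $\ell_i',u_i'$). The only thing that could go wrong is that $\Pi$ "cuts into" the box in a way not captured by constraints on $\alpha$, $\beta$, and $\alpha+\beta$ alone. So the core task is to understand $\Pi$ well enough. First I would recall, as announced in \cref{sec:overviewRCCTUF}, that $\Pi$ is obtained by projecting feasible solutions of the \emph{relaxation} of the \RCCTUF{} problem onto the $(\alpha,\beta)$-space, i.e.\ $\Pi = \{(f^\top x_B, h^\top x_A) : T(x_A,x_B)^\top\le b,\ x\in\mathbb{Z}^{n}\}$ — this uses that for fixed $(\alpha,\beta)$ the two parts decouple into the $A$- and $B$-problems of~\eqref{eq:A-B-problem}, which are feasible iff their TU relaxations are (the congruency constraints are irrelevant for membership in $\Pi$, which only records relaxation-feasibility). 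Then I would invoke the fact that $\binom{0\ f^\top}{}$ and $\binom{h^\top\ 0}{}$ (equivalently, the three rows in \cref{obs:simultTUappend}) are TU-appendable to $T$: appending both of them and projecting is an integer projection of a TU polyhedron, hence $\Pi$ is (the integer points of) a polyhedron, and moreover a polyhedron all of whose facet normals lie among the rows of a TU matrix together with $\pm e$'s.

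The key step is then a structural claim: $\Pi$ is a polyhedron in $\mathbb{R}^2$ whose edges can only have normals proportional to $(1,0)$, $(0,1)$, or $(1,1)$ — equivalently, $\Pi$ can be described purely by lower and upper bounds on $\alpha$, on $\beta$, and on $\alpha+\beta$. The way I would argue this is via TU-appendability once more: consider the matrix $\widetilde T$ obtained from $T=\binom{A\ ef^\top}{gh^\top\ B}$ by appending the rows $(0\ f^\top)$, $(h^\top\ 0)$, $(h^\top\ f^\top)$, which is TU by \cref{obs:simultTUappend}; a Fourier–Motzkin elimination of the $x_A,x_B$ variables down to the two coordinates $\alpha=(0\ f^\top)x$ and $\beta=(h^\top\ 0)x$ produces a system in $(\alpha,\beta)$ whose coefficient vectors are $2$-dimensional projections of rows of a TU matrix; but the only primitive integer vectors in $\mathbb{Z}^2$ that arise as such projections, after eliminating everything else, are (up to sign) $(1,0)$, $(0,1)$, $(1,1)$, and $(1,-1)$. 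A $(1,-1)$-type constraint would bound $\alpha-\beta$; I would rule this out using the presence of the third appended row $(h^\top\ f^\top)$, which exposes $\alpha+\beta$ as a scalar product of a TU-appendable vector, and a short parity/sign argument on the $3$-sum structure (the blocks $e,f,g,h$ of a $1$-, $2$-, or $3$-sum) showing that the relevant eliminated inequalities only couple $\alpha$ and $\beta$ with the \emph{same} sign — so $\Pi = \{(\alpha,\beta):\hat\ell_0\le\alpha+\beta\le\hat u_0,\ \hat\ell_1\le\alpha\le\hat u_1,\ \hat\ell_2\le\beta\le\hat u_2\}$ for suitable (possibly infinite) bounds.

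Given this, the conclusion is essentially bookkeeping: intersecting $\Pi$ with the narrowing box just takes componentwise maxima of the lower bounds and minima of the upper bounds, so set $\ell_i' = \max\{\ell_i,\hat\ell_i\}$ and $u_i' = \min\{u_i,\hat u_i\}$ for $i\in\{0,1,2\}$. Since the narrowing box already forces $u_i-\ell_i\le m-|R|$ by \cref{lem:patternsBounds}, we get $u_i'-\ell_i'\le u_i-\ell_i\le m-|R|$ for free. All of $\hat\ell_i,\hat u_i$ can be computed in strongly polynomial time by optimizing the linear objectives $\pm\alpha$, $\pm\beta$, $\pm(\alpha+\beta)$ over the relaxation polyhedron $\{x:Tx\le b\}$ using Tardos' algorithm~\cite{tardos_1986_strongly} (note: these optima are integral since the constraint matrices involved are TU, so rounding issues do not arise), and then taking maxima/minima with the $\ell_i,u_i$ from \cref{lem:patternsBounds}.

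\textbf{Main obstacle.} The delicate point is the structural claim that $\Pi$ is describable using only $\alpha$, $\beta$, and $\alpha+\beta$ — in particular ruling out a $\alpha-\beta$ constraint. A clean way around a heavy Fourier–Motzkin analysis might be to instead prove it "coordinate-free": show directly that $(\alpha,\beta)\in\Pi$ together with $(\alpha',\beta')\in\Pi$ and the box constraints implies membership of the relevant lattice points in between, by a cut-uncrossing / convexity argument on the actual solutions $x_A,x_B$ in the original space (exploiting that $Tx\le b$ is a TU system, so its integer hull equals its LP relaxation and projections of integer points behave well). I expect that to be the part of the write-up that needs genuine care; everything downstream of it is routine.
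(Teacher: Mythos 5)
Your overall skeleton matches the paper's: view $\Pi$ as the projection of the relaxation's feasible set onto $(\alpha,\beta)$, use the simultaneous TU-appendability of $(0\ f^\top)$, $(h^\top\ 0)$, $(h^\top\ f^\top)$ (\cref{obs:simultTUappend}), conclude that $\Pi$ is described by upper/lower bounds on $\alpha$, $\beta$, $\alpha+\beta$ alone, and then obtain the $\ell_i',u_i'$ by optimizing these six linear objectives over the TU system (with the narrowing constraints from \cref{lem:patternsBounds}) via Tardos. The downstream bookkeeping in your last paragraph is fine. However, there is a genuine gap: the pivotal structural claim --- that $\operatorname{conv}(\Pi)$ has no facet bounding $\alpha-\beta$ (equivalently, no edge in direction $(1,1)$), nor any facet with a normal outside $\{\pm(1,0),\pm(0,1),\pm(1,1)\}$ --- is exactly what the paper proves as \cref{lem:feasibleResiduePolyhedron}, resting on \cref{thm:TUprojectionEdges}, and your proposal only gestures at it. Your Fourier--Motzkin sketch asserts that the only primitive normals that can ``arise as such projections'' are $(1,0),(0,1),(1,1),(1,-1)$ up to sign; as stated this is unjustified (eliminating variables from a TU system readily produces derived inequalities whose $(\alpha,\beta)$-coefficients are not in $\{-1,0,1\}$ --- the real content is that such inequalities are never facet-defining for the integral projection, which is precisely what needs proof), and your ``short parity/sign argument on the $3$-sum structure'' for excluding $\alpha-\beta$ constraints is not developed. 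Note also that excluding $\alpha-\beta$ facets is not a generic property of projections of TU systems: it hinges specifically on $(h^\top\ f^\top)$ being TU-appendable, i.e.\ on the row $e_\alpha+e_\beta$ being appendable to the extended system, which the paper establishes with an explicit determinant argument (the matrices $T'$, $T''$, $T'''$).

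The paper closes this gap with an edge-direction argument you would still need to supply: if an edge of the projection had direction $v$ with $w_I^\top v\notin\{-1,0,1\}$ for some TU-appendable $w$ supported on the projection coordinates, pick a fractional point $y$ on that edge with $w_I^\top y$ integral; a preimage of $y$ lies in the system $Tx\le b$, $w^\top x=\eta$, which is TU with integral right-hand side, hence $y$ decomposes as a convex combination of integral points of the projection satisfying the same equality --- and these must lie on the same edge, contradicting that $y$ is the unique such point on it. This is in the spirit of your ``coordinate-free'' alternative, but as written your proposal leaves precisely this step (the heart of \cref{lem:narrowedPiIneqs}) unproven.
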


We emphasize that the main contribution of \cref{lem:narrowedPiIneqs} is not to find new bounds $\ell_i', u_i'$ (they will simply be the tightest bounds such that $\Pi_{\text{narrowed}}$ is contained in the resulting set), but that there are no holes within the shape given by the bounds. That is, there are no $(\alpha,\beta)$ satisfying the bounds, but such that there is no feasible solution of our \RCCTUF{} problem with scalar products $(\alpha,\beta)$. It turns out that $\Pi$ has the same property in the following sense, and \cref{lem:narrowedPiIneqs} will follow from that.

\begin{lemma}\label{lem:feasibleResiduePolyhedron}
For $\Pi\subseteq\mathbb{Z}^2$ defined as above, $\operatorname{conv}(\Pi)$ is a polyhedron with $\Pi = \operatorname{conv}(\Pi) \cap \mathbb{Z}^2$ and edge directions in $\mathcal{D}\coloneqq\{\pm\begin{psmallmatrix}
1 \\ 0
\end{psmallmatrix}, \pm\begin{psmallmatrix}
0 \\ 1
\end{psmallmatrix}, \pm\begin{psmallmatrix}
1 \\ -1
\end{psmallmatrix}\}$. Hence, there is an inequality description of $\operatorname{conv}(\Pi)$ only consisting of upper and/or lower bounds on $\alpha$, $\beta$, and $\alpha+\beta$.
\end{lemma}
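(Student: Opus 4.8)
The plan is to show that $\Pi$ is the set of integer points of a polyhedron whose facets are bounds on $\alpha$, $\beta$, and $\alpha+\beta$ only. The key observation is that $\Pi$ arises by projecting the feasible region of the relaxation of the \RCCTUF{} problem onto the two coordinates $\alpha = f^\top x_B$ and $\beta = h^\top x_A$. More precisely, recall that $(\alpha,\beta)\in\Pi$ if and only if both the $A$-problem and the $B$-problem (with the congruency constraints dropped, since for membership in $\Pi$ we only need feasibility for \emph{some} residues) are feasible. The $A$-problem's relaxation asks for $x_A\in\mathbb{Z}^{n_A}$ with $Ax_A\leq b_A-\alpha e$ and $h^\top x_A = \beta$, and similarly for $B$. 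Since $A$ and $B$ are TU and the vectors $e$, $g$ are integral, these polyhedra are integral; hence $\Pi$ equals the set of integer points of $\operatorname{proj}_{(\alpha,\beta)} P$, where $P$ is the polyhedron
\begin{equation*}
P = \left\{(x_A,x_B,\alpha,\beta)\colon \begin{pmatrix} A & ef^\top \\ gh^\top & B\end{pmatrix}\begin{pmatrix}x_A\\x_B\end{pmatrix}\leq \begin{pmatrix}b_A\\b_B\end{pmatrix},\ f^\top x_B = \alpha,\ h^\top x_A = \beta\right\}.
\end{equation*}
Because the constraint matrix $\begin{psmallmatrix}A & ef^\top\\ gh^\top & B\end{psmallmatrix}$ is TU and (by \cref{obs:simultTUappend}) the rows $(0\ f^\top)$ and $(h^\top\ 0)$ are simultaneously TU-appendable to it, the extended system defining $P$ (after moving $\alpha,\beta$ to be free variables constrained by those equalities) still has a TU coefficient matrix. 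Thus $P$ is an integral polyhedron, its projection $\operatorname{conv}(\Pi)=\operatorname{proj}_{(\alpha,\beta)}P$ is integral, and $\Pi = \operatorname{conv}(\Pi)\cap\mathbb{Z}^2$.

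The next step is to identify the edge directions of $\operatorname{conv}(\Pi)$. The crucial tool is that the projection of a polyhedron defined by a TU matrix along a subset of variables has edge directions that are, up to scaling, differences of standard basis vectors in the remaining space — more carefully, I would argue directly: every edge of $\operatorname{proj}_{(\alpha,\beta)}P$ is the image of an edge or a higher-dimensional face of $P$, and its direction $(d_\alpha,d_\beta)$ satisfies that $(d_\alpha,d_\beta) = (f^\top r_B, h^\top r_A)$ for some ray/edge direction $(r_A,r_B)$ of a face of $P$. Since the system is TU and $(0\ f^\top)$, $(h^\top\ 0)$ are TU-appendable, the augmented matrix $\begin{psmallmatrix}A & ef^\top\\ gh^\top & B\\ 0 & f^\top\\ h^\top & 0\end{psmallmatrix}$ is TU, so a minimal edge direction of $P$, when hit with this matrix, yields a vector with entries in $\{-1,0,1\}$; in particular $f^\top r_B, h^\top r_A \in\{-1,0,1\}$. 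This forces $(d_\alpha,d_\beta)\in\{0,\pm1\}^2$. It remains to rule out the directions $\pm(1,1)$. This is where I would use that $\alpha+\beta = (h^\top\ f^\top)(x_A,x_B)$ is again a TU-appendable row (the third part of \cref{obs:simultTUappend}): appending it too keeps TU-ness, so along any edge direction we also have $d_\alpha + d_\beta = (h^\top\ f^\top)(r_A,r_B)\in\{-1,0,1\}$, which excludes $(1,1)$ and $(-1,-1)$. Hence all edge directions lie in $\mathcal{D}$.

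Finally, a two-dimensional polyhedron all of whose edges have directions in $\mathcal{D}$ is cut out by inequalities whose normals are orthogonal to elements of $\mathcal{D}$ — namely $\pm\alpha\leq\cdot$, $\pm\beta\leq\cdot$, and $\pm(\alpha+\beta)\leq\cdot$ — which is exactly the claimed form; this gives \cref{lem:feasibleResiduePolyhedron}. For \cref{lem:narrowedPiIneqs}, I would intersect this description of $\operatorname{conv}(\Pi)$ with the box-type constraints $\ell_0\leq\alpha+\beta\leq u_0$, $\ell_1\leq\alpha\leq u_1$, $\ell_2\leq\beta\leq u_2$ from \cref{lem:patternsBounds}: the intersection is again a polyhedron of the same shape, its defining bounds $\ell_i',u_i'$ are obtained by taking, for each of the three linear functionals, the tighter of the two available bounds, and these can be computed in strongly polynomial time by optimizing the (at most six) linear functionals over $\operatorname{conv}(\Pi)$ using Tardos's algorithm~\cite{tardos_1986_strongly}. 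Since $\operatorname{conv}(\Pi)$ is integral, the resulting system has no integer holes, so $\Pi_{\text{narrowed}}$ equals the integer points satisfying the six bounds; and $u_i'-\ell_i'\leq u_i-\ell_i\leq m-|R|$ because we only tightened. The main obstacle I anticipate is making the edge-direction argument fully rigorous: one must be careful that projecting does not create edges whose directions are not images of edge directions of $P$ (they could come from faces of dimension $>1$), so the cleanest route is probably to argue about recession cones and the normal fan, or to invoke that a vector is an edge direction of the projection only if it is a non-negative combination... — in fact the safest formulation is: $(d_\alpha,d_\beta)$ is an edge direction of $\operatorname{conv}(\Pi)$ only if it is a positive multiple of $(f^\top r_B,\, h^\top r_A)$ for some $(r_A,r_B)$ in a minimal face of the lineality-reduced cone, and then the TU/TU-appendable bound pins it down to $\mathcal{D}$.
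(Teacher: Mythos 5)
Your overall strategy is the same as the paper's: view $\Pi$ as the integer points of a coordinate projection of an integral polyhedron described by a TU system, use the three simultaneously TU-appendable rows of \cref{obs:simultTUappend} (including $(h^\top\ f^\top)$ to kill the directions $\pm(1,1)$), and conclude that all edge directions of $\operatorname{conv}(\Pi)$ lie in $\mathcal{D}$. The setup, the integrality of the fibers, and the final step from edge directions to the inequality description are all fine.

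The genuine gap is exactly the step you flag yourself: the claim that every edge direction of the \emph{projection} is a positive multiple of $(f^\top r_B, h^\top r_A)$ for some edge/extreme-ray direction $(r_A,r_B)$ of $P$. Your proposed justification (``a minimal edge direction of $P$, when hit with the augmented TU matrix, yields entries in $\{-1,0,1\}$'') is correct for genuine one-dimensional faces of a pointed $P$ — there the direction is the primitive kernel vector of a rank-$(n-1)$ tight subsystem, and the product with a TU-appendable row is a determinant of the augmented TU matrix. But the reduction from an edge of $\operatorname{conv}(\Pi)$ to such a direction of $P$ is not established: the preimage of an edge of the projection is a face of $P$ of possibly high dimension, which may be unbounded and, since $P$ need not be pointed, may have a nontrivial lineality space. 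For lineality directions (and more generally for directions in the kernel of the constraint matrix) the rank-$(n-1)$ argument gives nothing — a primitive kernel vector of a TU matrix need not pair to $\{-1,0,1\}$ with a TU-appendable row — so the case analysis you defer (``recession cones and the normal fan'') is precisely where the work lies, and as written the proof does not go through.

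The paper closes this gap with \cref{thm:TUprojectionEdges}, proved by a different and more robust argument that never transfers edge directions back to $P$: take two integer points $x_1,x_2=x_1+v$ on the edge of the projection, pick the fractional point $y=x_1+\lambda v$ on that edge with $w_I^\top y=\eta\in\mathbb{Z}$ (possible whenever $|w_I^\top v|\geq 2$), lift $y$ to a preimage in $P$, and use integrality of the TU system obtained by appending the equality $w^\top x=\eta$ to write the lift as a convex combination of integer points; projecting, these points all satisfy $w_I^\top z=\eta$ and lie on the same edge, hence coincide with the non-integral $y$ — a contradiction. This works for unpointed $P$, unbounded edges, and needs no face-dimension bookkeeping. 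To complete your proof you should either prove a statement of this kind (which essentially reproduces the paper's argument) or carry out the full recession-cone/lineality analysis you sketch; as it stands, the central claim is asserted rather than proved.
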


We remark that when we refer to \emph{edge directions} $v$ of an integral polyhedron (with rational extremal rays in case of unboundedness), then we always choose $v$ to be integral, i.e., $v\in \mathbb{Z}^n$, and such that the greatest common divisor of its coordinates is $1$.
In other words, a vector $v\in\mathbb{Z}^n$ is an \emph{edge direction} of an integral polyhedron if there exist integral points $x_1$ and $x_2$ that lie on the same edge of $P$ such that $x_1=x_2+v$, and the greatest common divisor of all components of $v$ is $1$.

\begin{proof}[Proof of \cref{lem:narrowedPiIneqs}]
From \cref{lem:feasibleResiduePolyhedron} and \eqref{eq:PiNarrowed}, it follows immediately that \cref{lem:narrowedPiIneqs} holds for
\begin{gather*}
\ell_0'=\min\{\alpha+\beta\colon(\alpha,\beta)\in\Pi_{\text{narrowed}}\} \quad\text{and}\quad u_0'=\max\{\alpha+\beta\colon(\alpha,\beta)\in\Pi_{\text{narrowed}}\}\enspace,\\
\ell_1'=\min\{\alpha\colon(\alpha,\beta)\in\Pi_{\text{narrowed}}\} \quad\text{and}\quad u_1'=\max\{\alpha\colon(\alpha,\beta)\in\Pi_{\text{narrowed}}\}\enspace, \quad\text{and}\\
\ell_2'=\min\{\beta\colon(\alpha,\beta)\in\Pi_{\text{narrowed}}\} \quad\text{and}\quad u_2'=\max\{\beta\colon(\alpha,\beta)\in\Pi_{\text{narrowed}}\}\enspace.
\end{gather*}
To see that we can determine $\ell_i'$ and $u_i'$ in strongly polynomial time, we exploit that by \cref{obs:simultTUappend},
\begin{align*}
\begin{array}{rcrcrcl}
	&      &             A x_A & + &   e f^\top x_B & \leq   & b_A \\
	&      &      g h^\top x_A & + &          B x_B & \leq   & b_B \\
\ell_0   & \leq & h^\top x_A & + & f^\top x_B & \leq   & u_0 \\
\ell_1 & \leq & & & f^\top x_B & \leq   & u_1 \\
\ell_2 & \leq & h^\top x_A & & & \leq   & u_2
\end{array}
\end{align*}
is an inequality system with a totally unimodular constraint matrix. Here, the last three constraints precisely encode the constraints $(\alpha=f^\top x_B,\beta=h^\top x_A)\in\Pi_{\text{narrowed}}$, so pairs in $\Pi_{\text{narrowed}}$ correspond to feasible solutions of the above system, and vice versa.
Due to total unimodularity, we can find integral solutions of this system minimizing or maximizing the linear functions $\alpha=f^\top x_B$, $\beta=h^\top x_A$, and $\alpha+\beta=f^\top x_B + h^\top x_A$ by solving the corresponding relaxations using the approach of Tardos~\cite{tardos_1986_strongly} in strongly polynomial time, and the corresponding optimal values are precisely the values $\ell_i'$ and $u_i'$ for $i\in\{0,1,2\}$ that we are looking for, and we have $u_i'-\ell_i'\leq u_i-\ell_i\leq m-|R|$ for all $i\in\{0,1,2\}$.
\end{proof}

To prove \cref{lem:feasibleResiduePolyhedron}, we will observe that $\Pi$ can be seen to essentially be a projection of the set of feasible solutions of the relaxation of the initial \RCCTUF{} problem. The following result will provide the necessary properties to conclude \cref{lem:feasibleResiduePolyhedron}.

\begin{theorem}\label{thm:TUprojectionEdges}
Let $T\in\{-1,0,1\}^{k\times n}$ be a totally unimodular matrix, let $b\in\mathbb{Z}^n$, and let $I\subseteq[n]$ be a subset of the column indices. Then, the axis-parallel projection $Q\subseteq \mathbb{R}^{I}$ of $P\coloneqq \{x\in\mathbb{R}^n\colon Tx\leq b\}$ on the variables $(x_i)_{i\in I}$ has the following property: For any edge direction $v\in\mathbb{Z}^I$ of $Q$, and any $w\in\mathbb{Z}^n$ that is TU-appendable to $T$ and supported on $I$, we have $w_I^\top v\in\{-1,0,1\}$.
\end{theorem}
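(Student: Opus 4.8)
The plan is to reduce the statement about edge directions of the projection $Q$ to a statement about the original polyhedron $P$, using the fact that edges of a projection lift to (possibly higher-dimensional) faces of $P$. Concretely, let $v\in\mathbb{Z}^I$ be an edge direction of $Q$, realized by two integral points $q_1,q_2\in Q$ lying on a common edge $E$ of $Q$ with $q_1 = q_2 + v$. Pick a linear functional $c_I\in\mathbb{Q}^I$ that is maximized over $Q$ exactly on $E$, and extend it by zeros to $c\in\mathbb{Q}^n$ supported on $I$; since the value of $c^\top x$ on $P$ depends only on the projection, $c$ is maximized over $P$ exactly on the preimage face $F\coloneqq \{x\in P\colon x_I\in E\}$, which is a face of $P$ whose projection is $E$. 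Now $F$ is itself a polyhedron described by a totally unimodular system (the tight constraints of $P$ together with $Tx\le b$), so $F$ is an integral polyhedron, and its projection onto the $I$-coordinates is the segment $E$. Choosing integral vertices (or, in the unbounded case, a vertex and a rational extremal ray) of $F$ projecting to $q_1$ and $q_2$, we can connect them by a path of edges of $F$; projecting this path to $E$, each edge either projects to a point or to a positive multiple of $v$, and at least one projects to a nontrivial multiple of $v$. Hence there is an edge direction $u\in\mathbb{Z}^n$ of $F$ (and thus of $P$, since a face of a face is a face) with $u_I = \lambda v$ for some integer $\lambda\ge 1$.

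The second step is to control $u$ itself. An edge direction of the polyhedron $\{x\colon Tx\le b\}$ is an elementary extremal ray of the recession cone of the corresponding minimal face, i.e.\ an elementary extremal ray of a cone of the form $\{x\colon T^{\scriptscriptstyle =}x = 0,\ T^{\scriptscriptstyle <}x\le 0\}$ for an appropriate partition of the rows of $T$ into tight and non-tight ones — exactly the situation of \cref{lem:extremalRayIsExtremalT}. That lemma gives that $u$ is elementary with respect to $T$: for every $w$ TU-appendable to $T$, we have $w^\top u\in\{-1,0,1\}$. In particular, if $w\in\mathbb{Z}^n$ is TU-appendable to $T$ and supported on $I$, then $w^\top u = w_I^\top u_I = \lambda\, w_I^\top v$, so $\lambda\, w_I^\top v\in\{-1,0,1\}$. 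Since $\lambda\ge 1$ is an integer and $w_I^\top v$ is an integer, this forces $w_I^\top v\in\{-1,0,1\}$ (and in fact $\lambda=1$ whenever $w_I^\top v\ne 0$), which is precisely the claim.

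There is one technical point to make the first step fully rigorous: I must argue that the minimal face $F$ of $P$ lying above the edge $E$ really does project onto all of $E$ and that one can realize $v$ as an integer combination of projected edge directions of $F$ with a \emph{positive} leading coefficient. This follows from standard polyhedral facts — $F$ is an integral polyhedron (TU constraint matrix), $E=\pi(F)$ is one-dimensional, any two vertices of $F$ are joined by a monotone edge-path with respect to the functional $\pm c_I\circ\pi$, and projecting an edge of $F$ yields either $0$ or a positive rational multiple of the direction $v$ of $E$ — and I would spell it out carefully since the whole reduction rests on it. This is the step I expect to be the main obstacle; once the lift of $v$ to an elementary edge direction $u$ of $P$ with $u_I=\lambda v$, $\lambda\in\mathbb{Z}_{\ge1}$, is in hand, \cref{lem:extremalRayIsExtremalT} finishes the argument immediately.

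Finally, with \cref{thm:TUprojectionEdges} established, \cref{lem:feasibleResiduePolyhedron} follows by applying it with $T$ the constraint matrix of~\eqref{eq:structured-problem}, $I$ the two coordinates recording $\alpha=f^\top x_B$ and $\beta=h^\top x_A$ (after adding the rows $(0\ f^\top)$ and $(h^\top\ 0)$, which are TU-appendable by \cref{obs:simultTUappend}, together with two new variables equated to these scalar products so that $\alpha,\beta$ literally become coordinates): the projection is $\operatorname{conv}(\Pi)$, and the vectors $\pm e_\alpha$, $\pm e_\beta$, $\pm(e_\alpha-e_\beta)$ are (the restrictions to $I$ of) TU-appendable rows, so every edge direction of $\operatorname{conv}(\Pi)$ has scalar product in $\{-1,0,1\}$ with all of them, which pins its direction down to $\mathcal{D}$.
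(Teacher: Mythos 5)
Your plan differs from the paper's proof: the paper argues directly on $Q$ (pick the non-integral point $y$ on the edge with $w_I^\top y$ integral, lift it to $P$, expand the lift as a convex combination of integral points of the TU system $\{x\colon Tx\le b,\ w^\top x=\eta\}$, project back, and contradict integrality using that a point on an edge can only be written as a convex combination of points of that same edge), whereas you lift the edge of $Q$ to an edge of $P$ and then want to invoke elementarity of edge directions of $P$. The crux of your reduction, however, has a genuine gap. The fact you need in the second step --- that every edge direction of $\{x\colon Tx\le b\}$ is elementary with respect to $T$ --- is exactly the $I=[n]$ case of the theorem being proved, and your justification of it fails: for a \emph{bounded} edge (the typical case), the recession cone $\{x\colon T^{=}x=0,\ T^{<}x\le 0\}$ of that edge is $\{0\}$, so the edge direction $u$ does not lie in it at all (already for $P=[0,1]\subseteq\mathbb{R}$ with $u=1$ this cone is $\{0\}$), and \cref{lem:extremalRayIsExtremalT} is simply not applicable. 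The statement is true and recoverable --- e.g., $u$ spans $\ker(T^{=})$ for the rows $T^{=}$ tight along the edge, and then $\{x\colon T^{=}x=0,\ w^\top x=1\}$ is a TU-described integral polyhedron equal to the single point $u/(w^\top u)$, which is non-integral if $|w^\top u|\ge 2$ --- but this is essentially the paper's slicing argument rather than a use of \cref{lem:extremalRayIsExtremalT}, so as written your proof assumes (a special case of) what it sets out to prove without a valid argument for it.

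A secondary issue lies in the lifting step you yourself flag: it presupposes that $F$ (equivalently $P$) is pointed, since you pick vertices of $F$ and walk along edge paths. The theorem makes no such assumption, and if $P$ has a nontrivial lineality space it may have no edges at all while $Q$ still does (e.g.\ $P=\{x\in\mathbb{R}^3\colon 0\le x_3\le 1\}$ with $I=\{3\}$). This is repairable, for instance by adding integral box constraints around chosen integral preimages of $q_1,q_2$ (unit rows preserve total unimodularity and do not change the set of TU-appendable vectors, and the segment $[q_2,q_1]$ remains on an edge of the restricted projection), but your sketch does not cover it; also note that $c_I\circ\pi$ is constant on $F$ by construction, so the monotone edge path should be taken with respect to the position along $E$, not with respect to $\pm c_I\circ\pi$. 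Your final paragraph deducing \cref{lem:feasibleResiduePolyhedron} from the theorem is consistent with how the paper applies it.
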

Here, for a vector $w\in\mathbb{R}^n$ and a subset $I\subseteq [n]$, we denote by $w_I$ the restriction of $w$ to the coordinate indices in $I$.
Generally, note that for $I=[n]$, \cref{thm:TUprojectionEdges} is a statement about edge directions of polyhedra that are defined by totally unimodular matrices, characterized in terms of TU-appendable vectors.
This shows another use of the concept of TU-appendable vectors and gives a result that might find independent applications.

\begin{proof}[Proof of \cref{thm:TUprojectionEdges}]
Assume for the sake of deriving a contradiction that $Q$ has an edge direction $v\in\mathbb{R}^I$ such that there exists a vector $w\in\mathbb{Z}^n$ that is supported on $I$ and TU-appendable to $T$ such that $w_I^\top v\notin\{-1,0,1\}$.
Let $x_1,x_2\in\mathbb{Z}^I$ lie on an edge of $Q$ such that $x_1=x_2+v$, and observe that there exists $\lambda\in(0,1)$ such that $y\coloneqq (1-\lambda)x_1+\lambda x_2=x_1+\lambda v$ is not integral, but satisfies $w_I^\top y = \eta$ for some $\eta\in\mathbb{Z}$, for example $\lambda = \sfrac{1}{|w_I^\top v|}$.

Now let $\overline y$ be a preimage of $y$ under the axis-parallel projection from $P$ to $Q$, and observe that $\overline y$ is a fractional solution of the system
\begin{equation*}
\begin{array}{rcl}
T x & \leq & b \\
w^\top x & = & \eta
\end{array}\enspace,
\end{equation*}
which has a totally unimodular constraint matrix and integral right-hand sides, which implies that it describes an integral polyhedron.
Thus, $\overline y$ can be written as a convex combination $\overline y = \sum_{i=1}^k \lambda_i \overline z_i$ of integral vectors $\overline z_i\in \{x\in \mathbb{Z}^n\colon Tx \leq b, w^{\top} x = \eta\}$, with coefficients $\lambda_i\in(0,1)$ such that $\sum_{i=1}^k \lambda_i=1$.
Let $z_i\in\mathbb{Z}^I$ be obtained from $\overline z_i$ through axis-parallel projection to $\mathbb{R}^I$.
Hence, $z_i\in Q\cap \mathbb{Z}^I$, $w_I^\top z_i = \eta$, and $y = \sum_{i=1}^k \lambda_i z_i$.
Thus, we expressed $y$ as a convex combination of points $z_i\in Q$.
But recall that $y$ lies on an edge of $Q$, and the only way to express such a point as a convex combination of others with non-zero coefficients is to use points from the same edge only, hence all $z_i$ lie on the same edge.
However, as the edge direction is $v$ and $w_I^\top v\neq 0$, the point $y$ is the only point on the edge satisfying $w_I^\top x = \eta$, so we must have $z_i=y$ for all $i\in[k]$.
This contradicts that $z_i$ are integral, while $y$ is not.
Thus, our assumption was wrong and \cref{thm:TUprojectionEdges} follows.
\end{proof}

\begin{proof}[Proof of \cref{lem:feasibleResiduePolyhedron}]
Note that $\Pi$ contains precisely those pairs $(\alpha,\beta)\in\mathbb{Z}^2$ for which there exist $(x_A, x_B)\in\mathbb{Z}^{n_A}\times\mathbb{Z}^{n_B}$ such that $(x_A, x_B, \alpha, \beta)$ is a solution of the system
\begin{align}\label{eq:extendedSystem}
\begin{pmatrix}
A & ef^\top & 0 & 0 \\
gh^\top & B & 0 & 0 \\
0 & f^\top & -1 & 0 \\
h^\top & 0 & 0 & -1
\end{pmatrix}\begin{pmatrix}
x_A \\ x_B \\ \alpha \\ \beta
\end{pmatrix} \enspace \begin{matrix}
\leq \\ \leq \\ = \\ =
\end{matrix} \enspace \begin{pmatrix}
b_A \\ b_B \\ 0 \\ 0
\end{pmatrix}\enspace.
\end{align}
Let $P$ be the polyhedron defined by~\eqref{eq:extendedSystem}, and let $T$ be the constraint matrix in~\eqref{eq:extendedSystem}.
Observe that $T$ is totally unimodular by \cref{obs:simultTUappend}.
This has several implications:
First, $Q\coloneqq\operatorname{conv}(\Pi)$ is precisely the projection of $P$ to the variables $(\alpha, \beta)$.
Moreover, every integral point in this projection has an integral inverse image, hence $\Pi=\operatorname{conv}(\Pi)\cap\mathbb{Z}^2$.
Finally, by \cref{thm:TUprojectionEdges} applied with $I$ containing the indices of the variables $\alpha$ and $\beta$, we obtain that all edge directions $v\in\mathbb{Z}^2$ of $Q$ satisfy that for any integral vector $w$ that is TU-appendable to $T$ with support on the last two columns only, we have $w_I^\top v\in \{-1,0,1\}$.
Obviously, the unit vectors $w\in \{\pm e_\alpha, \pm e_\beta\}$ (i.e., the vectors that are all zero except for $\pm1$ entries in corresponding to the variables $\alpha$ and $\beta$, and hence correspond to $w_I\in\{\pm\begin{psmallmatrix}1 \\ 0
\end{psmallmatrix}, \pm\begin{psmallmatrix}0 \\ 1\end{psmallmatrix}\}$, respectively) are TU-appendable, and we claim that $w =\pm(e_\alpha+e_\beta)$ (corresponding to $w_I=\pm\begin{psmallmatrix}
1 \\ 1
\end{psmallmatrix}$) are, as well.

Assuming this claim, the conclusion is immediate: We know that all edge directions $v\in\mathbb{Z}^2$ of $Q$ are such that $w_I^\top v\in\{-1,0,1\}$ for all $w_I\in\{\pm\begin{psmallmatrix}1 \\ 0
\end{psmallmatrix}, \pm\begin{psmallmatrix}0 \\ 1\end{psmallmatrix}, \pm\begin{psmallmatrix}1 \\ 1\end{psmallmatrix}\}$.
This leaves $v\in\{\pm\begin{psmallmatrix}1 \\ 0
\end{psmallmatrix}, \pm\begin{psmallmatrix}0 \\ 1\end{psmallmatrix}, \pm\begin{psmallmatrix}1 \\ -1\end{psmallmatrix}\}$ as the only possible feasible edge directions, and hence the polyhedron $Q$ can be described by inequalities bounding $\alpha$, $\beta$, and $\alpha+\beta$ from above and/or below, as claimed by \cref{lem:feasibleResiduePolyhedron}.
Thus, we conclude the proof by showing the claim. To this end, define the three matrices
\begin{align*}
T' \coloneqq \begin{pmatrix}
A & ef^\top & 0 & 0 \\
gh^\top & B & 0 & 0 \\
0 & f^\top & -1 & 0 \\
h^\top & 0 & 0 & -1 \\
0 & 0 & 1 & 1
\end{pmatrix},\quad
T'' \coloneqq \begin{pmatrix}
A & ef^\top \\ gh^\top & B \\ 0 & f^\top \\ h^\top & 0
\end{pmatrix},
\quad\text{and}\quad
T'''\coloneqq\begin{pmatrix}
A & ef^\top & 0 & 0 \\
gh^\top & B & 0 & 0 \\
0 & f^\top & -1 & 0 \\
h^\top & 0 & 0 & -1 \\
h^\top & f^\top & 0 & 0
\end{pmatrix}.
\end{align*}
The matrices $T''$ and $T'''$ are auxiliary matrices we use in the following.
To show the claim we need to show that $T'$ is totally unimodular.
Indeed, this will show TU-appendability of both $\begin{psmallmatrix}1 \\ 1\end{psmallmatrix}$ and $\begin{psmallmatrix}-1 \\ -1\end{psmallmatrix}$ because changing the sign of a row preserve total unimodularity of a matrix.
To this end, consider any square submatrix $S=T'_{IJ}$ of $T'$, for two index subsets $I$ and $J$.
If $I$ does not contain all of the last three rows, we can perform a Laplace expansion of the determinant along unit rows and columns, which will suffice to get rid of the last two columns and the last row of $T'$ (if they are present in $S$), and get that the determinant of $S$ equals the determinant of a square submatrix of $T''$ in absolute value.
But $T''$ is totally unimodular due to \cref{obs:simultTUappend}, and hence the determinant of the submatrix that we are considering is in $\{-1,0,1\}$.
If, on the other hand, $S=T'_{IJ}$ contains all of the last three rows, we know that its determinant is equal to the determinant of the submatrix of $S'=T'''_{IJ}$, where $T'''$ is obtained from $T'$ by adding the penultimate and third to last row to the last one.
This operation does not change determinants, i.e., $\det(S) = \det(S')$.
But $T'''$ is totally unimodular by \cref{obs:simultTUappend}, and hence $\det(S')\in\{-1,0,1\}$.
In both cases, we obtain $\det(S)\in\{-1,0,1\}$, so $T'$ is totally unimodular.
\end{proof}

\subsubsection*{An averaging lemma and linear patterns\MOORdot}

In the proof of \cref{thm:TUprojectionEdges}, one key idea was to average two integral solutions $x_1$ and $x_2$ to obtain a fractional solution that has an integral scalar product with some TU-appendable vector $w$, and then decompose that fractional solution into other feasible vectors that have the same integral scalar product with $w$.
This idea can also be exploited to obtain the following result.
Here, for an \RCCTUF{} problem of the form given in~\eqref{eq:structured-problem} (or its relaxation), we say that an \RCCTUF{} solution (or solution to its relaxation) $x=(x_A, x_B)\in\mathbb{Z}^{n_A}\times\mathbb{Z}^{n_B}$ is a solution for $(\alpha, \beta)\in\mathbb{Z}^2$ if $f^\top x_B = \alpha$ and $h^\top x_A=\beta$.

\begin{lemma}[Averaging Lemma]\label{lem:averagingPatterns}
Consider the relaxation of an \RCCTUF{} problem of the form given in~\eqref{eq:structured-problem}.
Let $x^1$ and $x^2$ be solutions for $(\alpha_1, \beta_1)$ and $(\alpha_2, \beta_2)$, respectively.
Then, there exist solutions $x^3$ and $x^4$ for $(\alpha_3, \beta_3)$ and $(\alpha_4, \beta_4)$, respectively, such that $x^1+x^2=x^3+x^4$, as well as
\begin{equation}\label{eq:scalarProductBounds}
\begin{gathered}
\floor*{\frac{\alpha_1 + \alpha_2}{2}} \leq \alpha_3, \alpha_4 \leq \ceil*{\frac{\alpha_1 + \alpha_2}{2}} \enspace, \qquad
\floor*{\frac{\beta_1 + \beta_2}{2}} \leq \beta_3, \beta_4 \leq \ceil*{\frac{\beta_1 + \beta_2}{2}}\enspace,\quad\text{and} \\
\floor*{\frac{\alpha_1 + \beta_1 + \alpha_2 + \beta_2}{2}} \leq \alpha_3 + \beta_3, \alpha_4 + \beta_4 \leq \ceil*{\frac{\alpha_1 + \beta_1 + \alpha_2 + \beta_2}{2}}\enspace.
\end{gathered}
\end{equation}
\end{lemma}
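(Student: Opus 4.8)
\textbf{Proof plan for \cref{lem:averagingPatterns} (Averaging Lemma).}
The plan is to average $x^1$ and $x^2$ at the level of the relaxation, and then re-split the integral sum $s\coloneqq x^1+x^2$ into two integral feasible solutions whose three relevant scalar products are pinned down by a totally unimodular system. Let $T$ denote the constraint matrix of~\eqref{eq:structured-problem}, and write $d_\alpha\coloneqq\begin{pmatrix}0 & f^\top\end{pmatrix}$, $d_\beta\coloneqq\begin{pmatrix}h^\top & 0\end{pmatrix}$, $d_{\alpha\beta}\coloneqq\begin{pmatrix}h^\top & f^\top\end{pmatrix}$, so that for any $x=(x_A,x_B)$ we have $d_\alpha^\top x=\alpha$, $d_\beta^\top x=\beta$, and $d_{\alpha\beta}^\top x=\alpha+\beta$. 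Set $y\coloneqq\tfrac12(x^1+x^2)=\tfrac12 s$. Since $T$ is totally unimodular and both $x^1,x^2$ satisfy $Tx\leq b$, the convex combination $y$ satisfies $Ty\leq b$, and moreover $d_\alpha^\top y=\tfrac{\alpha_1+\alpha_2}{2}$, $d_\beta^\top y=\tfrac{\beta_1+\beta_2}{2}$, $d_{\alpha\beta}^\top y=\tfrac{\alpha_1+\beta_1+\alpha_2+\beta_2}{2}$.

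Next I would introduce the polyhedron
\[
P\coloneqq\left\{x\in\mathbb{R}^n\ :\ Tx\leq b,\ \ T(s-x)\leq b,\ \ \lfloor d_\alpha^\top y\rfloor\leq d_\alpha^\top x\leq\lceil d_\alpha^\top y\rceil,\ \ \lfloor d_\beta^\top y\rfloor\leq d_\beta^\top x\leq\lceil d_\beta^\top y\rceil,\ \ \lfloor d_{\alpha\beta}^\top y\rfloor\leq d_{\alpha\beta}^\top x\leq\lceil d_{\alpha\beta}^\top y\rceil\right\}.
\]
By construction $y\in P$, so $P\neq\emptyset$, and its right-hand side is integral ($b\in\mathbb{Z}^k$, $Ts\in\mathbb{Z}^k$ because $s=x^1+x^2\in\mathbb{Z}^n$, and all floors/ceilings are integers). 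The constraint matrix of $P$ is obtained from the matrix $M$ — which is $T$ with the rows $d_\alpha^\top,d_\beta^\top,d_{\alpha\beta}^\top$ appended, and is totally unimodular by \cref{obs:simultTUappend} — by accompanying every row of $M$ with a negated copy of itself. Adjoining a negated copy of a row never destroys total unimodularity: any square submatrix that selects both copies of some row has two linearly dependent rows and thus vanishing determinant, while any square submatrix selecting at most one copy of each row is, after possibly multiplying some rows by $-1$, a square submatrix of $M$ and hence has determinant in $\{-1,0,1\}$. Therefore $P$ is an integral polyhedron, and being nonempty it contains an integral point $x^3$. Define $x^4\coloneqq s-x^3$; then $x^3+x^4=s=x^1+x^2$, and since $x^3$ satisfies $T(s-x^3)\leq b$, the point $x^4$ satisfies $Tx^4\leq b$, so both $x^3$ and $x^4$ are solutions of the relaxation.

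Finally I would verify~\eqref{eq:scalarProductBounds}. Put $\alpha_3\coloneqq d_\alpha^\top x^3$, $\beta_3\coloneqq d_\beta^\top x^3$ and $\alpha_4\coloneqq d_\alpha^\top x^4$, $\beta_4\coloneqq d_\beta^\top x^4$, noting $d_{\alpha\beta}^\top x^i=\alpha_i+\beta_i$. Membership $x^3\in P$ gives directly $\lfloor\tfrac{\alpha_1+\alpha_2}{2}\rfloor\leq\alpha_3\leq\lceil\tfrac{\alpha_1+\alpha_2}{2}\rceil$, and likewise for $\beta_3$ and for $\alpha_3+\beta_3$. For $x^4=s-x^3$ we have $\alpha_4=d_\alpha^\top s-\alpha_3=(\alpha_1+\alpha_2)-\alpha_3$; since $\tfrac{\alpha_1+\alpha_2}{2}\in\tfrac12\mathbb{Z}$, an elementary check (treating the integer and the half-integer case) shows $\big[(\alpha_1+\alpha_2)-\lceil\tfrac{\alpha_1+\alpha_2}{2}\rceil,\ (\alpha_1+\alpha_2)-\lfloor\tfrac{\alpha_1+\alpha_2}{2}\rfloor\big]=\big[\lfloor\tfrac{\alpha_1+\alpha_2}{2}\rfloor,\ \lceil\tfrac{\alpha_1+\alpha_2}{2}\rceil\big]$, so $\alpha_4$ lies in the required range; the identical argument applies to $\beta_4$ and to $\alpha_4+\beta_4=(\alpha_1+\beta_1+\alpha_2+\beta_2)-(\alpha_3+\beta_3)$. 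This yields~\eqref{eq:scalarProductBounds} and finishes the proof.

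The only step I expect to be non-routine is establishing total unimodularity of the constraint matrix of $P$, which contains the ``doubled'' rows $\pm T$ together with the six rows $\pm d_\alpha^\top,\pm d_\beta^\top,\pm d_{\alpha\beta}^\top$; this is handled by the duplicate-and-negate observation layered on top of \cref{obs:simultTUappend}. Once that is in place, everything reduces to Hoffman--Kruskal integrality of $P$ and the elementary half-integer arithmetic above.
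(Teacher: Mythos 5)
Your proof is correct and takes essentially the same route as the paper's: average $x^1$ and $x^2$, set up a totally unimodular system with integral right-hand side (TU-ness coming from \cref{obs:simultTUappend} plus the fact that adjoining negated rows preserves total unimodularity) that contains the half-integral average, extract an integral point $x^3$, and set $x^4=x^1+x^2-x^3$. The only cosmetic difference is that the paper imposes the floor/ceiling bounds on both $x$ and $x^1+x^2-x$ inside the doubled system, whereas you impose them only on $x^3$ and recover the bounds for $x^4$ by the complement arithmetic; both are fine.
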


\begin{proof}
Consider the linear inequality system
\begin{equation}\label{eq:boundedSystem}
\renewcommand{\arraystretch}{1.1}
\begin{array}{rcrcrcl}
&      &             A x_A & + &   e f^\top x_B & \leq   & b_A \\
&      &      g h^\top x_A & + &          B x_B & \leq   & b_B \\
\floor*{\frac12(\alpha_1 + \beta_1 + \alpha_2 + \beta_2)}   & \leq & h^\top x_A & + & f^\top x_B & \leq   & \ceil*{\frac12(\alpha_1 + \beta_1 + \alpha_2 + \beta_2)} \\
\floor*{\frac12(\alpha_1 + \alpha_2)} & \leq & & & f^\top x_B & \leq   & \ceil*{\frac12(\alpha_1 + \alpha_2)} \\
\floor*{\frac12(\beta_1 + \beta_2)} & \leq & h^\top x_A & & & \leq   & \ceil*{\frac12(\beta_1 + \beta_2)}
\end{array}
\renewcommand{\arraystretch}{1.0}
\end{equation}and note that the claim of the lemma is that this system has two integral solutions $x^3$ and $x^4$ with $x^1+x^2=x^3+x^4$.
To find these solutions, let $T$ and $q$ be such that $Tx\leq q$ is the system~\eqref{eq:boundedSystem}, and observe that $T$ is totally unimodular by \cref{obs:simultTUappend}.
Then, the system
\begin{equation}\label{eq:twoSidedSystem}
\left\{\enspace\begin{aligned}
Tx & \leq q \\
T(x^1 + x^2 - x) & \leq q
\end{aligned}\right.
\end{equation}
also has a totally unimodular constraint matrix, and $z := \frac12(x^1 + x^2)$ is a (potentially fractional) solution of it.
Because the bounds in the inequality constraints are all integral, we conclude that the linear system in~\eqref{eq:twoSidedSystem} also has an integral solution $x^3$.
Additionally, by symmetry it is immediate that $x^4\coloneqq x^1 + x^2 - x^3$ is another integral solution.
In particular, we thus found $x^3$ and $x^4$ that are feasible for~\eqref{eq:boundedSystem}, and they satisfy $x^1+x^2 = x^3 + x^4$, as desired.
\end{proof}

The above averaging lemma gives us a tool to analyze (narrowed) patterns $\pi\colon \Pi_{\text{narrowed}}\to 2^{\{0,\ldots,m-1\}}$, because if the difference of $(\alpha_1,\beta_1)$ and $(\alpha_2,\beta_2)$ is large enough, the inequalities in \cref{lem:averagingPatterns} will make sure that $(\alpha_3,\beta_3)$ and $(\alpha_4,\beta_4)$ are different from $(\alpha_1,\beta_1)$ and $(\alpha_2,\beta_2)$, and hence also the solutions $x^3$ and $x^4$ are different from $x^1$ and $x^2$.
Still, the relation $x^1 + x^2 = x^3 + x^4$ allows us to draw conclusions about feasible residues in $\pi(\alpha_3, \beta_3)$ and $\pi(\alpha_4,\beta_4)$, and in particular relate them to residues in $\pi(\alpha_1, \beta_1)$ and $\pi(\alpha_2,\beta_2)$.
We start by applying these ideas to narrowed patterns $\pi$ that satisfy $|\pi(\alpha,\beta)|=1$ for all $(\alpha,\beta)\in\Pi_{\text{narrowed}}$.
Again, we use the notation
\begin{equation*}
\mathcal{D}\coloneqq\left\{\pm\begin{pmatrix}
1 \\ 0
\end{pmatrix}, \pm \begin{pmatrix}
0 \\ 1
\end{pmatrix}, \pm \begin{pmatrix}
1 \\ -1
\end{pmatrix}\right\}
\end{equation*}
to denote the set of potential edge directions of $\operatorname{conv}(\Pi_{\text{narrowed}})$.

\begin{lemma}\label{lem:equalDiffOneStep}
Consider a narrowed pattern $\pi\colon\Pi_{\text{narrowed}}\to2^{\{0,\ldots,m-1\}}$, and let
$d_1,d_2\in\mathcal{D}$, $(\alpha,\beta)\in\mathbb{Z}^2$ such that $(\alpha,\beta)+\varepsilon_1d_1+\varepsilon_2d_2\in\Pi_{\text{narrowed}}$ for all $\varepsilon_1,\varepsilon_2\in\{0,1\}$, and let $r_{\varepsilon_1,\varepsilon_2}\in\{0,\ldots,m-1\}$ be such that $\pi((\alpha,\beta)+\varepsilon_1d_1+\varepsilon_2d_2)=\{r_{\varepsilon_1,\varepsilon_2}\}$ for all $\varepsilon_1,\varepsilon_2\in\{0,1\}$.
Then $r_{1,1} - r_{0,1} \equiv r_{1,0} - r_{0,0}\pmod{m}$.
\end{lemma}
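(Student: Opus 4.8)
The plan is to combine the Averaging Lemma (\cref{lem:averagingPatterns}) with the additivity of the map $x=(x_A,x_B)\mapsto\gamma_B^\top x_B$ over the four ``corners'' $P_{\varepsilon_1\varepsilon_2}\coloneqq(\alpha,\beta)+\varepsilon_1 d_1+\varepsilon_2 d_2$. First I would fix, for each $\varepsilon_1,\varepsilon_2\in\{0,1\}$, a solution $z^{\varepsilon_1\varepsilon_2}=(z_A,z_B)$ of the relaxation of~\eqref{eq:structured-problem} for $P_{\varepsilon_1\varepsilon_2}$; such a solution exists because $P_{\varepsilon_1\varepsilon_2}\in\Pi_{\text{narrowed}}\subseteq\Pi$. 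The inequality $gh^\top z_A+Bz_B\le b_B$ together with $h^\top z_A=\beta$ and $f^\top z_B=\alpha$ (the scalar products of $P_{\varepsilon_1\varepsilon_2}$) shows that $z_B$ is feasible for the corresponding $B$-problem, so $\gamma_B^\top z_B\bmod m\in\pi(P_{\varepsilon_1\varepsilon_2})=\{r_{\varepsilon_1\varepsilon_2}\}$, i.e.\ $\gamma_B^\top z^{\varepsilon_1\varepsilon_2}_B\equiv r_{\varepsilon_1\varepsilon_2}\pmod{m}$. Since $r_{1,1}-r_{0,1}\equiv r_{1,0}-r_{0,0}\pmod{m}$ is equivalent to the symmetric statement $r_{0,0}+r_{1,1}\equiv r_{1,0}+r_{0,1}\pmod{m}$ (a sum over each of the two ``diagonals'' $\{P_{00},P_{11}\}$ and $\{P_{10},P_{01}\}$), I have the freedom to work with whichever diagonal is convenient.

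The core step is to apply \cref{lem:averagingPatterns} to the two solutions sitting on one diagonal. Both diagonals have the same $(\alpha,\beta)$-midpoint $C\coloneqq(\alpha,\beta)+\tfrac12(d_1+d_2)$, and averaging the two solutions on, say, the diagonal $\{P_i,P_j\}$ produces $z^3,z^4$ with $z^3+z^4=z^{i}+z^{j}$ and with scalar products $(\alpha_t,\beta_t)$, $t\in\{3,4\}$, confined by the bounds of \cref{lem:averagingPatterns} to the integer box
\[
 B(C)\coloneqq\{(a,b)\in\mathbb{Z}^2 : \floor{C_1}\le a\le\ceil{C_1},\ \floor{C_2}\le b\le\ceil{C_2},\ \floor{C_1+C_2}\le a+b\le\ceil{C_1+C_2}\}.
\]
Restricting $z^3+z^4=z^{i}+z^{j}$ to the $B$-coordinates and using additivity of $\gamma_B^\top(\cdot)$ gives $\gamma_B^\top z^3_B+\gamma_B^\top z^4_B\equiv r_i+r_j\pmod{m}$, while the feasibility argument from the first paragraph shows $\gamma_B^\top z^t_B\bmod m\in\pi((\alpha_t,\beta_t))$ for $t\in\{3,4\}$. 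Hence the lemma reduces to the purely geometric claim that $B(C)$ is exactly the endpoint set of one of the two diagonals (or the singleton $\{C\}$ when $C$ happens to be a corner): one then averages the \emph{other} diagonal, whose averaged solutions are forced onto $B(C)$, so their $\pi$-values are $\{r_{0,0}\}$ and $\{r_{1,1}\}$ (resp.\ both equal to the unique residue at the corner $C$), and the two displayed congruences combine to $r_{0,0}+r_{1,1}\equiv r_{1,0}+r_{0,1}\pmod{m}$.

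What remains — and this is the only real obstacle — is the finite case analysis establishing the geometric claim over $d_1,d_2\in\mathcal{D}$. If $d_1\in\{d_2,-d_2\}$, then $C$ is integral and coincides with one of the corners (for instance $C=P_{10}=P_{01}$ when $d_1=d_2$), so $B(C)=\{C\}$; here I would average the non-degenerate diagonal, forcing $z^3$ and $z^4$ both to sit at $C$, and use that $\pi(C)$ is a singleton (it is one of the given $\{r_{\varepsilon_1\varepsilon_2}\}$) together with $r_{1,0}=r_{0,1}$ (resp.\ $r_{0,0}=r_{1,1}$) in this degenerate situation. Otherwise $d_1$ and $d_2$ represent distinct nonzero classes of $(\mathbb{Z}/2)^2$, and a direct check of the finitely many pairwise non-antiparallel pairs shows that exactly one of $d_1+d_2$ and $d_1-d_2$ lies in $\mathcal{D}$; calling it $w$, the three possibilities $w=\pm\binom{1}{0},\pm\binom{0}{1},\pm\binom{1}{-1}$ each give, after unwinding the floors and ceilings, $B(C)=\{C-\tfrac12w,\ C+\tfrac12w\}$, which is precisely the endpoint set of the diagonal with direction $w$. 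Averaging the other diagonal then works as described (its two averaged solutions are distinct because their scalar-product sum $2C$ is not in $2\mathbb{Z}^2$, and both lie in the two-element set $B(C)$), which concludes the proof. Everything else is routine bookkeeping with \cref{lem:averagingPatterns} and the additivity of $\gamma_B^\top(\cdot)$.
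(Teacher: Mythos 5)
Your proposal is correct and follows essentially the same route as the paper's proof: apply the averaging lemma (\cref{lem:averagingPatterns}) to solutions at two opposite corners, use the scalar-product bounds (plus the fact that the averaged solutions' scalar products sum to those of the originals) to force the averaged solutions onto the other two corners or the common midpoint, and conclude via additivity of $\gamma_B^\top(\cdot)$ that $r_{0,0}+r_{1,1}\equiv r_{1,0}+r_{0,1}\pmod{m}$. The only difference is organizational: the paper normalizes the signs of $d_1,d_2$ by shifting the base point and then does the finite case check, whereas you keep the symmetric formulation and run the same finite check through the integer box around the midpoint $C$ -- both amount to the same case analysis, and your geometric claim about $B(C)$ does hold in all cases.
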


\begin{proof}
We first observe that we can assume without loss of generality that either $d_1=d_2$, or
$$\{d_1,d_2\}\in\left\{\left\{\begin{pmatrix}
1 \\ 0
\end{pmatrix}, \begin{pmatrix}
0 \\ 1
\end{pmatrix}\right\},\left\{\begin{pmatrix}
1 \\ 0
\end{pmatrix}, \begin{pmatrix}
1 \\ -1
\end{pmatrix}\right\}, \left\{\begin{pmatrix}
0 \\ 1
\end{pmatrix}, \begin{pmatrix}
-1 \\ 1
\end{pmatrix}\right\}\right\}\enspace.$$
Indeed, the above situation can always be achieved by changing the sign of $d_1$ and/or $d_2$. Changing the sign of $d_1$ can be done by choosing $(\alpha',\beta')=(\alpha,\beta)+d_1$ and the directions $d_1'=-d_1$ and $d_2'=d_2$, as we have $\{(\alpha',\beta')+\varepsilon_1d_1'+\varepsilon_2d_2'\colon\varepsilon_1,\varepsilon_2\in\{0,1\}\}=\{(\alpha,\beta)+\varepsilon_1d_1+\varepsilon_2d_2\colon\varepsilon_1,\varepsilon_2\in\{0,1\}\}$, and the statement that we want to show transforms accordingly.
Analogously, we may also change the sign of $d_2$.

Now let $x^1$ be a solution for $(\alpha_1,\beta_1)=(\alpha,\beta)$, and let $x^2$ be a solution for $(\alpha_2,\beta_2)=(\alpha,\beta)+d_1+d_2$.
Applying \cref{lem:averagingPatterns} to these solutions, we obtain that there exist solutions $x^3$ and $x^4$ for $(\alpha_3,\beta_3)$ and $(\alpha_4,\beta_4)$, respectively, such that $x^1+x^2=x^3+x^4$, and the inequalities in~\eqref{eq:scalarProductBounds} are satisfied.
On a case-by-case basis, it is immediate to see that with the above assumptions, the inequalities in~\eqref{eq:scalarProductBounds} imply that $(\alpha_3,\beta_3), (\alpha_4,\beta_4)\in\{(\alpha,\beta)+d_1, (\alpha,\beta)+d_2\}$.
Moreover, because $x^1+x^2=x^3+x^4$ also implies $(\alpha_1,\beta_1)+(\alpha_2,\beta_2)=(\alpha_3,\beta_3)+(\alpha_4,\beta_4)$, we must even have $\{(\alpha_3,\beta_3), (\alpha_4,\beta_4)\}=\{(\alpha,\beta)+d_1, (\alpha,\beta)+d_2\}$. We thus assume without loss of generality that $(\alpha_3,\beta_3)=(\alpha,\beta)+d_1$ and $(\alpha_4,\beta_4)=(\alpha,\beta)+d_2$.

By definition, we then have $r_{0,0} = \gamma_B^\top x^1_B$, $r_{1,0} = \gamma_B^\top x^3_B$, $r_{0,1} = \gamma_B^\top x^4_B$, and $r_{1,1} = \gamma_B^\top x^2_B$.
The equality $x^1+x^2 = x^3+x^4$ also implies $x^1_B+x^2_B = x^3_B+x^4_B$, and hence
$$
r_{1,1} - r_{0,1} \equiv \gamma_B^\top x^2_B - \gamma_B^\top x^4_B = \gamma_B^\top x^3_B - \gamma_B^\top x^1_B \equiv r_{1,0} - r_{0,0}\pmod{m}\enspace,
$$
as desired.
\end{proof}

In what follows, for any $(\alpha_1,\beta_1),(\alpha_2,\beta_2)\in\mathbb{Z}^2$, we define
$$
D_{(\alpha_1,\beta_1),(\alpha_2,\beta_2)} \coloneqq \left\{(\alpha,\beta)\in\mathbb{Z}^2\colon\begin{array}{c} \min\{\alpha_1+\beta_1,\alpha_2+\beta_2\}  \leq \alpha+\beta \leq  \max\{\alpha_1+\beta_1,\alpha_2+\beta_2\}\\
\min\{\alpha_1,\alpha_2\}  \leq \alpha \leq \max\{\alpha_1,\alpha_2\}\\
\min\{\beta_1,\beta_2\} \leq \beta \leq  \max\{\beta_1,\beta_2\}
\end{array} \right\}\enspace.
$$
In particular, if $(\alpha_1,\beta_1),(\alpha_2,\beta_2)\in\Pi_{\text{narrowed}}$ for some domain $\Pi_{\text{narrowed}}$ of a narrowed pattern, then by \cref{lem:feasibleResiduePolyhedron}, we always also have $D_{(\alpha_1,\beta_1),(\alpha_2,\beta_2)}\subseteq\Pi_{\text{narrowed}}$.
Also, if $(\alpha_3,\beta_3)\in D_{(\alpha_1,\beta_1),(\alpha_2,\beta_2)}$, then $D_{(\alpha_1,\beta_1),(\alpha_3,\beta_3)}\subseteq D_{(\alpha_1,\beta_1),(\alpha_2,\beta_2)}$.

Moreover, we define a \emph{distance} notion for two pairs $(\alpha_1,\beta_1),(\alpha_2,\beta_2)\in\mathbb{Z}^2$ as follows:
Consider the graph $G$ on $\mathbb{Z}^2$ where two points $x,y\in\mathbb{Z}^2$ are connected by an edge if and only if $x-y\in\mathcal{D}$, and define the distance between $(\alpha_1,\beta_1)$ and $(\alpha_2,\beta_2)$ to be the length of a shortest path in $G$ that connects the two points. It is easy to see that such a shortest path has all intermediate points within $D_{(\alpha_1,\beta_1),(\alpha_2,\beta_2)}$.
Concretely, if $(\alpha_1,\beta_1)$ and $(\alpha_2,\beta_2)$ are at distance $t$, there are $d_1,\ldots,d_t\in\mathcal{D}$ such that
\begin{enumerate}
\item $(\alpha_1,\beta_1) + \sum_{i=1}^\ell d_i\in D_{(\alpha_1,\beta_1),(\alpha_2,\beta_2)}$ for all $\ell\in[t]$, and
\item $(\alpha_1,\beta_1) + \sum_{i=1}^t d_i =(\alpha_2,\beta_2)$.
\end{enumerate}

\begin{lemma}\label{lem:equalDiff}
Consider a narrowed pattern $\pi\colon\Pi_{\text{narrowed}}\to2^{\{0,\ldots,m-1\}}$ and a subset $\Pi_0\subseteq \Pi_{\text{narrowed}}$ of the form
$$
\Pi_0 = \left\{(\alpha,\beta)\in\mathbb{Z}^2\colon \ell_0\leq \alpha+\beta\leq u_0,\ \ell_1\leq \alpha\leq u_1,\ \ell_2 \leq \beta\leq u_2\right\}
$$
with $|\pi(\alpha,\beta)|=1$ for all $(\alpha,\beta)\in\Pi_{0}$, and let $r(\alpha,\beta)\in\{0,\ldots,m-1\}$ be such that $\pi(\alpha,\beta)=\{r(\alpha,\beta)\}$.
Then, for every $d\in \mathcal{D}$, there exists $r_d\in\{0,\ldots,m-1\}$ such that for any $(\alpha,\beta)\in\Pi_0$ with $(\alpha,\beta)+d\in\Pi_0$,
$$
r((\alpha,\beta)+d) - r(\alpha,\beta) \equiv r_d \pmod{m}\enspace.
$$
\end{lemma}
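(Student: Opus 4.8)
The statement asks us to show that on a "box-shaped" domain $\Pi_0$ where the pattern is single-valued, the difference $r((\alpha,\beta)+d)-r(\alpha,\beta)$ depends only on the direction $d\in\mathcal{D}$, not on the base point. The natural approach is to fix $d\in\mathcal{D}$ and argue that along any direction, the increment is locally constant, hence globally constant on the connected box $\Pi_0$. First I would observe that $\Pi_0$ is itself a set of the same shape as a narrowed pattern domain, so $\cref{lem:equalDiffOneStep}$ applies verbatim to any translated unit square $\{(\alpha,\beta)+\varepsilon_1 d_1+\varepsilon_2 d_2:\varepsilon_i\in\{0,1\}\}$ that lies inside $\Pi_0$: it gives $r_{1,1}-r_{0,1}\equiv r_{1,0}-r_{0,0}\pmod m$. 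Taking $d_1=d_2=d$, this immediately says that if three consecutive points $(\alpha,\beta)$, $(\alpha,\beta)+d$, $(\alpha,\beta)+2d$ all lie in $\Pi_0$, then $r((\alpha,\beta)+2d)-r((\alpha,\beta)+d)\equiv r((\alpha,\beta)+d)-r(\alpha,\beta)\pmod m$; that is, the $d$-increment is the same at two adjacent points along a common line. Taking $d_1=d$ and $d_2\in\mathcal{D}$ a different admissible direction (after the sign normalization in $\cref{lem:equalDiffOneStep}$), it says the $d$-increment at $(\alpha,\beta)$ equals the $d$-increment at $(\alpha,\beta)+d_2$. So the "$d$-increment function" $g_d(\alpha,\beta):=(r((\alpha,\beta)+d)-r(\alpha,\beta)\bmod m)$, defined on $\{(\alpha,\beta)\in\Pi_0:(\alpha,\beta)+d\in\Pi_0\}$, is invariant under moving the base point by any direction in $\mathcal{D}$ (as long as we stay in the domain).

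**Carrying it out.** With that invariance in hand, the remaining task is connectivity: any two points in the relevant subdomain $\Pi_0^d:=\{(\alpha,\beta)\in\Pi_0:(\alpha,\beta)+d\in\Pi_0\}$ can be joined by a path using steps in $\mathcal{D}$ that stays within $\Pi_0^d$. Here I would use the distance notion introduced just before the lemma: $\Pi_0$ is a box cut out by upper/lower bounds on $\alpha$, $\beta$, $\alpha+\beta$, and for such a box any two integer points are connected by a shortest $\mathcal{D}$-path whose intermediate points lie in the corresponding "interval box" $D_{(\alpha_1,\beta_1),(\alpha_2,\beta_2)}\subseteq\Pi_0$. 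A small technical point is that we need the path to stay in $\Pi_0^d$, not merely $\Pi_0$ — i.e. at every intermediate point $p$ we also need $p+d\in\Pi_0$. This is handled by noting $\Pi_0^d$ is again a box of the same shape (it is $\Pi_0$ intersected with the translate $\Pi_0-d$, and intersecting two boxes with edge directions in $\mathcal{D}$ gives a box with edge directions in $\mathcal{D}$), so the same connectivity statement applies inside $\Pi_0^d$ directly. Then, defining $r_d$ to be the common value $g_d$ takes at any one point of $\Pi_0^d$ (if $\Pi_0^d=\emptyset$ the statement is vacuous and any $r_d$ works), the invariance propagates along the path to show $g_d\equiv r_d$ everywhere on $\Pi_0^d$, which is exactly the claim.

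**Main obstacle.** The conceptual content is entirely in $\cref{lem:equalDiffOneStep}$; the only real work here is bookkeeping. The step I expect to be slightly delicate is the sign-normalization interface with $\cref{lem:equalDiffOneStep}$: that lemma only directly covers certain ordered pairs $\{d_1,d_2\}$ (or $d_1=d_2$), so to conclude "$g_d$ is invariant under every step in $\mathcal{D}$" I need to check each pair $\{d,d_2\}$ with $d_2\in\mathcal{D}\setminus\{d\}$ reduces, after flipping signs of $d$ and/or $d_2$ and reindexing the unit square, to one of the allowed configurations — and verify that the reindexing transforms the congruence $r_{1,1}-r_{0,1}\equiv r_{1,0}-r_{0,0}$ into precisely the statement "$d$-increment at base equals $d$-increment at base$+d_2$" (up to sign, which is harmless since we only care about a fixed $d$ and can also run the argument for $-d$). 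I would organize this as a short case analysis, or more cleanly observe that $\cref{lem:equalDiffOneStep}$ with $d_1=d_2=d$ already gives translation-invariance of $g_d$ in the $d$-direction, and $\cref{lem:equalDiffOneStep}$ with $\{d_1,d_2\}=\{d,d'\}$ for each of the (at most three, up to sign) other directions $d'$ gives invariance in the $d'$-direction; since $\mathcal{D}$ spans $\mathbb{Z}^2$ and the box is $\mathcal{D}$-connected, this suffices. Everything else is immediate.
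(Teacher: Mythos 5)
Your proposal is correct and follows essentially the same route as the paper: both reduce the claim to the single-step invariance provided by \cref{lem:equalDiffOneStep} (whose hypotheses are satisfied because all four corners of the relevant square lie in $\Pi_0$, where $|\pi|=1$) and then propagate it along a $\mathcal{D}$-path; the paper organizes this as an induction on the distance between base points with the shortest path shifted by $d$, whereas you observe that $\Pi_0^d=\Pi_0\cap(\Pi_0-d)$ is again a box of the same shape and connect within it, which is only a cosmetic difference. Your worry about sign normalization is moot: \cref{lem:equalDiffOneStep} is stated for arbitrary $d_1,d_2\in\mathcal{D}$ (including $d_2=\pm d_1$), the normalization being internal to its proof, so no extra case analysis is needed.
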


\begin{proof}
Fix $d\in\mathcal{D}$.
To derive the lemma, it is enough to show that for all $(\alpha_1,\beta_1), (\alpha_2,\beta_2)\in\Pi_0$ with $(\alpha_1,\beta_1)+d,(\alpha_2,\beta_2)+d\in\Pi_0$, we have
\begin{equation}\label{eq:residueDiff}
r((\alpha_1,\beta_1)+d)-r(\alpha_1,\beta_1) \equiv r((\alpha_2,\beta_2)+d)-r(\alpha_2,\beta_2) \pmod{m}\enspace.
\end{equation}
Note that if the distance between $(\alpha_1,\beta_1)$ and $(\alpha_2,\beta_2)$ is~$0$, there is nothing to show.
Moreover, if that distance is~$1$, then a corresponding shortest path connecting $(\alpha_1,\beta_1)$ and $(\alpha_2,\beta_2)$ consists of a single step $d'\in\mathcal{D}$, i.e. $(\alpha_2,\beta_2)=(\alpha_1,\beta_1)+d'$, and~\eqref{eq:residueDiff} follows from applying \cref{lem:equalDiffOneStep} to $(\alpha_1,\beta_1)$ and the directions $d, d'\in\mathcal{D}$.

More generally, let us assume by induction that~\eqref{eq:residueDiff} holds whenever the distance of $(\alpha_1,\beta_1)$ and $(\alpha_2,\beta_2)$ is less than $t$, for some $t\geq 2$, and take two such pairs of distance equal to $t$.
Then, a corresponding shortest path connecting the two points can be represented by $d_1,\ldots,d_t\in\mathcal{D}$.
Let $(\alpha',\beta')=(\alpha_1,\beta_1)+d_1$. By applying \cref{lem:equalDiffOneStep} to $(\alpha_1,\beta_1)$ and the directions $d,d_1\in\mathcal{D}$, we obtain
\begin{equation}\label{eq:firstStep}
r((\alpha_1,\beta_1)+d)-r(\alpha_1,\beta_1) \equiv r((\alpha',\beta')+d)-r(\alpha',\beta') \pmod{m}\enspace.
\end{equation}
Note that this invocation of \cref{lem:equalDiffOneStep} requires $(\alpha_1,\beta_1)+d+d_1\in\Pi_{\text{narrowed}}$, which holds because of the following:
A shortest path $P$ connecting $(\alpha_1,\beta_1)$ and $(\alpha_2,\beta_2)$ is inside $D_{(\alpha_1,\beta_1),(\alpha_2,\beta_2)}$, and shifting the whole path by $d$ gives a shortest path connecting $(\alpha_1,\beta_1)+d$ and $(\alpha_2,\beta_2)+d$ that is inside $D_{(\alpha_1,\beta_1)+d,(\alpha_2,\beta_2)+d}\subseteq\Pi_{\text{narrowed}}$.
Thus, in particular, because $(\alpha',\beta')$ is on $P$, $(\alpha',\beta')+d=(\alpha_1,\beta_1)+d+d_1$ is on the shifted path, and thus in $\Pi_{\text{narrowed}}$.

Additionally, because $(\alpha',\beta')$ and $(\alpha_2,\beta_2)$ are of distance at $t-1$, the inductive assumption gives that
\begin{equation}\label{eq:induction}
r((\alpha',\beta')+d)-r(\alpha',\beta') \equiv r((\alpha_2,\beta_2)+d)-r(\alpha_2,\beta_2)\pmod{m}\enspace.
\end{equation}
Together,~\eqref{eq:firstStep} and~\eqref{eq:induction} imply the desired~\eqref{eq:residueDiff}, thus completing the inductive step.
\end{proof}

\begin{corollary}\label{cor:linearSubpattern}
Consider a narrowed pattern $\pi\colon\Pi_{\text{narrowed}}\to2^{\{0,\ldots,m-1\}}$ and a subset $\Pi_0\subseteq\Pi_{\text{narrowed}}$ of the form
$$
\Pi_0 = \left\{(\alpha,\beta)\in\mathbb{Z}^2\colon \ell_0\leq \alpha+\beta\leq u_0,\ \ell_1\leq \alpha\leq u_1,\ \ell_2 \leq \beta\leq u_2\right\}
$$
with $|\pi(\alpha,\beta)|=1$ for all $(\alpha,\beta)\in\Pi_0$, and let $r(\alpha,\beta)\in\{0,\ldots,m-1\}$ be such that $\pi(\alpha,\beta)=\{r(\alpha,\beta)\}$.
Then, there exist $r_0, r_1, r_2\in\{0,\ldots,m-1\}$ such that for all $(\alpha,\beta)\in\Pi_0$,
$$
r(\alpha,\beta) \equiv r_0 + r_1\alpha + r_2\beta \pmod{m}\enspace.
$$
\end{corollary}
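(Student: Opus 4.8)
Proof plan. The plan is to upgrade Lemma~\ref{lem:equalDiff} --- which asserts that $r$ has a \emph{constant first difference} $r_d\in\mathbb Z/m\mathbb Z$ in each of the three lattice directions $d\in\{(1,0),(0,1),(1,-1)\}$ occurring in $\mathcal D$ (and $r_{-d}\equiv -r_d$, with equality where $\Pi_0$ has a pair differing by $d$, and a free choice otherwise) --- to full affinity of $r$ on $\Pi_0$. First I would dispose of the degenerate case where all points of $\Pi_0$ are collinear: a one‑dimensional box‑hexagon is a lattice segment whose direction lies in $\{(1,0),(0,1),(1,-1)\}$ up to sign, so applying Lemma~\ref{lem:equalDiff} to that single direction shows $r$ is affine in the parameter along the segment, and writing that parameter as $\alpha$ (when the direction has nonzero first coordinate) or else as $\beta$ puts $r$ in the form $r_0+r_1\alpha+r_2\beta$ at once.

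The crux is the two‑dimensional case, and the key relation to establish is
$$r_{(1,-1)}\equiv r_{(1,0)}-r_{(0,1)}\pmod{m}.$$
To prove it I would use the elementary geometric fact that a two‑dimensional box‑hexagon $\Pi_0$ contains a unit ``up‑triangle'' $\{q,q+e_1,q+e_2\}$ or a unit ``down‑triangle'' $\{q,q+e_1,q+e_1-e_2\}$ for some $q$ (a short case analysis on which of the six bounds are active near a corner of $\Pi_0$; note that in the $2$‑dimensional case $\Pi_0$ necessarily has pairs differing by $(1,-1)$, so this relation cannot be evaded). In the up‑triangle case, $q$ and $q+e_1$ differ by $(1,0)$, and $q$ and $q+e_2$ by $(0,1)$, so Lemma~\ref{lem:equalDiff} gives $r(q+e_1)-r(q)\equiv r_{(1,0)}$ and $r(q+e_2)-r(q)\equiv r_{(0,1)}$; since $q+e_1$ and $q+e_2$ differ by $(1,-1)$ it also gives $r(q+e_1)-r(q+e_2)\equiv r_{(1,-1)}$, and subtracting yields the key relation (the down‑triangle case is symmetric).

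With the key relation in hand, I would set $r_1:=r_{(1,0)}$, $r_2:=r_{(0,1)}$, fix a base point $p_0=(\alpha_0,\beta_0)\in\Pi_0$, and put $r_0:=r(p_0)-r_1\alpha_0-r_2\beta_0\bmod m$. To verify $r(\alpha,\beta)\equiv r_0+r_1\alpha+r_2\beta$ for arbitrary $(\alpha,\beta)\in\Pi_0$, I would join $p_0$ to $(\alpha,\beta)$ by a path $p_0=q_0,q_1,\dots,q_k=(\alpha,\beta)$ with consecutive differences in $\mathcal D$ and all $q_i\in\Pi_0$; such a path exists because, as recorded in the paragraph preceding Lemma~\ref{lem:equalDiff}, the $\mathcal D$‑distance between two points of $\Pi_0$ is realized by a path inside $D_{(\alpha_0,\beta_0),(\alpha,\beta)}\subseteq\Pi_0$. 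Telescoping together with Lemma~\ref{lem:equalDiff} gives $r(\alpha,\beta)-r(p_0)\equiv\sum_{i=1}^{k}r_{q_i-q_{i-1}}$. Finally, the map $d\mapsto r_d$ on $\mathcal D$ extends to the $\mathbb Z$‑linear functional $\tilde r$ with $\tilde r(e_1)=r_1$ and $\tilde r(e_2)=r_2$: indeed $r_{(1,0)}=r_1=\tilde r(e_1)$, $r_{(0,1)}=r_2=\tilde r(e_2)$, $r_{(1,-1)}\equiv r_1-r_2=\tilde r(e_1-e_2)$ by the key relation, and $r_{-d}\equiv-r_d=\tilde r(-d)$. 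Hence $\sum_{i=1}^{k}r_{q_i-q_{i-1}}\equiv\tilde r\!\big(\sum_{i=1}^{k}(q_i-q_{i-1})\big)=\tilde r(\alpha-\alpha_0,\beta-\beta_0)=r_1(\alpha-\alpha_0)+r_2(\beta-\beta_0)$, which rearranges to $r(\alpha,\beta)\equiv r_0+r_1\alpha+r_2\beta$.

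The main obstacle is the key relation $r_{(1,-1)}\equiv r_{(1,0)}-r_{(0,1)}$, and inside it the routine but slightly fiddly lattice‑geometric claim that a full‑dimensional box‑hexagon always contains a unit up‑ or down‑triangle. Everything else is bookkeeping layered on top of Lemma~\ref{lem:equalDiff} and the $\mathcal D$‑connectivity remark that precedes it.
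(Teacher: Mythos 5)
Your proposal is correct and follows essentially the same route as the paper's proof: telescoping $r$ along a $\mathcal{D}$-path from a base point via \cref{lem:equalDiff}, handling the degenerate (collinear / tight-constraint) case separately, and in the full-dimensional case deriving the key relation $r_{(1,-1)}\equiv r_{(1,0)}-r_{(0,1)}\pmod{m}$ from a unit triangle contained in $\Pi_0$ --- exactly the configuration the paper uses in its Case~2, asserted there at the same level of detail as in your sketch. The only differences (proving the key relation up front and packaging the conclusion as a $\mathbb{Z}$-linear functional rather than aggregating coefficients $a,b,c$ and then case-splitting) are presentational.
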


\begin{proof}
Fix $(\alpha_0,\beta_0)\in\Pi_0$. Then for any $(\alpha,\beta)\in\Pi_0$, there exists $t\in\mathbb{Z}_{\geq 0}$ and $d_1,\ldots,d_t\in\mathcal{D}$ such that
\begin{enumerate*}
\item $(\alpha_\ell,\beta_\ell)\coloneqq (\alpha_0,\beta_0) + \sum_{i=1}^\ell d_i\in \Pi_0$ for all $\ell\in[t]$, and
\item $(\alpha_t,\beta_t)=(\alpha,\beta)$.
\end{enumerate*}
Now observe that we can write
$$
r(\alpha,\beta) = r(\alpha_0,\beta_0) + \sum_{i=0}^{t-1}r((\alpha_i,\beta_i)+d_i) - r(\alpha_i,\beta_i)\enspace.
$$
By \cref{lem:equalDiff}, we know that for every $d\in\mathcal{D}\cap \{d_1,\ldots,d_t\}$, there exists $r_d\in\mathbb{Z}$ such that $r((\alpha_i,\beta_i)+d) - r(\alpha_i,\beta_i)\equiv r_d\pmod*{m}$ for all $i\in[t]$ with $d_i=d$.
Observe that by definition, we must also have $r_{-d} \equiv -r_{d}\pmod*{m}$, hence by aggregating terms in the above sum, we obtain
\begin{equation}\label{eq:residue}
r(\alpha,\beta) = r(\alpha_0,\beta_0) + a \cdot r_{\begin{psmallmatrix}
1 \\ 0
\end{psmallmatrix}} + b \cdot r_{\begin{psmallmatrix}
0 \\ 1
\end{psmallmatrix}} + c \cdot r_{\begin{psmallmatrix}
1 \\ -1
\end{psmallmatrix}}\enspace,
\end{equation}
where the coefficients $a,b,c\in\mathbb{Z}$ satisfy
\begin{equation}\label{eq:path}
\begin{pmatrix}
\alpha \\ \beta
\end{pmatrix} = \begin{pmatrix}
\alpha_0 \\ \beta_0
\end{pmatrix} + a\cdot\begin{pmatrix}
1 \\ 0
\end{pmatrix} + b\cdot\begin{pmatrix}
0 \\ 1
\end{pmatrix} + c\cdot\begin{pmatrix}
1 \\ -1
\end{pmatrix}\enspace.
\end{equation}
The latter equation follows from aggregating terms in the sum in $(\alpha,\beta)=(\alpha_0,\beta_0)+\sum_{i=1}^t d_i$.
We now distinguish two cases:
\begin{description}
\item[Case 1:] One constraint in $\Pi_0$ is tight for all points in $\Pi_0$.\\
In this case, two among the three coefficients $a$, $b$, and $c$ will be zero for any choice of $(\alpha,\beta)\in\Pi_0$.
If $c=0$ is one of the zero coefficients, then~\eqref{eq:path} implies that $a=\alpha-\alpha_0$ and $b=\beta-\beta_0$, and~\eqref{eq:residue} gives that for all $(\alpha,\beta)\in\Pi_0$ we have
$$
r(\alpha,\beta) = r(\alpha_0,\beta_0) + (\alpha-\alpha_0)\cdot r_{\begin{psmallmatrix}
1 \\ 0
\end{psmallmatrix}} + (\beta-\beta_0)\cdot r_{\begin{psmallmatrix}
0 \\ 1
\end{psmallmatrix}}\enspace,
$$
which is linear in $\alpha$ and $\beta$, as required.
Otherwise, $a=b=0$ and~\eqref{eq:path} implies that $c=\alpha-\alpha_0$, and hence by~\eqref{eq:residue},
$$
r(\alpha,\beta) = r(\alpha_0,\beta_0) + (\alpha-\alpha_0)\cdot r_{\begin{psmallmatrix}
1 \\ -1
\end{psmallmatrix}}\enspace,
$$
which is of the desired form, as well.

\item[Case 2:] No constraint in $\Pi_0$ is tight for all points in $\Pi_0$.\\
This implies that there is a pair $(\alpha',\beta')\in\Pi_{0}$ such that either
\begin{enumerate}
\item $
\begin{psmallmatrix} \alpha'\\ \beta'\end{psmallmatrix},
\begin{psmallmatrix} \alpha'\\ \beta'\end{psmallmatrix}+\begin{psmallmatrix} 1\\0\end{psmallmatrix},
\begin{psmallmatrix} \alpha'\\ \beta'\end{psmallmatrix}+\begin{psmallmatrix} 0\\1\end{psmallmatrix} \in\Pi_0$, or

\item $\begin{psmallmatrix} \alpha'\\ \beta'\end{psmallmatrix},
\begin{psmallmatrix} \alpha'\\ \beta'\end{psmallmatrix}-\begin{psmallmatrix} 1\\0\end{psmallmatrix},
\begin{psmallmatrix} \alpha'\\ \beta'\end{psmallmatrix}-\begin{psmallmatrix} 0\\1\end{psmallmatrix} \in\Pi_0$.
\end{enumerate}
\medskip
In the first case, we get
\begin{multline*}
r_{\begin{psmallmatrix} 1 \\-1 \end{psmallmatrix}}  \equiv r\left((\alpha',\beta')+(1,0)\right) - r\left((\alpha',\beta')+(0,1)\right) \\
= \Big( r\left((\alpha',\beta')+(1,0)\right) - r\left(\alpha',\beta'\right)\Big) - \Big( r\left((\alpha',\beta')+(0,1)\right) - r(\alpha',\beta')\Big) \equiv r_{\begin{psmallmatrix} 1 \\0 \end{psmallmatrix}}- r_{\begin{psmallmatrix} 0 \\1 \end{psmallmatrix}}\enspace,
\end{multline*}
and in the second case, we get
\begin{multline*}
r_{\begin{psmallmatrix} 1 \\-1 \end{psmallmatrix}}  \equiv r\left((\alpha',\beta')-(0,1)\right) - r\left((\alpha',\beta')-(1,0)\right) \\
= \Big( r(\alpha',\beta') - r\left((\alpha',\beta')-(1,0)\right)\Big) - \Big( r(\alpha',\beta')-r\left((\alpha',\beta')-(0,1)\right)\Big) \equiv r_{\begin{psmallmatrix} 1 \\0 \end{psmallmatrix}}- r_{\begin{psmallmatrix} 0 \\1 \end{psmallmatrix}}\enspace.
\end{multline*}
Note that this gives the same relation among the different vectors $r_d$ in both cases.
Using this in~\eqref{eq:residue} together with the fact that~\eqref{eq:path} implies $a+c=\alpha-\alpha_0$ and $b-c=\beta-\beta_0$, we obtain that for all $(\alpha,\beta)\in\Pi_{0}$, we have
\begin{multline*}
r(\alpha,\beta) \equiv r(\alpha_0,\beta_0) + a \cdot r_{\begin{psmallmatrix}
1 \\ 0
\end{psmallmatrix}} + b \cdot r_{\begin{psmallmatrix}
0 \\ 1
\end{psmallmatrix}} + c \cdot \left( r_{\begin{psmallmatrix} 1 \\0 \end{psmallmatrix}}- r_{\begin{psmallmatrix} 0 \\1 \end{psmallmatrix}}\right) \\
= r(\alpha_0,\beta_0) + (\alpha-\alpha_0) \cdot r_{\begin{psmallmatrix}
1 \\ 0
\end{psmallmatrix}} + (\beta-\beta_0) \cdot r_{\begin{psmallmatrix}
0 \\ 1
\end{psmallmatrix}} \pmod{m}\enspace,
\end{multline*}
which is again a relation of the desired form.\qedhere
\end{description}
\end{proof}

\subsubsection*{Proof of \cref{thm:integration}\MOORdot}

We actually prove a slightly more general version of \cref{thm:integration}, in order not only to apply it to linear patterns $\pi$, but also to linear sub-patterns of a pattern $\pi$.
To this end, let us formally repeat the definition of sub-patterns.

\begin{definition}
Let $\pi\colon\Pi_{\text{narrowed}}\to 2^{\{0,\ldots,m-1\}}$ be a narrowed pattern stemming from an \RCCTUF{} problem of the form given in \eqref{eq:structured-problem}.
We say that $\widetilde \pi\colon \widetilde\Pi\to 2^{\{0,\ldots,m-1\}}$ is a \emph{sub-pattern} of $\pi$ if the following holds:
\begin{enumerate}
\item $\widetilde \Pi \subseteq \Pi_{\text{narrowed}}$.
\item There are $\ell_i, u_i\in\mathbb{Z}$ for $i\in \{0,1,2\}$ such that
\begin{equation*}
\widetilde\Pi = \left\{(\alpha,\beta)\in\mathbb{Z}^2\colon \ell_0\leq \alpha+\beta\leq u_0,\ \ell_1\leq \alpha\leq u_1,\ \ell_2 \leq \beta\leq u_2\right\}\enspace.
\end{equation*}
\item $\widetilde{\pi}(\alpha, \beta)\subseteq\pi(\alpha,\beta)$ for all $(\alpha, \beta)\in \widetilde\Pi$.
\end{enumerate}
\end{definition}

Moreover, we say that a solution $x=(x_A,x_B)$ of an \RCCTUF{} problem of the form given in \eqref{eq:structured-problem} is \emph{covered} by a sub-pattern $\widetilde{\pi}$ if $\gamma^\top x_B\in\widetilde{\pi}(\alpha,\beta)$ for $\alpha=f^\top x_B$ and $\beta=h^\top x_A$.

\begin{theorem}\label{thm:integrationSubpatterns}
Consider an \RCCTUF{} problem of the form given in~\eqref{eq:structured-problem}, let $\pi$ be an associated narrowed pattern, and let $\widetilde\pi$ be a linear sub-pattern of $\pi$ with domain given by $\widetilde\Pi=\{(\alpha,\beta)\in\mathbb{Z}^2\colon \ell_0\leq \alpha+\beta\leq u_0,\ \ell_1\leq \alpha\leq u_1,\ \ell_2 \leq \beta\leq u_2\}$, where $\ell_i, u_i\in\mathbb{Z}$ for $i\in\{0,1,2\}$. Then, we can in strongly polynomial time determine $r_0, r_1, r_2\in \{0,1,\ldots, m-1\}$ such that the \RCCTUF{} problem
\begin{equation}\label{eq:integrationProblem}
\begin{array}{rcrcrcrcl}
       &      &             A x_A & + &   e y_1 &   &         & \leq   & b_A \\
       &      &        h^\top x_A &   &         & - &     y_2 &   =    & 0 \\
\ell_0 & \leq &                   &   &     y_1 & + &     y_2 & \leq   & u_0 \\
\ell_1 & \leq &                   &   &     y_1 &   &         & \leq   & u_1 \\
\ell_2 & \leq &                   &   &         &   &     y_2 & \leq   & u_2 \\
       &      & \gamma_A^\top x_A & + & r_1 y_1 & + & r_2 y_2 & \in    & r_0 + R \pmod{m}\\
       &      &               x_A &   &         &   &         & \in    & \mathbb{Z}^{n_A}\\
       &      &                   &   &     y_1 & , &     y_2 & \in    & \mathbb{Z}\\
\end{array}
\end{equation}
has a feasible solution if and only if the original \RCCTUF{} problem has one that is covered by $\widetilde{\pi}$. Moreover, a solution of one problem can be transformed into one of the other in strongly polynomial time.
\end{theorem}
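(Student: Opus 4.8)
The plan is to exhibit the claimed reduction explicitly in both directions, reading the coefficients $r_0,r_1,r_2$ off the linear representation of $\widetilde\pi$. First I would record, using that $\widetilde\pi$ is a \emph{linear} sub-pattern of $\pi$ with domain $\widetilde\Pi$, that \cref{cor:linearSubpattern} (equivalently \cref{def:linearity}) provides residues $\rho_0,\rho_1,\rho_2\in\{0,\ldots,m-1\}$ such that, writing $\widetilde\pi(\alpha,\beta)=\{r(\alpha,\beta)\}$, we have $r(\alpha,\beta)\equiv\rho_0+\rho_1\alpha+\rho_2\beta\pmod*{m}$ for all $(\alpha,\beta)\in\widetilde\Pi$; these are obtained in strongly polynomial time by evaluating $\widetilde\pi$ at a constant number of points spanning the affine hull of $\widetilde\Pi$ and solving the resulting congruence. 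I then set $r_1:=\rho_1$, $r_2:=\rho_2$, and $r_0:=(-\rho_0\bmod m)$, so that $r_0+R=\{(\rho-\rho_0)\bmod m:\rho\in R\}$. Before relating the two problems one must also check that \eqref{eq:integrationProblem} is a bona fide \RCCTUF{} problem, i.e. that its constraint matrix is totally unimodular; I would argue this exactly as for \eqref{eq:integration} in \cref{thm:integration}, using the $3$-sum structure of $T$ and \cref{obs:simultTUappend} (concretely, total unimodularity of the first summand in \eqref{eq:extendedSum}), which is the place where simultaneous TU-appendability of $(0\ f^\top)$ and $(h^\top\ 0)$ is needed.

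For the direction from \eqref{eq:integrationProblem} to the original problem, take a feasible $(x_A,y_1,y_2)$ for \eqref{eq:integrationProblem} and set $\alpha:=y_1$, $\beta:=y_2$. The three double bounds of \eqref{eq:integrationProblem} give $(\alpha,\beta)\in\widetilde\Pi\subseteq\Pi_{\text{narrowed}}$, and the first two blocks give $Ax_A\leq b_A-\alpha e$ and $h^\top x_A=\beta$, so $x_A$ solves the relaxation of the $A$-problem for $(\alpha,\beta)$. Since $\widetilde\pi$ is a sub-pattern, $r(\alpha,\beta)\in\widetilde\pi(\alpha,\beta)\subseteq\pi(\alpha,\beta)$, so the $B$-problem $\{Bx_B\leq b_B-\beta g,\ f^\top x_B=\alpha,\ \gamma_B^\top x_B\equiv r(\alpha,\beta)\pmod*{m}\}$ is feasible; I take a witness $x_B$ (available from the computation of $\pi$, hence of $\widetilde\pi$). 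Then $x:=(x_A,x_B)$ satisfies $Ax_A+ef^\top x_B=Ax_A+\alpha e\leq b_A$ and $gh^\top x_A+Bx_B=\beta g+Bx_B\leq b_B$, i.e. $Tx\leq b$, and the congruency constraint of \eqref{eq:integrationProblem} yields $\gamma_A^\top x_A\equiv\sigma-\rho_0-\rho_1\alpha-\rho_2\beta\pmod*{m}$ for some $\sigma\in R$, whence $\gamma_A^\top x_A+\gamma_B^\top x_B\equiv\sigma\in R\pmod*{m}$. Thus $x$ is feasible for the original problem, and since $\gamma_B^\top x_B\equiv r(\alpha,\beta)$ it is in fact covered by $\widetilde\pi$.

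Conversely, given a feasible solution $(x_A^*,x_B^*)$ of the original \RCCTUF{} problem that is covered by $\widetilde\pi$, set $\alpha^*:=f^\top x_B^*$ and $\beta^*:=h^\top x_A^*$. Being covered forces $(\alpha^*,\beta^*)\in\widetilde\Pi$ and $\gamma_B^\top x_B^*\in\widetilde\pi(\alpha^*,\beta^*)=\{r(\alpha^*,\beta^*)\}$, i.e. $\gamma_B^\top x_B^*\equiv\rho_0+\rho_1\alpha^*+\rho_2\beta^*\pmod*{m}$. Put $x_A:=x_A^*$, $y_1:=\alpha^*$, $y_2:=\beta^*$: then $Ax_A+ey_1\leq b_A$ and $h^\top x_A-y_2=0$ hold because they are, respectively, the first block of $Tx\leq b$ at $x^*$ and the identity $h^\top x_A^*=\beta^*$; the double bounds hold because $(\alpha^*,\beta^*)\in\widetilde\Pi$; and from $\gamma_A^\top x_A^*+\gamma_B^\top x_B^*\in R$ we get $\gamma_A^\top x_A+r_1y_1+r_2y_2=\gamma_A^\top x_A^*+\rho_1\alpha^*+\rho_2\beta^*\equiv(\gamma_A^\top x_A^*+\gamma_B^\top x_B^*)-\rho_0\in R-\rho_0=r_0+R\pmod*{m}$. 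Hence $(x_A,y_1,y_2)$ is feasible for \eqref{eq:integrationProblem}. Both transformations are constant-time arithmetic operations on vectors of size polynomial in the input, hence strongly polynomial.

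The hard part is really twofold and both pieces are somewhat bookkeeping-heavy rather than deep: establishing that the constraint matrix of \eqref{eq:integrationProblem} is totally unimodular — which is exactly where the $3$-sum structure and \cref{obs:simultTUappend} must be invoked with the same care as in \cref{thm:integration}, and which is needed both to call \eqref{eq:integrationProblem} an \RCCTUF{} problem and to later apply \cref{thm:projection} to it — and correctly tracking the shift $r_0=(-\rho_0\bmod m)$ together with the precise meaning of ``covered by $\widetilde\pi$'', so that the congruency constraints match up. A minor additional point, which I would state explicitly, is that the backward transformation is strongly polynomial only because a witness $B$-solution for each of the $O((m-|R|)^2)$ relevant pairs $(\alpha,\beta)\in\Pi_{\text{narrowed}}$ is produced along with $\widetilde\pi$, so that no new $B$-problem has to be solved when reconstructing $(x_A,x_B)$.
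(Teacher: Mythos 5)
Your proof is correct and follows essentially the same route as the paper's: it reads $r_0,r_1,r_2$ off the linear representation of $\widetilde\pi$ (via \cref{cor:linearSubpattern}) and verifies both directions of the equivalence by direct substitution, matching the congruency constraints after the shift by $r_0$. The two points you flag in addition—total unimodularity of the constraint matrix of~\eqref{eq:integrationProblem} via the $3$-sum structure and \cref{obs:simultTUappend}, and reusing stored witness $B$-solutions for the backward transformation—are compatible refinements of the paper's argument (which at that step simply solves the $B$-relaxation with Tardos' algorithm) rather than a genuinely different approach.
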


\begin{proof}
By \cref{cor:linearSubpattern}, there exist $r_0, r_1, r_2\in\{0,\ldots,m-1\}$ such that $r(\alpha,\beta)\coloneqq -r_0+\alpha r_1 + \beta r_2$ has the following property for each $(\alpha,\beta)\in\widetilde\Pi$:
If $x_B$ is a solution of the $B$-problem for $(\alpha,\beta)$, then $\gamma_B^\top x_B \equiv r(\alpha,\beta)\pmod{m}$.
We claim that \cref{thm:integrationSubpatterns} holds for this choice of $r_0$, $r_1$, and $r_2$.

To see this, first let $(x_A, x_B)\in\mathbb{Z}^{n_A+n_B}$ be a solution of the original \RCCTUF{} problem that is covered by $\widetilde\pi$, i.e., a solution with scalar products $(\alpha,\beta)\in\widetilde\Pi$. We claim that $(x_A, \alpha,\beta)$ is a solution of~\eqref{eq:integrationProblem}. Indeed, feasibility for the original problem gives
\begin{equation*}
\begin{array}{rcrcl}
A x_A & + & ef^\top x_B & \leq & b_A \\
gh^\top x_A & + & B x_B & \leq & b_B \\
\gamma_A^\top x_A & + & \gamma_B^\top x_B & \in & R \pmod{m}\enspace,
\end{array}
\end{equation*}
and the first constraint is equivalent to $Ax_A+e\alpha \leq b_A$. Moreover, $h^\top x_A - \beta=0$ is satisfied by definition of $\beta$, and the constraints in the third, forth, and fifth line of~\eqref{eq:integrationProblem} are satisfied by $(y_1,y_2)=(\alpha,\beta)$ because $(\alpha,\beta)\in\widetilde\Pi$. Finally, the congruency constraint is satisfied because $\gamma_A^\top x_A - r_0 + \alpha r_1 + \beta r_2 = \gamma_A^\top x_A + r(\alpha,\beta) \equiv \gamma_A^\top x_A + \gamma_B^\top x_B \in R\pmod*{m}$ is equivalent to $\gamma_A^\top x_A + r_1\alpha + r_2\beta\in r_0+R\pmod*{m}$.

On the other hand, for any solution $(x_A,\alpha,\beta)$ of~\eqref{eq:integrationProblem}, we get that $(\alpha,\beta)\in\widetilde{\Pi}$ due to the constraints in~\eqref{eq:integrationProblem}, and hence any solution $x_B$ of the relaxation of the $B$-problem satisfies $\gamma_B^\top x_B\equiv r(\alpha,\beta)\pmod*{m}$.
From the same arguments as before, it follows that $(x_A,x_B)$ is feasible for the original \RCCTUF{} problem.

To conclude the proof, observe that transforming the solution of the original problem to a solution of~\eqref{eq:integrationProblem} only requires the computation of $\alpha$ and $\beta$.
For the other way round, we need to compute a feasible solution to the relaxation of the $B$-problem, which can be done in strongly polynomial time using the algorithm of \textcite{tardos_1986_strongly}.
\end{proof}

\begin{proof}[Proof of \cref{thm:integration}]
Because $\pi$ is a linear pattern by assumption, we can apply \cref{thm:integrationSubpatterns} with $\widetilde{\pi}=\pi$, and \cref{thm:integration} immediately follows.
\end{proof}

\subsubsection*{More properties of patterns and a proof of \cref{lem:coveringPattern}\MOORdot}

After having studied linear patterns and sub-patterns so far in this section, we now focus on non-linear patterns $\pi$, i.e., patterns that have at least one pair $(\alpha,\beta)$ with $|\pi(\alpha,\beta)|\geq 2$ in their domain.
The first lemma below shows how the property of having $|\pi(\alpha,\beta)|\geq 2$ propagates over the domain of a pattern, again using our averaging lemma, \cref{lem:averagingPatterns}.

With the ultimate goal of this subsection being to prove \cref{lem:coveringPattern}, we first show \cref{lem:type2step}, which showcases one important and repeatedly used situation in which \cref{lem:coveringPattern} holds.
We remark at this point that the requirement $|R|\geq m-2$ stated in \cref{lem:coveringPattern} is only due to \cref{lem:type2step}.
Hence, future attempts of overcoming this barrier using the ideas presented here will have to exploit setups beyond the one in \cref{lem:type2step}.
In contrast, the assumption in \cref{lem:coveringPattern} of $m$ being a prime number is exploited in several places.

Also, we remark that in this part, we aim at providing tools for analyzing (narrowed) patterns in a slightly more general setup than what we actually need.
More precisely, in our concrete case it would be enough to analyze narrowed patterns that are contained in a rectangular box of scalar product pairs $(\alpha,\beta)$ of dimensions $3\times 3$ (this follows by \cref{lem:patternsBounds}, for example, and our assumption $|R|\geq m-2$).
Still, we aim for the slightly more general presentation of our methods, which may be useful in potential future work on these topics, in particular when dropping the assumption $|R|\geq m-2$.

We start by observing that \cref{lem:coveringPattern} trivially holds in the case where the pattern $\pi$ is linear, as we can then choose $\widetilde{\pi}=\pi$.
In case of a non-linear pattern, we know that there is at least one pair $(\alpha,\beta)$ of scalar products in the domain of the pattern such that $|\pi(\alpha,\beta)|\geq 2$. If there exists a solution for such scalar products $(\alpha,\beta)$, item~\ref{lemitem:type1} of \cref{lem:coveringPattern} applies, so having many $(\alpha,\beta)$ with $|\pi(\alpha,\beta)|\geq 2$ is desirable.
Luckily, the subsequent lemma proves that such $(\alpha,\beta)$ cannot appear in a very isolated way.

\begin{lemma}\label{lem:pushingTwos}
Consider a narrowed pattern $\pi\colon\Pi_{\text{narrowed}}\to2^{\{0,\ldots,m-1\}}$, let $(\alpha,\beta)\in\Pi_{\text{narrowed}}$ and $d\in\mathcal{D}$ such that $(\alpha,\beta)+d, (\alpha,\beta)+2d\in\Pi_{\text{narrowed}}$.
If $|\pi(\alpha,\beta)|\geq 2$, then $|\pi((\alpha,\beta)+d)|\geq 2$, as well.
\end{lemma}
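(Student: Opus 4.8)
The plan is to use the Averaging Lemma (\cref{lem:averagingPatterns}) to transport the two residues available at $(\alpha,\beta)$ to two residues at $(\alpha,\beta)+d$, using an auxiliary solution sitting at $(\alpha,\beta)+2d$. First I would fix notation. Since $|\pi(\alpha,\beta)|\geq 2$, choose distinct $r_1,r_2\in\pi(\alpha,\beta)$ together with solutions $x_B^1,x_B^2$ of the relaxation of the $B$-problem for $(\alpha,\beta)$ satisfying $\gamma_B^\top x_B^i\equiv r_i\pmod*{m}$; since $(\alpha,\beta)\in\Pi_{\text{narrowed}}\subseteq\Pi$, the $A$-problem for $(\alpha,\beta)$ is feasible, say with solution $x_A^{*}$. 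Concatenating, $x^i\coloneqq(x_A^{*},x_B^i)$ are solutions of the relaxation of~\eqref{eq:structured-problem} for $(\alpha,\beta)$. Finally, $(\alpha,\beta)+2d\in\Pi_{\text{narrowed}}\subseteq\Pi$ provides a solution $x'$ of the relaxation of~\eqref{eq:structured-problem} for $(\alpha,\beta)+2d$; write $\rho'\coloneqq(\gamma_B^\top x'_B \bmod m)$.

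Next I would apply \cref{lem:averagingPatterns} to the pair $(x^1,x')$, which are solutions for $(\alpha,\beta)$ and $(\alpha,\beta)+2d$, obtaining solutions $z^1,z^2$ for $(\alpha_3,\beta_3),(\alpha_4,\beta_4)$ with $x^1+x'=z^1+z^2$ and the bounds~\eqref{eq:scalarProductBounds}. The crucial observation is that, writing $d=(d_\alpha,d_\beta)$ with $d_\alpha,d_\beta\in\{-1,0,1\}$, the shifts $2d_\alpha$, $2d_\beta$, and $2(d_\alpha+d_\beta)$ are all even, so every midpoint appearing in~\eqref{eq:scalarProductBounds} is an integer and the corresponding floor and ceiling coincide. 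Hence $\alpha_3=\alpha_4=\alpha+d_\alpha$ and $\beta_3=\beta_4=\beta+d_\beta$, i.e.\ both $z^1$ and $z^2$ are solutions for $(\alpha,\beta)+d$. Running the same argument with $(x^2,x')$ in place of $(x^1,x')$ yields solutions $w^1,w^2$ for $(\alpha,\beta)+d$ with $x^2+x'=w^1+w^2$.

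For the conclusion, note that $(\alpha,\beta)+d\in\Pi_{\text{narrowed}}\subseteq\Pi$, so $\pi((\alpha,\beta)+d)\neq\emptyset$. Assume for contradiction that $\pi((\alpha,\beta)+d)=\{\rho\}$ is a singleton. Since the $B$-residues of $z^1,z^2,w^1,w^2$ all lie in $\pi((\alpha,\beta)+d)$, they are all $\equiv\rho\pmod*{m}$. From $x^1+x'=z^1+z^2$, hence $x_B^1+x'_B=z^1_B+z^2_B$, I get $r_1+\rho'\equiv\gamma_B^\top x_B^1+\gamma_B^\top x'_B=\gamma_B^\top z^1_B+\gamma_B^\top z^2_B\equiv 2\rho\pmod*{m}$, and likewise $r_2+\rho'\equiv 2\rho\pmod*{m}$. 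Subtracting gives $r_1\equiv r_2\pmod*{m}$, contradicting the choice of distinct $r_1,r_2\in\{0,\ldots,m-1\}$. Therefore $|\pi((\alpha,\beta)+d)|\geq 2$, as claimed.

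The only point that needs care is the midpoint computation that forces both averaged pairs of scalar products to land exactly on $(\alpha,\beta)+d$ rather than being spread over $\{(\alpha,\beta),(\alpha,\beta)+d,(\alpha,\beta)+2d\}$; but this is immediate once one notices that doubling $d$ makes all three relevant coordinate shifts ($\alpha$, $\beta$, and $\alpha+\beta$) even, so no genuine case analysis over the six directions in $\mathcal{D}$ is required. Everything else is bookkeeping of residues modulo $m$, and in particular the argument does not use primality of $m$.
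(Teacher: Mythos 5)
Your proof is correct and takes essentially the same approach as the paper: two relaxation solutions at $(\alpha,\beta)$ with distinct $B$-residues are each averaged against one solution at $(\alpha,\beta)+2d$ via \cref{lem:averagingPatterns}, the integrality of the midpoints forces all averaged solutions onto $(\alpha,\beta)+d$, and the residue-sum identities exclude a singleton $\pi((\alpha,\beta)+d)$. The only differences are presentational — you spell out the concatenation of $A$- and $B$-solutions and the parity argument that the paper leaves implicit, and phrase the conclusion as a contradiction rather than directly noting that two of the four residues must differ.
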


\begin{proof}
Let $x^1$ and $y^1$ be feasible solutions of the relaxation of the underlying problem for scalar products $(\alpha, \beta)$ with different residues, i.e., $\gamma_B^\top x^1_B\not\equiv \gamma_B^\top y^1_B\pmod*{m}$, and let $x^2$ be any solution of the relaxation of the problem for scalar products $(\alpha,\beta)+2d$.

Applying the averaging lemma (\cref{lem:averagingPatterns}) to the solutions $x^1$ and $x^2$, and to the solutions $y^1$ and $x^2$, we obtain solutions $x^3,x^4$ and solutions $y^3,y^4$, respectively, such that $x^1 + x^2 = x^3 + x^4$ and $y^1 + x^2 = y^3 + y^4$.
Moreover, the inequalities~\eqref{eq:scalarProductBounds} in \cref{lem:averagingPatterns} state that all of $x^3$, $x^4$, $y^3$, and $y^4$ are solutions for the scalar products $(\alpha,\beta)+d$.
Consequently, $\{(\gamma_B^\top x^3_B\bmod{m}), (\gamma_B^\top x^4_B\bmod{m}), (\gamma_B^\top y^3_B\bmod{m}), (\gamma_B^\top y^4_B\bmod{m})\}\subseteq\pi((\alpha,\beta)+d)$.
To get that $|\pi((\alpha,\beta)+d)|\geq 2$, note that these residues satisfy
\begin{equation*}
\gamma_B^\top x^1_B + \gamma_B^\top x^2_B \equiv \gamma_B^\top x^3_B + \gamma_B^\top x^4_B \pmod{m}\enspace, \quad\text{and}\quad \gamma_B^\top y^1_B + \gamma_B^\top x^2_B \equiv \gamma_B^\top y^3_B + \gamma_B^\top y^4_B \pmod{m}\enspace,
\end{equation*}
and hence, because $\gamma_B^\top x^1_B\not\equiv \gamma_B^\top y^1_B$, at least two of the residues among $(\gamma_B^\top x^3_B\bmod{m})$, $(\gamma_B^\top x^4_B\bmod{m})$, $(\gamma_B^\top y^3_B\bmod{m})$, and $(\gamma_B^\top y^4_B\bmod{m})$ must be distinct, which proves the lemma.
\end{proof}

Even non-linear patterns $\pi$ might have several $(\alpha,\beta)$ in their support that satisfy $|\pi(\alpha,\beta)|=1$.
In \cref{lem:coveringPattern}, such squares may be covered by a linear sub-pattern, but it turns out that in general, there is no sub-pattern covering all pairs $(\alpha,\beta)$ with $|\pi(\alpha,\beta)|=1$.
The following lemma describes a configuration that allows for dealing with such pairs in a different way.

\begin{lemma}\label{lem:type2step}
Consider an \RCCTUF{} problem of the form given in~\eqref{eq:structured-problem} with prime modulus $m$ and $|R|\geq m-2$.
Let $\pi\colon\Pi_{\text{narrowed}}\to2^{\{0,\ldots,m-1\}}$ be an associated narrowed pattern, and let $(\alpha,\beta)\in\mathbb{Z}^2$ and $d\in\mathcal{D}$ with
\begin{align*}
(\alpha,\beta),(\alpha,\beta)+d,(\alpha,\beta)+2d\in\Pi_{\text{narrowed}}\enspace,\quad
|\pi(\alpha,\beta)|=1\enspace, \quad\text{and}\quad
|\pi((\alpha,\beta)+d)|\geq 2\enspace.
\end{align*}
If the problem has a solution with scalar products $(\alpha,\beta)$, one of the following holds:
\begin{enumerate}
\item $(\alpha,\beta)$ satisfies case~\ref{lemitem:goodCombo} of \cref{lem:coveringPattern}.
\item There is a solution with scalar products $(\alpha,\beta)+d$, i.e., $(\alpha,\beta)+d$ satisfies case~\ref{lemitem:type1} of \cref{lem:coveringPattern}.
\end{enumerate}
\end{lemma}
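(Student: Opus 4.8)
The plan is to establish the dichotomy by showing that whenever the first alternative (case~\ref{lemitem:goodCombo} of \cref{lem:coveringPattern}) fails for $(\alpha,\beta)$, the second alternative (case~\ref{lemitem:type1}) must hold via the pair $(\alpha,\beta)+d$. The engine will be an ``$A$-side'' analogue of \cref{lem:pushingTwos}, obtained through the Averaging Lemma (\cref{lem:averagingPatterns}), followed by the Cauchy--Davenport Inequality (\cref{lem:cauchyDavenport}).

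First I would fix notation. Assume the \RCCTUF{} problem has a solution $x^{*}=(x_A^{*},x_B^{*})$ with scalar products $(\alpha,\beta)$, and set $r_B^{*}\coloneqq\gamma_B^{\top}x_B^{*}\bmod m$, so $\pi(\alpha,\beta)=\{r_B^{*}\}$ (using $|\pi(\alpha,\beta)|=1$) and $\gamma_A^{\top}x_A^{*}+r_B^{*}\in R\pmod*{m}$. For $(\alpha',\beta')\in\Pi_{\text{narrowed}}$ let $\rho(\alpha',\beta')\subseteq\{0,\dots,m-1\}$ denote the set of residues $\gamma_A^{\top}x_A\bmod m$ over all solutions $x_A$ of the relaxation of the $A$-problem for $(\alpha',\beta')$. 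Since the only available $B$-residue at $(\alpha,\beta)$ is $r_B^{*}$, any such $x_A$ extends to a feasible \RCCTUF{} solution iff $\gamma_A^{\top}x_A+r_B^{*}\in R\pmod*{m}$; hence case~\ref{lemitem:goodCombo} holds for $(\alpha,\beta)$ exactly when $\rho(\alpha,\beta)+r_B^{*}\subseteq R\pmod*{m}$. If it holds we are done, so I would assume it fails: there is a relaxed $A$-solution $x_A^{\circ}$ for $(\alpha,\beta)$ with $\gamma_A^{\top}x_A^{\circ}+r_B^{*}\notin R\pmod*{m}$, and in particular $\gamma_A^{\top}x_A^{\circ}\not\equiv\gamma_A^{\top}x_A^{*}\pmod*{m}$, so $|\rho(\alpha,\beta)|\geq2$.

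Next I would transport this $A$-multiplicity from $(\alpha,\beta)$ to $(\alpha,\beta)+d$, mirroring the proof of \cref{lem:pushingTwos}. Choose any relaxation solution $w=(w_A,w_B)$ of~\eqref{eq:structured-problem} with scalar products $(\alpha,\beta)+2d$ (it exists since $(\alpha,\beta)+2d\in\Pi_{\text{narrowed}}$), and apply \cref{lem:averagingPatterns} to $(x_A^{*},x_B^{*})$ and $w$, and separately to $(x_A^{\circ},x_B^{*})$ and $w$. Since the three relevant averages of $(\alpha,\beta)$ and $(\alpha,\beta)+2d$ (in $\alpha$, in $\beta$, and in $\alpha+\beta$) are all integral and equal to the corresponding coordinates of $(\alpha,\beta)+d$, the bounds~\eqref{eq:scalarProductBounds} force all four outputs $p^{3},p^{4},q^{3},q^{4}$ to have scalar products exactly $(\alpha,\beta)+d$, so their $A$-parts lie in $\rho((\alpha,\beta)+d)$. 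The identities $x^{*}+w=p^{3}+p^{4}$ and $(x_A^{\circ},x_B^{*})+w=q^{3}+q^{4}$ give $\gamma_A^{\top}p_A^{3}+\gamma_A^{\top}p_A^{4}\equiv\gamma_A^{\top}x_A^{*}+\gamma_A^{\top}w_A$ and $\gamma_A^{\top}q_A^{3}+\gamma_A^{\top}q_A^{4}\equiv\gamma_A^{\top}x_A^{\circ}+\gamma_A^{\top}w_A\pmod*{m}$; as $\gamma_A^{\top}x_A^{*}\not\equiv\gamma_A^{\top}x_A^{\circ}$, the two sums differ modulo $m$, so the residues of $p_A^{3},p_A^{4},q_A^{3},q_A^{4}$ cannot all coincide, whence $|\rho((\alpha,\beta)+d)|\geq2$.

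Finally I would close with a counting argument. By hypothesis $|\pi((\alpha,\beta)+d)|\geq2$, so \cref{lem:cauchyDavenport} (using that $m$ is prime) gives $|(\rho((\alpha,\beta)+d)+\pi((\alpha,\beta)+d))\bmod m|\geq\min\{m,3\}$. Since the problem is feasible, $R\neq\emptyset$, hence $|R|\geq\max\{1,m-2\}$, which makes $\min\{m,3\}+|R|>m$ for every prime $m$; therefore $\rho((\alpha,\beta)+d)+\pi((\alpha,\beta)+d)$ is not contained in $\{0,\dots,m-1\}\setminus R$, i.e.\ there are $r_A\in\rho((\alpha,\beta)+d)$ and $r_B\in\pi((\alpha,\beta)+d)$ with $r_A+r_B\in R\pmod*{m}$. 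Taking a relaxed $A$-solution $x_A$ for $(\alpha,\beta)+d$ with $\gamma_A^{\top}x_A\equiv r_A$ and a relaxed $B$-solution $x_B$ for $(\alpha,\beta)+d$ with $\gamma_B^{\top}x_B\equiv r_B$, the concatenation $(x_A,x_B)$ is a feasible \RCCTUF{} solution with scalar products $(\alpha,\beta)+d$, which is case~\ref{lemitem:type1}. I expect the middle step to be the main obstacle: one must check carefully that the Averaging Lemma really places both averaged solutions exactly at $(\alpha,\beta)+d$ and that the resulting $A$-parts genuinely witness two distinct $A$-residues there; beyond that, only the elementary inequality $\min\{m,3\}+|R|>m$ in the small cases $m\in\{2,3\}$ needs attention.
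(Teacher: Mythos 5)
Your proposal is correct and follows essentially the same route as the paper: failure of case~\ref{lemitem:goodCombo} together with feasibility at $(\alpha,\beta)$ and $|\pi(\alpha,\beta)|=1$ forces two distinct $A$-residues at $(\alpha,\beta)$, these are propagated to $(\alpha,\beta)+d$ via the Averaging Lemma, and Cauchy--Davenport with $|R|\geq m-2$ yields a feasible combination at $(\alpha,\beta)+d$. The only difference is presentational: the paper introduces the pattern $\pi_A$ for the $A$-problem and invokes \cref{lem:pushingTwos} by symmetry, whereas you unfold that averaging argument inline (correctly, including the check that the averaged solutions land exactly at $(\alpha,\beta)+d$).
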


\begin{proof}
Assume that $(\alpha,\beta)$ does not satisfy case~\ref{lemitem:goodCombo} of \cref{lem:coveringPattern}, i.e. it is not true that for any solution $x_A$ of the $A$-problem for scalar products $(\alpha,\beta)$, there exists a solution $x_B$ of the $B$-problem such that $(x_A,x_B)$ is feasible for the original problem.
Recall that given a feasible solution $x_A$ of the relaxation of the $A$-problem and a feasible solution $x_B$ of the relaxation of the $B$-problem for the same scalar products $(\alpha,\beta)$, the combined solution $(x_A,x_B)$ is feasible for the original problem if and only if it satisfies the congruency constraint $\gamma_A^\top x_A + \gamma_B^\top x_B\in R\pmod*{m}$.
As $|\pi(\alpha,\beta)|=1$ by assumption, $r = (\gamma_B^\top x_B \bmod{m})$ is the same for all feasible solutions $x_B$ of the relaxation of the $B$-problem.
Consequently, the only reason why a combined solution $(x_A,x_B)$ can be infeasible is that $\gamma_A^\top x_A\not\in R-r\pmod*{m}$.
On the other hand, because by assumption, the problem has a feasible solution with scalar products $(\alpha,\beta)$, there must also be another solution $x_A'$ with $\gamma_A^\top x_A'\in R-r\pmod*{m}$.

Define $\pi_A\colon\Pi_{\text{narrowed}}\to 2^{\{0,1,\ldots,m-1\}}$ such that $\pi_A(\alpha',\beta')$ denotes, for every $(\alpha',\beta')\in\Pi_{\text{narrowed}}$, the set of residues $\gamma_A^\top x_A$ that can be achieved by solutions of the relaxation of the $A$-problem with scalar products $(\alpha',\beta')$.
Hence, $\pi_A$ is defined identically to $\pi$, with the only difference that $\pi_A$ captures attainable residues in the $A$-problem instead of the $B$-problem.
Hence, by symmetry between the $A$-problem and $B$-problem, properties holding for $\pi$ and the $B$-problem also hold for $\pi_A$ and the $A$-problem.
In particular, the previous argument showed that $|\pi_A(\alpha,\beta)|\geq 2$, and by definition of $\Pi_{\text{narrowed}}$, we know that the relaxation of the $A$-problem is feasible for all $(\alpha',\beta')\in\Pi_{\text{narrowed}}$.
Thus, applying \cref{lem:pushingTwos} to $\pi_A$, we obtain that $|\pi_A((\alpha,\beta)+d)|\geq 2$, as well.

The residues that a solution $(x_A,x_B)$ of the relaxation of the original problem can achieve for scalar product $(\alpha,\beta)+d$ are given by the set $\pi_A((\alpha,\beta)+d)+\pi((\alpha,\beta)+d)$.
By the Cauchy-Davenport Inequality (\cref{lem:cauchyDavenport}), which we can apply as $m$ is a prime number by assumption, we have
$$
|\pi_A((\alpha,\beta)+d)+\pi((\alpha,\beta)+d)| \geq \min\{m, |\pi_A((\alpha,\beta)+d)|+|\pi((\alpha,\beta)+d)|-1\} \geq \min\{m, 3\}\enspace.
$$
As the set $R$ of target residues satisfies $|R|\geq m-2$, we conclude that at least one of the achievable residues is a target residue, and hence there exists a solution of the problem with scalar products $(\alpha,\beta)+d$.
\end{proof}

To prove \cref{lem:coveringPattern}, we distinguish two cases based on whether the \emph{interior} of the pattern domain is empty or not, where interior is defined as follows.

\begin{definition}
For a set $\Pi\subseteq \mathbb{Z}^n$ of the form
\begin{equation}\label{eq:patternShape}
\Pi = \left\{(\alpha,\beta)\in\mathbb{Z}^2\colon \ell_0\leq \alpha+\beta\leq u_0,\ \ell_1\leq \alpha \leq u_1,\ \ell_2\leq \beta\leq u_2\right\}
\end{equation}
with $\ell_i, u_i\in\mathbb{Z}$ for $i\in \{0,1,2\}$, we say that $(\alpha,\beta)\in\Pi$ is in the \emph{interior} of $\Pi$ if none of the constraints in~\eqref{eq:patternShape} are tight for $(\alpha,\beta)$.
Else, we say that $(\alpha,\beta)$ is on the \emph{boundary} of $\Pi$.
\end{definition}

In fact, for non-linear patterns $\pi$ whose support has non-empty interior, we show that any pair $(\alpha,\beta)$ with $|\pi(\alpha,\beta)|=1$ is part of a configuration of the type described by \cref{lem:type2step}, leading to the following.

\begin{lemma}\label{lem:nonEmptyInterior}
Consider a non-linear narrowed pattern $\pi$ for a feasible \RCCTUF{} problem as given in~\eqref{eq:structured-problem} with prime modulus $m$ and $|R|\geq m-2$. If the domain of $\pi$ has non-empty interior, then~\ref{lemitem:goodCombo} or~\ref{lemitem:type1} in \cref{lem:coveringPattern} holds.
\end{lemma}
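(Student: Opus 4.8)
The plan is to first pin down the geometry of $\Pi_{\text{narrowed}}$ under the two hypotheses, and then run a short finite case analysis driven by the propagation lemmas \cref{lem:pushingTwos} and \cref{lem:type2step}.

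\smallskip

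\noindent\textbf{Step 1 (geometry).} First I would use \cref{lem:narrowedPiIneqs} together with $|R|\geq m-2$ to write $\Pi_{\text{narrowed}}=\{(\alpha,\beta)\colon \ell_0\leq\alpha+\beta\leq u_0,\ \ell_1\leq\alpha\leq u_1,\ \ell_2\leq\beta\leq u_2\}$ with $u_i-\ell_i\leq m-|R|\leq 2$ for $i\in\{0,1,2\}$. The assumption that $\Pi_{\text{narrowed}}$ has non-empty interior forces each gap $u_i-\ell_i$ to be exactly $2$, forces the unique interior point to be $p\coloneqq(\ell_1+1,\ell_2+1)$, and forces $\ell_0=\ell_1+\ell_2+1$. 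A direct count then shows that $\Pi_{\text{narrowed}}$ consists of exactly the seven points $p$ and $p+d$ for $d\in\mathcal{D}$ (the box $[\ell_1,\ell_1+2]\times[\ell_2,\ell_2+2]$ minus the two corners cut off by the $\alpha+\beta$-band). In particular every point of $\Pi_{\text{narrowed}}$ is either $p$ or a boundary point $p+d_0$ with $d_0\in\mathcal{D}$, and since $\mathcal{D}$ is symmetric under negation, for any boundary point $p+d_0$ the triple $p+d_0,\ p,\ p-d_0$ lies entirely in $\Pi_{\text{narrowed}}$; this is a valid configuration for both \cref{lem:pushingTwos} and \cref{lem:type2step} with direction $d=-d_0$.

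\smallskip

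\noindent\textbf{Step 2 ($|\pi(p)|\geq 2$).} Since $\pi$ is non-linear, there is some $q\in\Pi_{\text{narrowed}}$ with $|\pi(q)|\geq 2$. If $q=p$ there is nothing to show. Otherwise $q=p+d_0$ for some $d_0\in\mathcal{D}$, and applying \cref{lem:pushingTwos} to $(\alpha,\beta)=q$ with $d=-d_0$ (the needed triple $q,\ q+d=p,\ q+2d=p-d_0$ being inside $\Pi_{\text{narrowed}}$ by Step~1) yields $|\pi(p)|=|\pi(q+d)|\geq 2$.

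\smallskip

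\noindent\textbf{Step 3 (use feasibility).} By \cref{lem:patternsBounds} and the definition of $\Pi_{\text{narrowed}}$, the feasible \RCCTUF{} problem has a solution $(x_A,x_B)$ whose scalar products $(\alpha^*,\beta^*)$ lie in $\Pi_{\text{narrowed}}$, so $(\alpha^*,\beta^*)=p$ or $(\alpha^*,\beta^*)=p+d_0$ for some $d_0\in\mathcal{D}$. If $|\pi(\alpha^*,\beta^*)|\geq 2$ — which by Step~2 in particular covers the case $(\alpha^*,\beta^*)=p$ — then item~\ref{lemitem:type1} of \cref{lem:coveringPattern} holds. Otherwise $(\alpha^*,\beta^*)=p+d_0$ with $|\pi(p+d_0)|=1$; then the configuration $p+d_0,\ p,\ p-d_0$ together with $|\pi(p+d_0)|=1$ and $|\pi(p)|\geq 2$ (Step~2) meets the hypotheses of \cref{lem:type2step}, and since the problem has a solution with scalar products $p+d_0$, that lemma gives that either $(\alpha^*,\beta^*)$ satisfies case~\ref{lemitem:goodCombo} of \cref{lem:coveringPattern}, or there is a feasible solution with scalar products $p$. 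In the first case item~\ref{lemitem:goodCombo} holds; in the second case, combined with $|\pi(p)|\geq 2$, item~\ref{lemitem:type1} holds. In all cases item~\ref{lemitem:goodCombo} or~\ref{lemitem:type1} of \cref{lem:coveringPattern} holds, as claimed.

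\smallskip

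\noindent I expect the only genuine subtlety to be Step~1: making precise that non-empty interior plus the gap bound $m-|R|\leq 2$ pins $\Pi_{\text{narrowed}}$ down to exactly the seven-point hexagon, so that the case analysis of Step~3 is exhaustive. Everything after that is a routine application of the already-established lemmas \cref{lem:pushingTwos} and \cref{lem:type2step}.
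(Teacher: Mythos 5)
Your proof is correct, and its skeleton coincides with the paper's: establish that the interior point carries at least two residues, take a feasible solution's scalar products in $\Pi_{\text{narrowed}}$, conclude via item~\ref{lemitem:type1} of \cref{lem:coveringPattern} if that square has $\geq 2$ residues, and otherwise invoke \cref{lem:type2step} along a direction pointing from that square towards the interior point. Where you differ is in how the hypotheses of \cref{lem:pushingTwos} and \cref{lem:type2step} are verified: you first use \cref{lem:narrowedPiIneqs} together with $|R|\geq m-2$ to pin $\Pi_{\text{narrowed}}$ down to the seven-point hexagon around its unique interior point $p$, after which every relevant triple $p+d_0,\,p,\,p-d_0$ is trivially inside the domain and any square with $\geq 2$ residues is adjacent to $p$, so a single application of \cref{lem:pushingTwos} replaces the paper's use of \cref{lem:interiorTwos} and \cref{lem:neighbouringTwo}. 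The paper instead keeps the argument valid for arbitrary domains with non-empty interior (its propagation lemmas are deliberately stated in greater generality, with an eye towards dropping $|R|\geq m-2$ later), and only uses $|R|\geq m-2$ through \cref{lem:type2step}; your Step~1 hard-wires the bound $m-|R|\leq 2$ into the geometry, which buys a shorter, more concrete and exhaustive case analysis at the price of generality. Two small points to make explicit if you write this up: the passage from ``$\pi$ non-linear'' to ``some square has $|\pi(q)|\geq 2$'' uses \cref{cor:linearSubpattern} applied to the whole domain (all-singleton values with the domain shape of \cref{lem:narrowedPiIneqs} would force linearity) — the paper leans on the same implicit step — and the reduction from ``non-empty interior'' to ``$p+d\in\Pi_{\text{narrowed}}$ for all $d\in\mathcal{D}$'' should cite the paper's observation characterizing interior points, which makes the notion independent of the particular inequality description.
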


To prove this lemma, we study the structure of patterns more closely. We start with an observation, where again, $\mathcal{D}\coloneqq\{\pm\begin{psmallmatrix}
1 \\ 0
\end{psmallmatrix}, \pm\begin{psmallmatrix}
0 \\ 1
\end{psmallmatrix}, \pm\begin{psmallmatrix}
1 \\ -1
\end{psmallmatrix}\}$
denotes the set of all potential edge directions of the convex hull of a pattern domain.

\begin{observation}
Let $\Pi\subseteq \mathbb{Z}^n$ be of the form
\begin{equation}
\Pi = \left\{(\alpha,\beta)\in\mathbb{Z}^2\colon \ell_0\leq \alpha+\beta\leq u_0,\ \ell_1\leq \alpha \leq u_1,\ \ell_2\leq \beta\leq u_2\right\}
\end{equation}
with $\ell_i, u_i\in\mathbb{Z}$ for $i\in \{0,1,2\}$.
Then $(\alpha,\beta)\in\Pi$ is in the interior of $\Pi$ if and only if $(\alpha,\beta)+d\in\Pi$ for all $d\in\mathcal{D}$.
\end{observation}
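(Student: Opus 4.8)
The plan is to prove both implications directly, the key point being that all the bounds $\ell_i, u_i$ as well as the coordinates $\alpha, \beta$ are integers.

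For the forward direction I would assume $(\alpha,\beta)$ lies in the interior of $\Pi$, so all six inequalities defining $\Pi$ are strict at $(\alpha,\beta)$. Since every quantity involved is an integer, each strict inequality upgrades to a gap of at least one, i.e.\ $\ell_0+1 \leq \alpha+\beta \leq u_0-1$, $\ell_1+1 \leq \alpha \leq u_1-1$, and $\ell_2+1 \leq \beta \leq u_2-1$. Next I would observe that every $d\in\mathcal{D}$ changes each of the three linear forms $\alpha$, $\beta$, and $\alpha+\beta$ by at most one in absolute value: for $\alpha$ and $\beta$ this is immediate, and $\alpha+\beta$ changes by $\pm1$ for $d\in\{\pm(1,0),\pm(0,1)\}$ and by $0$ for $d\in\{\pm(1,-1)\}$. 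Hence after adding any $d\in\mathcal{D}$ each of the three forms still lies between its respective lower and upper bound, so $(\alpha,\beta)+d\in\Pi$.

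For the converse I would argue by contraposition: assume $(\alpha,\beta)$ is on the boundary of $\Pi$, so at least one of the six inequalities holds with equality, and exhibit a single $d\in\mathcal{D}$ with $(\alpha,\beta)+d\notin\Pi$. A short case distinction does it: if $\alpha=u_1$ take $d=(1,0)$; if $\alpha=\ell_1$ take $d=(-1,0)$; if $\beta=u_2$ take $d=(0,1)$; if $\beta=\ell_2$ take $d=(0,-1)$; if $\alpha+\beta=u_0$ take $d=(1,0)$ (raising the sum to $u_0+1$); and if $\alpha+\beta=\ell_0$ take $d=(-1,0)$. In each case $(\alpha,\beta)+d$ violates the corresponding constraint, so it is not true that $(\alpha,\beta)+d\in\Pi$ for all $d\in\mathcal{D}$, which is exactly the contrapositive of the remaining implication.

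I do not expect any real obstacle; the only subtlety is the integrality step in the forward direction, which is genuinely needed (over the reals an interior point can be arbitrarily close to the boundary and the statement fails), and checking that the boundary case list is exhaustive — which it is, since "on the boundary" means precisely that one of the six listed inequalities is tight.
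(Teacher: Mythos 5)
Your proof is correct: the integrality argument for the forward direction (strict inequalities between integers give slack of at least one, and every $d\in\mathcal{D}$ shifts each of the three linear forms $\alpha$, $\beta$, $\alpha+\beta$ by at most one) and the exhaustive tight-constraint case analysis for the converse together establish the equivalence. The paper states this as an observation without proof, and your argument is exactly the routine verification it leaves implicit, so there is nothing to compare beyond noting that you filled in the intended elementary reasoning.
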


\begin{lemma}\label{lem:neighbouringTwo}
Consider a narrowed pattern $\pi\colon\Pi_{\text{narrowed}}\to2^{\{0,\ldots,m-1\}}$ such that there exists $(\alpha_1,\beta_1)\in\Pi_{\text{narrowed}}$ with $|\pi(\alpha_1,\beta_1)|\geq 2$.
Then, for every $(\alpha_2,\beta_2)\in\Pi_{\text{narrowed}}\setminus\{(\alpha_1,\beta_1)\}$, there exists $d\in\mathcal{D}$ such that $(\alpha_2,\beta_2)+d\in D_{(\alpha_1,\beta_1),(\alpha_2,\beta_2)}$ and $|\pi((\alpha_2,\beta_2)+d)|\geq 2$.
\end{lemma}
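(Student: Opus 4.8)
The plan is to prove the lemma by induction on the graph distance $t\coloneqq\operatorname{dist}((\alpha_1,\beta_1),(\alpha_2,\beta_2))$ in the graph $G$ on $\mathbb{Z}^2$ whose edges are the pairs differing by an element of $\mathcal{D}$, ``walking'' from $(\alpha_1,\beta_1)$ towards $(\alpha_2,\beta_2)$ inside $D_{(\alpha_1,\beta_1),(\alpha_2,\beta_2)}$ while keeping the current point at $|\pi|\geq 2$. The base case $t=1$ is immediate: $q\coloneqq(\alpha_1,\beta_1)$ is adjacent to $(\alpha_2,\beta_2)$, lies in $D_{(\alpha_1,\beta_1),(\alpha_2,\beta_2)}$, and has $|\pi(q)|\geq 2$ by hypothesis. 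For the inductive step it suffices to establish a \emph{one-step claim}: if $p\in\Pi_{\text{narrowed}}$ with $|\pi(p)|\geq 2$ and $w\in\Pi_{\text{narrowed}}$ with $\operatorname{dist}(p,w)\geq 2$, then there is $d\in\mathcal{D}$ such that $p+d$ lies on a shortest $p$--$w$ path (so $p+d\in D_{p,w}$ and $\operatorname{dist}(p+d,w)=\operatorname{dist}(p,w)-1$) and $|\pi(p+d)|\geq 2$. Granting this, apply it with $p=(\alpha_1,\beta_1)$ and $w=(\alpha_2,\beta_2)$ to get a first step $p'\coloneqq p+d$ with $|\pi(p')|\geq 2$, then invoke the induction hypothesis on $(p',(\alpha_2,\beta_2))$, using $D_{p',(\alpha_2,\beta_2)}\subseteq D_{(\alpha_1,\beta_1),(\alpha_2,\beta_2)}$, to obtain the desired neighbour of $(\alpha_2,\beta_2)$.

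To prove the one-step claim I would fix a shortest $p$--$w$ path $p=q_0,q_1,q_2,\dots,q_t=w$ and set $v\coloneqq q_2\in D_{p,w}\subseteq\Pi_{\text{narrowed}}$, so $\operatorname{dist}(p,v)=2$. Writing $v=p+d_1+d_2$ for the steps $q_0\to q_1\to q_2$, either $d_1=d_2$ or $d_1\neq\pm d_2$ (the case $d_1=-d_2$ being excluded since $v\neq p$). Pick solutions $x^1=(x^1_A,x^1_B)$ and $y^1=(y^1_A,y^1_B)$ of the relaxation with scalar products $p$ and $\gamma_B^\top x^1_B\not\equiv\gamma_B^\top y^1_B\pmod{m}$ — these exist because $|\pi(p)|\geq 2$ and $p\in\Pi$, by combining a relaxed $A$-solution with the two $B$-solutions, exactly as in the proof of \cref{lem:pushingTwos} — and let $z$ be any solution of the relaxation with scalar products $v$. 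Applying the Averaging Lemma (\cref{lem:averagingPatterns}) to $x^1,z$ and separately to $y^1,z$ yields solutions $x^3,x^4$ and $y^3,y^4$ with $x^1+z=x^3+x^4$ and $y^1+z=y^3+y^4$. A short check with the bounds in~\eqref{eq:scalarProductBounds}, using that these solutions are integral and hence cannot have the (in general fractional) midpoint of $p$ and $v$ as scalar products, shows that the scalar-product pairs of $x^3$ and $x^4$ are precisely the two elements of $F\coloneqq\{p+d_1,p+d_2\}$ (with multiplicity two on $p+d_1$ when $d_1=d_2$), and likewise for $y^3,y^4$.

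Now suppose, for contradiction, that $|\pi(q)|=1$ for every $q\in F$, say with unique residue $\rho(q)$. Since $x^3_B$ and $x^4_B$ are feasible $B$-solutions for the respective pairs in $F$ (as in \cref{lem:pushingTwos}), their $\gamma_B$-residues equal the corresponding $\rho$-values, so $\gamma_B^\top x^3_B+\gamma_B^\top x^4_B\equiv\Sigma\pmod{m}$, where $\Sigma$ is the fixed sum of $\rho$ over $F$ counted with multiplicity; the same holds for $y^3,y^4$. Combining with $x^1+z=x^3+x^4$ and $y^1+z=y^3+y^4$ gives $\gamma_B^\top x^1_B+\gamma_B^\top z_B\equiv\Sigma\equiv\gamma_B^\top y^1_B+\gamma_B^\top z_B\pmod{m}$, whence $\gamma_B^\top x^1_B\equiv\gamma_B^\top y^1_B\pmod{m}$, a contradiction. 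Hence some $q\in F$ has $|\pi(q)|\geq 2$; as both elements of $F$ are first steps of shortest $p$--$v$ paths, and thus of shortest $p$--$w$ paths, this $q$ is the point $p+d$ required by the one-step claim, completing the induction.

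I expect the main obstacle to be the bookkeeping in the one-step claim, namely verifying across the few possible shapes of the distance-$2$ point $v$ that the Averaging Lemma really confines $x^3,x^4$ (and $y^3,y^4$) to the two-point set $F$ — so that the residue sum $\Sigma$ is well defined independently of the chosen solutions — and checking carefully that the notions of ``solution for $(\alpha,\beta)$'' and of the pattern $\pi$ are aligned, so that $x^3_B$ and $x^4_B$ genuinely certify residues in $\pi$ at the corresponding points. Everything else (monotonicity/containment of the boxes $D_{\cdot,\cdot}$, existence of $z$, and the induction itself) is routine.
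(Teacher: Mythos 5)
Your proposal is correct and relies on the same ingredients as the paper's own argument: the Averaging Lemma (\cref{lem:averagingPatterns}) applied to two relaxed solutions with distinct $\gamma_B$-residues at the point where $|\pi|\geq 2$, the residue-sum contradiction, and induction on the distance induced by steps in $\mathcal{D}$. The only difference is bookkeeping: you advance one step at a time by averaging against the distance-$2$ point on a shortest path (re-deriving the local configuration analysis of \cref{lem:pushingTwos,lem:equalDiffOneStep}), whereas the paper averages the two endpoints directly and halves the distance in each inductive step; both variants go through.
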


\begin{proof}
For any two pairs $(\alpha,\beta),(\alpha',\beta')\in\mathbb{Z}^2$, denote
$$
\Delta((\alpha,\beta),(\alpha',\beta'))\coloneqq\max\{|\alpha-\alpha'|, |\beta-\beta'|, |(\alpha+\beta)-(\alpha'+\beta')|\}\enspace.
$$
We prove that \cref{lem:neighbouringTwo} holds by induction on $\Delta=\Delta((\alpha_1,\beta_1),(\alpha_2,\beta_2))$.
For the base case, note that $\Delta=1$ implies that there exists $d\in\mathcal{D}$ such that $(\alpha_1,\beta_1)=(\alpha_2,\beta_2)+d$, so there is nothing to show.
Thus, assume that \cref{lem:neighbouringTwo} holds if $\Delta<t$ for some $t\in\mathbb{Z}_{\geq 2}$, and consider a situation with $\Delta = t$.
Let $x^1$ and $y^1$ be two solutions for scalar products $(\alpha_1,\beta_1)$ with $\gamma_B^\top x^1_B\not\equiv\gamma_B^\top y^1_B\pmod*{m}$.
These solutions exist because by assumption, $|\pi(\alpha_1,\beta_1)|\geq 2$.
Additionally, let $x^2$ be a solution for scalar products $(\alpha_2,\beta_2)$.
Applying the averaging lemma (\cref{lem:averagingPatterns}) to $x^1$ and $x^2$, and to $y^1$ and $x^2$, we obtain solutions $x^3,x^4$ and $y^3,y^4$, respectively, where $x^1+x^2=x^3+x^4$ and $y^1+x^2=y^3+y^4$.
Observe that the inequalities~\eqref{eq:scalarProductBounds} leave only one option for each of $(\alpha_3,\beta_3)$ and $(\alpha_4,\beta_4)$, and both of these are within $D_{(\alpha_1,\beta_1),(\alpha_2,\beta_2)}$. In particular, they satisfy
$$
\Delta((\alpha_3,\beta_3),(\alpha_2,\beta_2)) \leq \left\lceil\sfrac{t}{2}\right\rceil
 \quad\text{and}\quad
\Delta((\alpha_4,\beta_4),(\alpha_2,\beta_2)) \leq \left\lceil\sfrac{t}{2}\right\rceil\enspace.
$$
We claim that either $|\pi(\alpha_3,\beta_3)|\geq 2$ or $|\pi(\alpha_4,\beta_4)|\geq 2$, which allows us to apply the inductive assumption, thus finishing the proof.

To show the claim, assume for the sake of deriving a contradiction that $|\pi(\alpha_3,\beta_3)| = |\pi(\alpha_4,\beta_4)| = 1$.
Without loss of generality, let $x^3$ and $y^3$ be solutions for $(\alpha_3,\beta_3)$, while $x^4$ and $y^4$ are solutions for $(\alpha_4,\beta_4)$.
Then, by the assumption, $\gamma_B^\top x^3_B\equiv \gamma_B^\top y^3_B\pmod*{m}$, and $\gamma_B^\top x^4_B\equiv \gamma_B^\top y^4_B\pmod*{m}$.
Thus, we also obtain
$$
\gamma_B^\top x^1_B + \gamma_B^\top x^2_B
= \gamma_B^\top x^3_B + \gamma_B^\top x^4_B
\equiv \gamma_B^\top y^3_B + \gamma_B^\top y^4_B
= \gamma_B^\top y^1_B + \gamma_B^\top x^2_B \pmod{m}\enspace,
$$
but this contradicts the choice of $x^1$ and $y^1$ such that $\gamma_B^\top x^1_B\not\equiv\gamma_B^\top y^1_B\pmod*{m}$.
\end{proof}

\begin{lemma}\label{lem:interiorTwos}
Consider a non-linear narrowed pattern $\pi\colon\Pi_{\text{narrowed}}\to2^{\{0,\ldots,m-1\}}$.
Then, for every $(\alpha,\beta)$ in the interior of $\Pi_{\text{narrowed}}$, we have $|\pi(\alpha,\beta)|\geq2$.
\end{lemma}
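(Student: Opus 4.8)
The plan is to leverage the propagation lemma for pairs with at least two residues, \cref{lem:pushingTwos}, together with the characterization of interior points as those $(\alpha,\beta)$ with $(\alpha,\beta)+d\in\Pi_{\text{narrowed}}$ for all $d\in\mathcal{D}$. Since $\pi$ is non-linear, there exists some $(\alpha_1,\beta_1)\in\Pi_{\text{narrowed}}$ with $|\pi(\alpha_1,\beta_1)|\geq 2$. I want to show that every interior point $(\alpha,\beta)$ also has $|\pi(\alpha,\beta)|\geq 2$.

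First I would invoke \cref{lem:neighbouringTwo}: given an interior point $(\alpha,\beta)=(\alpha_2,\beta_2)$ (if $(\alpha_2,\beta_2)=(\alpha_1,\beta_1)$ there is nothing to prove), there is some $d\in\mathcal{D}$ with $(\alpha,\beta)+d\in D_{(\alpha_1,\beta_1),(\alpha,\beta)}\subseteq\Pi_{\text{narrowed}}$ and $|\pi((\alpha,\beta)+d)|\geq 2$. Next I would like to ``push'' this back from $(\alpha,\beta)+d$ to $(\alpha,\beta)$ using \cref{lem:pushingTwos}, which states that if $|\pi(p)|\geq 2$ and $p+d', p+2d'\in\Pi_{\text{narrowed}}$, then $|\pi(p+d')|\geq 2$. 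Applying this with $p=(\alpha,\beta)+d$ and $d'=-d$, I need $(\alpha,\beta)+d+(-d)=(\alpha,\beta)\in\Pi_{\text{narrowed}}$ (true, it is an interior point) and $(\alpha,\beta)+d+2(-d)=(\alpha,\beta)-d\in\Pi_{\text{narrowed}}$. The latter holds precisely because $(\alpha,\beta)$ is in the interior: by the observation preceding \cref{lem:neighbouringTwo}, $(\alpha,\beta)+d'\in\Pi_{\text{narrowed}}$ for all $d'\in\mathcal{D}$, and $-d\in\mathcal{D}$. Hence \cref{lem:pushingTwos} yields $|\pi((\alpha,\beta))|=|\pi((\alpha,\beta)+d+(-d))|\geq 2$, as desired.

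The main obstacle I anticipate is making sure the side conditions of \cref{lem:pushingTwos} are met, i.e.\ that both $(\alpha,\beta)$ and $(\alpha,\beta)-d$ lie in $\Pi_{\text{narrowed}}$; this is exactly where the interiority hypothesis is used, so I would state that step carefully, citing the observation that characterizes interior points. A minor subtlety is that \cref{lem:neighbouringTwo} only guarantees \emph{some} neighbour $(\alpha,\beta)+d$ with the two-residue property in the ``$D$-box'' towards $(\alpha_1,\beta_1)$, not a neighbour in a prescribed direction, but that is harmless: whichever $d\in\mathcal{D}$ it produces, $-d\in\mathcal{D}$ too, and interiority guarantees $(\alpha,\beta)-d\in\Pi_{\text{narrowed}}$. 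Thus the proof is short once these lemmas are in place, and I would write it as: fix an interior point, obtain a two-residue neighbour via \cref{lem:neighbouringTwo}, and push back via \cref{lem:pushingTwos}.
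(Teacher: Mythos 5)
Your proposal is correct and follows essentially the same route as the paper's proof: obtain a two-residue neighbour $(\alpha,\beta)+d$ via \cref{lem:neighbouringTwo}, then apply \cref{lem:pushingTwos} in direction $-d$, using interiority to ensure $(\alpha,\beta)$ and $(\alpha,\beta)-d$ lie in $\Pi_{\text{narrowed}}$. Your explicit handling of the trivial case $(\alpha,\beta)=(\alpha_1,\beta_1)$ is a minor (and welcome) addition the paper leaves implicit.
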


\begin{proof}
Because the pattern $\pi$ is non-linear, there exists $(\alpha_1,\beta_1)\in\Pi_{\text{narrowed}}$ such that $|\pi(\alpha_1,\beta_1)|\geq2$.
Consider any $(\alpha,\beta)$ different from $(\alpha_1,\beta_1)$ in the interior of $\Pi_{\text{narrowed}}$.
Then by \cref{lem:neighbouringTwo}, there exists $d\in\mathcal{D}$ such that $(\alpha,\beta)+d\in\Pi_{\text{narrowed}}$ and $|\pi((\alpha,\beta)+d)|\geq 2$.
Because $(\alpha,\beta)$ is in the interior of $\Pi_{\text{narrowed}}$, we also have that $(\alpha+\beta)-d\in\Pi_{\text{narrowed}}$.
Consequently, applying \cref{lem:pushingTwos}, we obtain that $|\pi(\alpha,\beta)|\geq 2$, as well.
\end{proof}

Having the above at hand, we are now ready to prove \cref{lem:nonEmptyInterior}.

\begin{proof}[Proof of \cref{lem:nonEmptyInterior}]
If the problem has a solution for scalar products $(\alpha, \beta)\in\Pi_{\text{narrowed}}$ with $|\pi(\alpha,\beta)|\geq 2$, then case~\ref{lemitem:type1} of \cref{lem:coveringPattern} holds.
Thus, assume that this is not the case, i.e., the problem only has solutions for scalar products $(\alpha,\beta)\in\Pi_{\text{narrowed}}$ with $|\pi(\alpha,\beta)|=1$.

By \cref{lem:interiorTwos}, there is a scalar product $(\alpha',\beta')$ in the interior of $\Pi_{\text{narrowed}}$ with $|\pi(\alpha',\beta')|\geq 2$.
Applying \cref{lem:neighbouringTwo} to $(\alpha',\beta')$ and $(\alpha,\beta)$, we obtain that there exists $d\in\mathcal{D}$ such that $(\alpha,\beta)+d\in D_{(\alpha,\beta),(\alpha',\beta')}\subseteq\Pi_{\text{narrowed}}$ and $|\pi((\alpha,\beta)+d)|\geq 2$.
Note that because $(\alpha',\beta')$ is in the interior of $\Pi_{\text{narrowed}}$, we have $(\alpha',\beta')+d\in\Pi_{\text{narrowed}}$, and thus also $D_{(\alpha,\beta),(\alpha',\beta')+d}\subseteq\Pi_{\text{narrowed}}$.
As $(\alpha,\beta)+d\in D_{(\alpha,\beta),(\alpha',\beta')}$, it is also true that $(\alpha,\beta)+2d\in D_{(\alpha,\beta),(\alpha',\beta')+d}$, so we conclude that $(\alpha,\beta)+2d\in\Pi_{\text{narrowed}}$.

Observe that $(\alpha,\beta)$ and $d$ thus satisfy the assumptions of \cref{lem:type2step}.
Because we assumed that there are no scalar product pairs satisfying case~\ref{lemitem:type1} of \cref{lem:coveringPattern}, \cref{lem:type2step} implies that here, $(\alpha,\beta)$ satisfies case~\ref{lemitem:goodCombo} of \cref{lem:coveringPattern}.
\end{proof}

To prove \cref{lem:coveringPattern}, it remains to deal with patterns whose domain has empty interior, which is covered by the statement below.

\begin{lemma}\label{lem:emptyInterior}
Consider a non-linear narrowed pattern $\pi$ for a feasible \RCCTUF{} problem as given in~\eqref{eq:structured-problem} with prime modulus $m$ and $|R|\geq m-2$. If the domain of $\pi$ has empty interior, \cref{lem:coveringPattern} holds.
\end{lemma}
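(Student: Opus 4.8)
The plan is to combine \cref{lem:type2step} with a linear sub-pattern supported on a single segment, in the spirit of the proof of \cref{lem:nonEmptyInterior}. Since $\pi$ is non-linear, the set $Z\coloneqq\{(\alpha,\beta)\in\Pi_{\text{narrowed}}\colon|\pi(\alpha,\beta)|\geq 2\}$ is non-empty. If the problem has a feasible solution with scalar products $(\alpha,\beta)\in Z$, then case~\ref{lemitem:type1} of \cref{lem:coveringPattern} holds and we are done; so assume it does not. As the problem is feasible, \cref{lem:patternsBounds} then yields a feasible solution with scalar products $(\alpha^*,\beta^*)\in\Pi_{\text{narrowed}}$, and by the previous assumption $|\pi(\alpha^*,\beta^*)|=1$. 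I will show how to determine, in strongly polynomial time, a linear sub-pattern $\widetilde\pi$ of $\pi$ with the property that every $(\alpha,\beta)\in\Pi_{\text{narrowed}}$ with $|\pi(\alpha,\beta)|=1$ lying outside $\operatorname{dom}(\widetilde\pi)$ admits a direction $d\in\mathcal{D}$ with $(\alpha,\beta),(\alpha,\beta)+d,(\alpha,\beta)+2d\in\Pi_{\text{narrowed}}$ and $|\pi((\alpha,\beta)+d)|\geq2$. Granting such $\widetilde\pi$: if $(\alpha^*,\beta^*)\in\operatorname{dom}(\widetilde\pi)$, then since $|\pi(\alpha^*,\beta^*)|=1$ and $\widetilde\pi$ is linear we have $\widetilde\pi(\alpha^*,\beta^*)=\pi(\alpha^*,\beta^*)=\{\gamma_B^\top x_B\}$ for the corresponding solution, so case~\ref{lemitem:mainBranch} holds; otherwise the displayed \cref{lem:type2step} configuration applies to $(\alpha^*,\beta^*)$, and because the problem has a solution there, case~\ref{lemitem:goodCombo} or~\ref{lemitem:type1} holds. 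In every case one of the three alternatives of \cref{lem:coveringPattern} is satisfied.

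To construct $\widetilde\pi$, I will exploit that $|R|\geq m-2$: by \cref{lem:narrowedPiIneqs} we have $u_i-\ell_i\leq m-|R|\leq 2$ for $i\in\{0,1,2\}$, so $\Pi_{\text{narrowed}}$ is a rectangular band of side at most $3$ in each of the directions $\alpha$, $\beta$, $\alpha+\beta$, and after translating we may assume $\Pi_{\text{narrowed}}\subseteq\{0,1,2\}^2$. Emptiness of the interior means that the only candidate interior point $(1,1)$ is either absent from $\Pi_{\text{narrowed}}$ or lies on its boundary, which forces $\Pi_{\text{narrowed}}$ to be a subset of one of finitely many shapes: a single row or column, a three-point corner $L$, the half-triangle $\{(\alpha,\beta)\in\{0,1,2\}^2\colon\alpha+\beta\leq 2\}$ or a symmetric variant, or the $2\times 3$ rectangle $\{0,1\}\times\{0,1,2\}$ (possibly further cut by a diagonal constraint). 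For each shape I will pin down the possible locations of $Z$ using \cref{lem:neighbouringTwo} (every point of $\Pi_{\text{narrowed}}$ has a $Z$-neighbour in the direction of a fixed element of $Z$, so $Z$ cannot be isolated) and \cref{lem:pushingTwos} (membership in $Z$ propagates along two-step segments, so within this tiny domain $Z$ is forced into only a few configurations). In each resulting configuration I take $\widetilde\pi$ to be $\pi$ restricted to a well-chosen maximal $\mathcal{D}$-aligned segment $L\subseteq\Pi_{\text{narrowed}}\setminus Z$: being a one-dimensional rectangular band on which $|\pi|\equiv1$, \cref{cor:linearSubpattern} shows $\pi|_L$ is linear, and \cref{thm:integrationSubpatterns} provides the accompanying $r_0,r_1,r_2$ in strongly polynomial time, so $\widetilde\pi\coloneqq\pi|_L$ is a genuine linear sub-pattern. (In the degenerate case $Z=\Pi_{\text{narrowed}}$ there is nothing to cover and we may output any one-point linear sub-pattern, as then the feasible solution lies in $Z$ and case~\ref{lemitem:type1} already holds.) The segment $L$ is chosen exactly so that every $|\pi|=1$ point of $\Pi_{\text{narrowed}}$ outside $L$ lies in the required \cref{lem:type2step} configuration, which amounts to a short but complete verification across the listed shapes and $Z$-configurations.

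Finally, $\widetilde\pi$ can be determined in strongly polynomial time: $\Pi_{\text{narrowed}}$ has constantly many points, and for each we can compute $\min\{|\pi(\alpha,\beta)|,3\}$ together with the actual feasible residues whenever $|\pi(\alpha,\beta)|\leq 2$ by at most three recursive calls on the $B$-problem, exactly as in the general procedure; this identifies $Z$, lets us pick $L$, and outputs $\widetilde\pi=\pi|_L$ with its coefficients $r_0,r_1,r_2$. The only part requiring genuine work — and hence the main obstacle — is the case analysis of the previous paragraph: one must check both that the enumeration of empty-interior shapes of $\Pi_{\text{narrowed}}$ is exhaustive and that, for every such shape and every location of $Z$ permitted by \cref{lem:pushingTwos,lem:neighbouringTwo}, a segment $L$ with the property of \cref{lem:type2step}-covering all remaining $|\pi|=1$ points does exist.
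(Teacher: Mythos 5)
Your overall strategy (combine \cref{lem:type2step} configurations with one linear sub-pattern, after enumerating the finitely many empty-interior shapes of $\Pi_{\text{narrowed}}$) is the same skeleton as the paper's proof, but the proposal has a genuine gap, and it is exactly where you acknowledge it: the case analysis is the entire content of this lemma, and you have not carried it out. Worse, the specific covering scheme you commit to --- take $\widetilde\pi=\pi|_L$ for a maximal $\mathcal{D}$-aligned segment $L\subseteq\Pi_{\text{narrowed}}\setminus Z$ on which $|\pi|\equiv 1$, and handle every other $|\pi|=1$ point via a \cref{lem:type2step} configuration --- provably cannot work for some of the shapes your enumeration must contain. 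Consider the four-point domain $\Pi_{\text{narrowed}}=\{0,1\}^2$ (one of the shapes in the paper's analysis of $|\Pi_{\text{narrowed}}|=4$) with $|\pi(\alpha,\beta)|\geq 2$ exactly at $(0,1)$ and $(1,0)$ and $|\pi|=1$ at $(0,0)$ and $(1,1)$. This configuration is compatible with \cref{lem:pushingTwos,lem:neighbouringTwo}: the domain contains no three collinear points in a direction of $\mathcal{D}$, so \cref{lem:pushingTwos} imposes nothing, and the neighbouring condition of \cref{lem:neighbouringTwo} is satisfied. In this situation no \cref{lem:type2step} configuration exists at all (it needs $(\alpha,\beta)$, $(\alpha,\beta)+d$, $(\alpha,\beta)+2d$ all in the domain), and the two $|\pi|=1$ points differ by $(1,1)\notin\mathcal{D}$, so no single segment avoiding $Z$ contains both. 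Hence for at least one of the two candidate locations of the feasible solution, neither alternative of your dichotomy applies, and your construction of $\widetilde\pi$ fails.

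The paper resolves precisely these small shapes by a different mechanism that your proposal rules out by design: for the $|\Pi_{\text{narrowed}}|=4$ shapes it applies the averaging lemma (\cref{lem:averagingPatterns}) to solutions at the two ``extreme'' squares and uses the resulting identity $\gamma_B^\top x^1_B+\gamma_B^\top x^2_B=\gamma_B^\top x^3_B+\gamma_B^\top x^4_B$ to build a linear sub-pattern whose domain is \emph{all} of $\Pi_{\text{narrowed}}$, selecting one residue from $\pi(\alpha,\beta)$ even at points with $|\pi(\alpha,\beta)|\geq 2$; a sub-pattern is allowed to do this since it only needs $\widetilde\pi(\alpha,\beta)\subseteq\pi(\alpha,\beta)$. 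Your restriction that $\operatorname{dom}(\widetilde\pi)$ avoid $Z$ is therefore too rigid. (Similar averaging- and segment-based arguments are also needed for the five- and six-point shapes and for the width-$0$/width-$1$ band cases, which your sketch handles correctly in spirit but does not verify.) To repair the proposal you would need to (i) actually perform the exhaustive enumeration of empty-interior domains and admissible $Z$-configurations, and (ii) allow sub-patterns whose domains meet $Z$, constructed via \cref{lem:averagingPatterns} as in the paper, rather than only segments inside $\Pi_{\text{narrowed}}\setminus Z$.
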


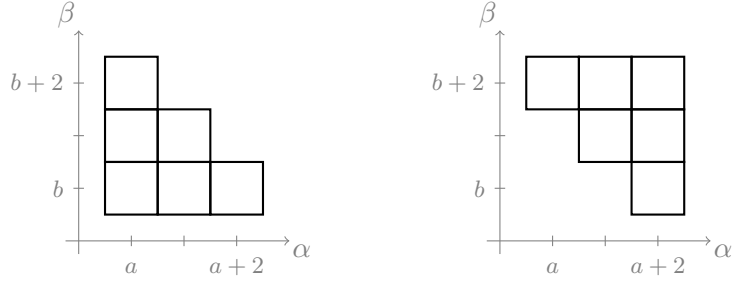
\begin{figure}[!ht]
\centering
\newcommand{\figscale}{0.7}
\begin{tikzpicture}[scale=\figscale]
\begin{scope}[every node/.style={draw, thick, rectangle, minimum width=\figscale cm, minimum height=\figscale cm, inner sep=0pt, text width=\figscale cm, align=center}]
\foreach \x/\y/\fill/\text in {-1/ 1/none/,  -1/ 0/none/,  0/ 0/none/, -1/-1/none/,   0/-1/none/, 1/-1/none/}
\node[fill=\fill, font=\small] (\x\y) at (\x, \y) {\vphantom{ly}$\text$};
\end{scope}

\begin{scope}[every path/.style={gray}]
\draw[->] (-2.25, -2) to node[pos=1, anchor=north west, inner sep=1pt] {$\alpha$} (2, -2);
\draw[->] (-2, -2.25) to node[pos=1, anchor=south east, inner sep=1pt] {$\beta$} (-2, 2);
\foreach \x/\xtext in {-1/a, 0/, 1/a+2}
\draw (\x, -2.1) to node[pos=0, anchor=north, font=\footnotesize] {$\xtext$\vphantom{l}} (\x, -1.9);
\foreach \y/\ytext in {-1/b, 0/, 1/b+2}
\draw (-2.1, \y) to node[pos=0, anchor=east, font=\footnotesize] {$\ytext$\vphantom{l}} (-1.9, \y);
\end{scope}

\begin{scope}[xshift=8cm]
\begin{scope}[every node/.style={draw, thick, rectangle, minimum width=\figscale cm, minimum height=\figscale cm, inner sep=0pt, text width=\figscale cm, align=center}]
\foreach \x/\y/\fill/\text in {-1/ 1/none/,  0/ 1/none/,   1/ 1/none/,
0/ 0/none/, 1/ 0/none/,1/-1/none/}
\node[fill=\fill, font=\small] (\x\y) at (\x, \y) {\vphantom{ly}$\text$};
\end{scope}

\begin{scope}[every path/.style={gray}]
\draw[->] (-2.25, -2) to node[pos=1, anchor=north west, inner sep=1pt] {$\alpha$} (2, -2);
\draw[->] (-2, -2.25) to node[pos=1, anchor=south east, inner sep=1pt] {$\beta$} (-2, 2);
\foreach \x/\xtext in {-1/a, 0/, 1/a+2}
\draw (\x, -2.1) to node[pos=0, anchor=north, font=\footnotesize] {$\xtext$\vphantom{l}} (\x, -1.9);
\foreach \y/\ytext in {-1/b, 0/, 1/b+2}
\draw (-2.1, \y) to node[pos=0, anchor=east, font=\footnotesize] {$\ytext$\vphantom{l}} (-1.9, \y);
\end{scope}
\end{scope}

\end{tikzpicture} \caption{Shapes $\Pi_1^{(a,b)}$ (left) and $\Pi_2^{(a,b)}$ (right).}
\label{fig:boundedPatternShapes}
\end{figure}

Before proving \cref{lem:emptyInterior}, we first observe structural properties of pattern domains with an empty interior.
The possible shapes of such domains is very restricted.
In particular, the subsequent lemma shows that they are either flat, or contained in small shapes  $\Pi_0^{(a,b)}$ and $\Pi_1^{(a,b)}$ for $a,b\in\mathbb{Z}$ given by
\begin{align*}
&\Pi_1^{(a,b)} \coloneqq \left\{(\alpha,\beta)\in\mathbb{Z}^2\colon \begin{array}{c}a\leq\alpha\leq a+2\\ b\leq\beta\leq b+2\\ a+b\leq\alpha+\beta\leq a+b+2\end{array}\right\}\\
\text{and}\quad
&\Pi_2^{(a,b)} \coloneqq \left\{(\alpha,\beta)\in\mathbb{Z}^2\colon \begin{array}{c}a\leq\alpha\leq a+2\\ b\leq\beta\leq b+2\\ a+b+2\leq\alpha+\beta\leq a+b+4\end{array}\right\}\enspace,
\end{align*}
as depicted in \cref{fig:boundedPatternShapes}.

In what follows, we define $\mathcal{D}^\perp\coloneqq \{\pm\begin{psmallmatrix}
1 \\ 0
\end{psmallmatrix}, \pm\begin{psmallmatrix}
0 \\ 1
\end{psmallmatrix}, \pm\begin{psmallmatrix}
1 \\ 1
\end{psmallmatrix}\}$, which is a set of vectors orthogonal to the potential edge directions $\mathcal{D}$ of the convex hull of a pattern support (see \cref{lem:feasibleResiduePolyhedron}).

\begin{lemma}\label{lem:structureEmptyInterior}
Let $\Pi\subseteq \mathbb{Z}^n$ be of the form
\begin{equation}\label{eq:generalDomainShape}
\Pi = \left\{(\alpha,\beta)\in\mathbb{Z}^2\colon \ell_0\leq \alpha+\beta\leq u_0,\ \ell_1\leq \alpha \leq u_1,\ \ell_2\leq \beta\leq u_2\right\}
\end{equation}
with $\ell_i, u_i\in\mathbb{Z}$ for $i\in \{0,1,2\}$, and assume that $\Pi$ has empty interior.
Then at least one of the following holds:
\begin{enumerate}
\item\label{lemitem:flatDirection} A direction in $\mathcal{D}^\perp$ is a flat direction of width at most $1$ for $\Pi$.
\item\label{lemitem:boundedShape} There are $a,b\in\mathbb{Z}$ and $i\in\{1,2\}$ such that $\Pi\subseteq\Pi_{i}^{(a,b)}$.
\end{enumerate}
\end{lemma}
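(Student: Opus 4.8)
The plan is to reduce the statement to an elementary fact about integer intervals. First I would dispose of trivialities: if $\Pi=\emptyset$ then $\Pi\subseteq\Pi_1^{(0,0)}$ and item~\ref{lemitem:boundedShape} holds, so we may assume $\Pi\neq\emptyset$. Next I would normalize the description of $\Pi$ by replacing the given bounds with the tight ones, i.e.\ $\ell_0,u_0$ by $\min_{\Pi}(\alpha+\beta)$ and $\max_{\Pi}(\alpha+\beta)$, and analogously $\ell_1,u_1$ by $\min_\Pi\alpha,\max_\Pi\alpha$ and $\ell_2,u_2$ by $\min_\Pi\beta,\max_\Pi\beta$. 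Since the new bounds are attained on $\Pi$ and are no weaker than the old ones, this does not change the set $\Pi$; moreover, with these tight bounds, the width of $\Pi$ in the directions $\begin{psmallmatrix}1\\0\end{psmallmatrix}$, $\begin{psmallmatrix}0\\1\end{psmallmatrix}$, $\begin{psmallmatrix}1\\1\end{psmallmatrix}$ equals $u_1-\ell_1$, $u_2-\ell_2$, $u_0-\ell_0$, respectively. Hence, if item~\ref{lemitem:flatDirection} fails, we may assume $u_1-\ell_1\geq2$, $u_2-\ell_2\geq2$, and $u_0-\ell_0\geq2$, and the goal becomes to show $\Pi$ equals $\Pi_2^{(a,b)}$ or $\Pi_1^{(a,b)}$ with $a=\ell_1$, $b=\ell_2$.

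The core step is a disjointness argument. Since every point of $\Pi$ satisfies $\ell_1+\ell_2\leq\alpha+\beta\leq u_1+u_2$, tightness of the diagonal bounds gives the ``squeeze'' $\ell_1+\ell_2\leq\ell_0\leq u_0\leq u_1+u_2$. Consider the integer box $B\coloneqq\{(\alpha,\beta)\in\mathbb{Z}^2\colon \ell_1+1\leq\alpha\leq u_1-1,\ \ell_2+1\leq\beta\leq u_2-1\}$, which is nonempty because $u_i-\ell_i\geq2$ for $i\in\{1,2\}$. A point of $\Pi$ lies in the interior of $\Pi$ exactly if it lies in $B$ and additionally satisfies $\ell_0<\alpha+\beta<u_0$; consequently, $\Pi$ has empty interior if and only if the set $\{\alpha+\beta\colon(\alpha,\beta)\in B\}$ is disjoint from the integer interval $\{\ell_0+1,\dots,u_0-1\}$. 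A routine check (the sum of two integer intervals is the integer interval spanning from the sum of the left endpoints to the sum of the right endpoints) identifies $\{\alpha+\beta\colon(\alpha,\beta)\in B\}=\{\ell_1+\ell_2+2,\dots,u_1+u_2-2\}$, and both of these integer intervals are nonempty (using $u_i-\ell_i\geq2$ and $u_0-\ell_0\geq2$). Two nonempty integer intervals are disjoint only if one lies entirely below the other, so the empty-interior hypothesis forces either \textbf{(A)} $u_1+u_2\leq\ell_0+2$, or \textbf{(B)} $u_0\leq\ell_1+\ell_2+2$.

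In case~(A), combining with the squeeze $u_0\leq u_1+u_2$ and with $u_0\geq\ell_0+2$ gives the chain $u_0\leq u_1+u_2\leq\ell_0+2\leq u_0$, whence $u_0=\ell_0+2$ and $u_1+u_2=u_0$. Then for every $(\alpha,\beta)\in\Pi$ we get $\alpha\geq\ell_0-\beta\geq\ell_0-u_2=u_1-2$ and, symmetrically, $\beta\geq\ell_0-u_1=u_2-2$; taking minima over $\Pi$ yields $\ell_1\geq u_1-2$ and $\ell_2\geq u_2-2$, which together with the reverse inequalities gives $u_1-\ell_1=u_2-\ell_2=2$. Setting $a=\ell_1$ and $b=\ell_2$, one reads off $u_1=a+2$, $u_2=b+2$, $\ell_0=u_1+u_2-2=a+b+2$, and $u_0=a+b+4$, i.e.\ the six (tight) inequalities defining $\Pi$ are exactly those defining $\Pi_2^{(a,b)}$, so $\Pi\subseteq\Pi_2^{(a,b)}$ (indeed with equality). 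Case~(B) is completely symmetric, with the roles of the two diagonal inequalities swapped: one obtains $\ell_0=\ell_1+\ell_2$, $u_0=\ell_1+\ell_2+2$, and $u_1-\ell_1=u_2-\ell_2=2$, hence $\Pi=\Pi_1^{(a,b)}$ with $a=\ell_1$, $b=\ell_2$. In either case item~\ref{lemitem:boundedShape} holds.

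I expect no genuine obstacle here; the only points needing care are purely bookkeeping. One is to verify that $\alpha+\beta$ really attains \emph{every} integer in $[\ell_1+\ell_2+2,\,u_1+u_2-2]$ as $(\alpha,\beta)$ ranges over $B$, so that emptiness of the interior is genuinely \emph{equivalent} to interval disjointness (and not merely implied by it). The other is to keep the squeeze $\ell_1+\ell_2\leq\ell_0\leq u_0\leq u_1+u_2$ straight and track how it collapses to equalities in cases~(A) and~(B). Finally, all degenerate configurations of $\Pi$ (empty, a single point, or a segment) are handled for free: each of them makes some direction in $\mathcal{D}^\perp$ a flat direction of width at most $1$, so item~\ref{lemitem:flatDirection} is triggered and there is nothing further to prove.
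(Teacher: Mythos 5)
Your proof is correct, and it takes a genuinely different route from the paper. The paper fixes a maximizer $(\alpha_0,\beta_0)$ of $\alpha-\beta$ over $\Pi$ and does a case distinction on which constraints are tight there (equivalently, on the local edge directions), in each case either exhibiting an interior point to reach a contradiction or reading off containment in $\Pi_1^{(a,b)}$ or $\Pi_2^{(a,b)}$. You instead tighten the bounds to the values attained on $\Pi$ (which only shrinks the interior, so the hypothesis is preserved), observe that the interior is exactly the inner box $B$ intersected with the open diagonal band $\ell_0<\alpha+\beta<u_0$, and reduce ``empty interior'' to disjointness of two nonempty integer intervals via the sumset identity for $\{\alpha+\beta\colon(\alpha,\beta)\in B\}$; the two ways the intervals can be disjoint then collapse the squeeze $\ell_1+\ell_2\leq\ell_0\leq u_0\leq u_1+u_2$ into exactly the defining inequalities of $\Pi_2^{(a,b)}$ and $\Pi_1^{(a,b)}$, respectively. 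The bookkeeping you flag (all intermediate sums are attained; the squeeze and its collapse) checks out, and your handling of the degenerate cases is fine -- indeed the closing remark about segments is not even needed, since any configuration with some width at most $1$ triggers item~\ref{lemitem:flatDirection} directly. Your argument buys a more systematic, one-dimensional reduction and the slightly stronger conclusion that, with tightened bounds, $\Pi$ \emph{equals} $\Pi_i^{(a,b)}$ in the non-flat case; the paper's argument is a shorter local analysis at a single extreme point that works directly with the given description. Both are elementary and equally valid.
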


\begin{proof}
Assume that item~\ref{lemitem:flatDirection} does not hold, i.e., the three directions $\begin{psmallmatrix}
1 \\ 0
\end{psmallmatrix}$, $\begin{psmallmatrix}
0 \\ 1
\end{psmallmatrix}$, and $\begin{psmallmatrix}
1 \\ 1
\end{psmallmatrix}$
are all of width at least $2$, and let $(\alpha_0,\beta_0)\in\arg\max_{(\alpha,\beta)\in\Pi}\left(\alpha-\beta\right)$. Starting from a general $\Pi$ of the form in~\eqref{eq:generalDomainShape}, there are three cases to distinguish:
\begin{description}
\item[Case 1:] $(\alpha_0,\beta_0)=(u_1,\ell_2)$ and the edge directions at $(\alpha_0,\beta_0)$ are $d_1=\begin{psmallmatrix}
0\\1
\end{psmallmatrix}$ and $d_2=\begin{psmallmatrix}
-1\\0
\end{psmallmatrix}$. \\
This implies that $(\alpha_0,\beta_0)+d_1,(\alpha_0,\beta_0)+d_2\in\Pi$, hence we must have $\ell_0\leq \alpha_0+\beta_0-1$ and $u_0\geq \alpha_0+\beta_0+1$.
Also note that because $\begin{psmallmatrix}
1 \\ 0
\end{psmallmatrix}$, $\begin{psmallmatrix}
0 \\ 1
\end{psmallmatrix}$ are directions of width at least $2$, we must also have $\ell_1\leq \alpha_0-2$ and $u_2\geq \beta_0+2$.
But this implies that $(\alpha_0-1, \beta_0+1)$ is in the interior of $\Pi$, contradicting the assumption.

\item[Case 2:] $(\alpha_0,\beta_0)=(u_1,\ell_0-u_1)$ and the edge directions at $(\alpha_0,\beta_0)$ are $d_1=\begin{psmallmatrix}
0\\1
\end{psmallmatrix}$ and $d_2=\begin{psmallmatrix}
-1\\1
\end{psmallmatrix}$. \\
Because of the width $2$ assumption, we must have $\ell_1\leq u_1 - 2 =\alpha_0-2$ and $u_0\geq \ell_0 + 2 = \alpha_0+\beta_0+2$.
Also, we must have $u_2\geq \beta_0+2$. If $u_2=\beta_0+2$, we obtain $\Pi\subseteq\Pi_2^{(\alpha_0-2,\beta_0)}$; if $u_2>\beta_0+2$, then $(\alpha_0-1, \beta_0+2)$ is in the interior of $\Pi$, which is a contradiction.

\item[Case 3:] $(\alpha_0,\beta_0)=(u_0-\ell_2,\ell_2)$ and the edge directions at $(\alpha_0,\beta_0)$ are $d_1=\begin{psmallmatrix}
-1\\0
\end{psmallmatrix}$ and $d_2=\begin{psmallmatrix}
-1\\1
\end{psmallmatrix}$. \\
Because of the width $2$ assumption, we must have $u_2\geq \ell_2 + 2 = \beta_0+2$ and $\ell_0\leq u_0 - 2= \alpha_0+\beta_0-2$.
Also, we must have $\ell_1\leq \alpha_0-2$. If $\ell_1=\alpha_0-2$, we obtain $\Pi\subseteq\Pi_1^{(\alpha_0-2,\beta_0)}$; if $\ell_1<\alpha_0-2$, then $(\alpha_0-2, \beta_0+1)$ is in the interior of $\Pi$, which is a contradiction.\qedhere
\end{description}
\end{proof}

\begin{proof}[Proof of \cref{lem:emptyInterior}]
When dealing with patterns and showing that \cref{lem:coveringPattern} holds, observe the following:
If there exists a solution for scalar products $(\alpha,\beta)\in \Pi_{\text{narrowed}}$ with $|\pi(\alpha,\beta)|\geq 2$, then item~\ref{lemitem:type1} of \cref{lem:coveringPattern} applies, so we can assume from now on that any scalar products $(\alpha,\beta)\in \Pi_{\text{narrowed}}$ for which there is a solution satisfy $|\pi(\alpha,\beta)|=1$.
To this end, we exploit two options: The first one is that such squares are contained in configurations of the type described by \cref{lem:type2step}; the second one is to find a linear sub-pattern that has the corresponding $(\alpha,\beta)$ in its domain and thus covers potential solutions for these scalar products.
We distinguish three cases based on the shape of the narrowed pattern domain $\Pi_{\text{narrowed}}$, which cover all the options by \cref{lem:structureEmptyInterior}:
\begin{description}
\item[Case 1:] There is a direction of width $0$ for $\Pi_{\text{narrowed}}$ in $\mathcal{D}^\perp$, but $\Pi_{\text{narrowed}}\not\subseteq \Pi_i^{(a,b)}$ for any $a,b\in\mathbb{Z}$ and $i\in\{1,2\}$.
\item[Case 2:] There is no direction of width $0$ but one of width $1$ for $\Pi_{\text{narrowed}}$ in $\mathcal{D}^\perp$, and $\Pi_{\text{narrowed}}\not\subseteq \Pi_i^{(a,b)}$ for any $a,b\in\mathbb{Z}$ and $i\in\{1,2\}$.
\item[Case 3:] There are $a,b\in\mathbb{Z}$ and $i\in\{1,2\}$ such that $\Pi_{\text{narrowed}}\subseteq \Pi_i^{(a,b)}$.
\end{description}

In case 1, observe that $\Pi_{\text{narrowed}}$ is bounded, hence there exist $(\alpha_0,\beta_0)\in\Pi_{\text{narrowed}}$, $d\in\mathcal{D}$ and $t\in\mathbb{Z}_{\geq 0}$ such that
$$\Pi_{\text{narrowed}} = \{(\alpha_0,\beta_0)+id\colon i\in\{0,\ldots, t\}\}\enspace,$$
and because $\Pi_{\text{narrowed}}\not\subseteq \Pi_i^{(a,b)}$ for any $a$, $b$, and $i$, we must have $t\geq 3$.
Because the pattern is non-linear, there exists $(\alpha_1,\beta_1)\in\Pi_{\text{narrowed}}$ with $|\pi(\alpha_1,\beta_1)|\geq 2$.
We claim that $|\pi((\alpha_0,\beta_0)+id)|\geq 2$ for all $i\in[t-1]$.

To see the claim, we first show that $|\pi((\alpha_0,\beta_0)+d)|\geq 2$.
If $(\alpha_0,\beta_0)\neq(\alpha_1,\beta_1)$, we may apply \cref{lem:neighbouringTwo} to $(\alpha_0,\beta_0)$ and $(\alpha_1,\beta_1)$ to obtain that $|\pi((\alpha_0,\beta_0)+d)|\geq 2$.
If, on the other hand, $(\alpha_0,\beta_0)=(\alpha_1,\beta_1)$, then apply \cref{lem:neighbouringTwo} to $(\alpha_0,\beta_0)+2d$ and $(\alpha_1,\beta_1)$, which also gives $|\pi((\alpha_0,\beta_0)+d)|\geq 2$.
Finally, applying \cref{lem:neighbouringTwo} once again to $(\alpha_0,\beta_0)+(i+1)d$ and $(\alpha_0,\beta_0)+d$ for $i\in\{2,\ldots,t-1\}$, we get that $|\pi(\alpha_0,\beta_0)+id)|\geq 2$.
Thus, the only potential scalar product pairs with $|\pi(\alpha,\beta)|=1$ are $(\alpha,\beta)\in\{(\alpha_0,\beta_0),(\alpha_0,\beta_0)+td\}$.
These $(\alpha,\beta)$ are part of a configuration as described by \cref{lem:type2step}, hence we get that if there is a solution for such $(\alpha,\beta)$, then either item~\ref{lemitem:goodCombo} or~\ref{lemitem:type1} of \cref{lem:coveringPattern} holds.

In case~2, we note that the condition on a flat direction of width~$1$ implies that there exists $(\alpha,\beta)\in\Pi_{\text{narrowed}}$ and two directions $d_1,d_2\in\mathcal{D}$ with $d_1\neq d_2$ and $d_1\neq-d_2$ such that
$$
\Pi_{\text{narrowed}}\subseteq \{(\alpha_0,\beta_0)+id_1+\varepsilon d_2\colon i\in\mathbb{Z}, \varepsilon\in\{0,1\}\}\enspace.
$$
Define $\Pi_0\coloneqq\Pi_{\text{narrowed}}\cap\{(\alpha_0,\beta_0)+id_1\colon i\in\mathbb{Z}\}$, and $\Pi_1\coloneqq\Pi_{\text{narrowed}}\cap\{(\alpha_0,\beta_0)+id_1+d_2\colon i\in\mathbb{Z}\}$.
If $|\Pi_0|<3$ or $|\Pi_1|<3$, then $\Pi_{\text{narrowed}}\subseteq\Pi_i^{(a,b)}$ for some $a,b\in\mathbb{Z}$ and $i\in\{0,1\}$, which we excluded in this case.
Thus, $|\Pi_0|\geq3$ and $|\Pi_1|\geq3$.
Observe that because the pattern $\pi$ is non-linear, for at least one $\varepsilon\in\{0,1\}$, there exist $(\alpha,\beta)\in\Pi_\varepsilon$ with $|\pi(\alpha,\beta)|\geq 2$, and hence we may apply the analysis from case~1 to such $\Pi_\varepsilon$ to see that if there is a solution for scalar products in $\Pi_\varepsilon$, then one of items~\ref{lemitem:goodCombo} or~\ref{lemitem:type1} of \cref{lem:coveringPattern} applies.
If not both $\Pi_\varepsilon$ fall into the previous case, then there is one remaining, say $\Pi_{\varepsilon'}$, such that for all $(\alpha,\beta)\in\Pi_{\varepsilon'}$, $|\pi(\alpha,\beta)|=1$.
Then $\widetilde{\pi}\coloneqq \pi|_{\Pi_{\varepsilon'}}$ is a linear sub-pattern of $\pi$, hence if there is a solution covered by $\widetilde{\pi}$, then item~\ref{lemitem:mainBranch} of \cref{lem:coveringPattern} applies.
This completes the analysis of case~2.

Finally, we deal with case~3 on a case-by-case basis, by going through potential narrowed pattern domain shapes that are contained in sets of the form $\Pi_0^{(a,b)}$ or $\Pi_1^{(a,b)}$ for some $(a,b)\in\mathbb{Z}^2$, presented here by increasing size of $|\Pi_{\text{narrowed}}|$.
\begin{itemize}
\item $|\Pi_{\text{narrowed}}|\leq 3$:
If $\Pi_{\text{narrowed}} = \{(\alpha_0,\beta_0)+id\colon i\in\{0,1,2\}\}$ for some $(\alpha_0,\beta_0)\in\mathbb{Z}^2$ and $d\in\mathcal{D}$, then the arguments from case~1 apply.
Otherwise, restricting $\pi$ to the subset of all $(\alpha,\beta)\in\Pi_{\text{narrowed}}$ with $|\pi(\alpha,\beta)|=1$ gives a sub-pattern $\widetilde{\pi}$ for which \cref{lem:coveringPattern} follows immediately.

\item $|\Pi_{\text{narrowed}}|= 4$: Because we require $\Pi_{\text{narrowed}}\subseteq \Pi_i^{(a,b)}$ for some $(a,b)\in\mathbb{Z}^2$ and $i\in\{0,1\}$, the only possible shapes of $\Pi_{\text{narrowed}}$ are the ones given in \cref{fig:patternShapes4}.

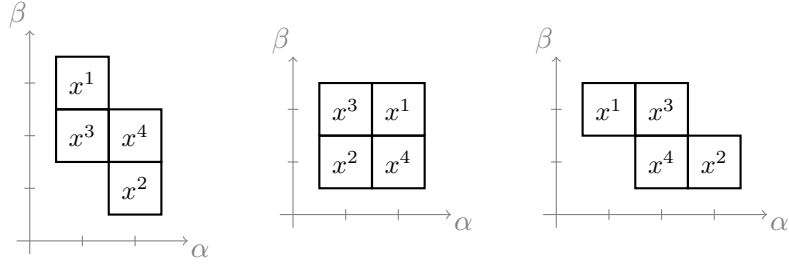
\begin{figure}[!ht]
\centering
\newcommand{\figscale}{0.7}
\begin{tikzpicture}[scale=\figscale]
\begin{scope}
\begin{scope}[every node/.style={draw, thick, rectangle, minimum width=\figscale cm, minimum height=\figscale cm, inner sep=0pt, text width=\figscale cm, align=center}]
\foreach \x/\y/\fill/\text in {-1/ 1/none/x^1,  -1/ 0/none/x^3,    0/ 0/none/x^4, 0/-1/none/x^2  }
\node[fill=\fill, font=\small] (\x\y) at (\x, \y) {\vphantom{l}$\text$};
\end{scope}

\begin{scope}[every path/.style={gray}]
\draw[->] (-2.25, -2) to node[pos=1, anchor=north west, inner sep=1pt] {$\alpha$} (1, -2);
\draw[->] (-2, -2.25) to node[pos=1, anchor=south east, inner sep=1pt] {$\beta$} (-2, 2);
\foreach \x in {-1, ..., 0}
\draw (\x, -2.1) to node[pos=0, anchor=north, font=\footnotesize] {} (\x, -1.9);
\foreach \y in {-1, ..., 1}
\draw (-2.1, \y) to node[pos=0, anchor=east, font=\footnotesize] {} (-1.9, \y);
\end{scope}
\end{scope}

\begin{scope}[xshift=5cm, yshift=-0.5cm]
\begin{scope}[every node/.style={draw, thick, rectangle, minimum width=\figscale cm, minimum height=\figscale cm, inner sep=0pt, text width=\figscale cm, align=center}]
\foreach \x/\y/\fill/\text in {-1/ 1/none/x^3,  0/ 1/none/x^1,   -1/ 0/none/x^2,    0/ 0/none/x^4}
\node[fill=\fill, font=\small] (\x\y) at (\x, \y) {\vphantom{l}$\text$};
\end{scope}

\begin{scope}[every path/.style={gray}]
\draw[->] (-2.25, -1) to node[pos=1, anchor=north west, inner sep=1pt] {$\alpha$} (1, -1);
\draw[->] (-2, -1.25) to node[pos=1, anchor=south east, inner sep=1pt] {$\beta$} (-2, 2);
\foreach \x in {-1, ..., 0}
\draw (\x, -1.1) to node[pos=0, anchor=north, font=\footnotesize] {} (\x, -0.9);
\foreach \y in {0, ..., 1}
\draw (-2.1, \y) to node[pos=0, anchor=east, font=\footnotesize] {} (-1.9, \y);
\end{scope}
\end{scope}

\begin{scope}[xshift=10cm, yshift=-0.5cm]
\begin{scope}[every node/.style={draw, thick, rectangle, minimum width=\figscale cm, minimum height=\figscale cm, inner sep=0pt, text width=\figscale cm, align=center}]
\foreach \x/\y/\fill/\text in {-1/ 1/none/x^1,  0/ 1/none/x^3,   0/ 0/none/x^4, 1/ 0/none/x^2 }
\node[fill=\fill, font=\small] (\x\y) at (\x, \y) {\vphantom{l}$\text$};
\end{scope}

\begin{scope}[every path/.style={gray}]
\draw[->] (-2.25, -1) to node[pos=1, anchor=north west, inner sep=1pt] {$\alpha$} (2, -1);
\draw[->] (-2, -1.25) to node[pos=1, anchor=south east, inner sep=1pt] {$\beta$} (-2, 2);
\foreach \x in {-1, ..., 1}
\draw (\x, -1.1) to node[pos=0, anchor=north, font=\footnotesize] {} (\x, -0.9);
\foreach \y in {0, ..., 1}
\draw (-2.1, \y) to node[pos=0, anchor=east, font=\footnotesize] {} (-1.9, \y);
\end{scope}
\end{scope}

\end{tikzpicture} \caption{Narrowed pattern domains if $|\Pi_{\text{narrowed}}|=4$ and $\Pi_{\text{narrowed}}\subseteq \Pi_i^{(a,b)}$ for some $(a,b)\in\mathbb{Z}^2$ and $i\in\{0,1\}$.}
\label{fig:patternShapes4}
\end{figure}

Now in any of these three cases, let $x^1$ and $x^2$ be solutions of the relaxation of the \RCCTUF{} problem for scalar products $(\alpha,\beta)$ located in the pattern $\Pi_{\text{narrowed}}$ as indicated in \cref{fig:patternShapes4}.
Applying the averaging lemma (\cref{lem:averagingPatterns}) to $x^1$ and $x^2$, we obtain solutions $x^3$ and $x^4$, and by the inequalities~\eqref{eq:scalarProductBounds} in~\cref{lem:averagingPatterns} and the property that $x^1+x^2=x^3+x^4$, we may assume that $x^3$ and $x^4$ are solutions for scalar products located in the pattern $\Pi_{\text{narrowed}}$ as indicated in \cref{fig:patternShapes4}, as well.

Now observe that the residues $\gamma_B^\top x_B^i$ satisfy $\gamma_B^\top x^1_B + \gamma_B^\top x^2_B = \gamma_B^\top x^3_B + \gamma_B^\top x^4_B$, and hence the sub-pattern $\widetilde\pi$ that maps $(\alpha,\beta)\in\Pi_{\text{narrowed}}$ to the residue $\gamma_B^\top x^i_B$, where $x^i$ is the solution for $(\alpha,\beta)$, is a linear sub-pattern.
More importantly, it is a linear sub-pattern that covers all $(\alpha,\beta)\in\Pi_{\text{narrowed}}$ with $|\pi(\alpha,\beta)|=1$, and hence \cref{lem:coveringPattern} follows.

\item $|\Pi_{\text{narrowed}}|= 5$: Again, requiring $\Pi_{\text{narrowed}}\subseteq \Pi_i^{(a,b)}$ for some $(a,b)\in\mathbb{Z}^2$ and $i\in\{0,1\}$, we can immediately enumerate the possible shapes of $\Pi_{\text{narrowed}}$, giving the list in \cref{fig:patternShapes5}.

\begin{figure}[!ht]
\centering

\newcommand{\figscale}{0.7}
\begin{tikzpicture}[scale=\figscale]
\begin{scope}
\begin{scope}[every node/.style={draw, thick, rectangle, minimum width=\figscale cm, minimum height=\figscale cm, inner sep=0pt, text width=\figscale cm, align=center}]
\foreach \x/\y/\fill/\text in {-1/ 1/none/x,  0/ 1/lightgray/y',   -1/ 0/none/y,    0/ 0/lightgray/, 0/-1/lightgray/x'  }
\node[fill=\fill, font=\small] (\x\y) at (\x, \y) {\vphantom{ly}$\text$};
\end{scope}

\begin{scope}[every path/.style={gray}]
\draw[->] (-2.25, -2) to node[pos=1, anchor=north west, inner sep=1pt] {$\alpha$} (1, -2);
\draw[->] (-2, -2.25) to node[pos=1, anchor=south east, inner sep=1pt] {$\beta$} (-2, 2);
\foreach \x in {-1, ..., 0}
\draw (\x, -2.1) to node[pos=0, anchor=north, font=\footnotesize] {} (\x, -1.9);
\foreach \y in {-1, ..., 1}
\draw (-2.1, \y) to node[pos=0, anchor=east, font=\footnotesize] {} (-1.9, \y);
\end{scope}
\end{scope}

\begin{scope}[xshift=5cm, yshift=-0.5cm]
\begin{scope}[every node/.style={draw, thick, rectangle, minimum width=\figscale cm, minimum height=\figscale cm, inner sep=0pt, text width=\figscale cm, align=center}]
\foreach \x/\y/\fill/\text in {-1/ 1/lightgray/x',  0/ 1/lightgray/,   1/ 1/lightgray/y',
0/ 0/none/y, 1/ 0/none/x }
\node[fill=\fill, font=\small] (\x\y) at (\x, \y) {\vphantom{ly}$\text$};
\end{scope}

\begin{scope}[every path/.style={gray}]
\draw[->] (-2.25, -1) to node[pos=1, anchor=north west, inner sep=1pt] {$\alpha$} (2, -1);
\draw[->] (-2, -1.25) to node[pos=1, anchor=south east, inner sep=1pt] {$\beta$} (-2, 2);
\foreach \x in {-1, ..., 1}
\draw (\x, -1.1) to node[pos=0, anchor=north, font=\footnotesize] {} (\x, -0.9);
\foreach \y in {0, ..., 1}
\draw (-2.1, \y) to node[pos=0, anchor=east, font=\footnotesize] {} (-1.9, \y);
\end{scope}
\end{scope}

\begin{scope}[xshift=11cm]
\begin{scope}[every node/.style={draw, thick, rectangle, minimum width=\figscale cm, minimum height=\figscale cm, inner sep=0pt, text width=\figscale cm, align=center}]
\foreach \x/\y/\fill/\text in {-1/ 1/lightgray/x',  -1/ 0/none/y,  0/ 0/lightgray/, 0/-1/none/x, 1/-1/lightgray/y'}
\node[fill=\fill, font=\small] (\x\y) at (\x, \y) {\vphantom{ly}$\text$};
\end{scope}

\begin{scope}[every path/.style={gray}]
\draw[->] (-2.25, -2) to node[pos=1, anchor=north west, inner sep=1pt] {$\alpha$} (2, -2);
\draw[->] (-2, -2.25) to node[pos=1, anchor=south east, inner sep=1pt] {$\beta$} (-2, 2);
\foreach \x in {-1, ..., 1}
\draw (\x, -2.1) to node[pos=0, anchor=north, font=\footnotesize] {} (\x, -1.9);
\foreach \y in {-1, ..., 1}
\draw (-2.1, \y) to node[pos=0, anchor=east, font=\footnotesize] {} (-1.9, \y);
\end{scope}
\end{scope}

\begin{scope}[yshift=-5.5cm]
\begin{scope}
\begin{scope}[every node/.style={draw, thick, rectangle, minimum width=\figscale cm, minimum height=\figscale cm, inner sep=0pt, text width=\figscale cm, align=center}]
\foreach \x/\y/\fill/\text in {-1/ 1/lightgray/x',  -1/ 0/lightgray/,    0/ 0/none/y, -1/-1/lightgray/y',   0/-1/none/x  }
\node[fill=\fill, font=\small] (\x\y) at (\x, \y) {\vphantom{ly}$\text$};
\end{scope}

\begin{scope}[every path/.style={gray}]
\draw[->] (-2.25, -2) to node[pos=1, anchor=north west, inner sep=1pt] {$\alpha$} (1, -2);
\draw[->] (-2, -2.25) to node[pos=1, anchor=south east, inner sep=1pt] {$\beta$} (-2, 2);
\foreach \x in {-1, ..., 0}
\draw (\x, -2.1) to node[pos=0, anchor=north, font=\footnotesize] {} (\x, -1.9);
\foreach \y in {-1, ..., 1}
\draw (-2.1, \y) to node[pos=0, anchor=east, font=\footnotesize] {} (-1.9, \y);
\end{scope}
\end{scope}

\begin{scope}[xshift=5cm, yshift=-0.5cm]
\begin{scope}[every node/.style={draw, thick, rectangle, minimum width=\figscale cm, minimum height=\figscale cm, inner sep=0pt, text width=\figscale cm, align=center}]
\foreach \x/\y/\fill/\text in {-1/ 1/none/x,  0/ 1/none/y,   -1/ 0/lightgray/y',    0/ 0/lightgray/, 1/ 0/lightgray/x' }
\node[fill=\fill, font=\small] (\x\y) at (\x, \y) {\vphantom{ly}$\text$};
\end{scope}

\begin{scope}[every path/.style={gray}]
\draw[->] (-2.25, -1) to node[pos=1, anchor=north west, inner sep=1pt] {$\alpha$} (2, -1);
\draw[->] (-2, -1.25) to node[pos=1, anchor=south east, inner sep=1pt] {$\beta$} (-2, 2);
\foreach \x in {-1, ..., 1}
\draw (\x, -1.1) to node[pos=0, anchor=north, font=\footnotesize] {} (\x, -0.9);
\foreach \y in {0, ..., 1}
\draw (-2.1, \y) to node[pos=0, anchor=east, font=\footnotesize] {} (-1.9, \y);
\end{scope}
\end{scope}

\begin{scope}[xshift=11cm]
\begin{scope}[every node/.style={draw, thick, rectangle, minimum width=\figscale cm, minimum height=\figscale cm, inner sep=0pt, text width=\figscale cm, align=center}]
\foreach \x/\y/\fill/\text in {-1/ 1/lightgray/x',  0/ 1/none/y,   0/ 0/lightgray/, 1/ 0/none/x,1/-1/lightgray/y'}
\node[fill=\fill, font=\small] (\x\y) at (\x, \y) {\vphantom{ly}$\text$};
\end{scope}

\begin{scope}[every path/.style={gray}]
\draw[->] (-2.25, -2) to node[pos=1, anchor=north west, inner sep=1pt] {$\alpha$} (2, -2);
\draw[->] (-2, -2.25) to node[pos=1, anchor=south east, inner sep=1pt] {$\beta$} (-2, 2);
\foreach \x in {-1, ..., 1}
\draw (\x, -2.1) to node[pos=0, anchor=north, font=\footnotesize] {} (\x, -1.9);
\foreach \y in {-1, ..., 1}
\draw (-2.1, \y) to node[pos=0, anchor=east, font=\footnotesize] {} (-1.9, \y);
\end{scope}
\end{scope}
\end{scope}

\end{tikzpicture} \caption{Narrowed pattern domains if $|\Pi_{\text{narrowed}}|=5$ and $\Pi_{\text{narrowed}}\subseteq \Pi_i^{(a,b)}$ for some $(a,b)\in\mathbb{Z}^2$ and $i\in\{0,1\}$.}
\label{fig:patternShapes5}
\end{figure}

Observe that each of the six pattern domains in \cref{fig:patternShapes5} contains a segment of the form $\{(\alpha_0,\beta_0)+id\colon i\in\{0,1,2\}\}$ for some $(\alpha_0,\beta_0)\in\Pi_{\text{narrowed}}$ and $d\in\mathcal{D}$, namely the segments marked in gray in \cref{fig:patternShapes5}.
If for some $(\alpha,\beta)$ on such a segment, we have $|\pi(\alpha,\beta)|\geq 2$, then the arguments of case~1 apply, and they show that if there is a solution with scalar products on the segment, then either item~\ref{lemitem:goodCombo} or~\ref{lemitem:type1} of \cref{lem:coveringPattern} applies.
The remaining scalar products (i.e., those not covered by the segment) can then be treated as in the case $|\Pi_{\text{narrowed}}|\leq 3$.

Thus, let us assume that for all $(\alpha,\beta)$ that are marked gray in \cref{fig:patternShapes5}, $|\pi(\alpha,\beta)|=1$.
This implies that at least one of the other two $(\alpha,\beta)$ in the pattern (marked with $x$ and $y$ in \cref{fig:patternShapes5}) must satisfy $|\pi(\alpha,\beta)|\geq 2$.
In fact, we claim that in this case, both of the other two have that property.
This is enough to conclude because then, if there exist solutions for these scalar product pairs, item~\ref{lemitem:type1} of \cref{lem:coveringPattern} applies. Hence, restricting $\pi$ to the subset of all $(\alpha,\beta)\in\Pi_{\text{narrowed}}$ with $|\pi(\alpha,\beta)|=1$ (i.e., those in the segment) gives a sub-pattern $\widetilde{\pi}$ for which \cref{lem:coveringPattern} follows immediately.

To see the claim, we first assume that the pair $(\alpha,\beta)$ marked with an $x$ in \cref{fig:patternShapes5} satisfies $|\pi(\alpha,\beta)|\geq 2$.
Let the pair marked $x'$ be $(\alpha',\beta')$, and apply \cref{lem:neighbouringTwo} to $(\alpha,\beta)$ and $(\alpha',\beta')$ to obtain that there exists $d'\in\mathcal{D}$ such that $(\alpha',\beta')+d'\in\Pi_{\text{narrowed}}$ and $|\pi((\alpha',\beta')+d')|\geq 2$.
By assumption, $(\alpha',\beta')+d'$ can thus not be within the gray segment, and in all cases, it is immediate to see that $(\alpha',\beta')+d$ must correspond to the spot in the pattern marked with $y$ in \cref{fig:patternShapes5}.
The same argument works with the roles of $x$ and $x'$ interchanged with $y$ and $y'$, so the claim follows.

\item $|\Pi_{\text{narrowed}}|= 6$, i.e., $\Pi_{\text{narrowed}} = \Pi_i^{(a,b)}$ for some $(a,b)\in\mathbb{Z}^2$ and $i\in\{1,2\}$.
Note that in such domains, every $(\alpha,\beta)$ is contained in a boundary segment of the form $\{(\alpha_0,\beta_0)+id\colon i\in\{0,1,2\}\}$ for some $(\alpha_0,\beta_0)\in\Pi_{\text{narrowed}}$ and $d\in\mathcal{D}$.
As the pattern is non-linear, at least one of these segments contains $(\alpha,\beta)$ such that $|\pi(\alpha,\beta)|\geq 2$. Hence, the arguments of case~1 apply again, and if there is a solution with scalar products in that segment, then item~\ref{lemitem:goodCombo} or~\ref{lemitem:type1} of \cref{lem:coveringPattern} applies.
The remaining three scalar products (i.e., those not covered by the segment) can then be treated as in the case $|\Pi_{\text{narrowed}}|=3$.
\end{itemize}
To finish the proof, observe that in every case where a linear sub-pattern $\tilde\pi$ was identified, this could be done in strongly polynomial time in the size of the underlying \RCCTUF{} problem.
\end{proof}

\begin{proof}[Proof of \cref{lem:coveringPattern}]
If $\pi$ is linear, we may choose $\widetilde{\pi} = \pi$, and item~\ref{lemitem:mainBranch} of \cref{lem:coveringPattern} applies.
For non-linear $\pi$, by \cref{lem:nonEmptyInterior} we have that  \cref{lem:coveringPattern} holds if the domain of $\pi$ has non-empty interior, and by \cref{lem:emptyInterior} it holds for domains with empty interior.
\end{proof}

\subsection[Proof of \texorpdfstring{\cref{thm:decompProgress}}{Theorem~\ref{thm:decompProgress}}]{Proof of \cref{thm:decompProgress}\MOORdot}
\label{sec:proofThmDecompProgress}

We can (after potentially permuting rows and columns of the constraint matrix such that $A$ and $B$ change their roles) assume that the matrix $B$ has at most as many columns as $A$, i.e., $p=\min \{n_A,n_B\}=n_B$.
Furthermore, by \cref{lem:patternsBounds}, we can in strongly polynomial time determine $\ell_i, u_i\in\mathbb{Z}$ with $u_i-\ell_i\leq m-|R|$ for $i\in\{0,1,2\}$ such that if the \RCCTUF{} problem has a solution, then it has one with $\ell_0 \leq \alpha+\beta \leq u_0$, $\ell_1 \leq \alpha \leq u_1$, and $\ell_2\leq \beta \leq u_2$.
By \cref{lem:narrowedPiIneqs}, we can even choose these $\ell_i$ and $u_i$ for $i\in\{0,1,2\}$ such that the corresponding narrowed pattern $\pi\colon \Pi_{\text{narrowed}}\to2^{\{0,\ldots,m-1\}}$ has domain
$$
\Pi_{\text{narrowed}}=\left\{(\alpha,\beta)\in\mathbb{Z}^2\colon \ell_0 \leq \alpha+\beta \leq u_0, \ell_1 \leq \alpha \leq u_1, \ell_2\leq \beta \leq u_2\right\}\enspace.
$$
For each $(\alpha,\beta)\in\Pi_{\text{narrowed}}$, we can now in strongly polynomial time compute the following:
\begin{itemize}
\item A solution $x_{A}^{\alpha,\beta}$ for the relaxation of the $A$-problem for scalar products $(\alpha,\beta)$.
\item Exactly $t^{\alpha,\beta} \coloneqq \min\{|\pi(\alpha,\beta)|, m-\ell+1\}$ solutions $x_{B,i}^{\alpha,\beta}$ of the relaxation of the $B$-problem with pairwise different residues $r_i^{\alpha,\beta}\coloneqq\gamma_B^\top x_{B,i}^{\alpha,\beta}$ for $i \in [t^{\alpha,\beta}]$.
\end{itemize}
Note that computing the solutions $x_A^{\alpha,\beta}$ boils down to obtaining optimal vertex solutions to linear programs with a constraint matrix with bounded entries, which we can do in strongly polynomial time using the framework of \textcite{tardos_1986_strongly}.
For fixed $(\alpha,\beta)$, computing the solutions $x_{B,i}^{\alpha,\beta}$ can be done by solving $m-\ell+1$ many $B$-problems with scalar products $(\alpha,\beta)$, i.e., by recursively calling our procedure, where we start with the full set $R_{B,1}=\{0,\ldots,m-1\}$ of feasible target residues to get a solution $x_{B,1}^{\alpha,\beta}$, and then iterate using $R_{B,i+1}=R_{B,i}\setminus\{r_{i}^{\alpha,\beta}\}$, until we have $m-\ell+1$ many different residues, or we arrive at an infeasible problem.
In the latter case, we computed $\pi(\alpha,\beta)=\{r_i^{\alpha,\beta}\colon i=1,\ldots, t^{\alpha,\beta}\}$, while in the first case, we just obtained a subset of $\pi(\alpha,\beta)$.
Also note that each of the solutions $x_{B,i}^{\alpha,\beta}$ is obtained from an \RCCTUF{} problem with $p$ variables, modulus $m$, and at most $\ell$ many target residues, and we solved at most $m-\ell+1\leq 3$ of them for each $(\alpha,\beta)\in\Pi_{\text{narrowed}}$.
As $|\Pi_{\text{narrowed}}|<(m-\ell+1)^2$, this procedure needed less than $3(m-\ell+1)^2$ many recursive calls in total, in accordance with the claim in \cref{thm:decompProgress}.

Now invoke \cref{lem:coveringPattern}. We can directly check whether case~\ref{lemitem:goodCombo} applies by going through all $(\alpha,\beta)\in\Pi_{\text{narrowed}}$ with $|\pi(\alpha,\beta)|=1$. If case~\ref{lemitem:goodCombo} applies for some $(\alpha,\beta)\in \Pi_{\text{narrowed}}$, the combination $(x_A^{\alpha,\beta},x_{B,1}^{\alpha,\beta})$ must be a solution to the \RCCTUF{} problem.

If case~\ref{lemitem:type1} of \cref{lem:coveringPattern} applies, we can find a solution as follows:
For $(\alpha,\beta)\in\Pi_{\text{narrowed}}$ with $|\pi(\alpha,\beta)|\geq m-\ell+1$, we can in fact immediately find a solution because by construction, all combinations $(x_A^{\alpha,\beta},x_{B,i}^{\alpha,\beta})$ for $i=1,\ldots,m-\ell+1$ are feasible for the relaxation of the \RCCTUF{} problem, and then have pairwise different residues $\gamma_A^\top x_A^{\alpha,\beta}+\gamma_B^\top x_{B,i}^{\alpha,\beta}$.
But in this case, one of them must have a residue in the set $R$ that has size $\ell$, thus giving a feasible solution.
If on the other hand $1<|\pi(\alpha,\beta)|\leq m-\ell$, we reduce the problem to the modified $A$-problem
\begin{equation*}
	\begin{array}{rcl}
		A x_A & \leq & b_A - \alpha e \\
		h^\top x_A & = & \beta \\
		\gamma_A^\top x_A & \in & R'   \pmod{m} \enspace,
	\end{array}
\end{equation*}
where $R'=R-\pi(\alpha,\beta)$.
This problem has a solution if and only if the original \RCCTUF{} problem has one:
Note that any solution $x_A$ of its relaxation can be combined with any solution $x_B$ of the relaxation of the $B$-problem to obtain a solution $(x_A,x_B)$ that is feasible for the relaxation of the original \RCCTUF{} problem. Moreover, the residues in $R'$ are precisely those that allow us to obtain a combined solution $(x_A,x_B)$ that also satisfies the original congruency constraint.
Since $m$ is a prime number and $|\pi(\alpha,\beta)|>1$, \cref{lem:cauchyDavenport} guarantees that $|R'|\geq |R|+1=\ell+1$.
To sum up, if case~\ref{lemitem:type1} of \cref{lem:coveringPattern} applies, we either find a feasible solution in strongly polynomial time, or we construct at most $|\Pi_{\text{narrowed}}|\leq (m-\ell+1)^2$ many new \RCCTUF{} problems with $n-p$ variables, modulus $m$, and at least $\ell+1$ target residues such that at least one of them has a feasible solution that, as seen immediately from the above discussion, can be transformed to a solution of the initial problem in strongly polynomial time.

If the above strategy to obtain a solution in case~\ref{lemitem:type1} of \cref{lem:coveringPattern} fails, we know that case~\ref{lemitem:mainBranch} of \cref{lem:coveringPattern} applies.
In this case, we know that the problem has a solution that is covered by the linear sub-pattern $\widetilde{\pi}$. Applying \cref{thm:integrationSubpatterns}, we reduce the problem to an \RCCTUF{} problem with $n-p+2$ variables, modulus $m$, and $\ell$ target residues, with the additional property that the inequality system has an equality constraint.
This equality constraint allows for applying \cref{thm:projection} to eliminate one variable and obtain an equivalent \RCCTUF{} problem with $n-p+1$ variables, modulus $m$ and $\ell$ target residues.
It remains to observe that a solution of this problem can be immediately transformed back to a solution of the intermediate problem, and that solution can, by \cref{thm:integrationSubpatterns}, be transformed back to a solution of the original problem in strongly polynomial time.

Altogether, after solving less than $3(m-\ell+1)^2$ many \RCCTUF{} problems with at most $p$ variables and further strongly polynomial time operations, we can either obtain a feasible solution, or construct a family $\mathcal{F}$ of problems that have the properties claimed by \cref{thm:decompProgress}.\hfill\qedsymbol

\subsection[Proof of \texorpdfstring{\cref{thm:projection}}{Theorem~\ref{thm:projection}}]{Proof of \cref{thm:projection}\MOORdot}
\label{sec:proofThmProjection}

By performing a pivoting operation (see \cref{def:pivoting}) on the element $\alpha$, we obtain a new TU matrix which has $A - \alpha a_i a^\top_2$ as a submatrix. Hence, the latter is also TU. Moreover, the two systems are equivalent since
{\normalsize
\[
\begin{array}{r@{\;}c@{\;}l}
	Ax \; + \; a_1y & \leq & b \\
	a_2^\top x \; + \; \alpha y & = & \beta
\end{array}
\iff
\begin{array}{r@{\;}c@{\;}l}
	Ax  +  a_1 \alpha (\beta -a_2^\top x) & \leq & b \\
	     y & = & \alpha (\beta -a_2^\top x)
\end{array}
\iff
\begin{array}{r@{\;}c@{\;}l}
	(A - \alpha a_1 a^\top_2)  x & \leq & b - \alpha \beta a_1 \\
	  y & = & \alpha (\beta -a_2^\top x)
\end{array},
\]
}
where we use that $\alpha \in \{-1,1\}$ since the matrix is TU and $\alpha \neq 0$. This completes the proof.\hfill\qedsymbol

\subsection[Proof of \texorpdfstring{\cref{thm:pivoting}}{Theorem~\ref{thm:pivoting}}]{Proof of \cref{thm:pivoting}\MOORdot}
\label{sec:proofThmPivoting}

Consider an \RCCTUF{} problem
$$ Tx \leq b,\ \gamma^\top x \in R \pmod*{m},\ x\in\mathbb{Z}^n\enspace, $$
where $T$ falls into case~\ref{thmitem:TUdecomp_pivot} of \cref{thm:TUdecomp}, and assume without loss of generality that the desired pivoted matrix arises from $T$ by pivoting on the element in the first row and column.

Observe that due to \cref{lem:productInterval}, we can determine $u\in\mathbb{Z}$ such that the initial \RCCTUF{} problem is feasible if and only if
\begin{equation}\label{eq:boundy1}
 Tx \leq b,\ y_1\leq u,\ \gamma^\top x \in R \pmod*{m},\ x\in\mathbb{Z}^n
\end{equation}
is feasible.
Let $T \coloneqq \begin{psmallmatrix}
	\varepsilon & p^\top \\ q & C
\end{psmallmatrix}$,
and let $Q \in \mathbb{Z}^{n \times n}$ be the unimodular matrix that corresponds to the column operations such that the first row of $TQ$ is equal to the vector $(1,0,\ldots,0)$. Then, if $e_1$ denotes the first $n$-dimensional unit vector,
\begin{align*}
	\begin{pmatrix}
		T  \\
		e_1^\top
	\end{pmatrix}
	Q =
	\begin{pmatrix}
		\varepsilon & p^\top \\ q & C \\ 1 & 0
	\end{pmatrix}
	Q =
	\begin{pmatrix}
		1 & 0 \\  \varepsilon q & C - \varepsilon q p^\top \\ \varepsilon & -\varepsilon p^\top
	\end{pmatrix}\quad .
\end{align*}
Thus, substituting $x=Qy$ and observing that $x\in\mathbb{Z}^n$ if and only if $y\in\mathbb{Z}^n$, we can rewrite the system in~\eqref{eq:boundy1} as
\begin{equation}
 \begin{pmatrix}
		1 & 0 \\  \varepsilon q & C - \varepsilon q p^\top \\ \varepsilon & -\varepsilon p^\top
	\end{pmatrix}y \leq \begin{pmatrix}
b \\ u
\end{pmatrix},\ (\gamma^\top Q) y \in R \pmod*{m},\ y\in\mathbb{Z}^n\enspace,
\end{equation}
which is of the desired form.\hfill\qedsymbol

\appendix

\section[Detecting unboundedness of CCTU problems]{Detecting unboundedness of \CCTU{} problems}\label{app:unboundedness}

\begin{lemma}\label{lem:unboundedness}
A \CCTU{} problem is unbounded if and only if it is feasible and its relaxation is unbounded.
Moreover, given a feasible solution $x_0\in \mathbb{Z}^n$ to an unbounded \CCTU{} problem $\min\{c^\top x\colon Tx \leq b,\ \gamma^\top x \equiv r \pmod*{m},\ x\in\mathbb{Z}^n \}$, one can efficiently determine a vector $v\in \mathbb{Z}^n$ such that $x_0 + k\cdot v$ is feasible for any $k\in \mathbb{Z}_{\geq 0}$ and $c^\top v < 0$.
\end{lemma}

\begin{proof}
If a \CCTU{} problem is unbounded, then it obviously has a feasible solution and its relaxation is unbounded. To show the other direction, consider a feasible \CCTU{} problem
$$
\min\left\{c^\top x\colon Tx \leq b,\ \gamma^\top x \equiv r \pmod*{m},\ x\in\mathbb{Z}^n \right\}
$$
whose relaxation is unbounded. Thus, there exists a point $x_0\in\mathbb{Z}^n$ that is feasible for the problem, and a direction $r\in\mathbb{Z}^n$ with $c^\top r < 0$ such that for any point $x$ that is feasible for the relaxation, $x+r$ is feasible for the relaxation, as well.
This implies that $x_k = x_0+mkr$ satisfies $Tx_k\leq b$, $x_k\in\mathbb{Z}^n$, and $\gamma^\top x_k \equiv \gamma^\top x_0\equiv r\pmod*{m}$ for all $k\in\mathbb{Z}_{>0}$, and thus every such $x_k$ is feasible for the \CCTU{} problem. As $c^\top x_{k} = c^\top x_0 + mk c^\top r \to-\infty$ for $k\to\infty$, we conclude that the \CCTU{} problem is unbounded.

Moreover, note that if the relaxation is unbounded, then one can obtain in polynomial time a vector $r\in \mathbb{Z}^n$ as described above, i.e., with $c^{\top} r <0$ and $T r \leq 0$. Hence, the vector $v\coloneqq m\cdot r$ can be computed efficiently and has the properties claimed by the lemma.
\end{proof}

We remark that \cref{lem:unboundedness} extends to \RCCTUF{} problems and their optimization counterparts, as well.

\begingroup
\setlength{\emergencystretch}{0.5em}
\printbibliography
\endgroup

\end{document}